     \OR\ifentrytype{incollection}\OR\ifentrytype{inproceedings}%
     \OR\ifentrytype{inreference}} {\printtext[title]{%
\definecolor{darkblue}{rgb}{0.13,0.13,0.39}
\newtheorem{thm}{Theorem}[section]
\newtheorem{lem}[thm]{Lemma}
\newtheorem{lemma}[thm]{Lemma}
\newtheorem{assump}[thm]{Assumption}
\newtheorem{prop}[thm]{Proposition}
\newtheorem{proposition}[thm]{Proposition}
\newtheorem{cor}[thm]{Corollary}
\newtheorem{conjec}[thm]{Conjecture}
\theoremstyle{definition}
\newtheorem{rem}[thm]{Remark}
 \newtheorem*{rem*}{Remark}
\newtheorem{defn}[thm]{Definition} 
 \newcounter{assum}
\newcommand{\thmtitle}[1]{{\bf(#1)}}
\newcommand{\Corner}{\mathrm{Corner}}
\newcommand{\fav}{\mathsf{Fav}}
\renewcommand{\L}{\CP}
\newcommand{\WB}{W\!B}
\newcommand{\Wf}{W\!f}
\newcommand\dist{\stackrel{\uptext{\tiny{dist}}}{=}}
\newcommand\hit{\uptext{Hit}}
\newcommand\nohit{\uptext{No\! hit}}
\newcommand{\op}{\mathrm{op}}
\newcommand{\fh}{\mathfrak{h}}
\newcommand{\h}{\fh^{{\scriptscriptstyle(}n{\scriptscriptstyle)}}}
\newcommand{\hz}[1]{\fh^{{\scriptscriptstyle(}n{\scriptscriptstyle)},#1}}
\newcommand{\fg}{\mathfrak{g}}
\newcommand{\ff}{\mathfrak{f}}
\newcommand{\fd}{\mathfrak{d}}
\newcommand{\Airy}{Airy$_2$~}
\newcommand{\TTP}{\mathsf{TP}}
\newcommand{\TP}[1]{\mathsf{TP}_{\!\!#1}}
\newcommand{\TPno}{\mathsf{TP}}
\newcommand{\TPS}[1]{\mathsf{S}_{#1}}
\newcommand{\diam}{\mathrm{diam}}
\renewcommand{\d}{\mathrm{d}}
\renewcommand{\SS}{\mathbb{S}}
\newcommand{\Dom}{\mathsf{Dom}}
\newcommand{\mylim}[1]{\underset{#1}{\wt{\mathrm{lim}}}}
\newcommand{\intint}[1]{\llbracket1,#1\rrbracket}
\newcommand*\bigcdot{\mathpalette\bigcdot@{.5}}
\newcommand*\bigcdot@[2]{\mathbin{\vcenter{\hbox{\scalebox{#2}{$\m@th#1\bullet$}}}}}
\newcommand{\bJ}{\mathsf{J}}
\newcommand{\I}{{\rm i}}
\newcommand{\R}{\mathbb{R}}
\newcommand{\nn}{\mathbb{N}}
\newcommand{\N}{\nn}
\newcommand{\zz}{\mathbb{Z}}
\newcommand{\aip}{\mathcal{A}}
\newcommand{\CT}{\mathcal{T}}
\newcommand{\CC}{\mathcal{C}}
\newcommand{\dens}{\CF}
\newcommand{\CH}{\mathcal{H}}
\newcommand{\CB}{\mathcal{B}}
\newcommand{\CL}{\mathcal{L}}
\newcommand{\CS}{\mathcal{S}}
\renewcommand{\S}{\CS}
\newcommand{\CX}{\mathcal{X}}
\newcommand{\1}[1]{\mathbf{1}_{#1}}
\newcommand{\eps}{\varepsilon}
\newcommand{\wt}{\widetilde}
\newcommand{\ts}{\hspace{0.1em}}
\newcommand{\tts}{\hspace{0.05em}}
\newcommand{\tsm}{\hspace{-0.1em}}
 \newcommand{\hlim}{\mathfrak{h}}
\renewcommand{\epsilon}{\varepsilon}
\newcommand{\F}{\mathscr{F}}
\DeclareMathOperator*{\argmax}{arg\,max}
\newcommand{\mc}{\mathcal}
\newcommand{\Q}{\mathbb Q}
\newcommand{\msf}{\mathsf}
\newcommand{\E}{\mathbb E}
\newcommand{\one}{\mathbbm{1}}
\newcommand{\as}{a.s.\ }
\newcommand{\smallconst}{c'}
\newcommand{\red}[1]{{\color{red}#1}}
\newcommand\RedeclareMathOperator{%
  \@ifstar{\def\rmo@s{m}\rmo@redeclare}{\def\rmo@s{o}\rmo@redeclare}%
}
\newcommand\rmo@redeclare[2]{%
  \begingroup \escapechar\m@ne\xdef\@gtempa{{\string#1}}\endgroup
  \expandafter\@ifundefined\@gtempa
     {\@latex@error{\noexpand#1undefined}\@ehc}%
     \relax
  \expandafter\rmo@declmathop\rmo@s{#1}{#2}}
\newcommand\rmo@declmathop[3]{%
  \DeclareRobustCommand{#2}{\qopname\newmcodes@#1{#3}}%
}
\newcommand{\uptext}[1]{\text{\upshape{#1}}}
\DeclareMathOperator{\epi}{\uptext{epi}}
\DeclareMathOperator{\hypo}{\uptext{hypo}}
\DeclareMathOperator{\UC}{\uptext{UC}}
\DeclareMathOperator{\LC}{\uptext{LC}}
\DeclareMathOperator{\sgn}{sgn}
\DeclareMathOperator{\Ai}{\uptext{Ai}}
\DeclareMathOperator{\tr}{\mathop{\uptext{tr}}}
\RedeclareMathOperator{\det}{\mathop{\uptext{det}}}
\RedeclareMathOperator{\ker}{\mathop{\uptext{ker}}}
\RedeclareMathOperator{\exp}{\mathop{\uptext{exp}}}
\RedeclareMathOperator{\log}{\mathop{\uptext{log}}}
\RedeclareMathOperator*{\lim}{\mathop{\uptext{lim}}}
\RedeclareMathOperator*{\sup}{\mathop{\uptext{sup}}}
\RedeclareMathOperator*{\limsup}{\mathop{\uptext{lim\hspace{1pt}sup}}}
\RedeclareMathOperator*{\max}{\mathop{\uptext{max}}}
\RedeclareMathOperator*{\inf}{\mathop{\uptext{inf}}}
\RedeclareMathOperator*{\min}{\mathop{\uptext{min}}}
\newcommand{\Max}{\mathrm{Max}}
\newcommand{\Argmax}{\mathrm{Arg max}}
\newcommand{\fT}{\mathbf{S}}
\newcommand{\ft}{\mathbf{t}}
\newcommand{\fx}{x}
\newcommand{\bx}{\mathbf{x}}
\newcommand{\by}{\mathbf{y}}
\newcommand{\fM}{M}
\newcommand{\fD}{\mathbf{D}}
\newcommand{\fy}{y}
\newcommand{\fB}{\mathbf{B}}
\newcommand{\fP}{\mathbf{P}}
\newcommand{\fQ}{\mathbf{Q}}
\newcommand{\fA}{\mathbf{A}}
\newcommand{\fK}{\mathbf{K}}
\newcommand{\fR}{\mathbf{R}}
\newcommand{\fG}{\mathbf{G}}
\newcommand{\fH}{\mathbf{H}}
\newcommand{\CI}{\mathcal{I}}
\newcommand{\CJ}{\mathcal{J}}
\newcommand{\CA}{\mathcal{A}}
\newcommand{\CP}{\mathcal{P}}
\newcommand{\CF}{\mathcal{F}}
\renewcommand{\P}{\mathbb P}
\let\emptyset\varnothing
\def\dash---{\kern.16667em---\penalty\exhyphenpenalty\hskip.16667em\relax}
\numberwithin{equation}{section}
\title[Exceptional times when the KPZ fixed point violates Johansson's conjecture]{Exceptional times when the KPZ fixed point violates\\ Johansson's conjecture on maximizer uniqueness}
\date{\today}
\author[I. Corwin]{Ivan Corwin}
\address[I. Corwin]{Columbia University, Department of Mathematics, 2990 Broadway, Office 603, New York, NY 10027, USA}
\email{ic2354@columbia.edu}
\author[A. Hammond]{Alan Hammond}
\address[A. Hammond]{Departments of Mathematics and Statistics, University of California at Berkeley, 899 Evans Hall, Berkeley, CA, 94720-3840, USA}
\email{alanmh@stat.berkeley.edu}
\author[M. Hegde]{Milind Hegde}
\address[M. Hegde]{Department of Mathematics, University of California at Berkeley, 1039 Evans Hall, Berkeley, CA, 94720-3840, USA}
\email{milind.hegde@berkeley.edu}
\author[K. Matetski]{Konstantin Matetski}
\address[K. Matetski]{Columbia University, Department of Mathematics, 2990 Broadway, Office 517, New York, NY 10027, USA}
\email{matetski@math.columbia.edu}
\begin{document}

\begin{abstract}
In 2002, Johansson conjectured that the maximum of the \Airy process minus the parabola $x^2$ is almost surely achieved at a unique location \cite[Conjecture 1.5]{JohanssonPoly}. This result was proved a decade later by Corwin and Hammond \cite[Theorem 4.3]{BrownianGibbs}; Moreno Flores, Quastel and Remenik \cite{MaxOfAiry2}; and Pimentel \cite{pimentel_2014}. Up to scaling, the \Airy process minus the parabola $x^2$ arises as the fixed time spatial marginal of the KPZ fixed point when started from  {\it narrow wedge} initial data. We extend this maximizer uniqueness result to the fixed time spatial marginal of the KPZ fixed point when begun from any element of a very broad class of initial data.
%
%

None of these results rules out the possibility that at {\it random} times, the KPZ fixed point spatial marginal violates maximizer uniqueness. To understand this possibility, we study the probability that the KPZ fixed point has, at a given time, two or more locations where its value is close to the maximum, obtaining quantitative upper and lower bounds in terms of the degree of closeness for a very broad class of initial data. We also compute a quantity akin to the joint density of the locations of two maximizers and the maximum value. 
As a consequence, the set of times of maximizer non-uniqueness almost surely has Hausdorff dimension at most two-thirds. 

Our analysis relies on the exact formula for the distribution function of the KPZ fixed point obtained by Matetski, Quastel and Remenik in \cite{fixedpt}, the variational formula for the KPZ fixed point involving the Airy sheet constructed by Dauvergne, Ortmann and Vir\'ag in \cite{Landscape}, and the Brownian Gibbs property for the \Airy process minus the parabola $x^2$ demonstrated by Corwin and Hammond in \cite{BrownianGibbs}. 

\end{abstract}

\maketitle

\tableofcontents


\section{Introduction and main result}

The Kardar-Parisi-Zhang (KPZ) fixed point is a Markovian temporal evolution on spatial functions which is believed to be the universal limit of a wide class of random models of interface growth. Begun from narrow wedge initial data, the KPZ fixed point at given time has an almost surely unique maximizer. This is Johansson's conjecture~\cite{JohanssonPoly}, proved in \cite{BrownianGibbs,MaxOfAiry2,pimentel_2014}. An event that has zero probability at any given time may however occur on a set of exceptional times of vanishing measure.

In this work, we initiate the study of such exceptional times of non-uniqueness for the KPZ fixed point. We consider a general class of initial data for the KPZ fixed point and investigate the times at which the spatial process has two or more maximizers. Our main result, stated in Section~\ref{sec_main_result}, asserts that, for any fixed time, this non-uniqueness has zero probability (so that Johansson's conjecture is in fact valid for general initial data), while the set of such random times has Hausdorff dimension at most two-thirds almost surely.

In an earlier version of this article our main result was that the Hausdorff dimension equals $\frac{2}{3}$ on the event of the set of exceptional times being non-empty, which we claimed has positive probability. Unfortunately a flaw in our argument made these earlier claims out of reach, so we have revised our manuscript to prove an upper bound only. In the meantime these claims and several extensions which we conjectured (see Section~\ref{sec:conjectures}) have been proven by Dauvergne in \cite{dauvergne2022non}.

In recent years, research into the KPZ universality class has been invigorated by several fundamental new techniques including the Brownian Gibbs resampling property \cite{BrownianGibbs}, the KPZ fixed point transition probability formulas \cite{fixedpt}, and the variational representation of the directed landscape \cite{Landscape}. As we explain in Section~\ref{sec:heuristic}, our work is the first instance in which all three of these approaches have been brought together. In particular, in order to establish the two-thirds upper bound on the Hausdorff dimension, we compute and bound certain iterated derivatives of the Fredholm determinant formulas giving the exact transition probability formulas for the KPZ fixed point. To establish lower bounds on the probability of a given time being one of the existence of two or more near-maximizer (which would be one of the estimates needed to show that the set of exceptional times is non-empty with positive probability or establish a dimension lower bound, though we do not to show these),
 we make use of a pre-limiting version of the parabolic Airy sheet (a marginal of the directed landscape) for Brownian last passage percolation which can be encoded in terms of a variational problem for $\beta=2$ Dyson Brownian motion. The $\beta=2$ Dyson Brownian motion enjoys the Brownian Gibbs property which allows us to closely compare its law to that of independent Brownian motions. Filtering this through the variational problem, we are able to provide lower bounds on the probability that there are two or more approximate maximizers.

We close the introduction in Section~\ref{sec:conjectures} with some conjectures for other exceptional sets.
The concerned problems include showing that the exceptional set of times of maximizer non-uniqueness is almost surely dense in $(0,\infty)$; establishing the Hausdorff dimension of times with three (or a given greater number of) maximizers; and probing exceptional sets involving non-uniqueness of geodesics in the directed landscape.



\subsection{Introducing the KPZ fixed point through last passage percolation}

The KPZ universality class is a large collection of one-dimensional randomly forced systems, whose members may loosely be characterized by three features: slope-dependent lateral growth; smoothing behavior; and local random perturbations. Examples are last passage percolation; stochastic interface growth on a one-dimensional substrate; directed polymers in random environment; driven lattice gas models; and reaction-diffusion models in two-dimensional random media: see for instance \cite{corwinReview,quastelCDM,quastelSpohn,hht,Takeuchi}. These systems are expected to evince universal behavior---in particular, scaled universal limiting structures.
The KPZ fixed point is believed to be the universal limit of all KPZ class systems. A natural aspect of its fractal geometry will be the object of our attention.


The precise definition of the KPZ fixed point is somewhat involved and will be given in Section \ref{sec:KPZ_fp}. For the pedagogical purpose of this introduction, we begin with a very concrete model, geometric \emph{last passage percolation} (LPP) on $\zz^2$, which is essentially known to converge to the KPZ fixed point. By working with this model, we hope to illustrate some of the important concepts related to the KPZ fixed point and to motivate our main results, which come in Section \ref{sec_main_result}. Since our discussion on LPP is purely for illustrative purposes and our main results are for the KPZ fixed point, we will not precisely state or qualify all results mentioned.

\medskip
Consider a family of independent geometric random variables $\{\omega_{i,j}\}_{i,j \in \zz}$ with parameter $q \in (0,1)$, i.e., $\P(\omega_{i,j} = k) = q(1-q)^k$ for integer $k \geq 0$. For two points $\by, \bx \in \zz^2$ such that $\by \leq \bx$ entry-wise, we define the set $\Pi_{\by \to \bx}$ of all upright paths $\pi = (\pi_0, \pi_1, \ldots)$ in $\zz^2$ from $\by$ to $\bx$; i.e., paths starting at $\pi_0 = \by$, ending at $\bx$ and such that $\pi_i - \pi_{i-1} \in \{(1,0), (0,1)\}$. Then the \emph{point-to-point last passage time} is
\begin{equation}\label{eq:LPP_time}
G_{\by \to \bx} := \max_{\pi \in \Pi_{\by \to \bx}} \sum_{i = 0}^{|\bx - \by|} \omega_{\pi_i},
\end{equation}
where $|\bx - \by| := |x_1 - y_1| + |x_2 - y_2|$. One can readily check that the last passage times satisfy the recurrence relation
\begin{equation}
G_{\by \to \bx} = \max \bigl\{G_{\by \to \bx - (1,0)}, G_{\by \to \bx - (0,1)}\bigr\} + \omega_{\bx},
\end{equation}
for all points $\bx$ lying to the right and above $\by$. These last passage times are random variables, since they depend on the environment $\{\omega_{i,j}\}_{i,j \in \zz}$. A path $\pi \in \Pi_{\by \to \bx}$ can be considered to be a discrete directed path going from $\by$ to $\bx$ whose energy is given by the sum in \eqref{eq:LPP_time}. Then the quantity $G_{\by \to \bx}$ is the maximal energy of such paths, and the random path achieving the maximum can be regarded as a zero-temperature random directed polymer (with given endpoints).

\begin{figure}[h]
\centering
\scalebox{0.8}{\includegraphics{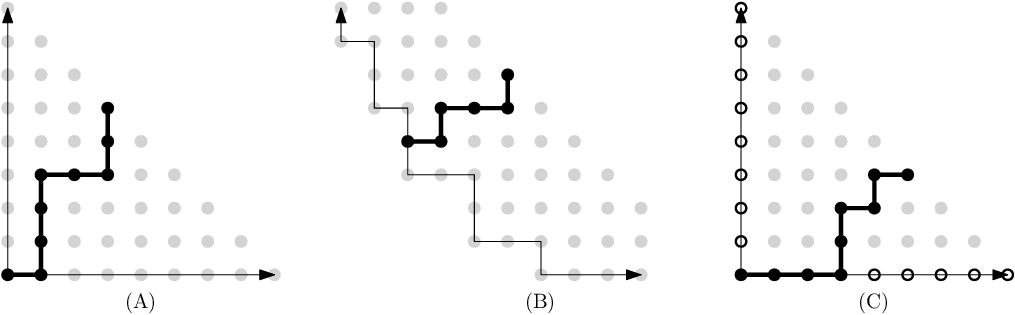}}
 \captionsetup{width=.9\linewidth}
\caption{(A): LPP with a specified starting and ending point. The grey bullets are associated with random weights $\omega_{i,j}$ and the thick black line is the upright path $\pi$ which maximizes the sum of weights along it. The quantity $H_N(x)$, defined by \eqref{eq:H}, measures the centred and scaled last passage time at various points along the anti-diagonal line through $(N,N)$. (B): In this version of LPP, defined in \eqref{eq:LPP general starting points}, we allow for the starting points to be anywhere along the thin black line on the lower left of the figure. (C): In this version, the weights $\omega$ on the boundary of the quadrant, depicted by black circles, may have a different law than those in the bulk, and may, in fact, be inhomogeneous.}
\label{Fig:LPPs}
\end{figure}

\subsubsection{The \Airy process and Johansson's uniqueness conjecture}

Let us fix the starting point of polymers $\by = (0,0)$ as depicted in Figure \ref{Fig:LPPs}(A). Then, for an integer $N \geq 1$, we define the random function $x \mapsto H_N(x)$ by linearly interpolating the values obtained by the identity
\begin{equation}\label{eq:H}
G_{(0,0) \to (N + x, N - x)} = c_1 N + c_2 N^{\frac{1}{3}} H_N \bigl( c_3 N^{-\frac{2}{3}} x\bigr),
\end{equation}
where $c_1 = 2\sqrt q / (1 - \sqrt q)$, $c_2 = q^{1/6} (1 + \sqrt q)^{1/3} / (1-q)$ and $c_3 = (1 - \sqrt q) c_2 / (1 + \sqrt q)$. The function $H_N(x)$ weakly converges as $N \to \infty$ to $\CA(x) - x^2$ in the topology of uniform convergence on compact sets \cite{JohanssonPoly}, where $\CA$ is an \Airy process, first defined in \cite{prahoferSpohn}. The process $\CA(x)$ is stationary and has marginal distributions equal to the Tracy-Widom GUE distribution \cite{tracyWidom, tracyWidom2}; we refer to \cite{cqr} and \cite{quastelRem-review} for a review of the \Airy process.

The just introduced polymers run point-to-point, and we may consider polymers whose starting point is fixed at $\by=(0,0)$, but whose ending point is free to lie anywhere along a given line. More precisely, we consider the maximum of the left-hand side of \eqref{eq:H} over all $x=-N, \ldots, N$. The path which attains this maximum energy is a random directed polymer with \emph{unconstrained} ending point. The random endpoint is the  maximizer of the function $H_N$, and it is natural to enquire into its law.
A first query in this vein is whether the random endpoint is uniquely defined in the large $N$ limit; this effectively amounts to determining whether the process  $\CA(x) - x^2$ has a unique maximizer.

Johansson conjectured \cite[Conjecture 1.5]{JohanssonPoly} that, indeed, the maximizer of $\CA(x) - x^2$ is almost surely unique. Predicated on the validity of this conjecture, \cite[Theorem 1.6]{JohanssonPoly} demonstrated that the random variables
\begin{equation}
\CX_N := \inf \left\{x \in \R : \sup_{z \leq x} H_N(z) = \sup_{z \in \R} H_N(z)\right\}
\end{equation}
converge weakly as $N \to \infty$ to the unique maximizer $\CX$ of $x \mapsto \CA(x) - x^2$.

\smallskip
There are now three proofs of {\em Johansson's conjecture}, which we will briefly review.

\smallskip
Corwin and Hammond in \cite{BrownianGibbs} introduced the method of {\em Brownian Gibbs} resampling---a technique important in this paper, employed in Section~\ref{sec:TP_probability}---and showed thereby that, on any compact interval, the \Airy process is absolutely continuous with respect to a Brownian motion of rate two.  The almost sure uniqueness of the maximizer of Brownian motion on a compact interval then led to Johansson's conjecture.
Moreno Flores, Quastel and Remenik \cite{MaxOfAiry2} computed the joint density function of the maximizer $\CX$ and the maximal value $\CA(\CX) - \CX^2$ using the continuum statistic formula for the \Airy process proved in \cite{cqr}. Since this density integrates to one, they were able to conclude that the maximizer is unique almost surely.
In \cite{pimentel_2014}, Pimentel characterized the uniqueness of the maximizer of a stochastic process by perturbing the process by the addition of a small affine shift. This method yields Johansson's conjecture as a special case.

Each technique that yields Johansson's conjecture has its strengths and drawbacks. For example, the exact formula for the density obtained in \cite{MaxOfAiry2} yields rather strong upper bounds on probabilities. These, in turn, allowed Quastel and Remenik \cite{MR3300961} to obtain an upper estimate on the tail distribution of directed polymers. However, the formula was not useful for proving a lower bound and the authors had to use probabilistic methods instead. Stronger bounds were obtained in \cite{Schehr} (see also \cite{BLS}) via Riemann-Hilbert techniques. In our work too, we require information not easily gleaned by any single technique, and we will have recourse to several approaches to furnish the proofs of our principal results. We will elaborate  in Section~\ref{sec:heuristic}.

\subsubsection{General initial data for the KPZ fixed point}

More general initial locations of polymers in the LPP model may be allowed. Indeed, for a collection $\vec\by$ of starting points $\by_i \in \zz^2$, $i \in \nn$, and for an endpoint $\bx \in \zz^2$, we define the last passage time
\begin{equation}\label{eq:LPP general starting points}
G_{\vec \by \to \bx} := \max_{i \in \nn} G_{\by_i \to \bx}.
\end{equation}
This is now the maximum energy of a polymer running from any of the points $\by_i$ to $\bx$. Figure~\ref{Fig:LPPs}(B) depicts this version of LPP when the set of points $\vec \by$ forms a downright path (the thin black line on the lower left of the figure).
The scaling limit of $H_N(x)$, still defined by \eqref{eq:H}, is dictated by the form of this  downright path of starting points. For instance, if the set of points $\vec\by$ forms a zigzag path (down, then right, and repeat), then $H_N(x)$ converges to the Airy$_1$ process \cite{bfp} rather than to the \Airy process. In general, the mapping from the limiting behavior of the set $\vec\by$ to the limiting behavior of $H_N(x)$ as $N \to \infty$ is facilitated by the KPZ fixed point. The limit of $H_N(x)$ matches the time-one spatial marginal $\fh_1(x)$ of the KPZ fixed point with initial data related to the limit of the $\vec\by$. The process $\CA(x) - x^2$  arises precisely when the initial data for the KPZ fixed point is given by the so-called narrow wedge. Geometric last passage percolation has a simple correspondence to the height function evolution of a discrete-time version of the corner growth model (or totally asymmetric simple exclusion process) and in that interpretation, the general initial data corresponds to the initial height function: see for instance \cite{corwin2016}.

Another route to access more general initial data for the KPZ fixed point arises by returning to the fixed initial location $\by = (0,0)$  version of LPP but now associating to each point on the boundary of the first quadrant an $\omega$ which is distributed differently (in a way that may depend on location) compared to the independent and identically distributed geometric $\omega$ inside the quadrant. Figure \ref{Fig:LPPs}(C) depicts this version of LPP (see also the caption). Depending on how the boundary $\omega$ are distributed, the $H_N(x)$ defined by \eqref{eq:H} will converge to different limits. As before, these correspond to the time-one spatial marginal of the KPZ fixed point with initial data related to the limit of the boundary $\omega$'s.

The first result of this paper (Theorem~\ref{thm:johansson} ahead) is that, for a quite large class of initial data with decay at infinity, the KPZ fixed point almost surely has a unique maximizer. This result should also follow from recent results of \cite{sarkar2020brownian} using a proof technique analogous to that of \cite{BrownianGibbs} via absolute continuity with respect to Brownian motion. Our proof of Theorem~\ref{thm:johansson} is in the vein of \cite{MaxOfAiry2}, using exact determinantal formulas from \cite{fixedpt}.

\subsubsection{Temporal evolution of the KPZ fixed point}

\begin{figure}[h]
\centering
\includegraphics[width=\linewidth]{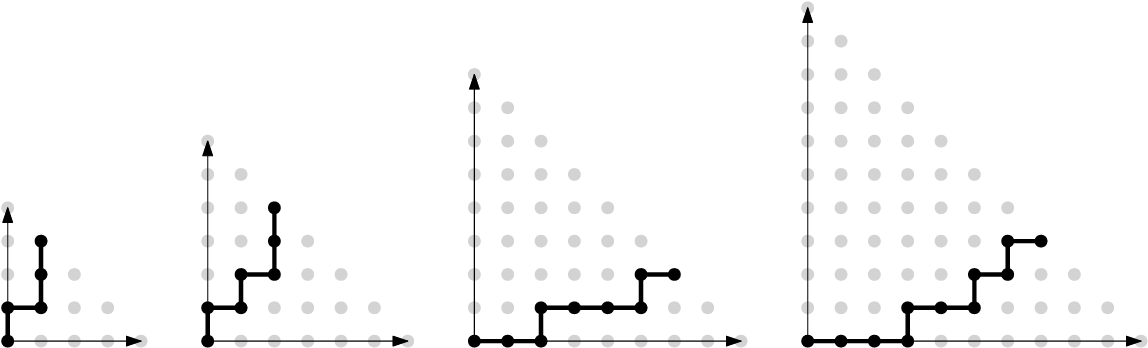}
 \captionsetup{width=.9\linewidth}
\caption{LPP evolution with respect to time. Time is measured in the diagonal direction and space is in the anti-diagonal direction. The thick black line depicts the maximizing path over all possible endpoints at a given time. Between the first and second pictures (and between the third and fourth), this endpoint is rather stable. However, between the second and third pictures, there is a large jump in the endpoint. In the large $N$ limit, these jumps occur in the exceptional set $\CT_{T}$ of times at which the maximizer is not unique.}
\label{Fig:LPPsovertime}
\end{figure}

We may add a time parameter $t$ into the LPP model (as depicted in Figure \ref{Fig:LPPsovertime}) by replacing $N$ by~$\lfloor tN \rfloor$. In this way, we extend $H_N(x)$ to $H_N(t,x)$ where the later is defined by the relation
\begin{equation}\label{eq:H2}
G_{\vec \by \to ( \lfloor tN \rfloor + x, \lfloor tN \rfloor - x)} = c_1 t N + c_2 N^{\frac{1}{3}} H_N \bigl(t, c_3 N^{-\frac{2}{3}} x\bigr).
\end{equation}
The limit of $H_N(t,x)$ as $N \to \infty$ is given by the \emph{KPZ fixed point} $\fh_t(x)$ started from initial data given in terms of the limit of the starting points $\vec\by$. The KPZ fixed point was first constructed in \cite{fixedpt} as a limit of the totally asymmetric simple exclusion process (TASEP), which is one of the simplest models in the universality class (see also \cite{NQR20} for an analogous derivation for reflected Brownian motions). TASEP is essentially equivalent to $H_N(t,x)$. More precisely, there is a coupling of the exponential random variable version of our LPP model and the totally asymmetric simple exclusion process \cite{BaikBook} such that $H_N$ describes the height function for the latter. A proof that geometric LPP has the same limit as the exponential version of the model is given in \cite{TASEPandOthers}. We do not reproduce a precise statement of this limit result  since we will only be concerned with the limit process itself.


In their construction of the KPZ fixed point, \cite{fixedpt}  also proved that the map $t \mapsto \fh_t$ is a Markov process on a suitable space of upper semi-continuous functions. Various other properties of this Markov process are discussed in \cite[Section~4]{fixedpt}, such as exact determinantal formulas for transition probabilities. A variational formulation of the KPZ fixed point which provides a coupling of all initial data on the same probability space was proved in \cite{NQR20} based on the {\em directed landscape} constructed in \cite{Landscape}. For physical background on KPZ universality and an overview of some of the subject's main developments, see \cite{Takeuchi, quastelCDM, corwinReview, borodinPetrov, quastelSpohn}.

There are several models for which convergence to the KPZ fixed point was proved only for special initial data, and just two (TASEP and Brownian last passage percolation) for which it is proven for general initial data. However, the KPZ fixed point is conjectured to be the universal scaling limit of the height functions for all models in the KPZ universality class: the height functions $h(t,x)$ are conjectured to converge, after recentring and under the KPZ 1:2:3 scaling, to $\fh_t(x)$; indeed, it is expected that, up to model-dependent values for $c_1,c_2$ and $c_3$,  $h_{\eps}(t,x):=c_2\eps^{1/2} h(\eps^{-3/2} t, c_3\eps^{-1} x) - c_1 \eps^{-3/2} t$ converges to $\fh_t(x)$ with initial data corresponding to the $t=0$ limit of $h_{\eps}(0,x)$.

For special initial data, the one-dimensional distributions of the KPZ fixed point coincide with the Tracy-Widom GOE and GUE distributions, which originally appeared in random matrix theory \cite{tracyWidom, tracyWidom2, RM}. The GOE arises when $\fh_0(x)\equiv 0$, the so-called ``flat'' initial data. The GUE arises when $\fh_0(x)=\fd_u(x)$, the so-called ``narrow wedge''. For at $u \in \R$, $\fd_u(x)$ is defined as
\begin{equation}\label{eq:NW}
\fd_u(x) :=
\begin{cases}
0, &\text{if}~ x = u,\\
-\infty, &\text{otherwise}.
\end{cases}
\end{equation}
Despite looking somewhat odd, this is the most fundamental initial data for the KPZ fixed point.

As mentioned earlier, the following result can be found in \cite[Eq.~4.12]{fixedpt}: for any fixed $t > 0$, the spatial marginal of the KPZ fixed point starting from the narrow wedge \eqref{eq:NW} is given, as a process in $x$, by
\begin{equation}\label{eq:KPZ_NW}
\fh_{t}(x) = \aip (t, x - u) - \frac{(x-u)^2}{t},
\end{equation}
where $\aip (t, x) := t^{1/3}\aip (t^{-2/3} x)$ and $\aip(x)$ is the \Airy process. In particular, under narrow-wedge initial data, $\fh_1(x)$ has the same law as $\aip(x) - x^2$. The uniqueness of the random variable~$\CX$, defined above, implies that, for any given time $t \geq 0$, the KPZ fixed point $\fh_{t}$ started from a narrow wedge almost surely attains its maximum at a unique point.

\subsection{The main results of the paper on Hausdorff dimension}\label{sec_main_result}

\begin{figure}[h]
\centering
\scalebox{0.6}{\includegraphics{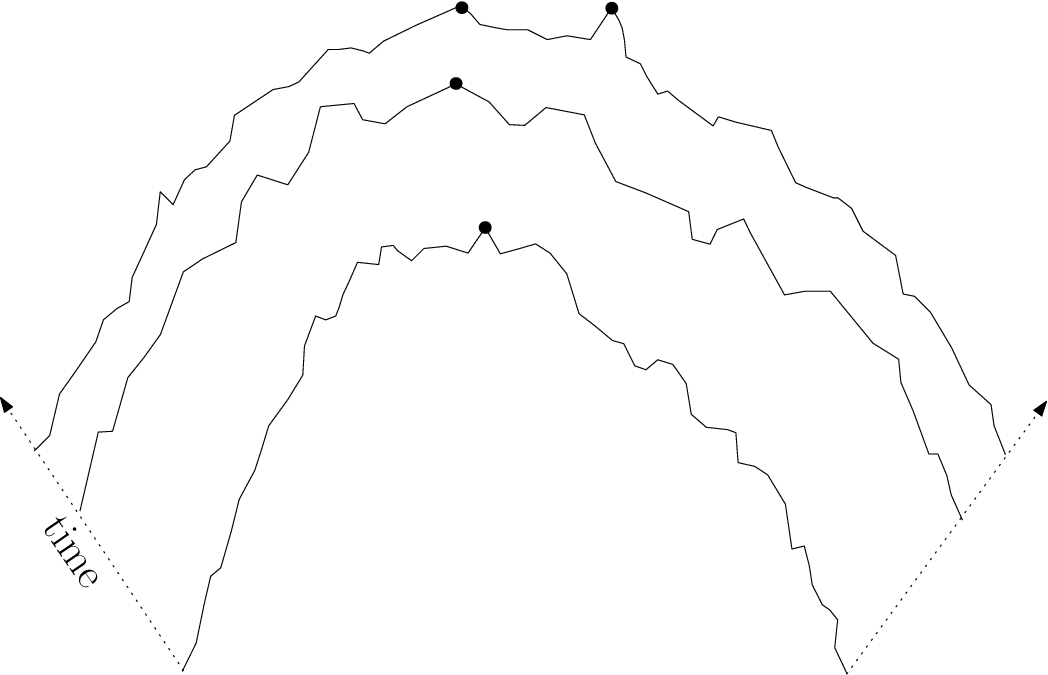}}
 \captionsetup{width=.9\linewidth}
\caption{The KPZ fixed point $\fh_t(x)$ spatial process depicted at three times; for clarity, we have shown the curves separated, but there need not be any ordering of the function values as time proceeds. The first two of the three times are typical ones at which the maximizer is unique (see the black bullet at its location), while the third is exceptional. There are then two maximizers, one close to those earlier and the other at a dramatic remove.}
\label{Fig:fixedpointevolution}
\end{figure}

As we have mentioned, one of our results will be that $\fh_t(x)$ attains its maximum at a unique location almost surely for each fixed $t> 0$ under general initial data. However, this does not preclude the existence of random \emph{exceptional} times at which $\fh_t$ has several maximizers.
A classical example of a non-empty set of exceptional times is the zero set  $\mathsf{Zero} = \{ t \geq 0 : B(t) = 0\}$ of standard Brownian motion $B$. For every $T > 0$, the Hausdorff dimension of $\mathsf{Zero} \cap [0,T]$ is almost surely $\frac{1}{2}$---see \cite[Theorem~4.24]{morters2010brownian}.
The random geometry of an exceptional time is trivial in this case---we merely have $B(t) = 0$ at such times. In our case, and others, there is however a rich geometry associated to exceptional times.

Indeed,
the study of random dynamics that leaves invariant an equilibrium measure, and the question of the existence and Hausdorff dimension of an exceptional set of times at which an almost sure property of the equilibrium law is violated, are celebrated topics in modern probability theory.
Discrete Fourier analytic tools have been employed to investigate~\cite{BKS1999} the sensitivity to noise of critical percolation. This led to the proof~\cite{SS2010} that, under a natural dynamic which left the critical percolation distribution invariant, there exist exceptional times where there exists an infinite cluster; further, the Hausdorff dimension of these exceptional times has been identified~\cite{GPS2010} as $31/36$.

In our case, we will consider exceptional times when the $\fh_t$ fails to have a unique maximizer. They can be viewed as moments of instability in the life of the random polymer, at which its endpoint loses its uniqueness. At such moments, the polymer's future is uncertain, with the passage of even an instant of further time being liable to propel the polymer endpoint to a macroscopic distance from its present location (see Figures \ref{Fig:LPPsovertime} and \ref{Fig:fixedpointevolution}). (This does not contradict existing results on the continuity properties ofsuchpolymers, eg. from \cite{Landscape}, as those results are typically stated on the event that the polymer is unique.)

Let $T > 0$ and $A\geq 0$. The exceptional set of random times in $[0,T]$ that are such moments of instability, when the polymer endpoint may subsequently jump by a distance greater than~$A$, is
\begin{equation}\label{eq:TwinPeaksSet}
	\CT_{T, A}(\fh) := \bigl\{t \in [0,T] : \exists\, x_1, x_2 \in \Argmax(\fh_t),\; |x_1-x_2|> A\bigr\},
\end{equation}
where the set $\Argmax(\fh_t) \subset \R$ contains all points $x$ at which $\fh_t(x)$ is the maximum value of $\fh_t$. When $A=0$, we adopt the shorthand $\CT_{T}$ for $\CT_{T,0}$; in this case, the condition on $x_1, x_2$ may be equivalently written as $x_1\neq x_2$.

Of course, the set $\CT_T$ is random and depends on the initial data $\fh_0$ of the KPZ fixed point. For example, if $\fh_0$ does not have sufficiently fast decay at infinity, then $\fh_t$ can be unbounded, in which case $\CT_{T}(\fh)$ is empty.
We thus need to impose some assumptions on the initial data $\fh_0$ to ensure that our question is well-defined. For instance, we need that $\fh_t$ is always almost surely bounded so that maximizers exist. We impose a slightly stronger condition, that $\fh_0$ decays in a square-root manner at infinity and is identically $-\infty$ for all sufficiently negative arguments.

\begin{assump}\thmtitle{Decay at infinity}\label{a:initial_state_pd} 
The initial data $\fh_0 : \R \to [-\infty, \infty)$ of the KPZ fixed point is a non-random, upper semicontinuous function that is not identically equal to $-\infty$, and for which (a)~there exist $\alpha\in\R$ and $\gamma>0$ such that the bound $\fh_0(y) \leq \alpha - \gamma |y|^{1/2}$ holds for all $y \in \R$ and (b)~there exists $\lambda \in \R$ such that $\fh_0(y) = -\infty$ for $y\leq -\lambda$.
\end{assump}


\noindent Assumption~\ref{a:initial_state_pd} will refer to both parts (a) and (b), and we will occasionally make use of only part~(a).

We will show in Lemma~\ref{lem:KPZ_is_bounded} that Assumption~\ref{a:initial_state_pd}(a) guarantees that, at any time $t > 0$, the KPZ fixed point $\fh_t$ is
almost surely bounded and decays at infinity. This yields the existence of maximizers and leads to the natural question of their almost sure uniqueness at fixed times, a form of Johansson's conjecture for general initial data; here is our result affirming this general form of the conjecture.

\begin{thm}\label{thm:johansson}
Let the initial data of the KPZ fixed point $\fh_0$ satisfy Assumption~\ref{a:initial_state_pd}(a). Then $\fh_t$ has a unique maximizer almost surely for every fixed $t>0$.
\end{thm}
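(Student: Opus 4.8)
The plan is to follow the strategy of \cite{MaxOfAiry2}, extracting from the exact transition-probability formula of \cite{fixedpt} enough regularity to show that a certain two-dimensional distribution is absolutely continuous, which then forces uniqueness. First, under Assumption~\ref{a:initial_state_pd}(a), Lemma~\ref{lem:KPZ_is_bounded} gives that for every fixed $t>0$ the function $\fh_t$ is almost surely bounded above and tends to $-\infty$ at $\pm\infty$; being upper semicontinuous, it then attains its maximum, so $\Argmax(\fh_t)$ is almost surely a non-empty compact subset of $\R$, and the task is to exclude its containing two distinct points.

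Next I would reduce to a statement about one-sided suprema split at a rational point. For $a\in\R$ set $M_a^-:=\sup_{x\le a}\fh_t(x)$ and $M_a^+:=\sup_{x>a}\fh_t(x)$; both are finite by the above. If $\fh_t$ has two maximizers $x_1<x_2$, then any rational $a\in(x_1,x_2)$ satisfies $M_a^-\ge\fh_t(x_1)=\max\fh_t$ and $M_a^+\ge\fh_t(x_2)=\max\fh_t$, whence $M_a^-=M_a^+=\max\fh_t$. Thus $\{\#\Argmax(\fh_t)\ge 2\}\subseteq\bigcup_{a\in\qq}\{M_a^-=M_a^+\}$, and it suffices to prove that $\P(M_a^-=M_a^+)=0$ for each fixed $a\in\R$.

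To that end I would fix $a$ and study the joint distribution function $F_a(r,s):=\P(M_a^-\le r,\ M_a^+\le s)$, which is exactly $\P\big(\fh_t(x)\le g_{r,s}(x)\text{ for all }x\in\R\big)$ for the step function $g_{r,s}$ equal to $r$ on $(-\infty,a]$ and to $s$ on $(a,\infty)$. The formula of \cite{fixedpt} --- the general-initial-data counterpart of the continuum-statistics identity exploited in \cite{MaxOfAiry2} --- expresses this probability as a Fredholm determinant $\det\!\big(I-\mathcal{K}^{\fh_0}_{t;\,r,s}\big)$ of an explicit trace-class operator in which $r$ and $s$ enter only through the probability that a Brownian motion avoids the two-step barrier $g_{r,s}$ and through Gaussian factors, quantities that are real-analytic in $(r,s)$. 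One then verifies that $(r,s)\mapsto\mathcal{K}^{\fh_0}_{t;\,r,s}$ is smooth in trace norm, using the parabolic decay of $\fh_0$ from Assumption~\ref{a:initial_state_pd}(a) to make the relevant integrals and their $r,s$-derivatives converge, so that $F_a\in C^2(\R^2)$. Consequently the law of $(M_a^-,M_a^+)$ equals $\partial_r\partial_s F_a(r,s)\,\d r\,\d s$ with no singular component, and in particular puts no mass on the diagonal $\{r=s\}$; that is, $\P(M_a^-=M_a^+)=0$, and a union over $a\in\qq$ concludes.

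The main obstacle is this last step: one must pin down the correct Fredholm-determinant representation for $\{\fh_t\le g_{r,s}\}$ with a \emph{step} barrier (or a smooth approximation thereof, passing to the limit) and then furnish the analytic estimates --- trace-class bounds together with $(r,s)$-uniform smooth dependence of the kernel, strong enough to differentiate the determinant twice --- on the non-compact domain $\R$; this is exactly where Assumption~\ref{a:initial_state_pd}(a) is used. An essentially equivalent route, closer to the letter of \cite{MaxOfAiry2}, is to compute the first-moment (intensity) measure of $\Argmax(\fh_t)$ from iterated derivatives of the same determinant and to show that its total mass equals $1$; since $\Argmax(\fh_t)$ is non-empty, it is then a singleton almost surely. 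In either formulation, quantitative control of the Fredholm determinant and its derivatives is the real content.
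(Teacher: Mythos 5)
Your strategy is sound and belongs to the same family as the paper's: both arguments get existence of a maximizer from Lemma~\ref{lem:KPZ_is_bounded} and then extract uniqueness from the exact transition formula \eqref{eq:fpform} of \cite{fixedpt} by differentiating a Fredholm determinant in barrier parameters, in the spirit of \cite{MaxOfAiry2}. The specific route differs, though. You split at a rational $a$, consider the joint law of the one-sided suprema $(M_a^-,M_a^+)$, and aim to show that its distribution function $F_a(r,s)$ is $C^2$, so that the pair has a joint density and puts no mass on the diagonal. The paper instead differentiates the determinant with respect to two \emph{localized} dips in the barrier, producing the two-near-maximizer density of Proposition~\ref{prop:densities}, and from it the quantitative bound $\P_{\fh_0}(\fh_t^L\in\TP{A,\bar L}^{\eps})\le C\eps t^{-1/3}$ of Theorem~\ref{thm:densities2}; Theorem~\ref{thm:johansson} then follows by letting $\eps\to0$, $L\to\infty$, $A\to0$. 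Your reduction and the measure-theoretic step ($C^2$ CDF implies an absolutely continuous joint law, hence no diagonal mass) are correct; a small technicality is that your two-step barrier is not lower semicontinuous when $s<r$, so \eqref{eq:fpform} applies only after resetting its value at $a$ to $r\wedge s$, which leaves the event unchanged because $\fh_t$ is spatially continuous. Note also that the paper's stronger, quantitative formulation is reused later for the Hausdorff-dimension upper bound, whereas your route, even when completed, yields only fixed-time uniqueness.

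The genuine gap is that the analytic core of your plan is asserted rather than carried out: you must prove that the relevant hypo/epi kernels depend on $(r,s)$ in a twice-differentiable way \emph{in trace norm}, with enough uniformity to differentiate the determinant twice and obtain a continuous mixed partial, and you must do so either directly on the non-compact domain (controlling the Brownian scattering limit in \eqref{eq:Kd} together with its $r,s$-derivatives) or on a window $[-L,L]$ followed by a limit in $L$ of the resulting probabilities. This is exactly where the bulk of the paper's work lies: the hitting-kernel differentiation of Lemma~\ref{lem:Q_limits}, the trace-norm bounds of Lemmas~\ref{lem:D_bounds}--\ref{lem:deriv1} resting on the Airy-kernel estimates of Appendix~\ref{sec:kernels_bounds}, and the assembly in Proposition~\ref{prop:densities}. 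You correctly identify this as ``the real content,'' but since none of it is supplied, the proposal as written is a viable plan rather than a proof; the differentiation of the no-hit probabilities in the two step heights is plausibly somewhat easier than the paper's localized version (no $(x_2-x_1)^{-3/2}$ singularity arises), yet it still requires essentially all of the machinery of Section~\ref{sec:TwinPeaks}.
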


Theorem~\ref{thm:johansson} should be provable by using the recent result of \cite{sarkar2020brownian} which shows the absolute continuity of $\fh_t$ with respect to Brownian motion on compact intervals. Such a proof would closely follow the original proof of Johansson's conjecture in the narrow-wedge case presented in \cite{BrownianGibbs}; this requires a statement such as Lemma~\ref{lem:KPZ_is_bounded} on the decay of $\fh_t$ at infinity. Our proof of Theorem~\ref{thm:johansson}, however, is independent of \cite{sarkar2020brownian} and relies on the formulas for the KPZ fixed point proved in \cite{fixedpt}. In this way, it is more in line with the approach to Johansson's conjecture in the narrow-wedge case presented in \cite{MaxOfAiry2}. However, in contrast to \cite{MaxOfAiry2}, where a density of one maximizer was computed, from which uniqueness was concluded, we compute the probability that two maximizers appear at a given time and show that it equals zero.

Theorem~\ref{thm:johansson} implies that the Lebesgue measure of $\CT_{T,A}(\fh)$
is zero almost surely. It is thus natural to examine the fractal dimension of the exceptional set of violating times.  We refer to Appendix~\ref{sec:Hausdorff} for the definition of
the Hausdorff dimension $\dim(X)$ for a set $X \subset \R$, because it is this notion of dimension that we will use to measure the exceptional set when it is non-empty. Our first result concerns the Hausdorff dimension of the set~$\CT_{T,A}(\fh)$ and proves an upper bound.

\begin{thm}\label{thm:main}
Let the initial data of the KPZ fixed point $\fh_0$ satisfy Assumption~\ref{a:initial_state_pd}.
Then, for any $T > 0$ and $A\geq 0$,
\begin{equation}\label{eq:dimTP}
\P_{\fh_0}\Bigl(\dim(\CT_{T,A}(\fh)) \leq \tfrac{2}{3}\Bigr) \, = \, 1.
\end{equation}
\end{thm}


In fact, we obtain the upper bound on the Hausdorff dimension under only Assumption~\ref{a:initial_state_pd}(a). In our approach, we require the full Assumption~\ref{a:initial_state_pd} to show a lower bound on the probability of the existence of multiple near-maximizers at a given time, which we will shortly state as Theorem~\ref{thm:TP_probability}, because of a restriction in a ``melon'' transformation in Brownian last passage percolation that we rely on; we will say a little more about this transformation in Section~\ref{sec:heuristic}, and about the restriction that it imposes in Section~\ref{sec:TP_probability}. We do not attempt the extension to more general initial data in the present article. We explain the difficulties in obtaining a matching lower bound on the Hausdorff dimension in Section~\ref{sec:why no lower bound}.


Finally, in the case of narrow wedge initial data, we observe that it satisfies Assumption~\ref{a:initial_state_pd} and so we obtain the following corollary immediately. 

\begin{cor}\label{cor:nw}
For narrow wedge initial data $\fh_0=\fd_0$ and for any $T>0$, 
\begin{equation}\label{eq:dimTPNW}
\P_{\fd_0}\Bigl(\dim(\CT_{T}(\fh)) \leq \tfrac{2}{3} \Bigr) \, = \, 1.
\end{equation}
\end{cor}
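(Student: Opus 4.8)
The plan is to deduce the corollary from Theorem~\ref{thm:main} by upgrading "non-empty with positive probability" to "non-empty almost surely" using the exact scaling invariance of the narrow-wedge KPZ fixed point together with a $0$-$1$ law. The starting observation is the self-similarity recorded in \eqref{eq:KPZ_NW}: under narrow wedge initial data $\fh_0 = \fd_0$ we have $\fh_t(x) = \aip(t, x) - x^2/t$ with $\aip(t,x) = t^{1/3}\aip(t^{-2/3}x)$, and the Airy process $\aip$ is stationary. Consequently, for any $\rho > 0$ the rescaled field $(t, x) \mapsto \rho^{-1/3}\fh_{\rho t}(\rho^{2/3} x)$ has the same law as $(t,x)\mapsto \fh_t(x)$ as a space-time process under $\P_{\fd_0}$. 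Since $\Argmax$ is covariant under this scaling (the spatial dilation by $\rho^{2/3}$ preserves the set of maximizing locations, up to the dilation), one gets the distributional identity
\begin{equation}\label{eq:TP-scaling}
\CT_{\rho T}(\fh) \ \dist\ \rho\, \CT_{T}(\fh)
\end{equation}
under $\P_{\fd_0}$, and in particular $\P_{\fd_0}(\CT_T(\fh)\neq\emptyset)$ does not depend on $T > 0$; call this common value $p \in (0,1]$. By Theorem~\ref{thm:main}, $p > 0$.

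Next I would run a $0$-$1$ argument at time $0$. Fix a decreasing sequence $T_n \downarrow 0$ and consider the events $E_n = \{\CT_{[T_{n+1}, T_n]}(\fh) \neq \emptyset\}$, where $\CT_{[a,b]}(\fh) := \{t \in [a,b] : \exists\, x_1 \neq x_2 \in \Argmax(\fh_t)\}$ is the analogue of $\CT_{T}$ on a general interval. The key point is that, because the KPZ fixed point from narrow wedge is a Markov process and the non-uniqueness event on $[T_{n+1},T_n]$ is a tail-type event as $n\to\infty$ — it concerns the behaviour of $\fh$ on arbitrarily small time windows near $0$ — the event $\{\CT_{\epsilon}(\fh)\neq\emptyset \text{ for all } \epsilon>0\}$ (equivalently $\limsup_n E_n$, or more precisely $\bigcap_{\epsilon>0}\{\CT_\epsilon(\fh)\neq\emptyset\}$) lies in the germ $\sigma$-field $\bigcap_{\epsilon>0}\sigma(\fh_s : s\le\epsilon)$ at time $0^+$. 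The Markov property of $\fh$ gives a Blumenthal-type $0$-$1$ law for this germ field, so this event has probability $0$ or $1$. To see it is not $0$: by \eqref{eq:TP-scaling}, $\P_{\fd_0}(\CT_\epsilon(\fh)\neq\emptyset) = p > 0$ for every $\epsilon > 0$, hence $\P_{\fd_0}\bigl(\bigcap_{\epsilon>0}\{\CT_\epsilon(\fh)\neq\emptyset\}\bigr) = \lim_{\epsilon\downarrow 0}\P_{\fd_0}(\CT_\epsilon(\fh)\neq\emptyset) = p > 0$, using monotonicity of the events in $\epsilon$. By the $0$-$1$ law, this probability equals $1$. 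Thus $\P_{\fd_0}$-almost surely there is some $t \in (0, T]$ with $\fh_t$ having two or more maximizers, i.e. $\CT_T(\fh) \neq \emptyset$ almost surely for every $T > 0$. Feeding this into Theorem~\ref{thm:main} (whose hypothesis Assumption~\ref{a:initial_state_pd} is met by $\fd_0$, since $\fd_0(y) \le -\infty$ for $y < 0$ dominates any $\alpha - \gamma y^2$ there, $\fd_0(0) = 0$, and $\fd_0 \equiv -\infty$ on $(-\infty, 0)$, with $0$-$1$ conditioning now trivial) immediately yields \eqref{eq:dimTPNW}.

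The main obstacle, and the step requiring the most care, is making the $0$-$1$ law rigorous: one must confirm that the event $\bigcap_{\epsilon>0}\{\CT_\epsilon(\fh)\neq\emptyset\}$ is genuinely measurable with respect to the germ $\sigma$-field at $0^+$ — this requires knowing that $t\mapsto\fh_t$ has sufficient regularity (e.g. the path-continuity / measurability statements for the KPZ fixed point from \cite{fixedpt}) so that $\{\CT_\epsilon\neq\emptyset\}$ is an event and that $\Argmax(\fh_t)$ behaves measurably in $t$ — and that the Markov process $t\mapsto\fh_t$ started from $\fd_0$ satisfies the Blumenthal $0$-$1$ law at its initial time, which is where one invokes that the narrow wedge is a genuine (degenerate) starting state and that $\fh_{0^+} = \fd_0$ in the appropriate topology. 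A secondary technical point is justifying the exact distributional scaling \eqref{eq:TP-scaling} at the level of the full space-time process (not merely fixed-time marginals), which follows from the scaling invariance of the directed landscape / Airy process but should be stated carefully; with the scaling relation in hand, everything else is a short deduction.
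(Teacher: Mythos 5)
Your proposal is correct and follows essentially the same route as the paper: use the 1:2:3 scaling invariance of the narrow-wedge KPZ fixed point (the paper cites Proposition~\ref{prop:sym}(\ref{123a}), which gives exactly the space-time process-level statement you flag as needing care) to make $\P(\CT_T\neq\emptyset)$ independent of $T$, then apply Blumenthal's zero-one law to the germ $\sigma$-algebra of the (right-)continuous Feller process $t\mapsto\fh_t$ via the decreasing limit $\bigcap_{T>0}\{\CT_T\neq\emptyset\}$, and finally combine with Theorem~\ref{thm:main} (the paper passes through the $A\to0$ limit of $\CT_{T,A}$, but the conclusion is the same).
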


\subsection{Quantitative upper and lower bounds on near-twin peaks}

Theorem~\ref{thm:main} is the consequence of an upper bound on the probability that $\fh_t$, for given $t>0$, satisfies an almost twin peaks condition. To our knowledge these are the first results giving explicit probability bounds for the KPZ fixed point under general initial data, and may prove to be of independent interest. Indeed, (weaker) upper bounds on similar twin peaks probabilities in the narrow-wedge case were obtained in \cite{calvert2019brownian,LPPtools} and used in \cite{LPPdynamics} in the related context of Brownian LPP to understand a phase transition under a particular dynamic.

Let us define what we mean by an almost twin peaks more precisely. For $\varepsilon>0$ and fixed $\beta\in(0,1)$, define the $\varepsilon$-twin peaks set $\TP{A, L}^{\eps}$ by
\begin{equation}\label{eq:TP}
\TP{A, L}^{\eps} = \left\{f: \R\to\R\cup{-\infty} : |\Max(f)| \leq (\beta L)^{1/2}, \TPS{A, L}^{\eps}(f) \neq\emptyset\right\},
\end{equation}
where
$$\TPS{A, L}^{\eps}(f) := \Bigl\{(x_1, x_2) \in [-\beta L, \beta L]^2 : x_2 - x_1 > A,\; \Max(f) - f(x_i) \leq \eps ~\text{for}~ i=1,2\Bigr\}.$$
In words, $\TP{A,L}^\eps$ is the event of having two locations, in an interval of length of order $L$, at distance at least $A$ apart, where the value of $\h_t$ is within $\varepsilon$ of the global maximum; additionally, the maximum is in absolute value at most of order $L^{1/2}$.

The upper bound on the Hausdorff dimension can be proved by understanding the probability of exhibiting an $\varepsilon$-twin peaks at time $t$, which is our next main result. Here, we use the cut-off $\fh_t^L$ of the KPZ fixed point, which is essentially $\fh_t|_{[-L,L]}$; it is defined in \eqref{eq:cut-off}.

\begin{thm}\label{thm:densities2-intro}
Let the initial data $\fh_0 \in \UC$ of the KPZ fixed point satisfy $\fh_0(x) \leq \alpha$ for some $\alpha \in \R$ and for any $x \in \R$. 
Then, for any $0 < L \leq \bar L$, $A > 0$ and $0 < T_0 < T$, 
\begin{equation}\label{eq:q_def}
\P_{\fh_0} \Bigl(\fh^L_t \in \TP{A, \bar L}^{\eps}\Bigr) \leq C \eps t^{-\frac{1}{3}},
\end{equation}
for $t \in [T_0, T]$ and $\eps \in (0,1)$, and for a constant $C \geq 0$ depending on $T_0$, $\bar L$, $A$, $\beta$ (which comes into the definition \eqref{eq:TP}) and $\alpha$ (i.e., the dependence on $\fh_0$ is merely via the constant $\alpha$).
\end{thm}

Observe that this theorem immediately yields Theorem~\ref{thm:johansson} by taking $\varepsilon\to0$ and then $L,\bar L, A\to\infty$.


As we will see in the proof outline section ahead, the Hausdorff dimension upper bound of $\frac{2}{3}$ that we prove is a consequence of the $\frac{1}{3}$ exponent of $t$ and $1$ exponent of $\varepsilon$ in the previous result. In the next result we show that the $\varepsilon$-dependence is sharp by proving a matching lower bound.

\begin{thm}\label{thm:TP_probability}
    Let the initial state $\fh_0$ of the KPZ fixed point satisfy Assumption~\ref{a:initial_state_pd}, and suppose that $A>0$ and $t>0$. There exist $L_0$ and $\smallconst > 0$ (both depending on $A$, $t$, and $\fh_0$) such that, for all $L\geq L_0$ and $\eps \in (0,1)$,
    %
    \begin{equation}
        \P_{\fh_0} \Bigl(\fh_t \in \TP{A, L}^{\eps}\Bigr) \geq \smallconst \eps.
    \end{equation}
    Further, $\smallconst$ and $L_0$ can be taken to depend on $t>0$ in a continuous manner.
\end{thm}


\subsection{A density of two maximizers}

Our proof of Theorem~\ref{thm:densities2-intro} relies on existence of the joint density of two maximizers and the maximal value of the KPZ fixed point $\fh_t$. More precisely, for $t, L > 0$ and $M \in \R$, define the function
\begin{align}
\MoveEqLeft[11]
\dens_{t, M, L}^{(\eta_1; \eta_2)}(x_1, x_2) := \P_{\fh_0}\Big(\text{For each }i\in \{1,2\}, \,\,\fh_{t}(y_i) > M - \eps_i \text{ for}\\[-2pt]
&\text{ some } y_i \in [x_i, x_i + \delta_i),
 \text{ and } \fh_{t}(z) \leq M \text{ for } z \in [-L, L]\Big),\label{eq:density}
\end{align}
where $\eta_i = (\eps_i, \delta_i)$ with $\eps_i, \delta_i > 0$. %
Here, we have suppressed dependence on the initial data $\fh_0$. The function \smash{$\dens_{t, M, L}^{(\eta_1; \eta_2)}(x_1, x_2)$} equals the probability that the KPZ fixed point visits $\eps_1$- and $\eps_2$-neighbourhoods of its maximum in $\delta_1$- and $\delta_2$-neighbourhoods around the respective points $x_1$ and $x_2$, with the maximum being close to $M$. The next result implies that properly normalized functions $\smash{\dens^{(\eta_1; \eta_2)}_{t, M, L}(x_1, x_2)}$ converge to a non-trivial limit as $\eta_1, \eta_2 \searrow 0$. To compute the limit of this function, we fix a constant $\alpha_\star > \frac{1}{2}$; define the domain
\begin{equation}
	\Dom := \bigl\{(\eps, \delta) \in \R^2\, :\, \delta > 0,\, 0 < \eps \leq \delta^{\alpha_\star}\bigr\} ; \label{eq:Dom}
\end{equation}
and denote the limit in this domain
\begin{equation}\label{eq:mylim}
	\mylim{\eta \searrow 0} f(\eta) := \lim_{\eta \in \Dom,\, \eta \searrow 0} f(\eta),
\end{equation}
for a suitable function $f$. It should be noted that, when we take the limit $\eta \searrow 0$, we are implicitly claiming that the double limit in $\epsilon$ and $\delta$ in $\Dom$ exists and does not depend on how that limit is performed.

\begin{thm}\label{thm:densities}
Assume that the initial data of the KPZ fixed point satisfies $\fh_0(x) \leq \alpha$ for some $\alpha\in \mathbb{R}$ and for all $x \in \R$. For every fixed $t > 0$ the  following limit exists: 
\begin{equation}\label{eq:F-limit}
	\dens_{t, M}(x_1, x_2) := 
    \lim_{L \to \infty}\ \mylim{\eta_1 \searrow 0}\ \mylim{\eta_2 \searrow 0}\ \frac{1}{\eps_1 \delta_1 \eps_2 \delta_2} \dens^{(\eta_1; \eta_2)}_{t, M, L}(x_1, x_2),
\end{equation}
pointwise for $x_1, x_2 \in (-L, L)$ and $M \in \R$, such that $x_2 - x_1 > 0$.
Further, $\dens_{t,M}$ satisfies a scale invariance property:
\begin{equation}
\dens_{t, M}(x_1, x_2) = t^{-2} \dens_{1, t^{-1/3} M} \bigl(t^{-\frac{2}{3}} x_1, t^{-\frac{2}{3}} x_2\bigr),
\end{equation}
where the right-hand function is defined for the initial data $\fh_0^{(t)}(x) := t^{-1/3}\fh_0(t^{2/3} x)$. Moreover, this function can be written explicitly as
\begin{equation}
\begin{split}
\dens_{1, M}(x_1, x_2) &= \det\Bigl(I- \fK^{\hypo(\fh_0)}_{1/2} \Bigl(\fR - \fA^{(0; \star)} - \fA^{(\star; 0)} + \fA^{(\star; \star)}\Bigr) \Bigr) \\
&\quad - \det\Bigl(I- \fK^{\hypo(\fh_0)}_{1/2} \Bigl(\fR - \fA^{(0; \star)}\Bigr) \Bigr)\\
&\quad - \det \Bigl(I- \fK^{\hypo(\fh_0)}_{1/2} \Bigl(\fR - \fA^{(\star; 0)}\Bigr) \Bigr) + \det \Bigl(I-\fK^{\hypo(\fh_0)}_{1/2}\fR\Bigr),
\end{split}
\end{equation}
where the kernels in the Fredholm determinants are defined in \eqref{eq:Kd} and Section~\ref{sec:L-to-infinity}.
\end{thm}

(To be precise, the function $\dens_{1,M}$ should be called a super-density, because its integral over the arguments $x_1$, $x_2$ and $M$ exceeds $1$; in fact, we expect this integral to be infinite.)


\subsection{Heuristics and structure}
\label{sec:heuristic}

We will indicate three main facts; how they will combine into a proof of the upper bound of the Hausdorff dimension of the exceptional set and the non-emptiness of that set; and, in outline, how we will prove each of them.
In so doing, we will indicate the structure of the paper, which is also illustrated in Figure~\ref{fig:outline}.


In a crude but instructive simplification, a set has dimension $\alpha$ if, for any $\eps > 0$, it may be fattened to become a union of order $\eps^{-\alpha}$ intervals of length $\eps$.
The fattened set will be a proxy set $\CT_T^\eps(\fh) = \{t\leq T: \fh_t \in \TPno^\eps\}$; namely, the set
of times $t$ at which the KPZ fixed point $\fh_t$ lies in the {\em twin peaks} set $\TPno^\eps$ (similar to $\TP{A,L}^\eps$ introduced earlier) 
of upper semicontinuous functions $f$ such that $\Max(f) - f(x_i) \leq \eps$ for some points $x_1, x_2 \in \R$ such that $|x_1-x_2|\geq 1$. We refer to these points $x_1$ and $x_2$ as \emph{near maximizers}. The reader may recall that in our earlier statements of bounds on the probability of $\fh_t$ lying in $\TP{A,L}^\varepsilon$, we consider near maximizers whose distance $|x_1-x_2|$ is at least a given small value $A$; and for which $|x_1|,|x_2|\leq L$ for large $L$. In this present heuristic discussion, we ignore these variables. Here are the three main facts that we demonstrate:
%
\begin{enumerate}[label=(\roman*)]
  \item\label{heur:1} The  KPZ fixed point $\fh_t(x)$ is H\"older-$\tfrac{1}{3}^{{\scriptscriptstyle-}}$ in $t$, uniformly over compact sets in $x$.

  \item\label{heur:2} The probability that $\fh_t \in \TPno^\eps$ is at most $C\eps t^{-1/3}$ (Theorem~\ref{thm:densities2-intro}).

  \item\label{heur:3} The same probability is at least $C(t) \eps$ (Theorem~\ref{thm:TP_probability}).
\end{enumerate}

(Strictly speaking, only the first two points are needed for an upper bound on the dimension. The third point aids in the heuristic explanation of why the dimension should be \emph{equal} to $\frac{2}{3}$, and would be needed to prove a matching dimension lower bound; see Section~\ref{sec:why no lower bound} ahead for why are unable to do this.)

Given these, we may see why the Hausdorff dimension of $\CT_T(\fh)$ will be two-thirds when this set is non-empty.
Indeed, Item \ref{heur:1} implies that the maximum of $\fh_t$ is H\"older \smash{$\frac{1}{3}^{\scriptscriptstyle-}$} in time, which suggests that intervals contained in $\CT_T^\eps(\fh)$ should typically be at least of size of order $\eps^3$. Item \ref{heur:2} gives us a complementary upper bound on the typical length of intervals in $\CT_T^\eps(\fh)$. In particular, it shows that, once the set $\CT_T^\eps(\fh)$ is entered, in the passage of time of order $\eps^3$, it will be exited  (and hence the interval will have ended).
Finally, Item \ref{heur:3} along with Item \ref{heur:2} implies that the total size of~$\CT_T^\eps(\fh)$ is of order $\eps$. Thus, $\CT_T^\eps(\fh)$ is a set of size $\epsilon$
which consists of intervals of length~$\eps^3$; the number of intervals must be of order $\eps^{-2}$, and such an order of intervals of length $\epsilon$ will be needed to cover~$\CT_T(\fh)$.
The infimum $\CH^\alpha_\eps(\CT_T(\fh))$ of $\alpha$-values   of coverings of maximum diameter~$\eps$ recalled in~\eqref{eq:pre-measure} should be of order $\eps^{-2 + 3\alpha}$, a quantity whose behavior changes as $\eps\to0$ depending on whether $\alpha$ is greater or less than $2/3$. Thus is the Hausdorff dimension of two-thirds predicted. Of course, we only prove an upper bound of two-thirds; see ahead in Section~\ref{sec:why no lower bound} for a discussion of where our methods face difficulties in proving a lower bound.


Our task is thus to prove the three facts.  The respective proofs appear in Sections~\ref{sec:maximum}, \ref{sec:TwinPeaks}, and \ref{sec:TP_probability}. The first two pieces are fit together to prove Theorem~\ref{thm:main} in the final Section~\ref{sec:main_proof}. Next we explain in outline how the three tasks will be accomplished.

\smallskip
Lemma~\ref{lem:Holder-time} is the precise rendering of Item  \ref{heur:1}. That $\fh_t(x)$ is H\"older-$\frac{1}{3}^{\scriptscriptstyle-}$ in $t$ for fixed $x$ is known from \cite{fixedpt}, but our proof of the extra local uniformity in space makes use of a variational formula, recalled in Section~\ref{sec:KPZ_fp}, for the KPZ fixed point in terms of the directed landscape recently constructed in \cite{Landscape}. The proof also requires that $\fh_t(x)$ is H\"older-$\frac{1}{2}^{\scriptscriptstyle-}$ in $x$, locally uniformly in $t$, which we prove in Lemma~\ref{lem:Holder-space}. Finally, the spatially uniform temporal continuity's implication that $\Max(\fh_t)$ is H\"older-$\frac{1}{3}^{\scriptscriptstyle-}$ in time is recorded in Lemma~\ref{lem:max_Holder}.

\smallskip
The previously stated Theorem~\ref{thm:densities2-intro} is the rigorous manifestation of Item \ref{heur:2}, and is derived by way of a density whose existence is asserted in Theorem~\ref{thm:densities}. The much more involved proof of these results uses exact formulas,
also recalled in Section~\ref{sec:KPZ_fp}, for the KPZ fixed point in terms of Fredholm determinants that have been obtained in \cite{fixedpt}. More precisely, for a fixed $t > 0$ and for any $\delta, \eps > 0$, we will consider in \eqref{eq:density} the probability that two near maximizers of $\fh_t$ are located in $\delta$-neighbourhoods of fixed points $x_1 \neq x_2$, while the maximum of $\fh_t$ is in an $\eps$-neighbourhood of a fixed value $M$.
Dividing this probability by $\delta^2 \eps^2$ and taking the limits $\eps, \delta \to 0$, we will obtain in effect  the density, conditionally on maximizer non-uniqueness, of the maximum value and two maximizers of $\fh_t$; this is the limit and the density whose existence is the content of Theorem~\ref{thm:densities}. In Proposition~\ref{prop:densities}, scaling properties and bounds on this limit are asserted.
(To call this just mentioned limit a density is really an improper formulation because the concerned law is $\sigma$-finite, with infinite mass on maximizer pairs at distance close to zero.) The derivation of this result involves  double differentiation of a Fredholm determinant, arising from the distribution function of the KPZ fixed point. The justification of this differentiation and the computation of the derivatives are quite technical, involving asymptotic analysis of path integral formulations of the Fredholm determinant kernels, defined by means of  a Brownian scattering transform.
These calculations rely on properties of trace class operators and Fredholm determinants that are summarized in Appendix~\ref{sec:Fredholm}; and bounds on kernels involving Airy functions that are reviewed in Appendix~\ref{sec:kernels_bounds}.

 Theorem~\ref{thm:densities2-intro} is then obtained by integration of the just indicated density function over $x_1$, $x_2$ and $M$. 
Combining Item \ref{heur:2} with the temporal H\"older regularity of $\fh_t$ and of its maximum, given in Item~\ref{heur:1}, yields the upper bound on the Hausdorff dimension of $\CT_T(\fh)$.

\smallskip
Theorem~\ref{thm:TP_probability} is the precise rendering of Item  \ref{heur:3}. Its proof, in contrast to that of Item \ref{heur:2}, relies on a Gibbsian resampling argument, akin in spirit to  the Brownian Gibbs property used in such works as \cite{BrownianGibbs,dauvergne2018basic,hammond2017brownian,calvert2019brownian}; we discuss this property in greater detail in Section~\ref{sec:TP_probability}.
The argument works in the prelimiting model of Brownian last passage percolation: the random variable $\smash{\h_t}$, namely the prelimiting version of $\fh_t$, will be written in terms of a last passage value through a random environment obtained by a ``melon'' transformation of the original Brownian environment (the term ``melon'' is used in \cite{Landscape}, though one could instead call this a ``continuous RSK correspondence'' or a ``Pitman transform''). This relies on a remarkable identity of last passage values through the original and melon-transformed environments proved in \cite{Landscape}. (The identity was later found to be implied by the earlier work \cite{noumi2002tropical} in the more general setting of positive temperature polymer models; see \cite{corwin2020invariance} for more details and three proofs of the positive temperature identity, and \cite{dauvergne2020hidden} for a fourth). It is through this melon transform that we are able to access general initial data.

The melon transformed environment coincides with Dyson Brownian motion with $\beta=2$. This ensemble of random curves is Gibbsian,  enjoying a simple Brownian resampling invariance. By harnessing this invariance, we may exploit the melon-transformed environment, resampling $\smash{\h_t}$ in order to generate an event of probability at least $C_2\epsilon$ on which $\smash{\h_t \in \TPno^\eps}$. Taking $n\to\infty$ is the final step on the road to
Theorem~\ref{thm:TP_probability}.

The lower bound in Item \ref{heur:3} is suggested by (but does not follow from) the spatial Brownian absolute continuity of $\fh_t$ established in~\cite{sarkar2020brownian}. Indeed, a Brownian bridge has probability proportional to $\eps$ of having a unit-order separated location that comes within $\eps$ of the maximal value of the bridge. A much more quantitative comparison result than absolute continuity is needed, however, in order to transfer small probability events between the Brownian measure and the spatial process for $\fh_t$. Our approach does not seek such a comparison, but rather attacks the problem directly.

Although Gibbsian resampling is not a new technique in studying models in the KPZ universality class, our use of it departs from existing methods in two quite substantial ways. In previous works obtaining quantitative probability bounds on KPZ objects via Gibbs resampling, the Gibbsian line ensemble which is being resampled is the object of interest. 
In the present work, we are interested in last passage percolation in the environment defined by the Gibbsian line ensemble. If we restrict attention to narrow wedge initial data, we recover the earlier considered cases where the line ensemble is the object of interest (i.e., the last passage problem trivializes). (Of course, a number of other works, such as \cite{Landscape}, have made use of resampling arguments with the line ensemble to obtain important qualitative results---indeed, we crucially rely on the important identity from \cite{Landscape} mentioned above.) The second departure is that our resampling is performed on a random interval defined relative to the location of the unique global maximizer. In this sense, the resampling may be called non-Markovian. To illustrate the complication that this introduces, consider a Brownian bridge. Its law on any given interval is invariant under replacing the path by an independent Brownian bridge affinely shifted to fit in. If we look at its law on an interval starting at the maximizer of the path, such a resampling invariance no longer holds. To handle this in the Brownian bridge case, as well as in our more general case, we rely on a path decomposition of Markov processes at certain special times given in~\cite{millar1978path}.


 \begin{center}
  \footnotesize
  \begin{figure}
  \begin{tikzpicture}[scale=0.9, auto,
    block_main/.style = {rectangle, draw=black, thick, fill=white, text width=13em, text centered, minimum height=4em, font=\small},
  block_bound/.style = {rectangle, draw=black, fill=white, thick, text width=10em, text centered, minimum height=4em},
  block_item1/.style = {rectangle, draw=black, fill=gray!30, thick, text width=11em, text centered, minimum height=4em},
  block_item2/.style = {rectangle, draw=black, fill=gray!30, thick, text width=11em, text centered, minimum height=4em},
  block_item3/.style = {rectangle, draw=black, fill=gray!30, thick, text width=14em, text centered, minimum height=4em},
      block_KPZ/.style = {rectangle, draw=black, fill=white, thick, text width=12em, text centered, minimum height=4em},
        line/.style ={draw, thick, -latex', shorten >=0pt}]
    \node [block_main] (main) at (-4.9,0) {The Hausdorff dimension of exceptional times is at most $\frac{2}{3}$ (Theorem~\ref{thm:main})};
    \node [block_KPZ] (KPZ_bdd) at (0,-2.5) {The KPZ fixed point is \as bounded (Lemma~\ref{lem:KPZ_is_bounded})};
    \node [block_item1] (item1) at (-4.8,-2.5) {The max of the KPZ fixed point is \\ $\tfrac{1}{3}^{{\scriptscriptstyle-}}$-H\"{o}lder (Lemma~\ref{lem:max_Holder})};
    \node [block_item2] (item2) at (0,0) {The probability that $\fh_t \in \TPno^\eps$ is at most $C\eps t^{-1/3}$ (Theorem~\ref{thm:densities2-intro})};
    \node [block_item3] (item3) at (5,0) {The probability that $\fh_t \in \TPno^\eps$ is at least $C(t) \eps$ (Theorem~\ref{thm:TP_probability})};
    \node [block_KPZ] (KPZ) at (3,-5) {The KPZ fixed point formula and its properties (Propositions~\ref{prop:KPZfp} \& \ref{prop:sym})};
    \node [block_KPZ] (density) at (5,-2.5) {Existence and properties of the density of two maximizers (Proposition~\ref{prop:densities})};
    \node [block_KPZ] (landscape) at (-2.5,-5) {Properties of the directed landscape (Proposition~\ref{prop:Landscape_bound})};

    \begin{scope}[every path/.style=line]
      \path (KPZ_bdd)   -- (main);
      \path (KPZ)   -- (density);
      \path (density) -- (item2);
      \path (item2) -- (main);
      \path (item1) -- (main);
      \path (item2) -- (main);
      \path (landscape) -- (item1);
      \path (landscape) -- (KPZ_bdd);
    \end{scope}
  \end{tikzpicture}
  \captionof{figure}{Flowchart of the paper. The grey blocks correspond to the three items described in Section~\ref{sec:heuristic}.}
  \label{fig:outline}
  \end{figure}
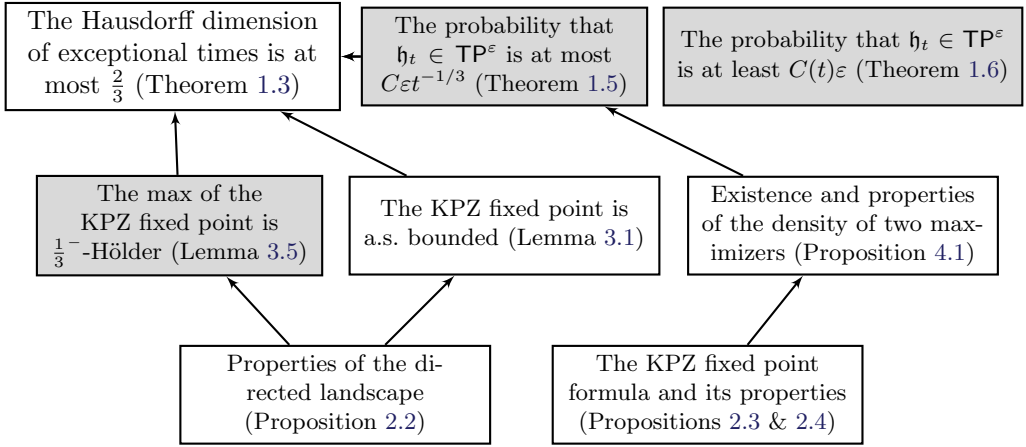
\end{center}

\subsection{The difficulty in proving a matching lower bound}\label{sec:why no lower bound}
Typically in proving bounds on Hausdorff dimension, the lower bound is the harder one to prove, and it is the same here. We try to give an indication now of why our methods face difficulties in proving a lower bound.

The basic problem is in understanding the behavior of the KPZ fixed point at very small times in a uniform way. In particular, to obtain a lower bound on the Hausdorff dimension, one needs to have some control on the probability of \emph{two} times lying in $\CT_{T,A}$, where the two times may be arbitrarily close. By the Markov property of the fixed point, this is equivalent to understanding the probability that $t\in\CT_{T,A}$ for arbitrarily small $t$ from an initial condition taken from a fairly wide class, which we approach by studying the probability that $\fh_t\in\TP{A}^\eps$ for small $\varepsilon$.

Heuristically, if the initial condition is very flat, there are many locations from which a potential maximizer can arise, which would increase the probability that $\fh_t\in\TP{A}^{\eps}$ for small $t$ (when the parabolic decay effect of the fixed point is weaker). And indeed, Theorem~\ref{thm:densities2-intro} provides a bound only when $t$ is bounded away from zero---a technically precise version of the problem.

Our approach to obtaining the upper bound in Theorem~\ref{thm:densities2-intro} has been by way of integrating the density from Theorem~\ref{thm:densities}; the density is a joint density of the location of the maximizers and the value of the maximum.
On a technical level, the problem we face is that the density blows up when the maximum value argument is taken to $-\infty$; by KPZ scaling, as $t\to 0$, the maximum value being larger than a negative constant $-M$ at time $t$ is the same as it being greater than $-Mt^{-1/3}$ at time 1, and so the maximum value argument must be allowed to go to $-\infty$ if we want to allow small $t$. The blowup of the density in this regime is somewhat to be expected by our heuristic from the previous paragraph: when the maximum is very negative, the shape of the profile should be essentially flat on a large interval. And it is this density blowup which leads to the imposition that $t$ be away from zero in Theorem~\ref{thm:densities2-intro}. For the same reason, our bounds do not imply that the set of exceptional times is non-empty with positive probability.

A similar problem was encountered in \cite{dauvergne2022non}, who adopted a thinning procedure and an understanding of the envelope of growth (a law of iterated logarithm) for the KPZ fixed point at short times, which sufficed to handle the problem in the context of the argument framework developed there. We discuss the approach of \cite{dauvergne2022non} a bit more in Section~\ref{sec:Dau22}.

\subsection{Conjectures for some related exceptional sets}
\label{sec:conjectures}

Theorem~\ref{thm:main} offers an assertion concerning fractal geometry for exceptional sets embedded in the KPZ fixed point. Several assertions may naturally be conjectured concerning related fractals embedded either in the KPZ fixed point or in the space-time Airy sheet. Here we state, and explain the reasoning for, some such conjectures.
Our first conjecture concerns more basic structural properties of the set $\CT_T$ than its dimension.

\begin{conjec}\label{conj:density}
For any $\fh_0$ satisfying Assumption~\ref{a:initial_state_pd}, $\CT_\infty(\fh)$ is almost surely dense in $[0, \infty)$.
\end{conjec}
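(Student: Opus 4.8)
The underlying picture is that the location of the maximizer of $\fh_t$, as a function of $t$, should be an unstable, jump-like process whose jump times---the elements of $\CT_\infty(\fh)$---accumulate at every instant, much as the zero set of Brownian motion clusters at each of its points. The plan is to make this precise via the strong Markov property of the KPZ fixed point together with a Blumenthal-type zero--one law, reducing the conjecture to a ``restart'' form of the lower bound of Theorem~\ref{thm:main}. First, $\CT_\infty(\fh)$ is dense in $[0,\infty)$ precisely when $\CT_\infty(\fh)\cap(a,b)\neq\emptyset$ for every pair of rationals $0\le a<b$; since there are only countably many such pairs, it suffices to show, for each rational $a\ge0$, that almost surely $a$ is a right limit point of $\CT_\infty(\fh)$, i.e. that the event $D_a:=\bigcap_{\delta>0}\{\CT_\infty(\fh)\cap(a,a+\delta)\neq\emptyset\}$ has probability one.

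Next I would condition on $\mathcal F_a:=\sigma(\fh_s:s\le a)$. By the Markov property (Proposition~\ref{prop:KPZfp}), conditionally on $\mathcal F_a$ the process $(\fh_{a+r})_{r\ge0}$ is a KPZ fixed point started from the random function $\fh_a$, which by Lemma~\ref{lem:KPZ_is_bounded} almost surely satisfies Assumption~\ref{a:initial_state_pd}(a) and by Theorem~\ref{thm:johansson} almost surely has a unique maximizer. The event $D_a$ lies in the germ $\sigma$-algebra $\bigcap_{\delta>0}\sigma(\fh_s:a\le s\le a+\delta)$, so Blumenthal's zero--one law \cite{10.2307/1992961}---valid because the KPZ fixed point is a Feller process (Proposition~\ref{prop:KPZfp})---gives $\P(D_a\mid\fh_a)\in\{0,1\}$ almost surely. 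It therefore remains to show that $\P(D_a\mid\fh_a)>0$ almost surely, equivalently that for almost every realization $g$ of $\fh_a$ one has
\[
\liminf_{\delta\downarrow0}\ \P_g\big(\CT_{\delta,0}(\fh)\neq\emptyset\big)\ >\ 0,
\]
where $\P_g$ denotes the law of a KPZ fixed point started from $g$ (and $\CT_{\delta,0}$ is computed along that process).

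The final---and main---difficulty is thus a restart lower bound: positivity, uniformly as $\delta\downarrow0$, of the short-time non-uniqueness probability starting from initial data obeying only Assumption~\ref{a:initial_state_pd}(a). The lower bound underlying Theorem~\ref{thm:main} (Item~\ref{heur:3}, proved in Section~\ref{sec:TP_probability}) currently needs the full Assumption~\ref{a:initial_state_pd}, because of the one-sidedness restriction in the melon transform of Brownian last passage percolation. To remove part~(b) I would first truncate: for large $\lambda$ set $g^{(\lambda)}(y)=g(y)$ for $y\ge-\lambda$ and $g^{(\lambda)}(y)=-\infty$ otherwise, so that $g^{(\lambda)}$ satisfies the full Assumption~\ref{a:initial_state_pd}; then, using the variational formula for the KPZ fixed point in terms of the directed landscape (Section~\ref{sec:KPZ_fp}) together with the parabolic bound on $g$ and the sub-parabolic growth of the landscape, one shows that, on an event of probability tending to $1$ as $\lambda\to\infty$, the fixed points started from $g$ and from $g^{(\lambda)}$ (coupled through the same landscape) agree on a large spatial window containing all of their maximizers throughout $[0,\delta]$; hence $\CT_{\delta,0}$ for the two initial data coincides on that event, and Theorem~\ref{thm:main} yields $\P_g(\CT_{\delta,0}(\fh)\neq\emptyset)>0$ for each fixed $\delta$. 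What I do not see how to do in general is to control the limit $\delta\downarrow0$: for narrow wedge this is exactly where Corollary~\ref{cor:nw} used exact self-similarity (Proposition~\ref{prop:sym}) to make $\P(\CT_{\delta,0}(\fh)\neq\emptyset)$ independent of $\delta$, but for general initial data no such invariance is available. One would instead want a genuinely quantitative short-time estimate---for instance that the constant $C(t)$ appearing in Item~\ref{heur:3} stays bounded below as $t\downarrow0$, and that feeding this through the energy/Frostman argument of Section~\ref{sec:lower_bound} produces a lower bound on $\P_g(\CT_{\delta,0}(\fh)\neq\emptyset)$ that does not degenerate with $\delta$. Establishing such a non-degenerate short-time bound---equivalently, that every deterministic time is almost surely a limit point of $\CT_\infty(\fh)$---is the crux, and I expect it to require new input beyond the three facts proved in this paper; it is presumably for this reason that density is left as a conjecture.
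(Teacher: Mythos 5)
The statement you were asked about is a conjecture: the paper does not prove it, and immediately after stating it the authors sketch exactly the strategy you describe---apply a Blumenthal-type zero--one law not just to the germ $\sigma$-algebra at time zero but at an arbitrary fixed time, which reduces density to showing that every deterministic time is almost surely a limit point of $\CT_\infty(\fh)$, i.e.\ to a short-time lower bound on the non-uniqueness probability that does not degenerate as the time window shrinks. Your proposal reproduces this reduction faithfully (conditioning on $\mathcal F_a$, using the Markov and Feller properties, noting that $\fh_a$ satisfies Assumption~\ref{a:initial_state_pd}(a) by Lemma~\ref{lem:KPZ_is_bounded}, and observing that $\P(D_a\mid\fh_a)\in\{0,1\}$), and you correctly identify the same obstruction the authors do: for narrow wedge the $\delta$-independence comes from exact 1:2:3 self-similarity (Corollary~\ref{cor:nw}), whereas for general initial data the constants $\eta$ and $L_0$ in Theorem~\ref{thm:TP_probability} depend on $t$ and nothing in the paper prevents them from degenerating as $t\downarrow 0$. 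So there is no error to flag; rather, your write-up is an honest partial argument whose missing step coincides with the open problem the authors themselves point to (``establishing some uniformity over $T$ of the lower bound \ldots seems difficult'').

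Two smaller remarks. First, your truncation idea for dispensing with Assumption~\ref{a:initial_state_pd}(b)---replacing $g$ by $g^{(\lambda)}$ and coupling through the directed landscape so that the two fixed points agree on a window containing all maximizers on $[0,\delta]$---is a reasonable sketch, but it is itself nontrivial: you need the maximizers of both processes to stay in a compact window uniformly over $t\in[0,\delta]$ (a uniform-in-time version of the argument in Lemma~\ref{lem:KPZ_is_bounded}/Lemma~\ref{lem:Holder-time}), and you need the contribution to the variational formula from $y<-\lambda$ to be dominated uniformly in $t$ near $0$, where the landscape term $-\frac{(x-y)^2}{t}$ actually helps, so this part is plausibly completable; it just does not touch the main difficulty. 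Second, note that fixing the gap is genuinely equivalent to the conjecture's crux and cannot be sidestepped by the Frostman machinery of Section~\ref{sec:lower_bound} as it stands, since Proposition~\ref{prop:local_time}(\ref{local_time:positive}) integrates the lower bound over $[T/2,T]$ and therefore inherits the same non-uniformity as $T\downarrow 0$.
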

Let us assume, for the moment, that we can show that for general initial data and any $T>0$,  $\CT_T\neq \emptyset$ almost surely. Then by the Markov property we can apply this result to the fixed point at any given time $t>0$ by considering the process started with initial data given by $\fh_t$. This would imply that around any rational time, there is a time arbitrarily close when non-uniqueness holds---thus the almost sure density result. 

In dynamical percolation at criticality on the faces of the honeycomb lattice~\cite{SS2010,GPS2010}, the probability that a given face lies in an infinite open component at some time on a given unit interval is known to be bounded away from zero and one. However, the set of times at which there exists an infinite open component is presumably almost surely dense---and a zero-one law seems to promise such a result, though no such proof is recorded to our knowledge. Given our conjecture, we see an analogy between exceptional times for the KPZ fixed point at which two maximizers exist, separated to unit order, with times in dynamic percolation at which the infinite open cluster visits a given region; and between those exceptional times at which the pair of maximizers exist, without any demand of separation, and moments in dynamical percolation at which the infinite open cluster exists, without any condition being imposed on its geometry.


\medskip
Our principal result, Theorem~\ref{thm:main},  concerns the Hausdorff dimension of exceptional times in the KPZ fixed point at which there exist  two maximizers, and a natural question concerns the Hausdorff dimensions of times at which there are a given number of  maximizers exceeding two. We formulate a conjecture on the values of these dimensions.
\begin{conjec}\label{conj:multiple maximizers}
The Hausdorff dimension of the set of times with three maximizers is one-third; the set of times with four such has zero Hausdorff dimension. For five maximizers, there almost surely exist no such exceptional times.
\end{conjec}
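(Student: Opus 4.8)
We expect the following quantitative picture, of which the conjecture is a consequence. For $A\geq 0$ and $k\geq 2$ write $\CT_{T,A,k}(\fh)=\{t\in[0,T]:\Argmax(\fh_t)\text{ contains }k\text{ pairwise }A\text{-separated points}\}$, so that $\CT_{T,A,2}=\CT_{T,A}$; and let $\TPno_k^\eps$ be the $k$-fold analogue of $\TPno^\eps$, namely the upper semicontinuous $f$ possessing $k$ pairwise unit-separated points $x_j$ with $\Max(f)-f(x_j)\leq\eps$. The heuristic of Section~\ref{sec:heuristic} extends: producing $k$ near maximizers at a fixed time requires $k-1$ essentially independent ``coincidences'' of the near-maximal value, so one anticipates
\begin{equation}\label{eq:kpeaks-prob}
\P_{\fh_0}\bigl(\fh_t\in\TPno_k^\eps\bigr)\asymp C(t)\,\eps^{\,k-1},\qquad C(t)\text{ locally bounded on }(0,\infty),
\end{equation}
the case $k=2$ being Theorem~\ref{thm:densities2} and Theorem~\ref{thm:TP_probability}. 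Granting \eqref{eq:kpeaks-prob}, the same balance as for $k=2$ — intervals of $\CT_{T,k}^\eps$ have length of order $\eps^3$ by the temporal H\"older-$\tfrac13^{\scriptscriptstyle-}$ regularity of $\Max(\fh_t)$ (Lemma~\ref{lem:max_Holder}), while the expected total time in $\TPno_k^\eps$ is of order $\eps^{k-1}$ — yields a covering of $\CT_{T,k}(\fh)$ by order $\eps^{k-4}$ intervals of length $\eps^3$, whence $\dim(\CT_{T,k}(\fh))=\max\{0,\tfrac{4-k}{3}\}$ on the event of non-emptiness, and $\CT_{T,k}(\fh)=\emptyset$ almost surely for $k\geq5$. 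For $k=3$ this is dimension $\tfrac13$; for $k=4$, dimension $0$; for $k\geq5$, emptiness.

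\textbf{Upper bound (all $k$; and $k\geq4$).} The plan is to upgrade Theorem~\ref{thm:densities2} by repeating its analysis with $k-1$ extra near-maximizer locations. For fixed distinct $x_1,\dots,x_k$ and a fixed value $M$, one estimates the probability that $\fh_t$ has a near maximizer in the $\delta$-neighbourhood of each $x_j$ at height within $\eps$ of $M$, with $M$ the maximum; dividing by $\delta^k$ and the appropriate power of $\eps$ and letting $\delta,\eps\to0$ yields a density whose integral against $\mathrm dx_1\cdots\mathrm dx_k\,\mathrm dM$ over the relevant region controls $\P(\fh_t\in\TPno_k^\eps)$ and produces the bound $C(t)\eps^{k-1}$. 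Technically this requires iterating the differentiation of the Fredholm determinant from the formulas of \cite{fixedpt} — one further differentiation per additional near-maximizer location and height constraint — justifying the differentiation and bounding the iterated kernels exactly as in the proof of Theorem~\ref{thm:densities2}, via the Brownian-scattering path-integral representations and the trace-class and Airy-kernel estimates of Appendices~\ref{sec:Fredholm} and \ref{sec:kernels_bounds}. With the upper bound in \eqref{eq:kpeaks-prob} in hand, the covering argument of Proposition~\ref{prop:upper} applies essentially verbatim: cover $[\delta,T]$ by the $\asymp\eps^{-3}$ intervals of length $\eps^3$; an interval meeting $\CT_{[\delta,T],k}(\fh)$ forces $\fh_s\in\TPno_k^{C\eps}$ at (say) its left endpoint $s$, by Lemma~\ref{lem:max_Holder} together with the spatially uniform temporal regularity of Lemma~\ref{lem:Holder-time}, so the expected $\alpha$-Hausdorff content of $\CT_{[\delta,T],k}(\fh)$ is $\lesssim\eps^{3\alpha}\cdot\eps^{-3}\cdot\eps^{k-1}=\eps^{3\alpha+k-4}\to0$ precisely when $\alpha>\tfrac{4-k}{3}$. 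Sending $\delta\to0$ (handling $(0,\delta]$ by the crude bound that, for $t$ small, $\fh_t$ is uniformly close to the suitably scaled initial data, which under Assumption~\ref{a:initial_state_pd} has only finitely many maximizers) gives $\dim(\CT_{T,k})\leq\tfrac{4-k}{3}$; in particular $\dim(\CT_{T,4})=0$, and for $k\geq5$ the content estimate with $\alpha=0$ reads $\lesssim\eps^{k-4}\to0$, so almost surely no length-$\eps^3$ interval meets $\CT_{T,5}$ for small $\eps$, i.e. $\CT_{T,5}(\fh)=\emptyset$ almost surely.

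\textbf{Lower bound for $k=3$.} Here one adapts the resampling argument behind Theorem~\ref{thm:TP_probability}. Working in Brownian last passage percolation and the melon-transformed ($\beta=2$ Dyson Brownian motion) environment, one resamples $\h_t$ on a random interval pinned to the location of the global maximizer so as to generate not one but \emph{two} extra unit-separated near maximizers; the Brownian resampling invariance makes each such event cost a factor of order $\eps$, and arranging the two events to be conditionally independent — for instance by resampling on two disjoint sub-intervals and invoking the path decomposition of \cite{millar1978path} at the relevant special times — yields $\P(\h_t\in\TPno_3^\eps)\gtrsim C_2(t)\eps^2$, hence the same for $\fh_t$ after $n\to\infty$. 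With both bounds of \eqref{eq:kpeaks-prob} for $k=3$, one runs the Frostman argument of Section~\ref{sec:lower_bound}: form the $\eps^{-2}$-normalised approximate occupation measure of $\TPno_3^\eps$ on $[0,T]$; the lower bound gives it positive mass with positive probability, and the upper bound with Lemma~\ref{lem:max_Holder} gives finite $\beta$-energy for every $\beta<\tfrac13$; Frostman's criterion then yields $\dim(\CT_{T,3})\geq\tfrac13$ on non-emptiness, and a weak limit point certifies $\CT_{T,3}(\fh)\neq\emptyset$ with positive probability, as in Proposition~\ref{prop:lower}. (A matching lower bound $\P(\fh_t\in\TPno_4^\eps)\gtrsim\eps^3$, obtained by creating three extra near maximizers, would likewise show $\CT_{T,4}(\fh)\neq\emptyset$ with positive probability, upgrading ``dimension zero'' to a statement about a genuinely non-empty set.)

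\textbf{Main obstacle.} The principal difficulty is the multi-point refinement of Theorem~\ref{thm:densities2}: the iterated differentiation of the Fredholm determinant and the attendant asymptotic analysis of the Brownian-scattering path-integral kernels become combinatorially heavier with each extra point, and one must track constants — in particular the $t$-dependence of $C(t)$ in \eqref{eq:kpeaks-prob}, which controls the behaviour near $t=0$ and must be integrable there or otherwise absorbed — carefully enough to extract the clean $\eps^{k-1}$ scaling. A secondary but real obstacle, for the $k=3$ lower bound, is engineering the resampling so that the two new near maximizers appear \emph{simultaneously} with costs genuinely multiplying rather than interfering; the non-Markovian character of resampling on an interval anchored at the random global maximizer — already delicate for $k=2$ — is the crux, and the melon-transform restriction continues to force the full Assumption~\ref{a:initial_state_pd}. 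We expect no new difficulty of principle beyond these.
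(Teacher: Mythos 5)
This statement is a conjecture: the paper offers no proof of it, only the heuristic in Section~\ref{sec:conjectures}, which your scaling picture reproduces exactly (a $k$-peaks probability of order $\eps^{k-1}$, temporal intervals of length of order $\eps^{3}$ from the H\"older-$\tfrac{1}{3}^{-}$ regularity of the maximum, hence a covering count of order $\eps^{k-4}$ and dimension $\max\{0,\tfrac{4-k}{3}\}$). Your outline of how one might make this rigorous---an iterated-differentiation extension of Theorem~\ref{thm:densities2} for the upper bound, and a multi-point resampling extension of Theorem~\ref{thm:TP_probability} fed into the Frostman argument of Propositions~\ref{prop:upper} and~\ref{prop:lower} for the $k=3$ lower bound---simply follows the paper's own $k=2$ machinery and correctly flags the genuine obstacles (the multi-point Fredholm-determinant analysis and the simultaneity of the resampled near maximizers), so it is consistent with, and does not go beyond, what the authors themselves leave unproven.
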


Admitting this conjecture, the set of times at which there exist four maximizers could be non-empty, but we believe this set to be empty almost surely.

Duncan Dauvergne has recently proven Conjectures~\ref{conj:density} and \ref{conj:multiple maximizers} in \cite{dauvergne2022non}. In the case of four maximizers, he also shows that the set is either almost surely empty or almost surely dense.

We argue for Conjecture~\ref{conj:multiple maximizers} in the case of three maximizers,  developing the heuristic presented for the case of two in the preceding section.
 Let $\CT_T(3)$ denote the set of times with three maximizers, which, by a countability argument, we may suppose to be at mutual distance at least one; and
 let $\CT_T^\eps(3)$ denote the set of times $t$ at which there exist three points at mutual distance at least one, for each of which, $\fh_t$ is within $\eps$ of its maximum. Local Brownianity of $\fh_t$  suggests that $\P(t\in \CT_T^\eps(3)) = \Theta(\varepsilon^2)$. Because $\fh_t$ is $\tfrac{1}{3}^{{\scriptscriptstyle-}}$-H\"older in time, $\CT_T^\eps(3)$ may plausibly be decomposed into intervals of length of order $\varepsilon^3$. Setting $\delta = \varepsilon^3$, the number of intervals of length $\delta$ needed to cover $\CT_T^\varepsilon(3)$, and hence $\CT_T(3)$, should be of order $\varepsilon^2/\varepsilon^3 = \varepsilon^{-1} = \delta^{-1/3}$, whence our prediction.

\medskip
Our results and the above conjectures concern the  KPZ fixed point, in which the initial time is fixed at zero. The fractal geometry of exceptional sets may also be explored in the directed landscape~$\mc{L}$ constructed in \cite{Landscape}. This landscape ascribes to any pair of points $(y,s),(x,t) \in \R^2$ with $s < t$
a random real-valued weight $\mc{L}(y,s;x,t)$ that is a scaled expression for the energy in Brownian LPP of the geodesic whose route in scaled coordinates starts at $(y,s)$ and ends at $(x,t)$. A fractal object in the landscape is the difference profile $\R \to \R: z \mapsto \mc{L}(1,0;z,1) - \mc{L}(-1,0;z,1)$. This is a non-decreasing random function whose set of points of increase almost surely has Hausdorff dimension one-half~\cite{basu2019fractal}. For generic point pairs $(y,s),(x,t) \in \R^2$ with $s < t$, there is a unique path---the geodesic---that interpolates the pair's elements whose scaled energy attains the value  $\mc{L}(y,s;x,t)$. Exceptionally, two such paths exist that are disjoint except at their shared endpoints. The set $(x,y) \in \R^2$ for which $(y,0)$ and $(x,1)$ are such a pair of points has Hausdorff dimension one-half almost surely~\cite{BatesGangulyHammond}.

To formulate conjectures that develop the theme of the present article but concern the coupling offered by the system $\mc{L}$, let $X$ denote the set of triples $(y,s,t)$ where $y \in \R$ and $s,t \in \R$ with $s < t$.
Each element indexes a scaled energy profile $\R \to \R: x \mapsto \mc{L}(y,s;x,t)$ which may, exceptionally, have several maximizers.
Indeed, for  $k \in \N$ with $k \geq 2$, we may set $E_k$ equal to the subset of elements of $X$ for which $|s-t|\geq 1$ and for which there are at least $k$ maximizers $x_1,\ldots, x_k$ for which $|x_i-x_j|\geq 1$ for all $1\leq i<j\leq k$.

The space $X$ should be equipped with a metric if Hausdorff dimension questions are to be well-posed.
A natural notion of distance on $X$ stipulates that the distance
$(y_1,s_1,t_1)$ and $(y_2,s_2,t_2)$
equals
$\vert t_1 - t_2 \vert + \vert s_1 - s_2 \vert + \vert y_1 - y_2 \vert^{3/2}$ (this is not a metric as it does not satisfy the triangle inequality).
The power of $3/2$ in the final term accounts for the shape of a KPZ space-time box: if such a box has height $\eps^3$, it will have width $\eps^2$. Despite the mentioned notion of distance not being a metric, this specification of the radius of boxes can be used to define a scaling-adapted notion of Hausdorff measure and dimension.
Such a scaling-adapted specification of a parabolic form of Hausdorff dimension was used by Cafferelli, Kohn and Nirenberg \cite{caffarelli1982partial} in their treatment of partial regularity of weak solutions of the Navier-Stokes' equations.

\begin{conjec}
When $X$ is equipped with the just specified metric, the Hausdorff dimension of~$E_k$ is almost surely equal to $\max \{ 3-k/3, 0\}$.
Moreover, $E_{10}$ is empty almost surely.
\end{conjec}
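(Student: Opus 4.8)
We sketch a possible route to this conjecture, following the template of Theorem~\ref{thm:main}. Write $p=(y,s,t)$ for a generic element of $X$, and let $\msf{f}_p$ denote the scaled energy profile $x\mapsto\mc{L}(y,s;x,t)$. By the scaling symmetry of the directed landscape, for each fixed $p$ with $1\le|s-t|\le T$ the profile $\msf{f}_p$ equals in law a fixed deterministic rescaling---in both coordinates---of the parabolic \Airy process $\aip(\cdot)-(\cdot)^2$, so its twin-peaks probabilities are comparable, uniformly over such $p$, to those of $\aip(\cdot)-(\cdot)^2$ itself. Fix large $L$ and $T$, write $\TPno^{\eps}_{k}$ for the set of upper semicontinuous functions $f$ admitting points $x_1,\dots,x_k\in[-L,L]$ with $|x_i-x_j|\ge1$ for $i\ne j$ and $\max(f)-f(x_i)\le\eps$ for every $i$, and set $E_k^\eps=\{p\in X:1\le|s-t|\le T,\ |y|\le L,\ \msf{f}_p\in\TPno^{\eps}_{k}\}$. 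A compactness argument---using continuity of the profiles and of $\mc{L}$ together with tightness of the maximizers---identifies $E_k$, up to a null set, with $\bigcup_{L,T}\bigcap_{\eps>0}E_k^\eps$, so it suffices to estimate the proxy sets $E_k^\eps$. The three inputs we need are: \emph{(i)} the local H\"older regularity of $\mc{L}$ established in \cite{Landscape}, with exponent $\tfrac{1}{2}^{{\scriptscriptstyle-}}$ in each of $y$ and $x$ and $\tfrac{1}{3}^{{\scriptscriptstyle-}}$ in each of $s$ and $t$, which forces, off a negligible event, that $p\in E_k^\eps$ implies $E_k^{2\eps}$ contains the box $\{|y'-y|\le c\eps^2,\ |s'-s|\le c\eps^3,\ |t'-t|\le c\eps^3\}$ (up to $\eps^{o(1)}$ corrections)---a set comparable to a ball of radius $\asymp\eps^3$ in the parabolic metric $d$ of $X$, of Lebesgue volume $\asymp\eps^8$; \emph{(ii)} the bound $\P(\msf{f}_p\in\TPno^{\eps}_{k})\le C\eps^{k-1}$, the extension of Theorem~\ref{thm:densities2} and Proposition~\ref{prop:densities} to $k$ near maximizers of the parabolic \Airy process; and \emph{(iii)} the matching lower bound $\P(\msf{f}_p\in\TPno^{\eps}_{k})\ge c(t-s)\,\eps^{k-1}$, the extension of Theorem~\ref{thm:TP_probability}, together with a decorrelation estimate $\P(p,p'\in E_k^\eps)\le C\eps^{2(k-1)+o(1)}\,d(p,p')^{-(k-1)/3+o(1)}$ valid for $d(p,p')\ge\eps^3$.

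The \emph{upper bound and the emptiness of $E_{10}$} follow from a first-moment argument. Cover a compact window $D\subset X$ by a grid of parabolic $\delta$-cells with $\delta\asymp\eps^3$. By \emph{(i)}, every cell meeting $E_k^\eps$ can be charged, with bounded overlap, to a portion of $E_k^{2\eps}$ of Lebesgue volume $\gtrsim\delta^{8/3}$, so the expected number of $\delta$-cells meeting $E_k^\eps$ is at most $C\,\E\,|E_k^{2\eps}\cap D|\,\delta^{-8/3}\le C'\eps^{k-1}\delta^{-8/3}=C'\delta^{(k-9)/3}$ by \emph{(ii)}. Hence $\E\,\CH^\alpha_\delta(E_k\cap D)\le C'\delta^{\alpha+(k-9)/3}\to0$ as $\delta\to0$ for every $\alpha>3-\tfrac{k}{3}$, so $\dim(E_k)\le3-\tfrac{k}{3}$ almost surely after countable unions over $D$, $L$ and $T$. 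When $k\ge10$ the expected number of $\delta$-cells meeting $E_k^\eps$ tends to $0$, so $\P(E_k\cap D\ne\emptyset)=0$ and therefore $E_k=\emptyset$ almost surely.

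For the \emph{lower bound}, fix $\beta<3-\tfrac{k}{3}$ and, on a compact window $D$, define the random measures $\mu_\eps=\eps^{-(k-1)}\one_{E_k^\eps}\,\d y\,\d s\,\d t$; by \emph{(ii)} and \emph{(iii)}, $\E\mu_\eps(D)\asymp1$. Splitting the $\beta$-energy $\E\iint d(p,p')^{-\beta}\,\d\mu_\eps(p)\,\d\mu_\eps(p')$ at $d(p,p')=\eps^3$, the short-range part equals $O(\eps^{9-k-3\beta})=O(1)$ by \emph{(ii)} and the structure of the parabolic metric, while the long-range part is bounded using the decorrelation estimate and is $O(1)$ as long as $\beta+\tfrac{k-1}{3}<\tfrac{8}{3}$, i.e. $\beta<3-\tfrac{k}{3}$; a parallel computation gives $\E\mu_\eps(D)^2=O(1)$ when $k\le8$. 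Extracting a weak-$*$ subsequential limit $\mu$ of $\mu_\eps$---which is supported on the closed set $\bigcap_\eps E_k^\eps=E_k\cap D$---Paley--Zygmund shows $\mu\ne0$ with positive probability (for $k\le8$), and $\mu$ has finite $\beta$-energy, so Frostman's lemma gives $\dim(E_k)\ge\beta$ on the positive-probability event $\{\mu\ne0\}\subseteq\{E_k\ne\emptyset\}$. Letting $\beta\uparrow3-\tfrac{k}{3}$ and combining with the upper bound yields $\dim(E_k)=3-\tfrac{k}{3}$ on the positive-probability event $\{E_k\ne\emptyset\}$ for $k\le8$, while for $k=9$ the upper bound already forces $\dim(E_9)\le0$. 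To reach the unconditional statement of the conjecture for $k\le8$ one must upgrade $\P(E_k\ne\emptyset)>0$ to $\P(E_k\ne\emptyset)=1$: the slice $t\mapsto\msf{f}_{(0,0,t)}$ is a narrow-wedge KPZ fixed point, to which Corollary~\ref{cor:nw} already attaches non-uniqueness times, and feeding this together with the translation invariance and ergodicity of $\mc{L}$ under time shifts into a zero--one law---the analogue of the Blumenthal argument of Corollary~\ref{cor:nw}, now anchored at the diagonal $\{s=t\}$---should give $\P(E_k\ne\emptyset)=1$.

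The two substantial obstacles are the refinements \emph{(ii)} and \emph{(iii)}. Establishing \emph{(ii)} requires differentiating the Fredholm determinant transition formula of \cite{fixedpt} a total of $k-1$ times rather than twice; the combinatorics of the resulting Brownian-scattering path-integral expansions, together with the asymptotic analysis needed to bound them, grows rapidly in complexity with $k$, and this is the analytic heart of the upper bound. Establishing \emph{(iii)} requires iterating the ``melon''/Brownian--Gibbs resampling of Section~\ref{sec:TP_probability} so as to plant $k-1$ near maximizers simultaneously---which means controlling $k-1$ resamplings, each performed on a random interval anchored near a distinct would-be maximizer, a considerably more delicate non-Markovian construction than the single resampling used here---as well as the accompanying decorrelation estimate for parameter pairs at parabolic distance at least $\eps^3$. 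We expect these two steps, together with the diagonal zero--one law needed for the unconditional form, to be where essentially all of the new work lies.
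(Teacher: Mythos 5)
This statement is a \emph{conjecture} in the paper: there is no proof to compare against, only the brief $\eps$-box heuristic that follows it (expected Lebesgue measure $\eps^{k-1}$ for the fattened set within a unit ball, $\eps$-box volume $\eps^{3+3+2}=\eps^8$, hence $\eps^{k-9}=\delta^{k/3-3}$ boxes of diameter $\delta=\eps^3$). Your first-moment covering computation is a faithful formalization of exactly that heuristic, with the same exponents, and your lower-bound scaffolding (the measures $\mu_\eps$, the second-moment and $\beta$-energy bounds, Paley--Zygmund and Frostman) transplants the template of Section~\ref{sec:lower_bound} from the one-parameter temporal problem to the three-parameter set $E_k$. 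So as a roadmap your proposal is consistent with what the authors have in mind, and the arithmetic (the thresholds $\alpha>3-k/3$, $\beta<3-k/3$, the vanishing of the expected cell count for $k\geq 10$) checks out.

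It is not, however, a proof, and the gaps are precisely the load-bearing steps, as you partly acknowledge. Inputs \emph{(ii)} and \emph{(iii)} --- the bounds $\P(\msf{f}_p\in\TPno^{\eps}_k)\asymp\eps^{k-1}$ --- are $k$-fold analogues of Theorem~\ref{thm:densities2} and Theorem~\ref{thm:TP_probability} that nobody has established; the paper's proof of the $k=2$ upper bound already occupies all of Section~\ref{sec:TwinPeaks} (two derivatives of a Fredholm determinant with delicate trace-class control), and $k-1$ derivatives is a genuinely open analytic problem, not a routine extension. Your decorrelation estimate $\P(p,p'\in E_k^\eps)\lesssim\eps^{2(k-1)}d(p,p')^{-(k-1)/3}$ is likewise unproven: the paper's two-point bound \eqref{eq:C34ineq} uses the Markov property in time for a \emph{fixed} initial condition, whereas here $p$ and $p'$ differ in the starting point $(y,s)$ as well as in $(x,t)$-window, so no Markov decomposition of the landscape directly applies, and even the $k=2$ version of this mixed-displacement decorrelation is new. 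The regularity transfer in step \emph{(i)} (that $p\in E_k^\eps$ forces a full parabolic $c\eps^3$-ball inside $E_k^{2\eps}$) needs H\"older continuity of the \emph{maximum} of the profile jointly in all four landscape arguments, which goes beyond Lemma~\ref{lem:max_Holder} (temporal only, fixed initial data), though it is plausibly extractable from the moduli of continuity in \cite{Landscape}. Finally, the unconditional almost-sure statement rests on a zero--one law ``anchored at the diagonal'' that you only gesture at; the Blumenthal argument of Corollary~\ref{cor:nw} uses the Feller/Markov structure of the fixed point in $t$ and does not transfer automatically to the three-parameter family. In short: your outline correctly reproduces the paper's heuristic and its proof architecture, but every step that would convert the heuristic into a theorem remains open, so the statement stays a conjecture.
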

To make a case for the conjecture, let an $\eps$-box in $X$ take the form of a rectangle whose length is $\eps^3$ in the $t$ and $s$ coordinates, and $\eps^2$ in the $x$ coordinate.
Let $E_k(\eps)$ denote a fattening of the exceptional set $E_k$, wherein a given triple qualifies for membership if there exist $k$ points at pairwise distance at least one at which  the indexed profile achieves a shortfall from its maximum of at most~$\eps$.
By profile Brownianity on unit-order scales, we expect that the intersection of $E_k(\eps)$ with a unit-order ball in $X$ has Lebesgue measure~$\eps^{k-1}$.
But the measure of an $\eps$-box is $\eps^{3 + 3 + 2} = \eps^{8}$.
Thus, the number of boxes needed to cover $E_k(\eps)$ should be of order $\eps^{k-9}$.
The diameter $\delta$ of an $\eps$-box equals $\eps^3$ in the metric we specified above.
The number of boxes in the covering is thus $\delta^{k/3 - 3}$.
This inference points to the sought conclusion that
the Hausdorff dimension of $E_k$ equals $3 - k/3$; provided, of course, that this quantity is not negative.

\subsection{A brief discussion of \cite{dauvergne2022non}}\label{sec:Dau22}
As mentioned, \cite{dauvergne2022non} establishes Conjectures~\ref{conj:density} and \ref{conj:multiple maximizers} as well as a stronger form of Theorem~\ref{thm:main}. Here we discuss how Dauvergne's approach differs from ours.

The basic tool used in \cite{dauvergne2022non} is that almost sure statements about the set of times of twin peaks (or of a greater number of maximizers) for any given initial condition can be transferred to any other initial condition. This relies on $\fh_t$ being absolutely continuous to Brownian motion on compact intervals for any fixed $t>0$ and a very broad class of initial data \cite{sarkar2020brownian}.

This observation makes the subsequent analysis much more streamlined, as one can pick specific and different initial conditions to prove upper and lower bounds on the Hausdorff dimension (though one must restrict to the case where the separation between peaks can be arbitrarily small, i.e., $A=0$; for $A>0$ the twin peaks set will be empty with positive probability, which rules out a transfer strategy). The choice of initial conditions is made such that they are each suited to their respective tasks. In \cite{dauvergne2022non}, a Bessel process initial condition is used for proving an upper bound, and a collection of $k$ unit-order separated narrow wedges is chosen for the lower bound on the dimension of $k$ distinct maximizers.

In contrast, our approach tries to deal with the Hausdorff dimension separately for each initial condition. This requires us to prove estimates, such as Theorems~\ref{thm:densities2-intro} and \ref{thm:TP_probability}, which hold for a broad class of initial data, making the required analysis much more delicate. As we indicated above, some of the difficulties that arose in proving a lower bound seemed to come from the possibility of very flat initial conditions, a possibility which does not need to be directly handled if one can restrict to two specific initial conditions as in \cite{dauvergne2022non}.

\subsection{Notation}
Results from other papers that are restated herein will be called ``Propositions''. For two real numbers, $a$ and $b$, $a\wedge b:= \min(a,b)$ and $a\vee b:=\max(a,b)$. For a function $f : \R \to \R \cup \{\pm \infty\}$ we define $\Max(f) := \sup_{x \in \R} f(x)$, which may equal  $\infty$ even if $f$ is real-valued.  The elements of $\Argmax(f) \subset \R$ are $x \in \R$ such that $f(x) = \Max(f)$. The set $\Argmax(f)$ can be empty if $f$ is unbounded. 
 We will say (and already have said!) that a function $f$ is $\beta^{{\scriptscriptstyle-}}$-H\"older if, for all $\alpha<\beta$, it is $\alpha$-H\"older.

\subsection*{Acknowledgments}
The authors wish to acknowledge the support of the Fernholz Foundation which funded Minerva lectures by A. Hammond at Columbia University in spring 2019. This project started during that visit. We wish to thank Christophe Garban for helpful discussions about dynamic percolation; and Jim Pitman for pointing us to the paper \cite{millar1978path}. We also wish to thank Shirshendu Ganguly and Promit Ghosal for preliminary discussions on this subject.
Finally, we thank the referees for providing a careful reading and identifying some mistakes, in the course of correcting which we discovered the flaw which led to the present form of our results.

I.~Corwin was partially supported by the NSF grants DMS-1811143 and DMS-1664650, and a Packard Fellowship for Science and Engineering. A.~Hammond
 was partially supported by NSF grant DMS-$1512908$ and a Miller Professorship from the Miller Institute for Basic Research in Science.
M.~Hegde was partially supported by the Richman Fellowship of the U.C. Berkeley mathematics department and NSF grant DMS-1855550. K.~Matetski was partially supported by NSF grant DMS-1953859.

\section{The KPZ fixed point}
\label{sec:KPZ_fp}

The KPZ fixed point is a conjectural universal limit of a variety of space-time growth processes as well as directed polymers and interacting particle systems \cite{cqrFixedPt, corwinReview} (see also the lecture notes \cite{1710.02635}).
It was only recently constructed in \cite{fixedpt}, as a limit of TASEP; in \cite{NQR20}, convergence of one-sided reflected Brownian motions to the KPZ fixed point was proved, and, in \cite{TASEPandOthers}, solvability and scaling limits of a general class of determinantal processes are studied. Through a combination of works \cite{fixedpt},  \cite{Landscape} and \cite{NQR20}, a characterization of the KPZ fixed point in terms of a variational problem has been obtained. The fixed point's universality, termed {\it strong KPZ universality}, remains a wide open problem; see, however, recent progress in \cite{sarkar2020fixedpoint,virag2020fixedpoint}. The fixed point is a highly non-trivial object, with work needed to define it. This section is devoted to recalling key results and properties of the KPZ fixed point that we will use.

There are two approaches to the fixed point, each offering a different advantage, perspective and piece of the construction. The earlier approach, due to \cite{fixedpt}, is based on exact formulas; the second, due to \cite{Landscape}, studies a probabilistic object. Although these two works study limits of slightly different versions of TASEP, the approaches are united in \cite{NQR20}. 

The method of \cite{fixedpt} provides a Fredholm determinant formula for the transition probability distribution for the KPZ fixed point.
This formula appears later as \eqref{eq:fpform}; it employs notation and operators defined in Section \ref{sec_defs}. It is proved in \cite{fixedpt} that, for every choice of initial data (within a suitable class), there exists a unique Feller process with transition probabilities given by this key formula. This is the content of the upcoming Proposition~\ref{prop:KPZfp} in which the notation $\fh_{t}(x; \fh_0)$ denotes the KPZ fixed point $\fh_{t}(x)$ started with initial data $\fh_0$ at time $t = 0$. We also will record, as Proposition \ref{prop:sym}, several key properties of this process. The approach in \cite{fixedpt} does not provide a coupling of $\fh_{t}(x; \fh_0)$ over all initial data (i.e., a stochastic flow).

The coupling of all initial data for the KPZ fixed point is achieved in \cite{Landscape} by the construction of the directed landscape, an object that, when parabolic curvature is removed, is the space-time Airy sheet whose existence was conjectured in \cite{cqrFixedPt}.
This four-parameter field can be viewed as a Green's function; the fixed point is then recovered from it by convolution against the desired initial data. That is, for any specific initial data~$\fh_0$, the resulting space-time process is equal in law to $\fh_{t}(x; \fh_0)$: see Definition~\ref{def:KPZfp} and Proposition~\ref{prop:KPZfp} below. This variational representation for the KPZ fixed point is described in Section \ref{sec_var_form} along with some key properties of the directed landscape. Since the original posting of this manuscript, there has been progress \cite{dauvergne2021scaling} in establishing convergence to the directed landscape in a number of models apart from the original one used in \cite{Landscape}.

Since Theorem \ref{thm:main} is only concerned with the KPZ fixed point for a single (though arbitrary) initial data, we will mainly rely on the Fredholm determinant transition probability formula of \cite{fixedpt} in our analysis.  All of this analysis appears in the proof of our key technical result, Proposition~\ref{prop:densities}. The directed landscape is invoked in proving two lemmas in Section \ref{sec_max_kpz} which pertain to the existence and temporal H\"older continuity of the maximum of the KPZ fixed point, as well as in proving Theorem~\ref{thm:TP_probability}.

\subsection{A variational formula}
\label{sec_var_form}

We start by defining the KPZ fixed point through a variational formula. To do this, we will use the parabolic Airy sheet $\CS : \R^2 \to \R$, which was introduced in \cite{cqrFixedPt} and constructed in \cite{Landscape} as a scaling limit of Brownian last passage percolation. The projection $\CS(0, \bigcdot)$ of the random continuous function~$\CS$  has the distribution of the \Airy process minus the parabola~$x^2$. The parabolic Airy sheet of scale $s > 0$ is defined by
\begin{equation}
\CS_s(x,y) := s \CS \bigl(s^{-2} x, s^{-2} y\bigr),
\end{equation}
for $x,y \in \R$. Some properties of the parabolic Airy sheet can be found in Definition~1.2 and Lemma~9.1 of \cite{Landscape}.

The \emph{directed landscape} can be defined using the parabolic Airy sheet. Denote $\R^4_{\uparrow} := \{(x, s; y, t) \in \R^4 : s < t\}$, where $s$ and $t$ will be viewed as temporal, and $x$ and $y$ as spatial, variables. The directed landscape is a random continuous function $\CL: \R^4_{\uparrow} \to \R$, which satisfies
\begin{equation}
\CL(x, r; y, t) = \max_{z \in \R} \bigl\{\CL(x, r; z, s) + \CL(z, s; y, t)\bigr\},
\end{equation}
for $(x, r; y, t) \in \R^4_{\uparrow}$ and $s \in (r,t)$. It is characterized by the property that for any $k\in \N$, $t_1,\ldots, t_k>0$ and $s_1,\ldots, s_k>0$, provided that $(t_i,t_i+s_i^3)$ are pairwise disjoint intervals for $1\leq i\leq k$, the collection of processes
 $\left\{\CL(\bigcdot, t_i; \bigcdot, t_i + s_i^3)\right\}_{i=1}^{k}$ are independent parabolic Airy sheets of scale $s_i$, $1\leq i\leq k$.

\begin{defn}\label{def:KPZfp}
The KPZ fixed point $\fh_t$ for $t > 0$, starting from $\fh_0$ at time $t = 0$, is defined by the variational formula
\begin{equation}\label{eq:variational}
\fh_t(x; \fh_0) := \sup_{y \in \R} \bigl\{ \fh_0(y) + \CL(y, 0; x, t) \bigr\},
\end{equation}
provided that the supremum is finite.
\end{defn}

In order that the supremum in \eqref{eq:variational} be finite almost surely, we need to restrict the growth of $\fh_0(y)$ as $y \to \pm \infty$. The assumption made in  \cite{fixedpt} is of at most linear growth of $\fh_0$ for large $\vert y \vert$.

If we take the initial data $\fh_0$ to be the narrow wedge \eqref{eq:NW}, then, from \eqref{eq:variational}, we recover \eqref{eq:KPZ_NW}. We denote by $\P_{\fh_0}$ the probability distribution of the KPZ fixed point starting at $\fh_0$; $\E_{\fh_0}$ is the associated expectation. When the initial data is clear, we prefer to write $\fh_t(x)$ instead of $\fh_t(x; \fh_0)$.

\cite[Corollary 10.7]{Landscape} yields the following bound on the directed landscape:
\begin{prop}\label{prop:Landscape_bound}
For any $\delta > 0$,
\begin{align}\label{eq:Landscape_bound1}
\Bigl|\CL(y, s; x, t) + \tfrac{(y - x)^2}{t-s}\Bigr| \leq \CC_\delta \bigl(1 + \Vert y, s, x, t \Vert^{\frac{1}{5}}\bigr) (t-s)^{\frac{1}{3} - \delta},
\end{align}
for all $(y, s; x, t) \in \R^4_{\uparrow}$. Here, $\Vert \bigcdot \Vert$ denotes the $\ell_1$-norm on $\R^4$;  $\CC_\delta$ is a random variable, which depends on $\delta$ but is independent of $(y, s, x, t)$ and satisfies $\P (\CC_\delta > m) \leq a \exp(-c m^{3/2})$ for any $m \geq 0$ and for some positive $\delta$-dependent constants $a$ and $c$. In particular, $\CC_\delta$ is almost surely finite.
\end{prop}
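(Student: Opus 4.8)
This proposition is precisely \cite[Corollary~10.7]{Landscape}, and I would prove it following the structure of that reference. The plan is to strip off the two invariances of the directed landscape---space-time translation and the $1$:$2$:$3$ rescaling, both from \cite{Landscape}---to reduce \eqref{eq:Landscape_bound1} to a single-scale tail bound for the parabolic Airy sheet on a large box, and then to glue these single-scale bounds over a dyadic family of space-time boxes, reading off $\CC_\delta$ from the resulting union bound. Concretely, I would first translate so that $s=0$, then apply the scaling identity $\CL(y,0;x,L)\overset{d}{=}L^{1/3}\CL\bigl(yL^{-2/3},0;xL^{-2/3},1\bigr)$ (as random continuous functions, with $L:=t-s$), and use $\CL(\bigcdot,0;\bigcdot,1)=\CS$. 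This rewrites the left side of \eqref{eq:Landscape_bound1} as $(t-s)^{1/3}\bigl|\CS(\hat y,\hat x)+(\hat y-\hat x)^2\bigr|$ with $|\hat y|,|\hat x|\le C\Vert y,s,x,t\Vert\,(t-s)^{-2/3}$, so the task reduces to bounding $\sup$ of $\bigl|\CS(a,b)+(a-b)^2\bigr|$ over $|a|,|b|\le R$ with the right dependence on $R$.

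\emph{Step 2 (the core estimate).} The heart of the matter is an inequality of the form
\[
\P\Bigl(\sup_{|a|,|b|\le R}\bigl|\CS(a,b)+(a-b)^2\bigr|>m\Bigr)\le a'\exp\!\bigl(-c'(m-C R^{\kappa})_+^{3/2}\bigr)
\]
for a small fixed exponent $\kappa$. I would derive it from two inputs, both established in \cite{Landscape}: (i) a one-point estimate---by the diagonal stationarity of $\CS$ (inherited from the spatial stationarity of $\CL$), $\CS(a,b)+(a-b)^2\overset{d}{=}\CA(b-a)$ is a single value of the stationary Airy process $\CA$, whose marginal is GUE Tracy--Widom, with tails of order $e^{-cm^{3/2}}$ above and $e^{-cm^3}$ below; and (ii) H\"older-$\tfrac{1}{3}^{{\scriptscriptstyle-}}$ continuity of $\CS$ on compact sets, with stretched-exponential control on the H\"older constant. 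A chaining/union-bound argument over a mesh of the box fine enough to beat the modulus of continuity promotes (i) to the supremum bound, $R^{\kappa}$ being the (very lossy) cost of the union bound. Inputs (i) and (ii) are themselves proved in \cite{Landscape} by transferring the corresponding statements from Brownian last passage percolation, where, through the RSK/``melon'' correspondence, they reduce to tail bounds for the top curves of $\beta=2$ Dyson Brownian motion, and hence to GUE moderate-deviation estimates.

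\emph{Step 3 (gluing).} To assemble the global bound, tile $\R^4_\uparrow$ by dyadic space-time boxes $B_{j,k}$ ($j,k\ge 0$) on which $t-s\asymp 2^{-j}$, $|s|,|t|\lesssim 1$, and $|y|,|x|\asymp 2^{k}$; the remaining parameter ranges ($t-s\gtrsim 1$ or $|s|,|t|\gtrsim 1$) are less constraining and handled similarly. Rescaling $B_{j,k}$ to a unit time increment as in Step~1 turns it into a box for $\CS$ of spatial diameter $R\asymp 2^{k}2^{2j/3}$, on which \eqref{eq:Landscape_bound1} with $\CC_\delta$ replaced by a constant $M$ becomes the demand $\sup_{B_{j,k}}\bigl|\CS(a,b)+(a-b)^2\bigr|\le c\,M\,2^{k/5}2^{j\delta}$ (the $2^{j\delta}$ being exactly the $\delta$-slack in $(t-s)^{\frac13-\delta}$ after division by $(t-s)^{1/3}$, and $2^{k/5}$ coming from $\Vert y,s,x,t\Vert^{1/5}$). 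By Step~2 this fails with probability $\le a'\exp\bigl(-c'(M2^{k/5}2^{j\delta})^{3/2}\bigr)$ once $M$ is large, since the union-bound price $R^{\kappa}$ is dominated provided $\kappa<\min(\tfrac15,\tfrac{3\delta}{2})$. There are at most $2^{c_1 j+c_2 k}$ boxes $B_{j,k}$ for fixed $c_1,c_2$, a factor overwhelmed by the doubly-exponential decay of the per-box probability in $j$ and $k$; summing gives a total failure probability $\le a\exp(-cM^{3/2})$. Taking $\CC_\delta$ to be the least $M$ for which \eqref{eq:Landscape_bound1} holds on every box of the tiling produces a random variable with the claimed tail, and \eqref{eq:Landscape_bound1} then holds everywhere on $\R^4_\uparrow$.

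The genuinely hard step is Step~2: producing a one-point tail and a modulus-of-continuity estimate for the parabolic Airy sheet that are simultaneously sharp---a true Gaussian (exponent $\tfrac32$) upper tail, which is what dictates the tail of $\CC_\delta$---and uniform over a spatial region that must be allowed to grow as $t-s\to 0$. This is the technical core of \cite{Landscape}, resting on delicate moderate-deviation bounds for Brownian LPP and hence on fine control of the GUE/Dyson Brownian motion spectrum; the remaining steps are bookkeeping with the scaling relation and union bounds, with the $\delta$-loss in $(t-s)^{\frac13-\delta}$ and the power $\tfrac15$ in $\Vert y,s,x,t\Vert^{\frac15}$ providing ample slack.
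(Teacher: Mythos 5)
Your proposal is correct in outline but takes a genuinely different, and much heavier, route than the paper. The paper does not reprove the landscape estimate at all: it quotes \cite[Corollary~10.7]{Landscape} directly---which gives the bound with exponent exactly $(t-s)^{1/3}$ but with logarithmic factors involving $\Vert y,s,x,t\Vert$ and $(t-s)^{-1}$---and then obtains the stated form \eqref{eq:Landscape_bound1} by the one-line observation that $\log(1+|x|)\le \beta|x|^{\alpha}$, which converts the spatial logarithm into the deliberately lossy power $\Vert y,s,x,t\Vert^{1/5}$ and absorbs the temporal logarithm into the $\delta$-loss in $(t-s)^{1/3-\delta}$; the tail $\P(\CC_\delta>m)\le a\exp(-cm^{3/2})$ is inherited from the random constant of the cited corollary. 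Your plan instead reconstructs the proof of that corollary: translation and 1:2:3 scaling to reduce to the Airy sheet, the one-point Tracy--Widom tail via diagonal stationarity, a modulus-of-continuity/chaining step to upgrade one-point bounds to suprema over boxes, and a dyadic union bound to manufacture the global random constant. That is indeed how such estimates are established in \cite{Landscape}, and your exponent bookkeeping (the $2^{k/5}$ and $2^{j\delta}$ slack dominating the union-bound cost) is sound, so the approach would work; what the paper's route buys is brevity, since the hard probabilistic input is exactly the content of the cited reference. Two small inaccuracies in your write-up: the spatial modulus of continuity of $\CS$ is H\"older-$\tfrac{1}{2}^{-}$ (the exponent $\tfrac{1}{3}^{-}$ is the temporal regularity of $\CL$), though any positive H\"older exponent with stretched-exponentially controlled constant suffices for your chaining step; and the proposition is not \emph{precisely} \cite[Corollary~10.7]{Landscape}---that statement carries logarithmic corrections and exponent exactly $\tfrac13$, and passing from it to \eqref{eq:Landscape_bound1} by trading logarithms for the power $\tfrac15$ and the $\delta$-loss is exactly the (elementary) content of the paper's own proof.
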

The bound \eqref{eq:Landscape_bound1} is not optimal, but it will suffice for our purpose. We have applied the estimate $\log (1 + |x|) \leq \beta |x|^{\alpha}$, valid for any $\alpha > 0$ and some $\beta = \beta(\alpha) > 0$, to \cite[Corollary~10.7]{Landscape}.
\subsection{A formula for the distribution function}
\label{sec_defs}

Here we provide a formula for the distribution function of the KPZ fixed point \eqref{eq:variational}.
We need some definitions  from \cite[Sections~3.3 and 4.1]{fixedpt} wherein are found proofs---for example, of existence of limits---that the definitions are well-posed.

\subsubsection{Functions and graphs}

For a function $f : \R \to [-\infty, \infty]$ we define its \emph{hypograph} $\hypo(f) := \{(x,y) \in \R^2: y \le f(x)\}$ and \emph{epigraph} $\epi(f) := \{(x,y) \in \R^2: y \ge f(x)\}$. We recall that a function $f$ is upper (or lower) semicontinuous if and only if $\hypo(f)$ (or $\epi(f)$) is closed.
We declare that an upper semicontinuous function $f : \R \to [-\infty, \infty)$ belongs to  the set $\UC$  if $f \not\equiv -\infty$ and if, for some real $\alpha$ and $\gamma$, $f(x) \leq \alpha + \gamma |x|$ for all $x\in \R$. We equip $\UC$ with the topology of local $\UC$ convergence, a description of which can be found in \cite[Section~3.1]{fixedpt}. We denote by  $\LC := \{f : -f \in \UC\}$  the counterpart class of lower semicontinuous functions.

\subsubsection{Integral operators involving Airy functions}

The classical Airy function $\Ai$ can be defined via the contour integral $\Ai(z) := \frac1{2\pi\I} \int_{\langle} e^{w^3 / 3-z w} \d w$, where ``${\langle}~$'' is the positively oriented contour  running from $e^{-\I\pi/3}\infty$ to $e^{\I\pi/3}\infty$ through $0$ (see \cite{abrSteg, AiryBook, NIST:DLMF} for properties of $\Ai$). Using this function, we define the family of operators
\begin{equation}\label{eq:groupS}
\fT_{t,x} := \exp \bigl\{ x \partial^2 + \tfrac{t}3\tts\partial^3 \bigr\}, \qquad x, t\in\R^2\setminus \{x<0, t= 0\},
\end{equation}
which act on the domain $\mathscr{C}_0^\infty(\R)$ of smooth, compactly supported functions. This operator (and many others considered below) can be written explicitly as an integral operator with respect to a kernel. We will use the same notation for the kernel and the operator.
The operator $\fT_{t,x}$  acts on functions $f \in \mathscr{C}_0^\infty(\R)$ as $(\fT_{t,x} f)(z) = \int_\R \fT_{t,x}(z,y) f(y)\, \d y$ with the kernel $\fT_{t,x}(z,y) = \fT_{t,x}(z-y)$, which for $t > 0$ is given by
\begin{equation}\label{eq:fTdef}
\fT_{t,x}(z) := \frac1{2\pi\I} \int_{\langle}\ts e^{\frac{t}3 w^3+x  w^2+z w} \d w\, =\, t^{-1/3} e^{\frac{2 x^3}{3t^2}-\frac{zx}{t}}\tsm\Ai \bigl(-t^{-\frac{1}{3}} z+t^{-\frac{4}{3}}x^2\bigr).
\end{equation}
For $t<0$, the kernel is defined by setting $\fT_{t,x}(z) := \fT_{-t,x}(-z)$, which yields $\fT_{t,x}=(\fT_{-t,x})^*$, where the latter is the adjoint operator. Using properties of the Airy function, one shows that
\begin{equation}\label{eq:S_properties}
\fT_{s,x}\fT_{t,y}=\fT_{s+t,x+y}, \qquad\qquad (\fT_{t,x})^*\fT_{t,-x} = I,
\end{equation}
as long as all parameters avoid the set $\{x<0, t= 0\}$. Here, $I$ is the identity operator. 
In Appendix~\ref{sec:kernels_bounds}, we prove several bounds on the kernel \eqref{eq:fTdef}.

\subsubsection{The Brownian scattering transform}

Let $\fB$ be a Brownian motion with rate two, so that $\E [\fB(\ell)^2] = 2 \ell$. Let $p_\ell(u) := \P_{\fB(0)=0} (\fB(\ell)=u)$ be its transition kernel, given by
\begin{equation}\label{eq:heat_kernel}
p_\ell(u) = \frac{1}{\sqrt{4 \pi \ell}} e^{- \frac{u^2}{4 \ell}}, \qquad p_0(u) = \delta_u,
\end{equation}
for $\ell > 0$ and $u \in \R$. For any $\ell_1 < \ell_2$, and for a function $\ff \in \UC$, we define the ``No hit'' operator by
 \begin{equation}\label{eq:no_hit}
\fP_{[\ell_1,\ell_2]}^{\nohit(\ff)}(u_1,u_2) := \P_{\fB(\ell_1)=u_1, \fB(\ell_2) = u_2}\!\Big(\fB(y)>\ff(y)\text{ for } y \in [\ell_1,\ell_2] \Big) p_{\ell_2 - \ell_1}(u_2 - u_1),
\end{equation}
where the denoted event is that of a Brownian bridge, from $(\ell_1, u_1)$ to $(\ell_2, u_2)$, staying above the function $\ff$. We further define the ``Hit'' operator
\begin{equation}\label{eq:hit_def}
\fP_{[\ell_1,\ell_2]}^{\hit(\ff)} := I- \fP_{[\ell_1,\ell_2]}^{\nohit(\ff)},
\end{equation}
where, as before, $I$ is the identity operator.

For $f \in L^2(\R)$, $t > 0$, and  for  $\kappa > 0$ fixed,  we define the multiplicative operator
\begin{equation}\label{eq:Gamma_L}
\Gamma_t f(u) := e^{G_t(u)}f(u), \quad \text{with}\quad G_t(u) := t^{-\frac{1}{2}} \kappa \sgn(u) |u|^{\frac{3}{2}}.
\end{equation}
It is stated in the proof of Theorem~4.1 in \cite{fixedpt} that, for every $t > 0$, there exists a value $\kappa_\star$ such that, for $0 < \kappa < \kappa_\star$, the \emph{Brownian scattering transform} may be defined as a map from any function $\ff \in \UC$ to a $t$-dependent operator on $L^2(\R)$,
\begin{equation}\label{eq:Kd}
\fK^{\hypo(\ff)}_{t} := \lim_{\small\substack{\ell_1\to -\infty\\\ell_2\to \infty}} \Gamma_t (\fT_{t,\ell_1})^*\, \fP_{[\ell_1,\ell_2]}^{\hit(\ff)}\fT_{t,-\ell_2} \Gamma_t,
\end{equation}
where $\fT_{t,x}$ is specified in \eqref{eq:groupS}, and where the limit is in the trace norm in the space $L^2(\R)$. Moreover, if $\ff$ decays at $\pm \infty$, then the bound on the trace norm of $\fK^{\hypo(\ff)}_{t}$ depends on $\ff$ merely through the maximal value of $\ff$. Indeed, the initial data of the KPZ fixed point considered in \cite{fixedpt} has at most linear growth at infinity; i.e., $\fh_0(x) \leq \bar \alpha + \bar \gamma |x|$ for all $x \in \R$. Obviously, if $\fh_0$ decays at infinity, it is in this class of initial data with $\bar \gamma = 0$. Then the last formula in \cite[Appendix~A.1]{fixedpt} gives a bound on $\fK^{\hypo(\fh_0)}_{1/2}$, which depends on the initial data only via~$\bar \alpha$. 

In \cite[Section~4.1]{fixedpt}, the Brownian scattering transform $\fK^{\hypo(\ff)}_{t}$ was in fact defined as the limit of the kernels \eqref{eq:Kd} without using $\Gamma_t$, but by restricting the space to $L^2([a,\infty))$ for any fixed $a$. Such restriction of the space appears naturally from the determinantal formula for the multipoint distribution function of the KPZ fixed point \cite[Eq.~4.7]{fixedpt}, and one of the roles of $a$ is to avoid divergence of the $\ell$-dependent kernel in \eqref{eq:Kd} at $-\infty$. Since we will use the continuum statistics formula in Proposition~\ref{prop:KPZfp} below, divergence of the kernel is controlled by the operator $\Gamma_t$ in \eqref{eq:Kd}.

The object $\fK^{\hypo(\ff)}_{t}$ is an integral operator, whose existence was proved in \cite[Theorem~4.1]{fixedpt}. For any $\fg \in \LC$, we similarly define the integral operator on $L^2(\R)$
\begin{equation}\label{eq:Kd_epi}
\fK^{\epi(\fg)}_{-t} := \varrho\fK^{\hypo(-\fg)}_t\varrho,
\end{equation}
where $\varrho$ is the reflection operator $\varrho f(u) := f(-u)$.

\subsubsection{The KPZ fixed point formula}
\label{sec:KPZfp}

Using these definitions, we can provide a formula for the distribution function of the KPZ fixed point  \eqref{eq:variational}, which can be found in \cite[Section~4.2]{fixedpt}. Some basic properties of the Fredholm determinant which appears on the right-hand side of \eqref{eq:fpform} may be found in Appendix~\ref{sec:Fredholm}. 

\begin{prop}\label{prop:KPZfp}
For any $\fh_0\in\UC$, the KPZ fixed point $t \mapsto \fh_t(\,\bigcdot\,; \fh_0)$ is a Feller process on $\UC$, whose distribution function for any $t > 0$ and $\fg \in \LC$ is given by the formula
\begin{equation}\label{eq:fpform}
 \P_{\fh_0} \bigl( \fh_t(x) \le \fg(x),\, \forall\, x\in\R\bigr) = \det \Bigl(I - \fK^{\hypo(\fh_0)}_{t/2}\fK^{\epi(\fg)}_{-t/2} \Bigr)_{L^2(\R)}.
\end{equation}
\end{prop}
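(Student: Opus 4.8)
The plan is to treat this proposition as a repackaging, in the notation of Section~\ref{sec_defs}, of the construction of \cite{fixedpt}, combined with the identification of the variational and exact‑formula descriptions of the KPZ fixed point carried out in \cite{Landscape} and \cite{NQR20}; the only genuine work is matching conventions. I would proceed in three steps. \textbf{Step 1 (quote the construction).} In \cite{fixedpt} it is proved that for every $\fh_0\in\UC$ there is a unique Feller process on $\UC$ (with the topology of local $\UC$ convergence) whose multipoint laws are given by an explicit Fredholm determinant, and whose one‑time continuum‑statistics distribution function $\P_{\fh_0}(\fh_t(x)\le\fg(x),\ \forall x)$ is the right‑hand side of \eqref{eq:fpform} but with the Brownian scattering operators realized on $L^2([a,\infty))$ via the half‑line truncation used there, rather than on $L^2(\R)$. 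This yields the Feller assertion verbatim. \textbf{Step 2 (match the variational object).} One must check that the process defined by \eqref{eq:variational} in Definition~\ref{def:KPZfp} has these same finite‑dimensional laws; this is exactly the content of the comparison in \cite{NQR20} between the directed‑landscape representation built on \cite{Landscape} and the MQR formulas. The finiteness of the supremum in \eqref{eq:variational} needed to make Definition~\ref{def:KPZfp} meaningful follows from Proposition~\ref{prop:Landscape_bound} together with $\fh_0\in\UC$: for fixed $x,t$, the integrand $\fh_0(y)+\CL(y,0;x,t)$ is bounded above by $\gamma|y|-(y-x)^2/t$ plus a random, $y$‑independent multiple of $1+|y|^{1/5}$, and hence tends to $-\infty$ as $|y|\to\infty$, so the supremum is attained and finite almost surely.

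\textbf{Step 3 (reconcile the two determinant forms).} It remains to show that the $\Gamma_t$‑regularized operators of \eqref{eq:Kd} and \eqref{eq:Kd_epi}, now acting on all of $L^2(\R)$, give the same Fredholm determinant as the half‑line MQR kernels. The key points are: (i) once $\fh_0$ has at most linear growth, the weight‑free kernels $(\fT_{t,\ell_1})^*\fP^{\hit(\fh_0)}_{[\ell_1,\ell_2]}\fT_{t,-\ell_2}$ decay superexponentially with cubic rate, by the Airy‑function bounds of Appendix~\ref{sec:kernels_bounds} and the estimate recalled from \cite[Appendix~A.1]{fixedpt}; since $G_t(u)=t^{-1/2}\kappa\sgn(u)|u|^{3/2}$ has the strictly subquadratic exponent $3/2$, multiplying by the two copies of $\Gamma_t$ in \eqref{eq:Kd} still leaves a fast‑decaying kernel, the trace‑norm limit exists, $\fK^{\hypo(\fh_0)}_{t/2}$ (and via \eqref{eq:Kd_epi} also $\fK^{\epi(\fg)}_{-t/2}$) is trace class on $L^2(\R)$, and the trace norm of $\fK^{\hypo(\fh_0)}_{t/2}$ depends on $\fh_0$ only through $\Max(\fh_0)$; hence the determinant in \eqref{eq:fpform} is well defined (using the trace‑class/Fredholm facts of Appendix~\ref{sec:Fredholm}). (ii) The two outer copies of $\Gamma_{t/2}$ telescope against the reflection in \eqref{eq:Kd_epi}: since $G_t$ is odd one has $\Gamma_t\varrho=\varrho\Gamma_t^{-1}$, so writing $\fK^{\hypo(\fh_0)}_{t/2}=\Gamma_{t/2}\wt K\Gamma_{t/2}$ and $\fK^{\epi(\fg)}_{-t/2}=\varrho\Gamma_{t/2}\wt K'\Gamma_{t/2}\varrho=\Gamma_{t/2}^{-1}(\varrho\wt K'\varrho)\Gamma_{t/2}^{-1}$ (with $\wt K,\wt K'$ the corresponding weight‑free kernels), one gets $\fK^{\hypo(\fh_0)}_{t/2}\fK^{\epi(\fg)}_{-t/2}=\Gamma_{t/2}\,\wt K(\varrho\wt K'\varrho)\,\Gamma_{t/2}^{-1}$, a conjugate of the weight‑free product; so by cyclicity of the Fredholm determinant (valid once the previous trace‑class bounds are in hand) the right‑hand side of \eqref{eq:fpform} equals the determinant of the weight‑free product. (iii) On the weight‑free side, the passage from $L^2([a,\infty))$ to $L^2(\R)$ is justified by a monotone/approximation argument in the continuum‑statistics formula as $a\to-\infty$, again using the decay of the kernels at $-\infty$; this identifies the determinant with the MQR one, closing the chain.

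The main obstacle is Step 3, and within it the quantitative part: verifying that the cubic decay of the Airy kernels dominates the $|u|^{3/2}$ growth of $\Gamma_t$ uniformly as the cutoffs $\ell_1\to-\infty$, $\ell_2\to+\infty$, so that the limit defining $\fK^{\hypo(\fh_0)}_{t/2}$ in \eqref{eq:Kd} genuinely exists in trace norm and may be interchanged with the formation of the Fredholm determinant (and so that the conjugation identity of point (ii) is rigorous rather than merely formal at the level of kernels). This is entirely a matter of assembling the operator estimates of Appendices~\ref{sec:Fredholm} and \ref{sec:kernels_bounds} with the bound of \cite[Appendix~A.1]{fixedpt}, and introduces no new conceptual ingredient. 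Steps 1 and 2 are pure citations: the Feller property and the underlying formulas are taken directly from \cite{fixedpt}, and the agreement of the variational object of Definition~\ref{def:KPZfp} with that process from \cite{Landscape} and \cite{NQR20}.
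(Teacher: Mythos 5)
Your proposal is correct and matches the paper's treatment: the result is stated as a ``Proposition'' precisely because it restates what is proved elsewhere---the Feller property and the continuum-statistics Fredholm determinant formula come from \cite{fixedpt} (Section~4.2 and Theorem~4.1), and the identification with the variational definition \eqref{eq:variational} comes from \cite{Landscape} and \cite{NQR20}---so the paper offers no independent proof beyond noting that the operator $\fK^{\hypo(\fh_0)}_{t/2}\fK^{\epi(\fg)}_{-t/2}$ is trace class, hence the determinant is well defined. The convention-matching you isolate as Step~3 (the $\Gamma_t$-regularized kernels on $L^2(\R)$ versus the half-line restriction to $L^2([a,\infty))$) is likewise attributed by the paper to the proof of Theorem~4.1 in \cite{fixedpt} itself, so no new quantitative argument is needed beyond the citations you assemble.
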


We note that the trace norm of the operator $\fK^{\hypo(\fh_0)}_{t/2}\fK^{\epi(\fg)}_{-t/2}$ is bounded, because, in view of \eqref{eq:norms_products} and \eqref{eq:norms}, it is a composition of two trace class operators; in particular, the Fredholm determinant in \eqref{eq:fpform} is well-defined.

Properties of the KPZ fixed point can be found in \cite[Section~4.3]{fixedpt}. Here, we list only those which will be used in this article.

\begin{prop}\label{prop:sym}
Let $\fh_t(x; \fh_0)$ denote the KPZ fixed point with initial data $\fh_0\in \UC$. Then it has the following properties, where all of the identities in distribution are for random functions in the spatial variable on $x\in\R$.
\begin{enumerate}[label=\normalfont{(\roman*)},ref=\roman*]
\item \label{123a}  \emph{(1:2:3 scaling invariance)} For any $\alpha>0$, one has $\alpha\tts\fh_{\alpha^{-3}t}(\alpha^{-2}x; \fh^{\alpha}_0)\dist \fh_t(x;\fh_0)$ as a process in both $x$ and $t$, where $\fh^\alpha_0(x) := \alpha^{-1}\fh_0(\alpha^{2}x)$.
\item \label{sis} \emph{(Stationarity in space)} For any $t > 0$ and $u \in \R$, $\fh_t(x +u; \fh_0(\,\bigcdot\, -u))\dist \fh_t(x; \fh_0)$, where we write $\fh_0(\,\bigcdot\, -u)$ for the function $y \mapsto \fh_0(y -u)$.
\item \label{ai} \emph{(Affine invariance)} Let $\ff(x) = \fh_0(x) + a + c x$ for some constants $a, c$. Then one has $\fh_t(x; \ff)\dist \fh_t(x+\frac12ct; \fh_0) + a + cx + \frac14c^2t$ as a process in $x$ and $t$.
\item  \label{str} \emph{(Skew time reversibility)} For any functions $\ff,\fg\in\UC$,
\begin{equation}
\P\big(\fh_t(x; \fg)\le -\ff(x), \,\forall\,x \in \R\big) =\P\big(\fh_t(x;\ff)\le -\fg(x), \,\forall\,x \in \R\big).
\end{equation}
\item \label{pom} \emph{(Preservation of max)} For any $\ff_1,\ff_2\in\UC$ and $t > 0$, the KPZ fixed points $\fh_t(\bigcdot\,; \ff_1)$ and $\fh_t(\bigcdot\,; \ff_2)$ can be coupled so that $\fh_t(x; \ff_1)\vee \fh_t(x; \ff_2) \dist \fh_t(x;\ff_1\vee \ff_2)$ for all $x \in \R$.
\item \label{mon} \emph{(Monotonicity)} For any $t > 0$ and $\ff_1,\ff_2\in\UC$ such that $\ff_1 \geq \ff_2$, the KPZ fixed points $\fh_t(\bigcdot\,; \ff_1)$ and $\fh_t(\bigcdot\,; \ff_2)$ can be coupled so that $\fh_t(x;\ff_1) \geq \fh_t(x; \ff_2)$ for all $x \in \R$ \as
\item \label{space} \emph{($\tfrac{1}{2}^{{\scriptscriptstyle-}}-$H\"older regularity in space)} For any fixed $t>0$ and $\beta\in (0,\tfrac12)$, the function $x \mapsto \fh_t(x; \fh_0)$ is almost surely locally $\beta$-H\"{o}lder continuous.
\item \label{time} \emph{($\tfrac{1}{3}^{{\scriptscriptstyle-}}$-H\"older regularity in time)} For any fixed $x \in \R$ and $\beta\in (0,\tfrac13)$, the function $t \mapsto \fh_t(x; \fh_0)$ is almost surely locally $\beta$-H\"{o}lder continuous.
\end{enumerate}
\end{prop}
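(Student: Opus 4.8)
The plan is to read all eight properties off the variational representation of Definition~\ref{def:KPZfp}, invoking for each item the matching invariance or regularity of the directed landscape $\CL$ from \cite{Landscape} together with Proposition~\ref{prop:Landscape_bound}; the items listed in \cite[Section~4.3]{fixedpt} for the Fredholm-determinant construction may alternatively be quoted from there, the two descriptions of $\fh_t$ being identified in \cite{NQR20}. The advantage of \eqref{eq:variational} is that it realises all initial data on one probability space. The two order-type properties need no distributional input: since $\sup_y$ distributes over pointwise maxima and $\bigl(\ff_1(y)+\CL(y,0;x,t)\bigr)\vee\bigl(\ff_2(y)+\CL(y,0;x,t)\bigr)=(\ff_1\vee\ff_2)(y)+\CL(y,0;x,t)$, the identity $\fh_t(x;\ff_1)\vee\fh_t(x;\ff_2)=\fh_t(x;\ff_1\vee\ff_2)$ holds for every $x$ on the same realisation of $\CL$, which is \ref{pom} with an explicit coupling; and \ref{mon} follows identically from monotonicity of $\sup$.

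For the distributional symmetries one changes variables in the supremum in \eqref{eq:variational} and then invokes a symmetry of $\CL$. For \ref{123a}: substitute $y\mapsto\alpha^{-2}y$, use $\fh_0^\alpha(\alpha^{-2}y)=\alpha^{-1}\fh_0(y)$, and apply the scaling invariance $\alpha\,\CL(\alpha^{-2}y,\alpha^{-3}r;\alpha^{-2}x,\alpha^{-3}t)\dist\CL(y,r;x,t)$ jointly in $(x,y,r,t)$. For \ref{sis}: use translation invariance of $\CL$ in its two spatial slots. For \ref{ai}: use the shear (affine) invariance of $\CL$, where completing the square in the added linear term produces both the spatial shift $\tfrac12ct$ and the additive constant $\tfrac14c^2t$ (this item, with these precise constants, is also listed directly in \cite[Section~4.3]{fixedpt}). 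For \ref{str}: the event $\{\fh_t(\,\cdot\,;\fg)\le-\ff\}$ is $\{\fg(y)+\CL(y,0;x,t)+\ff(x)\le 0\text{ for all }x,y\}$, and the reversal symmetry $\CL(y,0;x,t)\dist\CL(x,0;y,t)$ (as joint fields in $(x,y)$, using also the spatial reflection symmetry) turns this into the same event with $(\ff,x)$ and $(\fg,y)$ interchanged, i.e.\ $\{\fh_t(\,\cdot\,;\ff)\le-\fg\}$.

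For the regularity statements, fix $t>0$. For $\fh_0\in\UC$, i.e.\ at most linear growth, the bound \eqref{eq:Landscape_bound1} forces $y\mapsto\fh_0(y)+\CL(y,0;x,t)\to-\infty$, so the supremum in \eqref{eq:variational} is attained on a bounded random set of $y$; the quantitative tail of $\CC_\delta$ in Proposition~\ref{prop:Landscape_bound} makes this restriction uniform over $x$ in a compact set. On such a set the Hölder-$\tfrac12^{{\scriptscriptstyle-}}$ regularity of $\CL$ in its spatial arguments (\cite{Landscape}) passes to $x\mapsto\fh_t(x;\fh_0)$, because a supremum of a uniformly Hölder family is Hölder with the same exponent; this is \ref{space}. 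Statement \ref{time} is the same argument with $x$ fixed, the supremum restricted locally uniformly in $t$, and the Hölder-$\tfrac13^{{\scriptscriptstyle-}}$ regularity of $\CL$ in its temporal arguments used instead; both \ref{space} and \ref{time} are moreover recorded directly in \cite{fixedpt}.

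The main obstacle is bookkeeping rather than conceptual—this Proposition collects known facts. The delicate points are: matching the paper's conventions (rate-two Brownian motion, the $t/2$ in \eqref{eq:fpform}, the orientation of $\CL$) when transporting invariances of $\CL$; getting the constants in \ref{ai} exactly right via the completion of the square; and, for \ref{space}–\ref{time}, making the effective compactification of the supremum uniform over the asserted compact set of $x$ (resp.\ $t$), which is precisely where the tail estimate on $\CC_\delta$ enters. If one prefers not to reprove anything, it suffices to cite \cite[Section~4.3]{fixedpt} and \cite{Landscape} for \ref{123a}–\ref{time} and to record the one-line coupling argument behind \ref{pom} and \ref{mon}.
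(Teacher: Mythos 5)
Your proposal is correct in substance, but note first what the paper itself does: by the convention announced in the Notation subsection, Proposition~\ref{prop:sym} is a \emph{restated} result, and the paper offers no proof beyond the remark that (\ref{mon}) follows from (\ref{pom}) and the citation of \cite[Theorem~4.13 and Proposition~4.24]{fixedpt} for (\ref{space})--(\ref{time}); everything is quoted from \cite[Section~4.3]{fixedpt}. Your route---rederiving the items from the variational formula \eqref{eq:variational}---is therefore genuinely different from the paper's. The pointwise coupling argument for (\ref{pom}) and (\ref{mon}) is fine (indeed it gives an almost-sure identity, stronger than the stated coupling), the change-of-variables arguments for (\ref{123a}), (\ref{sis}) and (\ref{ai}) are the standard ones given the scaling, shift and shear invariances of $\CL$, and your localization of the supremum via Proposition~\ref{prop:Landscape_bound} for (\ref{space})--(\ref{time}) is precisely the mechanism the paper itself deploys later (Lemmas~\ref{lem:Holder-space} and \ref{lem:Holder-time}) to prove the stronger locally uniform versions it actually needs. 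What the citation route buys is brevity and no convention-matching; what your route buys is a self-contained derivation on the single probability space carrying $\CL$, in the spirit of how this paper uses \eqref{eq:variational} elsewhere.

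One step of your sketch does not go through as written: for skew time reversibility (\ref{str}) you invoke ``$\CL(y,0;x,t)\dist\CL(x,0;y,t)$ jointly, using also the spatial reflection symmetry''. The symmetries established in \cite{Landscape} are scaling, translation, shear, and the flip $\CL(y,s;x,t)\dist\CL(-x,-t;-y,-s)$; a standalone spatial reflection $\CL(y,s;x,t)\dist\CL(-y,s;-x,t)$ is not among them, because the prelimiting Brownian LPP model has no such symmetry without simultaneously reversing the order of the lines (which is exactly the flip). Chasing the flip through the variational functional only yields $\P\big(\fh_t(x;\fg)\le-\ff(x)\,\forall x\big)=\P\big(\fh_t(x;\ff(-\bigcdot))\le-\fg(-x)\,\forall x\big)$, i.e.\ skew reversibility with both functions spatially reflected, which for general $\ff,\fg$ is not the stated identity. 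The missing transpose symmetry of the Airy sheet is true, but it is extra information relative to \cite{Landscape}; the standard source for (\ref{str}) is \cite{fixedpt}, where it is read off the determinantal formula \eqref{eq:fpform} (equivalently, one may import the reflection invariance of the fixed point from \cite{fixedpt} and combine it with the flip). Since you explicitly allow citing \cite[Section~4.3]{fixedpt} for any item, this is easily repaired, but the landscape-only derivation of (\ref{str}) should not be presented as complete.
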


\noindent The monotonicity (\ref{mon}) follows from the preservation of maximum (\ref{pom}). Proofs of spatial and temporal regularities of the KPZ fixed point can be found in~\cite[Theorem~4.13 and Proposition~4.24]{fixedpt}.

\begin{lem}
For any $\fh_0\in \UC$, the KPZ fixed point $t \mapsto \fh_t$ enjoys the strong Markov property.
\end{lem}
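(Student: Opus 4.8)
The plan is to deduce the strong Markov property directly from the Feller property recorded in Proposition~\ref{prop:KPZfp}, following the standard route for Feller processes. Write $(P_t)_{t\geq 0}$ for the transition semigroup of the KPZ fixed point, $P_tF(f):=\E_f[F(\fh_t)]$, acting on $C_b(\UC)$. The two inputs I would extract from Proposition~\ref{prop:KPZfp} are: (i) $P_t$ maps $C_b(\UC)$ into itself (Feller continuity in the initial datum); and (ii) the process has a right-continuous modification on the Polish space $\UC$. I would then work with such a version together with the usual augmented, right-continuous filtration $(\mathcal{F}_t)_{t\geq 0}$, so that for a stopping time $\tau$ the $\sigma$-algebra $\mathcal{F}_\tau$ is well defined.

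First I would take an $(\mathcal{F}_t)$-stopping time $\tau$ and discretize it from above by $\tau_n:=2^{-n}\lceil 2^n\tau\rceil$, a stopping time valued in $2^{-n}\N\cup\{\infty\}$ with $\tau_n\downarrow\tau$ and $\mathcal{F}_\tau\subseteq\mathcal{F}_{\tau_n}$. Decomposing over the deterministic values of $\tau_n$ (each event $\{\tau_n=k2^{-n}\}$ lying in $\mathcal{F}_{k2^{-n}}$) and applying the ordinary Markov property at each dyadic time, I would obtain, for fixed $s\geq 0$ and $F\in C_b(\UC)$, that $\E[F(\fh_{\tau_n+s})\mid\mathcal{F}_{\tau_n}]=(P_sF)(\fh_{\tau_n})$ on $\{\tau_n<\infty\}$. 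Next, for an arbitrary $A\in\mathcal{F}_\tau$ with $A\subseteq\{\tau<\infty\}$, I would integrate this identity over $A\in\mathcal{F}_\tau\subseteq\mathcal{F}_{\tau_n}$ and let $n\to\infty$: since $\tau_n+s\downarrow\tau+s$, right-continuity of paths gives $\fh_{\tau_n+s}\to\fh_{\tau+s}$ and $\fh_{\tau_n}\to\fh_\tau$ almost surely, and, $F$ and $P_sF$ being bounded and continuous, dominated convergence yields $\E[F(\fh_{\tau+s})\mid\mathcal{F}_\tau]=(P_sF)(\fh_\tau)$ on $\{\tau<\infty\}$, the $\mathcal{F}_\tau$-measurability of the right-hand side being clear.

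To promote this one-time identity to the full strong Markov statement---that, conditionally on $\mathcal{F}_\tau$, the shifted process $(\fh_{\tau+s})_{s\geq 0}$ has the law of the KPZ fixed point started from $\fh_\tau$---I would run the usual induction on the number of time points: for a product $F_1(\fh_{\tau+s_1})\cdots F_k(\fh_{\tau+s_k})$ with $0\leq s_1<\cdots<s_k$ and $F_i\in C_b(\UC)$, condition on $\mathcal{F}_{\tau+s_{k-1}}$, apply the one-time identity with the stopping time $\tau+s_{k-1}$ together with the semigroup property, and then invoke the inductive hypothesis; a monotone class argument then identifies all finite-dimensional distributions.

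The step I expect to need the most care---and the only genuinely non-routine one---is justifying inputs (i) and (ii) in the present setting, since $\UC$ is not locally compact and the version of Feller theory usually stated for $C_0$ on locally compact spaces does not literally apply; one must invoke the Polish-state-space formulation with the semigroup acting on $C_b$. Fortunately Proposition~\ref{prop:KPZfp} (quoted from \cite{fixedpt}) already packages exactly these facts---the Feller property on $\UC$ and the existence of a (right-)continuous modification---so in the write-up this reduces to a precise citation, after which everything is the boilerplate dyadic-approximation argument sketched above.
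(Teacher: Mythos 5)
Your proof is correct and rests on exactly the same two ingredients as the paper's: the Feller property (Proposition~\ref{prop:KPZfp}) and (right-)continuity of $t\mapsto\fh_t$ (Proposition~\ref{prop:sym}(\ref{time})); the only difference is that the paper outsources the implication ``Feller $+$ right-continuous paths $\Rightarrow$ strong Markov'' to \cite[Theorem~2]{Dynkin}, whereas you reproduce its standard dyadic-approximation proof (including the correct caveat that one must use the Polish-state-space $C_b$ formulation rather than the locally compact $C_0$ theory, since $\UC$ is not locally compact). Two small points of bookkeeping: the right-continuity of paths should be attributed to Proposition~\ref{prop:sym}(\ref{time}) rather than to Proposition~\ref{prop:KPZfp}, and the ordinary Markov property at deterministic times with respect to the augmented right-continuous filtration (which you invoke before discretizing) itself requires the routine Feller-based extension argument, which you should either spell out or cite.
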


\begin{proof}
The strong Markov property follows from time-continuity of the KPZ fixed point (see Proposition~\ref{prop:sym}(\ref{time})), the Feller property (see Proposition~\ref{prop:KPZfp}), and \cite[Theorem~2]{Dynkin}.
\end{proof}

%

\section{Maximum of the KPZ fixed point}\label{sec_max_kpz}
\label{sec:maximum}

In this section, we prove that, if the initial data has parabolic decay at infinity, then the KPZ fixed point at every time is almost surely bounded above and decays at infinity. Moreover, we obtain the temporal H\"{o}lder regularity of the maximum value, provided that it lies in a bounded interval. Along the way, we will prove H\"older continuity of $\fh_t(x)$ in each variable, locally uniformly in the other variable.

We start by proving boundedness of the KPZ fixed point.

\begin{lem}\label{lem:KPZ_is_bounded}
For any initial data $\fh_0$ satisfying Assumption~\ref{a:initial_state_pd}(a) and for any $T > 0$, there exists a random variable $\CB \in \R$, which is almost surely finite, such that the KPZ fixed point $\fh_{t}$ satisfies
\begin{equation}\label{eq:KPZ_is_bounded}
  \fh_t(x) \leq \CB - \frac{1}{8}\min\left(\sqrt{2}\gamma, \frac{1}{t}\right) |x|^{1/2},
\end{equation}
for all $x \in \R$ and $t \in [0, T]$, where the constant $\gamma > 0$ is from Assumption~\ref{a:initial_state_pd}. In particular, for every fixed $t \geq 0$, the KPZ fixed point $\fh_t(x)$ is almost surely bounded  above, and $\lim_{x \to \pm \infty} \fh_t(x) = -\infty$ almost surely.
\end{lem}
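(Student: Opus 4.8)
The plan is to bound the KPZ fixed point from above using the variational formula \eqref{eq:variational} together with the quantitative control on the directed landscape provided by Proposition~\ref{prop:Landscape_bound}. Starting from $\fh_t(x) = \sup_{y \in \R}\{\fh_0(y) + \CL(y,0;x,t)\}$, I would first apply Assumption~\ref{a:initial_state_pd}(a) to write $\fh_0(y) \le \alpha - \gamma y^2$, and then apply \eqref{eq:Landscape_bound1} with a fixed small $\delta$ (say $\delta = \tfrac16$) to get $\CL(y,0;x,t) \le -\tfrac{(y-x)^2}{t} + \CC_\delta(1 + \Vert y,0,x,t\Vert^{1/5})\, t^{1/6}$. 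Thus
\begin{equation}
\fh_t(x) \le \sup_{y\in\R}\Bigl\{ \alpha - \gamma y^2 - \tfrac{(y-x)^2}{t} + \CC_\delta\bigl(1 + (|x| + |y| + t)^{1/5}\bigr) t^{1/6} \Bigr\}.
\end{equation}
The quadratic part $-\gamma y^2 - t^{-1}(y-x)^2$ is maximized over $y$ at $y = x/(1+\gamma t)$, where it equals $-\tfrac{\gamma}{1+\gamma t} x^2$; this is already slightly better than the claimed exponent $-\tfrac{\gamma}{3+\gamma t}$, which leaves room to absorb the sublinear error term.

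Next I would handle the error term $\CC_\delta(1 + (|x|+|y|+t)^{1/5})\,t^{1/6}$. The key point is that the $y$-range that matters is effectively $|y| \lesssim |x| + \sqrt{t}\cdot(\text{something})$ because outside a window around $x/(1+\gamma t)$ the negative quadratic dominates; more simply, one can bound $(|x|+|y|+t)^{1/5}$ crudely and use the elementary inequality that for any $\eta > 0$ there is $C_\eta$ with $|z|^{1/5} \le \eta z^2 + C_\eta$, applied to $z = x$ and to $z = y$, to convert the sublinear-in-$x,y$ error into a small multiple of $x^2$ plus $y^2$ plus a ($\CC_\delta$- and $t$-dependent) constant. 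Choosing $\eta$ small enough (depending on $\gamma$ and $T$) so that the $x^2$ and $y^2$ coefficients it introduces are absorbed into the slack between $\tfrac{\gamma}{1+\gamma t}$ and $\tfrac{\gamma}{3+\gamma t}$ — uniformly over $t \in [0,T]$, where $\tfrac{\gamma}{1+\gamma t} - \tfrac{\gamma}{3+\gamma t} = \tfrac{2\gamma^2}{(1+\gamma t)(3+\gamma t)} \ge \tfrac{2\gamma^2}{(1+\gamma T)(3+\gamma T)} > 0$ — one obtains $\fh_t(x) \le -\tfrac{\gamma}{3+\gamma t} x^2 + \CB$ with $\CB$ a random constant depending only on $\alpha$, $\gamma$, $T$, and $\CC_\delta$. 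Since $\CC_\delta$ is almost surely finite by Proposition~\ref{prop:Landscape_bound}, so is $\CB$.

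The boundary case $t = 0$ is trivial since $\fh_0(x) \le \alpha - \gamma x^2 \le \CB - \tfrac{\gamma}{3} x^2$ after enlarging $\CB$, and the final assertions — that $\fh_t$ is almost surely bounded above and that $\fh_t(x) \to -\infty$ as $x \to \pm\infty$ — are immediate consequences of \eqref{eq:KPZ_is_bounded} once established. One small care point is uniformity in $t$: the error term carries a factor $t^{1/6}$ which vanishes as $t \to 0$ and is bounded by $T^{1/6}$ for $t \le T$, so all estimates are uniform over $t \in [0,T]$ after fixing the worst case; the $1/(1+\gamma t)$ factors are likewise monotone and bounded on $[0,T]$. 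I expect the main obstacle to be the bookkeeping in the second step — namely making the absorption of the $\Vert \cdot\Vert^{1/5}$ error term into the parabola completely uniform over both $y \in \R$ and $t \in [0,T]$, and checking that the single random constant $\CC_\delta$ (which does not depend on $y, x, t$) indeed suffices so that $\CB$ can be taken non-random-free of spatial and temporal arguments. There is no deep difficulty, only the need to organize the elementary quadratic-versus-sublinear estimates carefully.
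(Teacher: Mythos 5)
Your proposal is correct and follows essentially the same route as the paper: the variational formula \eqref{eq:variational}, Assumption~\ref{a:initial_state_pd}(a), the landscape bound of Proposition~\ref{prop:Landscape_bound}, and an elementary quadratic-versus-sublinear estimate that absorbs the $\CC_\delta\bigl(1+\Vert\cdot\Vert^{1/5}\bigr)$ error into the slack between the optimal parabolic coefficient and $\gamma/(3+\gamma t)$, with all randomness pushed into the additive constant $\CB$. The one point to organize carefully, as you anticipate, is that $\CC_\delta$ is random, so the absorption should be phrased as $\CC_\delta T^{1/6}|z|^{1/5}\le \eta z^2 + C(\eta,\CC_\delta)$ with $\eta$ deterministic (or with $\eta$ chosen $\CC_\delta$-measurably), which is exactly the role played in the paper's proof by reserving half of the $\gamma y^2$ term and a deterministic $c(\gamma,t)x^2$ to dominate the sublinear growth.
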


\begin{proof}
We first observe that, by the continuity of $(t,x)\mapsto \fh_t(x)$, it is enough to prove \eqref{eq:KPZ_is_bounded} under the assumption that $|x|\geq 1$.

We will use the variational formula \eqref{eq:variational} and properties of the directed landscape. Let us fix some $\delta>0$ and denote $F_\delta(y, s, x, t) := \CC_\delta (1 + \Vert y, s, x, t \Vert^{1 / 5})$, where $\Vert \bigcdot \Vert$ is the $\ell_1$-norm on $\R^4$ and $\CC_\delta$ is the $\delta$-dependent random variable from \eqref{eq:Landscape_bound1}, which is positive and almost surely finite. Then, for any $(y, s; x, t) \in \R^4_{\uparrow}$, \eqref{eq:Landscape_bound1} yields
\begin{equation}\label{eq:Landscape_bound2}
- F_\delta(y, s, x, t) (t-s)^{\frac{1}{3} - \delta} - \tfrac{(x-y)^2}{t-s} \leq \CL(y, s; x, t) \leq F_\delta(y, s, x, t) (t-s)^{\frac{1}{3} - \delta} - \tfrac{(x-y)^2}{t-s}.
\end{equation}
Since we are going to use $F_\delta$ and $\CC_\delta$ for a fixed value $\delta$, we prefer to make our notation slightly lighter and omit the subscripts $\delta$ in the rest of this proof. From \eqref{eq:variational}, \eqref{eq:Landscape_bound2} and Assumption~\ref{a:initial_state_pd}, we obtain
\begin{equation}\label{eq:KPZ_is_bounded1}
\fh_t(x) \leq \sup_{y \in \R} \Bigl\{ \alpha - \gamma |y|^{1/2} + F(y, 0, x, t) t^{\frac{1}{3} - \delta} - \tfrac{(x-y)^2}{t} \Bigr\}.
\end{equation}
We consider $t \in [0, T]$ for fixed $T > 0$.

We want to upper bound $-\gamma |y|^{1/2}/2 - (x-y)^2/t$ by $-c_{\gamma,t} |x|^{1/2}$ for some $c_{\gamma,t}>0$ for all $x, y \in \R$. First, by the triangle inequality we have $(x-y)^2/t \geq (|x|-|y|)^2/t$. Next, we write 
$$\frac{\gamma}{2}|y|^{1/2}+\frac{(|x|-|y|)^2}{t} = \frac{\gamma}{2}|y|^{1/2}+|x|^2\frac{(1-|y|/|x|)^2}{t}.$$
If $|y|/|x| \leq 1/2$, we ignore the first term and obtain a lower bound of $|x|^{1/2}/(4t)$, using $|x|^2\geq |x|^{1/2}$since $|x|\geq 1$ by assumption. If $|y|/|x|\geq 1/2$, we ignore the second term and obtain a lower bound of $\gamma |x|^{1/2}/2\sqrt{2}$. Thus we see
$$-\frac{\gamma}{2} |y|^{1/2} - \frac{(x-y)^2}{t} \leq -c_{\gamma,t}|x|^{1/2}$$
with $c_{\gamma,t} = \min(1/(4t), \gamma/2\sqrt{2})$. Substituting this in \eqref{eq:KPZ_is_bounded1}, we obtain
\begin{align*}
\MoveEqLeft[6]
\sup_{y\in\R}\left\{\alpha - \gamma |y|^{1/2} + F(y, 0, x, t) t^{\frac{1}{3} - \delta} - \frac{(x-y)^2}{t}\right\}\\
&\leq \sup_{y\in\R}\left\{\alpha - \frac{\gamma}{2}|y|^{1/2}+F(y, 0, x, t)t^{\frac{1}{3} - \delta} - c_{\gamma,t}|x|^{1/2}\right\}\\
&\leq \sup_{x, y\in\R}\left\{\alpha - \frac{\gamma}{2}|y|^{1/2}+F(y, 0, x, t)t^{\frac{1}{3} - \delta} - \frac{1}{2}c(\gamma,t)|x|^{1/2}\right\} - \frac{1}{2}c_{\gamma,t}|x|^{1/2}.
\end{align*}
Now, since $F(y, 0, x, t) \leq \CC(1+|x|^{1/5}+|y|^{1/5} + |t|^{1/5})$, it is easy to see that the first term in the previous displayed inequality is an almost surely finite random constant that depends on only $\gamma$ and $T$; we label it $\CB$. Boundedness and decay at infinity of the KPZ fixed point follow readily from \eqref{eq:KPZ_is_bounded}.
\end{proof}

\subsection{Locally uniform spatial \& temporal H\"{o}lder regularity}

To prove the H\"older regularity of the maximum of $\fh_t$ (Lemma~\ref{lem:max_Holder}), we will need that the temporal H\"{o}lder regularity Proposition~\ref{prop:sym}(\ref{time}) holds locally uniformly in $x$. This is Lemma~\ref{lem:Holder-time} ahead. Its proof will first require spatial H\"older regularity of $\fh_t$ to hold locally uniformly in $t$, which is the next statement.

\begin{lem}\label{lem:Holder-space}
For any initial state $\fh_0$ satisfying Assumption~\ref{a:initial_state_pd}(a), $K>0$, $0 < T_0 < T$ and  $\delta\in (0,\tfrac12]$, one has almost surely
\begin{equation}
  \sup_{s \in [T_0, T]} \sup_{\substack{x\neq y,\\|x|,|y|\leq K}} \frac{|\fh_s(x) - \fh_s(y)|}{|x - y|^{1/2-\delta}} < \infty.
\end{equation}
\end{lem}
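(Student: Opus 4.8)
The plan is to use the variational formula \eqref{eq:variational} together with the regularity of the directed landscape $\CL$ supplied by Proposition~\ref{prop:Landscape_bound}. The core idea is that $\fh_s(x) = \sup_{y}\{\fh_0(y) + \CL(y,0;x,s)\}$ is a supremum of a family of functions, each of which is H\"older-$\tfrac12^{{\scriptscriptstyle-}}$ in the last spatial variable, and the H\"older modulus of the supremum is controlled by the supremum of the individual moduli — provided one can restrict the supremum to a compact set of $y$'s, uniformly in $s\in[T_0,T]$ and $|x|\leq K$. First I would invoke Lemma~\ref{lem:KPZ_is_bounded} (which needs only Assumption~\ref{a:initial_state_pd}(a)): since $\fh_0(y)\leq \alpha - \gamma y^2$ and $\CL(y,0;x,s)\leq F_\delta(y,0,x,s) s^{1/3-\delta} - (x-y)^2/s$, for $|x|\leq K$ and $s\in[T_0,T]$ the bracket $\fh_0(y)+\CL(y,0;x,s)$ tends to $-\infty$ as $|y|\to\infty$ at rate $-(\gamma + T^{-1})y^2 + O(|y|^{1/5})$, uniformly in such $(x,s)$. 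Comparing with the (a.s.\ finite) value $\fh_s(0)\geq \fh_0(y_0)+\CL(y_0,0;0,s)$ for some fixed admissible $y_0$, we obtain an a.s.\ finite random radius $R = R(K,T_0,T,\omega)$ such that for all $s\in[T_0,T]$ and all $|x|\leq K$,
\begin{equation}
\fh_s(x) = \sup_{|y|\leq R}\bigl\{\fh_0(y)+\CL(y,0;x,s)\bigr\}.
\end{equation}

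Next, for $x,y$ with $|x|,|y|\leq K$ and any admissible $z$ with $|z|\leq R$, the difference of the integrands is $\CL(z,0;x,s) - \CL(z,0;y,s)$, which I would bound using \eqref{eq:Landscape_bound1} twice: writing $\CL(z,0;x,s) = -(z-x)^2/s + E(z,x,s)$ with $|E(z,x,s)|\leq \CC_\delta(1+\|z,0,x,s\|^{1/5})s^{1/3-\delta}$, we get
\begin{equation}
\bigl|\CL(z,0;x,s) - \CL(z,0;y,s)\bigr| \leq \frac{|(z-x)^2 - (z-y)^2|}{s} + |E(z,x,s)| + |E(z,y,s)|.
\end{equation}
The first term is at most $T_0^{-1}(|x|+|y|+2R)\,|x-y| \leq C(R,K,T_0)\,|x-y|$, and the error terms are bounded by $2\CC_\delta(1+(R+K+T)^{1/5})T_0^{1/3-\delta}$, a finite random constant independent of $x,y$. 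Actually the error-term bound does not even involve $|x-y|$, so to extract a genuine H\"older-$(\tfrac12-\delta)$ modulus I would instead use the finer continuity of $\CL$ in its spatial arguments: Proposition~\ref{prop:Landscape_bound} as stated controls $|\CL + (\text{parabola})|$, but the directed landscape is in fact H\"older-$\tfrac12^{{\scriptscriptstyle-}}$ jointly in its spatial variables on compacts; alternatively, and more in the spirit of staying within the excerpt, one should note that $\CL(z,0;\cdot,s) + (z-\cdot)^2/s$ — equivalently the parabolic Airy sheet of scale $s^{1/3}$ evaluated at the relevant points — is the object with H\"older-$\tfrac12^{{\scriptscriptstyle-}}$ spatial regularity, uniformly over $z,x$ in compacts and $s$ in $[T_0,T]$. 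I would cite \cite[Lemma~9.1 and Definition~1.2]{Landscape} for this locally uniform spatial modulus of continuity of the parabolic Airy sheet, with a random constant having Gaussian-type tails; combined with the deterministic $\tfrac12$-Lipschitz (indeed smooth) dependence of the parabola $(z-x)^2/s$ on $x$ over the compact range, this yields a single a.s.\ finite random constant $\CC'$ with
\begin{equation}
\bigl|\CL(z,0;x,s) - \CL(z,0;y,s)\bigr| \leq \CC'\,|x-y|^{1/2-\delta}
\end{equation}
for all $s\in[T_0,T]$, $|z|\leq R$, $|x|,|y|\leq K$.

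Finally I would conclude by the elementary fact that $|\sup_i a_i - \sup_i b_i|\leq \sup_i|a_i-b_i|$: taking the supremum over $|z|\leq R$ of the integrands in the displayed representation of $\fh_s(x)$ and $\fh_s(y)$ gives $|\fh_s(x)-\fh_s(y)| \leq \sup_{|z|\leq R}|\CL(z,0;x,s)-\CL(z,0;y,s)| \leq \CC'\,|x-y|^{1/2-\delta}$, and dividing and taking the double supremum over $s$ and $x\neq y$ proves the claim, since $\CC'$ (and $R$) are a.s.\ finite. The main obstacle is the second paragraph's issue: Proposition~\ref{prop:Landscape_bound} as quoted only gives a one-sided / absolute bound against a parabola, not a spatial H\"older modulus, so the real work is to extract the locally uniform (in $s\in[T_0,T]$ and in the first spatial variable $z$) H\"older-$\tfrac12^{{\scriptscriptstyle-}}$ continuity of the landscape's spatial marginals from \cite{Landscape} and to package its random constant so that the compact-range reduction via Lemma~\ref{lem:KPZ_is_bounded} applies uniformly. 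Everything else — the reduction to $|y|\leq R$, the Lipschitz control of the parabola, and the sup-of-suprema inequality — is routine.
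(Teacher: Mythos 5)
Your overall architecture is the same as the paper's: restrict the variational formula to a compact range of starting points (using the parabolic decay of $\fh_0$ and the parabolic upper bound on $\CL$, uniformly over $s\in[T_0,T]$ and $|x|\leq K$), then transfer a locally uniform spatial H\"older modulus of the landscape to $\fh_s$ via an elementary sup comparison (the paper does this with the minimal-absolute-value maximizer $z_s^*(y)$ and symmetry rather than your $|\sup-\sup|\leq\sup|\cdot-\cdot|$ inequality, which is only a cosmetic difference). The compactification step and the final assembly in your write-up are fine.

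The gap is in the one step you yourself flag as ``the real work'': the locally uniform spatial H\"older-$\tfrac12^{{\scriptscriptstyle-}}$ modulus for $z\mapsto$ fixed, $x\mapsto \CL(z,0;x,s)$, \emph{uniformly over $s\in[T_0,T]$}. The source you propose, \cite[Definition~1.2 and Lemma~9.1]{Landscape}, concerns the parabolic Airy sheet at a fixed scale, i.e.\ the landscape increment over a single fixed time gap; it gives, for each fixed $s$, a random H\"older constant, but these constants a priori depend on $s$, and one cannot take a union bound over the uncountably many $s\in[T_0,T]$ from a fixed-time statement alone. Some input about joint (space-time) regularity of $\CL$ is needed to get a single almost surely finite constant valid for all $s$ in the interval. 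This is exactly what the paper invokes: \cite[Proposition~10.5]{Landscape} provides the bound $|\CL(z,0;y,s)-\CL(z,0;x,s)|\leq \mc C_L|x-y|^{1/2-\delta}$ with one random constant $\mc C_L$ valid simultaneously for all $s\in[T_0,T]$ and all $|x|,|y|,|z|\leq L$ (the constant is then patched to the random compact radius, as in your $R$, by conditioning on which unit interval $R$ falls in). If you replace your citation of Lemma~9.1 by this uniform modulus-of-continuity result for the directed landscape, your argument closes and coincides with the paper's proof; as written, the key uniform-in-$s$ estimate is asserted but not actually supplied by the tools you cite.
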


We will need some modulus of continuity bounds for $\mc L$ which is a special case of \cite[Proposition~10.5]{Landscape}. 

\begin{lemma}\label{l.landscape mod of con}
Fix $L>0$ and $\varepsilon>0$. There exists a random constant $C$ which depends on $L$ and $\varepsilon$ and which is almost surely finite such that, for $x,y,z\in[-L,L]$ and $0<s<L$, $s+\varepsilon<t<L$,
\begin{align*}
\MoveEqLeft[12]
\left|\mc L(z,0;x,s)+\frac{(x-z)^2}{s} - \mc L(z,0;y,t)-\frac{(y-z)^2}{t}\right|\\
&\leq C\left(\tau^{1/3}\log^{2/3}\tau^{-1}+ \xi^{1/2}\log^{1/2}\xi^{-1}\right),
\end{align*}
where $\tau=t-s$ and $\xi=|x-y|$.
\end{lemma}

\begin{proof}[Proof of Lemma~\ref{lem:Holder-space}]
We make use of Lemma~\ref{l.landscape mod of con} which implies that, for $\delta \in (0,1/2]$,  $0< T_0 \leq T$, and  $L>0$, there exists a random constant $\mc C_L$ (depending also on $\delta$, $T_0$, and $T$) such that, for all $s\in[T_0,T]$ and  $x$, $y$, and $z$ with $|x|, |y|, |z| \leq L$,
\begin{equation}\label{e.landscape holder}
|\mc L(z,0; y, s) - \mc L(z, 0; x, s)| \leq \mc C_L|x-y|^{\frac{1}{2}-\delta}.
\end{equation}
Now we observe that, for any $x,y\in\R$,
\begin{align*}
\fh_s(y) - \fh_s(x) &= \sup_{z\in\R} \left(\fh_0(z) + \mc L(z,0;y,s)\right) - \sup_{z\in\R}\left(\fh_0(z) + \mc L(z,0;x,s)\right)\\
&\leq \mc L(z_s^*(y), 0; y,s) - \mc L(z_s^*(y), 0; x,s),
\end{align*}
where $z_s^*(y)$ is the element of minimum absolute value in the set of maximizers of the first supremum.

Define $\bar z = \sup_{s\in[T_0, T]}\sup_{|y|\leq K} |z_s^*(y)|$. We {\em claim} that $\bar z$ is almost surely finite. Before proving the claim, we show how it completes the proof of Lemma~\ref{lem:Holder-space}.

Since $\delta$, $T_0$, and $T$ are fixed, let, for $L\in\N$, $\mc C_L$ be the corresponding constant in \eqref{e.landscape holder}. Define $\mc C$ by setting it to be $\mc C_L$ on the event that $\bar z\in [L, L+1]$, for every $L\geq \lceil K \rceil$, and to be $\mc C_{\lceil K\rceil}$ on the event that $\bar z \in [0, \lceil K\rceil]$. Then we have that $\fh_s(y) -\fh_s(x) \leq \mc C|x-y|^{1/2-\delta}$ for all $|x|,|y| \leq K$ and $s\in [T_0,T]$; by symmetry, the same upper bound holds for $|\fh_s(y) - \fh_s(x)|$. Since $\mc C$ is clearly almost surely finite, Lemma~\ref{lem:Holder-space} follows conditional upon the claim.

We now prove the claim that $\bar z <\infty$ almost surely. Let $z_0\in\R$ be such that $\fh_0(z_0) > -\infty$. It is enough to prove that $\fh_0(z_0) + \inf_{s\in[T_0,T]}\inf_{|y|\leq K} \mc L(z_0,0;y,s) =: -R > -\infty$, i.e., $R<\infty$, and that there exists a random constant $M$ depending on only $T_0$, $T$, and $K$ such that $\sup_{|z| \geq M} (\fh_0(z)+\mc L(z,0;y,s)) < -R$ for all $|y|\leq K$.

That $R<\infty$ follows immediately from the continuity of $\mc L$ and the definition of $z_0$. To find the random $M$ satisfying the conditions, we observe that, from Assumption~\ref{a:initial_state_pd} and \eqref{eq:Landscape_bound2} with $\delta=1/3$,
\begin{align*}
\fh_0(z)+\mc L(z,0;y,s) &\leq \alpha-\gamma |z|^{1/2} + F_{1/3}(z,0,y,s) - \frac{(z-y)^2}{s}\\
&\leq \alpha-\gamma |z|^{1/2} + F_{1/3}(z,0,K,T) - \frac{(z-y)^2}{T_0}.
\end{align*}
It is easy to see that we can find $M$ large enough depending on only $\alpha$, $\gamma$, $K$, $T$ and $T_0$ such that $|z| > M$ implies that the previous expression is smaller than $-R$  for all $y$ with $|y|\leq K$. This completes the proof of the claim, and thus of  Lemma~\ref{lem:Holder-space}.
\end{proof}

\begin{lem}\label{lem:Holder-time}
For any initial state $\fh_0$ satisfying Assumption~\ref{a:initial_state_pd}(a), $L > 0$,  $0 < T_0 < T$ and  $\delta\in (0,\tfrac13]$, one has almost surely
\begin{equation}
	\sup_{|x| \leq L}\sup_{\substack{s \neq t\\ s,t \in [T_0, T]}} \frac{|\fh_t(x) - \fh_s(x)|}{|t - s|^{1/3-\delta}} < \infty.
\end{equation}
\end{lem}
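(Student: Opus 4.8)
The plan is to derive the locally uniform temporal H\"older bound from the variational formula \eqref{eq:variational}, combining the space-time modulus of continuity of the directed landscape with the spatial H\"older regularity established in Lemma~\ref{lem:Holder-space}. First I would fix $L>0$, $0<T_0<T$ and $\delta\in(0,\tfrac13]$, and for $|x|\le L$ and $s,t\in[T_0,T]$ with $s<t$ write, exactly as in the proof of Lemma~\ref{lem:Holder-space},
\begin{equation*}
\fh_t(x)-\fh_s(x)\;\le\;\CL(z_s^\star,0;x,t)-\CL(z_s^\star,0;x,s),
\end{equation*}
where $z_s^\star=z_s^\star(x)$ is the maximizer of minimal absolute value for $\fh_s(x)$; and symmetrically $\fh_s(x)-\fh_t(x)\le\CL(z_t^\star,0;x,s)-\CL(z_t^\star,0;x,t)$ with $z_t^\star$ the corresponding maximizer for $\fh_t(x)$. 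The key point, already available from the claim proved inside Lemma~\ref{lem:Holder-space}, is that $\bar z:=\sup_{s\in[T_0,T]}\sup_{|x|\le L}|z_s^\star(x)|$ is almost surely finite; so both displayed differences involve only landscape increments among points whose spatial coordinates are bounded by $\max(L,\bar z)$ and temporal coordinates in $[T_0,T]$.

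Next I would invoke the relevant modulus-of-continuity estimate for $\CL$ from \cite[Proposition~10.5]{Landscape} in its temporal form: for $\delta\in(0,\tfrac13]$, $0<T_0\le T$ and $M>0$ there is a random constant $\CC_M$ (also depending on $\delta,T_0,T$) such that, for all $z,x$ with $|z|,|x|\le M$ and all $r,r'\in[T_0,T]$,
\begin{equation*}
\bigl|\CL(z,0;x,r)-\CL(z,0;x,r')\bigr|\;\le\;\CC_M\,|r-r'|^{1/3-\delta}.
\end{equation*}
(Note the directed landscape is H\"older-$\tfrac13^-$ in its time arguments and H\"older-$\tfrac12^-$ in its space arguments, so this is the correct exponent; the restriction of the time interval to $[T_0,T]$ away from $0$ is what makes the constant finite, since otherwise the $(t-s)$-singularity of the parabolic term degrades the estimate.) Applying this with $M$ a dyadic upper bound for $\max(L,\bar z)$ then gives $|\fh_t(x)-\fh_s(x)|\le\CC_M|t-s|^{1/3-\delta}$ for all $|x|\le L$ and $s,t\in[T_0,T]$. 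Finally, exactly as in Lemma~\ref{lem:Holder-space}, I would define a single random constant $\CC$ by setting it to $\CC_M$ on the event $\{\bar z\in[M,M+1]\}$ for integer $M\ge\lceil L\rceil$ and to $\CC_{\lceil L\rceil}$ on $\{\bar z\le\lceil L\rceil\}$, which is almost surely finite and dominates the H\"older quotient uniformly, completing the proof.

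The one subtlety to be careful about — and the place where this proof genuinely uses Lemma~\ref{lem:Holder-space} rather than just the landscape estimate — is the comparison of the two maximizers $z_s^\star$ and $z_t^\star$: the upper bound for $\fh_t(x)-\fh_s(x)$ is controlled by a landscape increment at $z_s^\star$, while the lower bound is controlled by one at $z_t^\star$, and a priori these are different points. This is not actually an obstacle, because the bound $\bar z<\infty$ is uniform over all of $s\in[T_0,T]$, so \emph{both} $z_s^\star$ and $z_t^\star$ lie in $[-\bar z,\bar z]$ and the same random constant $\CC_M$ applies to both increments; one simply does not need the two maximizers to be close, only to be jointly bounded. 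I would also double-check that the maximizer of minimal absolute value is well-defined almost surely for every relevant $(s,x)$ — this follows from Lemma~\ref{lem:KPZ_is_bounded} (the supremum in \eqref{eq:variational} is attained, so $\Argmax$ is a non-empty closed set) — though in fact any measurable selection of a maximizer would serve equally well here. No new ideas beyond those of Lemma~\ref{lem:Holder-space} are required; the proof is essentially a transcription of that argument with the temporal modulus of continuity of $\CL$ in place of the spatial one.
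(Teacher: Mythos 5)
Your proposal is correct in substance but takes a genuinely different route from the paper's. The paper restarts the variational formula at time $s$, writing $\fh_t(x)-\fh_s(x)=\sup_y\{\fh_s(y)-\fh_s(x)+\CL(y,s;x,t)\}$, localizes the supremum to a compact interval, uses the full statement of Lemma~\ref{lem:Holder-space} to bound $|\fh_s(y)-\fh_s(x)|$ by $\mc C_1|y-x|^{1/2-\delta_1}$ uniformly in $s$, and then balances this gain against the parabolic penalty $-(x-y)^2/(t-s)$ from the two-sided landscape bound \eqref{eq:Landscape_bound2}; the exponent $\tfrac13$ emerges from that calculus optimization, and the lower bound comes from the candidate $y=x$. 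You instead keep the initial time at $0$, compare $\fh_t(x)$ and $\fh_s(x)$ through a common candidate maximizer in \eqref{eq:variational}, and import the temporal $\tfrac{1}{3}^{{\scriptscriptstyle-}}$ modulus of continuity of $\CL$ on compact sets with time separation bounded below directly from \cite[Proposition~10.5]{Landscape} (of which the paper's proof of Lemma~\ref{lem:Holder-space} uses only the spatial part); the restriction $r,r'\in[T_0,T]$ indeed keeps the relevant quadruples in a compact subset of $\R^4_\uparrow$, so that estimate applies, with the logarithmic correction absorbed into $|r-r'|^{-\delta}$. What you then need from Lemma~\ref{lem:Holder-space} is only the internal claim that $\bar z=\sup_{s\in[T_0,T]}\sup_{|x|\le L}|z_s^\star(x)|<\infty$ almost surely, not its conclusion. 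Your route is shorter and dispenses with the optimization step; the paper's route has the feature of needing the landscape's temporal behaviour only through the cruder global bound of Proposition~\ref{prop:Landscape_bound}, and of exhibiting temporal regularity as a consequence of spatial regularity plus the semigroup structure.

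One slip to fix: your two displayed inequalities attach the wrong maximizers. Plugging the time-$s$ maximizer $z_s^\star$ as a candidate into the time-$t$ problem yields the \emph{lower} bound $\fh_t(x)-\fh_s(x)\ge\CL(z_s^\star,0;x,t)-\CL(z_s^\star,0;x,s)$, while the upper bound $\fh_t(x)-\fh_s(x)\le\CL(z_t^\star,0;x,t)-\CL(z_t^\star,0;x,s)$ uses $z_t^\star$; as written, both of your inequalities are reversed. Since your argument treats the two maximizers symmetrically---both lie in $[-\bar z,\bar z]$, and both landscape increments are bounded in absolute value by the same modulus---this is a harmless relabelling rather than a gap, but the correction should be made.
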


\begin{proof}
For $s < t$,  the variational formula \eqref{eq:variational} yields
\begin{equation}\label{e.t,s difference variational formula}
	\fh_t(x) - \fh_s(x) = \sup_{y \in \R} \bigl\{ \fh_s(y) - \fh_s(x) + \CL(y, s; x, t) \bigr\}.
\end{equation}
We wish to bound $\fh_s(y) -\fh_s(x)$ by $\mc C|y-x|^{1/2-\delta}$ for a random constant $\mc C$ uniform in $s\in[T_0,T]$ using Lemma~\ref{lem:Holder-space}. However, this requires that $y$ be restricted to a compact interval, and so we start by showing that we may localize the supremum in the previous display to a compact interval. More precisely, letting $y_{s,t}^*(x)$ be the maximizer of the supremum in \eqref{e.t,s difference variational formula} of minimum absolute value, we claim that, almost surely,
\begin{equation}\label{e.localize sup}
\sup_{|x|\leq L}\sup_{s\neq t\in[T_0, T]} |y_{s,t}^*(x)| < \infty.
\end{equation}
First let $R' = -\inf_{|x|\leq L}\inf_{s\in[T_0,T]} \fh_s(x)$. By the variational formula \eqref{eq:variational} and the continuity of $\CL$, we have that $R'<\infty$ almost surely. Thus we see that
$$\sup_{y \in \R} \bigl\{ \fh_s(y) + \CL(y, s; x, t) \bigr\} \geq \fh_s(x) + \CL(x,s; x,t) \geq -R' + \inf_{|x|\leq L}\inf_{s\neq t\in[T_0,T]} \CL(x,s; x,t) =: -R.$$
Bounds  \eqref{eq:Landscape_bound2} on $\CL$ imply that $R<\infty$ almost surely; note that, by definition, $R$ is uniform over $|x|\leq L$ and $s,t\in[T_0,T]$. Thus, to prove \eqref{e.localize sup}, it is enough to show that there exists a random $M$ large enough such that, for all $s\neq t\in[T_0,T]$ and $|x|\leq L$,
\begin{equation}\label{e.defM}
\sup_{|y|\geq M} \bigl\{\fh_s(y) + \CL(y,s; x,t) \bigr\} < -R.
\end{equation}
As in the proof of Lemma~\ref{lem:Holder-space}, this follows by using that $\CL(y,s; x,t) \leq F_{1/3}(y,s,x,t) - \frac{(x-y)^2}{t-s}$ and a uniform decay estimate on $\fh_s(y)$. This decay estimate, provided by Lemma~\ref{lem:KPZ_is_bounded}, asserts that $\fh_s(y) \leq \CB - \frac{1}{8}\min(t^{-1}, \sqrt{2}\gamma) |y|^{1/2} \leq \CB - \frac{1}{8}\min(T^{-1}, \sqrt{2}\gamma) |y|^{1/2}$ for all $s,t\in[T_0, T]$ and $y\in \R$, where $\CB$ is uniform over $s\in[T_0,T]$.

Thus, using the random $M$ which satisfies \eqref{e.defM} above, we see that \eqref{e.t,s difference variational formula} may be written as
\begin{equation}\label{e.difference variational formula 2}
\fh_t(x) - \fh_s(x) = \sup_{|y| \leq M} \bigl\{ \fh_s(y) - \fh_s(x) + \CL(y, s; x, t) \bigr\}.
\end{equation}
Now, from Lemma~\ref{lem:Holder-space}, for given $\delta_1>0$, there exists a random constant $\mc C_1$ such that, for all $x,y \in[-M,M]$ and $s\in[T_0,T]$,
\begin{equation}\label{e.uniform space holder}
|\fh_s(y) - \fh_s(x)| \leq \mc C_1|y-x|^{\frac{1}{2}-\delta_1};
\end{equation}
though $M$ is random, we know such a random and a.s. finite constant $\mc C_1$ exists by applying Lemma~\ref{lem:Holder-space} on each of the events $M\in[m,m+1]$ for $m\in\N$ and getting a different random constant on eachof these disjoint events.

Applying this and bounds on $\mc L$ from \eqref{eq:Landscape_bound2} in \eqref{e.difference variational formula 2} yields, for $|x|\leq L$ and $s,t \in[T_0,T]$,
\begin{align*}
\fh_t(x)-\fh_s(x) &\leq \sup_{|y|\leq M} \left\{\mc C_1|y-x|^{\frac{1}{2}-\delta_1} + F_{\delta}(y,s,x,t)|t-s|^{\frac{1}{3}-\delta} - \frac{(x-y)^2}{t-s}\right\} \\
&\leq \sup_{|y|\leq M} \left\{\mc C_1|y-x|^{\frac{1}{2}-\delta_1} - \frac{(x-y)^2}{t-s}\right\} + F_{\delta}(M,T,L,T)|t-s|^{\frac{1}{3}-\delta}.
\end{align*}
Simple calculus yields that the supremum is at most $\mc C_2 |t-s|^{1/3-\delta_2}$ for a random constant $\mc C_2$, where $\delta_2 = 8\delta_1/(9+6\delta_1)$. We set $\delta_1$ so that $\delta_2$ equals the given positive value $\delta$ in Lemma~\ref{lem:Holder-time}, thus obtaining  $\fh_t(x)-\fh_s(x) \leq \mc C_3 |t-s|^{1/3-\delta}$ for a random constant $\mc C_3$ and for all $|x|\leq L$ and $s,t\in[T_0,T]$.

For the lower bound, again \eqref{e.uniform space holder} and \eqref{eq:Landscape_bound2} imply
\begin{align*}
\fh_t(x)-\fh_s(x) &\geq \sup_{|y|\leq M} \left\{-\mc C_1|y-x|^{\frac{1}{2}-\delta} - F_{\delta}(y,s,x,t)|t-s|^{\frac{1}{3}-\delta} - \frac{(x-y)^2}{t-s}\right\},\\
&\geq -\mc C_4 |t-s|^{\frac{1}{3}-\delta},
\end{align*}
by noting that $F_{\delta}(y,s,x,t) \leq F_\delta(M,T,L,T)$ and considering the value of the remaining expression at $y = x$, which satisfies $|y| \leq M$ by assuming $M>L$ if necessary. This completes the proof of Lemma~\ref{lem:Holder-time}.
\end{proof}

Our next aim is to obtain temporal H\"{o}lder regularity of the maximum of the following cutoff of the KPZ fixed point:
\begin{equation}\label{eq:cut-off}
\fh_t^L(x) :=
\begin{cases}
\fh_t(x), &\text{if}~ x \in [-L, L],\\
-\infty, &\text{otherwise},
\end{cases}
\end{equation}
defined for $L > 0$.

\begin{lem}\label{lem:max_Holder}
Let the initial data of the KPZ fixed point satisfy Assumption~\ref{a:initial_state_pd}(a). For  $\delta \in (0, \frac{1}{3})$, $T >T_0> 0$ and $L > 0$, one has
\begin{equation}\label{eq:max_Holder}
|\Max(\fh^L_{t}) - \Max(\fh_{s}^L)|\leq \CC |t-s|^{\frac{1}{3} - \delta},
\end{equation}
for all $s \neq t \in [T_0,T]$, where the random variable $\CC$ is almost surely finite.
 \end{lem}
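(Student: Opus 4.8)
The plan is to obtain Lemma~\ref{lem:max_Holder} as an immediate consequence of the spatially uniform temporal H\"older estimate of Lemma~\ref{lem:Holder-time}, together with the elementary fact that the supremum of a family of uniformly (in the spatial variable) temporally H\"older functions is itself temporally H\"older. First I would record that, since $\fh_t^L$ is identically $-\infty$ outside $[-L,L]$, we have $\Max(\fh_t^L) = \sup_{|x|\le L}\fh_t(x)$, and that for each $t\in[T_0,T]$ this is a finite real number. Indeed, it is bounded above by the random variable $\CB$ furnished by Lemma~\ref{lem:KPZ_is_bounded} (applied with the present $T$), since there $\fh_t(x)\le\CB-\tfrac{\gamma}{3+\gamma t}x^2\le\CB$ for all $x$ and $t\in[0,T]$; and it is strictly greater than $-\infty$ because, by the variational formula \eqref{eq:variational} and the finiteness and continuity of $\CL$, one has $\fh_t(x)\ge \fh_0(z_0)+\CL(z_0,0;x,t)>-\infty$ for any $z_0$ with $\fh_0(z_0)>-\infty$ (such $z_0$ exists by the non-triviality part of Assumption~\ref{a:initial_state_pd}).

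With finiteness of both maxima in hand, I would apply the elementary inequality $\sup_{A}g-\sup_{A}h\le\sup_{A}(g-h)$, valid for any set $A$ on which the two suprema are finite, with $A=[-L,L]$, $g=\fh_t(\,\bigcdot\,)$ and $h=\fh_s(\,\bigcdot\,)$, to get
\[
\Max(\fh_t^L)-\Max(\fh_s^L)\;\le\;\sup_{|x|\le L}\bigl(\fh_t(x)-\fh_s(x)\bigr).
\]
By Lemma~\ref{lem:Holder-time}, applied with the given $\delta\in(0,\tfrac13)$, $L$, $T_0$ and $T$, there is an almost surely finite random variable $\CC$ such that $\bigl|\fh_t(x)-\fh_s(x)\bigr|\le\CC\,|t-s|^{1/3-\delta}$ for all $|x|\le L$ and all $s,t\in[T_0,T]$; hence the right-hand side above is at most $\CC\,|t-s|^{1/3-\delta}$. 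Exchanging the roles of $s$ and $t$ gives the matching bound for $\Max(\fh_s^L)-\Max(\fh_t^L)$, and the two together yield \eqref{eq:max_Holder} with this $\CC$.

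There is no substantial obstacle in this argument: all the work has already been done in Lemmas~\ref{lem:KPZ_is_bounded} and~\ref{lem:Holder-time}, and the present statement is the routine passage from pointwise-in-space control to control of the maximum over the compact spatial window $[-L,L]$. The only points deserving a sentence of care---and the reason it is the truncation $\fh_t^L$ rather than $\fh_t$ itself whose maximum is treated here---are that the relevant maxima must be finite real numbers for the difference-of-suprema manipulation to be legitimate; this is exactly what Lemma~\ref{lem:KPZ_is_bounded} and the variational representation provide.
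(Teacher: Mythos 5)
Your proposal is correct and follows essentially the same route as the paper: the paper also deduces the lemma directly from Lemma~\ref{lem:Holder-time}, bounding $\Max(\fh^L_t)-\Max(\fh^L_s)$ by $\fh_t(x^*_t)-\fh_s(x^*_t)$ at a maximizer $x^*_t\in[-L,L]$ (your sup-difference inequality is the same step) and concluding by symmetry. Your additional remarks on finiteness of the maxima via Lemma~\ref{lem:KPZ_is_bounded} are a harmless elaboration of what the paper takes for granted.
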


\begin{proof}
This is a quick consequence of Lemma~\ref{lem:Holder-time} on locally spatially uniform temporal H\"older regularity. Letting $x_t^*\in [-L,L]$ be the maximizer of $\fh_t^L$, we see that
$$\Max(\fh^L_{t}) - \Max(\fh_{s}^L) \leq \fh_t(x^*_t) - \fh_s(x^*_t) \leq \mc C |t-s|^{\frac{1}{3}-\delta} \, ,$$
where the almost surely finite constant $\mc C$ depends only on $\delta$, $L$, $T_0$, and $T$ in view of Lemma~\ref{lem:Holder-time}. By symmetry, this completes the proof.
\end{proof}


 \section{The upper bound on the twin peaks probability}
 \label{sec:TwinPeaks}

Here we prove Theorem~\ref{thm:densities2-intro}, a rigorous rendering of Item~\ref{heur:2} in Section~\ref{sec:heuristic}, and a result that immediately yields Theorem~\ref{thm:johansson}. We derive Theorem~\ref{thm:densities2-intro} from a bound on the density for the presence of two maximizers of the KPZ fixed point $\fh_t$ at two given spatial points. The latter is computed in Proposition~\ref{prop:densities} by careful differentiation of the formula \eqref{eq:fpform}. We note that Theorem~\ref{thm:densities} follows from Proposition~\ref{prop:densities}. We refer to Figure~\ref{fig:outline_upper} for the structure of this section.

\begin{center}
  \footnotesize
  \begin{figure}[h]
  \begin{tikzpicture}[scale=0.9, auto,
  	block_main/.style = {rectangle, draw=black, thick, fill=white, text width=17em, text centered, minimum height=4em, font=\small},
	block_density1/.style = {rectangle, draw=black, fill=white, thick, text width=11em, text centered, minimum height=4em},
	block_density2/.style = {rectangle, draw=black, fill=white, thick, text width=10em, text centered, minimum height=4em},
	block_density3/.style = {rectangle, draw=black, fill=white, thick, text width=11em, text centered, minimum height=4em},
	block_rewrite/.style = {rectangle, draw=black, fill=white, thick, text width=17em, text centered, minimum height=4em},
	block_kernels/.style = {rectangle, draw=black, fill=white, thick, text width=18em, text centered, minimum height=4em},
        line/.style ={draw, thick, -latex', shorten >=0pt}]
      		\node [block_main] (upper_bound) at (-4,0) {The upper bound on the twin peaks probability (Theorem~\ref{thm:densities2-intro})};
		\node [block_main] (density) at (3,0) {Existence and properties of the density of two maximizers and the maximal value (Proposition~\ref{prop:densities})};
		\node [block_density1] (scaling) at (-4,-2.7) {Scaling property of the density, following from scaling of the KPZ fixed point (Section~\ref{ss:scaling_of_P})};
		\node [block_density2] (existence) at (1.5,-2.7) {Existence of the density, relying on double differentiation of the KPZ fixed point formula (Section~\ref{ss:first_deriv})};
		\node [block_density3] (bound) at (6.5,-2.7) {A bound on the density, obtained from the formula computed in Section~\ref{ss:first_deriv} (Section~\ref{ss:bound_on_P})};
		\node [block_rewrite] (rewrite) at (-2,-5.5) {Rewriting the prelimiting density in terms of distribution functions of the KPZ fixed point (Section~\ref{sec:KPZ_application})};
		\node [block_kernels] (kernels) at (5,-5.5) {Convergence of kernels in the trace class norm, which allows differentiation of Fredholm determinants (Lemma~\ref{lem:deriv1} and auxiliary results provided in Lemmas~\ref{lem:Q_limits}--\ref{lem:A_formulas} and Section~\ref{sec:Auxiliary})};
		
    \begin{scope}[every path/.style=line]
		\path (density) -- (upper_bound);
		\path (existence) -- (density);
		\path (scaling) -- (density);
		\path (bound) -- (density);
		\path (existence) -- (bound);
		\path (rewrite) -- (existence);
		\path (rewrite) -- (scaling);
		\path (kernels) -- (existence);
    \end{scope}
  \end{tikzpicture}
  \captionof{figure}{Structure of Section \ref{sec:TwinPeaks}.}
  \label{fig:outline_upper}
  \end{figure}
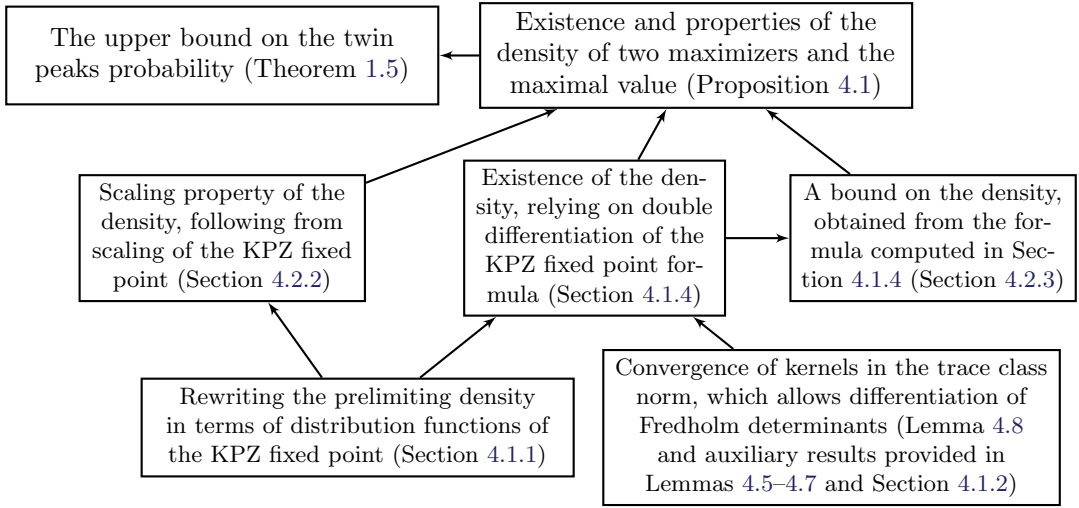
\end{center}

In order to use the formula \eqref{eq:fpform}, it is convenient to restrict the spatial variable $x$ of the KPZ fixed point $\fh_t(x)$ to a finite interval. Doing so permits the  use of the kernel on the right-hand side of \eqref{eq:Kd} before we take the limits. This kernel has the advantage of being given by an explicit formula. For fixed  $\beta \in (0,1)$, and $L > 0$, we denote
\begin{equation}\label{eq:setJ}
\bJ_{L} := [-\beta L, \beta L].
\end{equation}
We will estimate the probability that the KPZ fixed point has two near maximizers that differ by at least some fixed $A > 0$. To this end, for a function $f : \R \to \R \cup \{-\infty\}$ that is bounded from above, and for three values $A, L, \eps > 0$, we specify the set
 \begin{equation}\label{eq:set_S}
 \TPS{A, L}^{\eps}(f) := \bigl\{(x_1, x_2) \in \bJ_{L}^2 : x_2 - x_1 \geq A,\; \Max(f) - f(x_i) \leq \eps ~\text{for}~ i=1,2\bigr\}.
 \end{equation}
 Then we define the set
 \begin{equation}\label{eq:setTP}
 	\TP{A, L}^{\eps} := \left\{ f : \R \to \R \cup \{-\infty\} : \Max(f) \in \bJ_{L^{1/2}}, \; \TPS{A, L}^{\eps}(f) \neq \emptyset\right\}.
 \end{equation}
In particular, if $L \to \infty$ and $\eps = 0$ then $\TP{A} := \TP{A, \infty}^{0}$ contains functions $f$ which attain their maxima at two points at distance at least $A$. In the case that $L$ tends to infinity, we denote the set $\TP{A}^\eps := \TP{A, \infty}^{\eps}$, which coincides  with the twin peaks set $\TPno^\eps$ after the limit $A \to 0$ is taken.

The restriction that the maximum of $f \in \TP{A, L}^{\eps}$ lies in a bounded interval $\bJ_{L^{1/2}}$ is rather technical and simplifies computations in the proof of Proposition~\ref{prop:densities}.
We do not simply take $\beta = 1$ and $\bJ_{L} = [- L,  L]$, because then we would have to deal with divergences of the type $\smash{(L \pm \fx)^{-1 / 2}}$ in some upcoming bounds such as \eqref{eqs:D_bounds}. Having $\beta < 1$ slightly simplifies our computations; in particular, it allows us to bound $\smash{(L \pm \fx)^{-1 / 2}}$ by the constant $L^{-1 / 2} (1 - \beta)^{-1 / 2}$.
That the spatial variable lies in an interval of length proportional to $L$ while the maximum value lies  in an interval of length proportional to $L^{1/2}$ corresponds to the 1:2:3 scaling invariance of the KPZ fixed point provided in Proposition~\ref{prop:sym}(\ref{123a}).

Next, we derive an upper bound on the probability that the KPZ fixed point maximum lies close to a given value and is nearly adopted at locations close to two given points. For $\eps_i, \delta_i > 0$, set $\eta_i = (\eps_i, \delta_i)$. For $t, L > 0$ and $M \in \R$, define the function $\dens_{t, M, L}^{(\eta_1; \eta_2)} : \bJ_L^2 \to [0,1]$ according to \eqref{eq:density}. 
%
%
To compute the limit we are going to use the notion \eqref{eq:mylim}. The choice of the domain \eqref{eq:Dom} is dictated by our proof of Lemma~\ref{lem:Q_limits}. 

\begin{prop}\label{prop:densities}
For $\eta_i = (\eps_i, \delta_i)$ with positive components, define $|\eta_i| = \eps_i \delta_i > 0$. Assume that the initial data of the KPZ fixed point satisfies $\fh_0(x) \leq \alpha$ for some $\alpha\in \mathbb{R}$ and for all $x \in \R$. For every fixed $t > 0$, $L > 0$ and $A > 0$ the  following  limits exist:
\begin{subequations}\label{eqs:densities}
\begin{align}
	\dens^{(\eta_1; \star)}_{t, M, L}(x_1, x_2) &:= \mylim{\eta_2 \searrow 0} \frac{1}{|\eta_2|} \dens^{(\eta_1; \eta_2)}_{t, M, L}(x_1, x_2),\label{e:densities1}\\
	\dens_{t, M, L}(x_1, x_2) &:= \mylim{\eta_1 \searrow 0} \frac{1}{|\eta_1|} \dens^{(\eta_1; \star)}_{t, M, L}(x_1, x_2),\label{e:densities}
\end{align}
\end{subequations}
uniformly over $x_1, x_2 \in \bJ_{L}$, such that $x_2 - x_1 \geq A$, and locally uniformly over $M \in \R$; the limit \eqref{e:densities1} holds uniformly over $\eta_1 \in (0, \frac{A}{2}]^2$, i.e., for $\eps_1$ and $\delta_1$ in $(0, \frac{A}{2}]$.

Furthermore, the following limit holds pointwise 
\begin{equation}\label{eq:P_limit}
\dens_{1, M}(x_1, x_2) := \lim_{L \to \infty} \dens_{1, M, L}(x_1, x_2).
\end{equation}

The function $\dens_{t, M, L}(x_1, x_2)$ is continuous in $M \in \R$ and $x_1, x_2 \in \bJ_{L}$ such that $x_1 < x_2$, and satisfies the scaling identity
\begin{equation}\label{e:densities_scale}
\dens_{t, M, L}(x_1, x_2) = t^{-2} \dens_{1, t^{-1/3} M, t^{-2/3} L} \bigl(t^{-\frac{2}{3}} x_1, t^{-\frac{2}{3}} x_2\bigr),
\end{equation}
where the right-hand function is defined by \eqref{e:densities} with the initial data $\fh_0^{(t)}(x) := t^{-1/3}\fh_0(t^{2/3} x)$.

Finally, for any $\bar L > 0$, there exists a constant $C \geq 0$, depending on $L$, $\bar L$ and $\alpha$, such that 
\begin{equation}\label{eq:P_bound}
\dens_{1, M, L}(x_1, x_2) \leq C |x_2 - x_1|^{-\frac{3}{2}},
\end{equation}
for any  $x_1, x_2 \in \bJ_L$ such that $x_1 < x_2$, and any $|M| \leq \bar L$.
\end{prop}

To prove this result, we write the function $\dens_{t, M, L}^{(\eta_1; \eta_2)}(x_1, x_2)$ from \eqref{eq:density} in terms of distribution functions of the form \eqref{eq:fpform}. We then take the limits in \eqref{eqs:densities}, a task, undertaken in  Section~\ref{sec:KPZ_application}, that amounts to a double differentiation of a Fredholm determinant. In order to use Lemma~\ref{lem:det},  we prove in Section~\ref{sec:trace} that the involved kernels are trace class, and we compute  in Section~\ref{ss:first_deriv} these derivatives of the Fredholm determinant. Finally, we prove properties of the limit \eqref{e:densities} in Section~\ref{sec:properties_of_P}.

\subsection{Proof of Proposition~\ref{prop:densities}}

\subsubsection{Rewriting the probability}
\label{sec:KPZ_application}

The scaling property, Proposition~\ref{prop:sym}(\ref{123a}), of the KPZ fixed point  implies that, for the initial data $\fh_0$, the probability \eqref{eq:density} can be written as
\begin{align}
\MoveEqLeft[11]
\dens_{t, M, L}^{(\eta_1; \eta_2)}(x_1, x_2) = \P_{\fh^{(t)}_0}\Big(\text{For each }i\in \{1,2\},\,\, \fh_{1}(t^{-\frac{2}{3}} y_i) >\,  t^{-\frac{1}{3}}(M - \eps_i) \text{ for some } \\
&y_i \in [x_i, x_i + \delta_i), \text{and } \fh_{1}(t^{-\frac{2}{3}} z) \leq t^{-\frac{1}{3}} M \text{ for } z \in [-L, L]\Big), \label{eq:density2}
\end{align}
where $\fh_0^{(t)}(x) := t^{-1/3}\fh_0(t^{2/3} x)$. Note that if $\fh_0(x) \leq \alpha$ for all $x \in \R$, then $\fh_0^{(t)}(x) \leq t^{-1/3} \alpha$ for all $x \in \R$. Hence, it is sufficient to prove the limits \eqref{eqs:densities} for $t = 1$. Indeed, we will prove that the limits
\begin{subequations}\label{eqs:limitsF}
\begin{align}
\dens^{(\eta_1; \star)}_{1, M, L}(x_1, x_2) &= \mylim{\eta_2 \searrow 0} \frac{1}{|\eta_2|} \dens^{(\eta_1; \eta_2)}_{1, M, L}(x_1, x_2),\label{eq:limitF1}\\
\dens_{1, M, L}(x_1, x_2) &= \mylim{\eta_1 \searrow 0} \frac{1}{|\eta_1|} \dens^{(\eta_1; \star)}_{1, M, L}(x_1, x_2), \label{eq:limitF2}
\end{align}
\end{subequations}
exist uniformly as in \eqref{eqs:densities}, where the limit \eqref{eq:limitF1} holds uniformly over $\eta_1 \in (0, \frac{A}{2}]^2$. To make our notation lighter, we prefer to fix the values of $L$, $\fM$, $\fx_1$ and $\fx_2$ throughout this proof and not to indicate the dependence of various kernels on them.

We can write the function $\dens^{(\eta_1; \eta_2)}_{1, M, L}$ using the KPZ fixed point formula \eqref{eq:fpform}. For this, we need to write the event in \eqref{eq:density} in terms of the distribution function as on the left-hand side of \eqref{eq:fpform}. For $x_1, x_2 \in \bJ_L$ and for $\eta_i = (\eps_i, \delta_i) \geq 0$---a notation indicating that the components are non-negative---such that $\delta_1, \delta_2 < x_2 - x_1$, we define the function
\begin{equation}\label{eq:g_def}
 \fg_{\eta_1, \eta_2}(y) := \fM - \eps_1 \cdot \1{y \in [\fx_1, \fx_1 + \delta_1)} - \eps_2 \cdot \1{y \in [\fx_2, \fx_2 + \delta_2)}.
\end{equation}
Then our assumption on $\delta_i$ implies $[\fx_1, \fx_1 + \delta_1) \cap [\fx_2, \fx_2 + \delta_2) = \emptyset$. Using the inclusion-exclusion formula, the probability \eqref{eq:density} with $t = 1$ can be written as
\begin{equation}
\begin{split}
\dens_{1, \fM, L}^{(\eta_1; \eta_2)}(\fx_1, \fx_2) 
&=\P_{\fh_0}\Bigl( \fh_1(y) \leq \fg_{0, 0}(y) \text{ for } y \in [-L, L] \Bigr)\\
&\quad- \P_{\fh_0}\Bigl( \fh_1(y) \leq \fg_{\eta_1, 0}(y) \text{ for } y \in [-L, L] \Bigr)\\
&\quad- \P_{\fh_0}\Bigl( \fh_1(y) \leq \fg_{0, \eta_2}(y) \text{ for } y \in [-L, L] \Bigr)\\
&\quad+ \P_{\fh_0}\Bigl( \fh_1(y) \leq \fg_{\eta_1, \eta_2}(y) \text{ for } y \in [-L, L] \Bigr),
\end{split}\label{eq:density_approx}
\end{equation}
where we write $\eta_i = 0$ if $\eps_i = \delta_i = 0$.
To evaluate these probabilities, we use the KPZ fixed point formula \eqref{eq:fpform}. An advantage in restricting the spatial variable to the interval $[-L, L]$ is that we can use the prelimiting version of formula \eqref{eq:Kd} for the kernel in the KPZ fixed point formula \eqref{eq:fpform}. All Fredholm determinants are computed over $L^2(\R)$ and our notation omits reference to this.

Before using formula \eqref{eq:fpform}, we need to make several definitions. For $\ff \in \UC$ and $\ell_1 < \ell_2$, define the restriction of $\ff$ to $[\ell_1,\ell_2]$ by
\begin{equation}
\ff^{[\ell_1,\ell_2]}(x) :=
\begin{cases}
\ff(x), &\text{for}~x \in [\ell_1,\ell_2],\\
-\infty, &\text{otherwise}.
\end{cases}
\end{equation}
Then we define the operator on the right-hand side of \eqref{eq:Kd} before taking the limits
\begin{equation}\label{eq:Kd_prelimit}
\fK^{\hypo(\ff)}_{t, [\ell_1,\ell_2]} := \Gamma_t (\fT_{t,\ell_1})^*\, \fP_{[\ell_1,\ell_2]}^{\hit(\ff^{[\ell_1,\ell_2]})}\fT_{t,-\ell_2} \Gamma_t.
\end{equation}
The counterpart $\epi$-operator is defined by \eqref{eq:Kd_epi}. We use the shorthand
 \begin{equation}\label{eq:fR}
 \fR_L^{(\eta_1; \eta_2)} := \fK^{\epi(\fg_{\eta_1, \eta_2})}_{1/2, [-L,L]},
 \end{equation}
 where $\ff \in \UC$ and $\fg \in \LC$. 
 Using the KPZ fixed point formula \eqref{eq:fpform}, we can then write the probabilities in \eqref{eq:density_approx} in the form
\begin{align}
\dens_{1, \fM, L}^{(\eta_1; \eta_2)}(\fx_1, \fx_2) &= \det \Bigl(I-\fK^{\hypo(\fh_0)}_{1/2} \fR_L^{(0; 0)}\Bigr) - \det\Bigl(I-\fK^{\hypo(\fh_0)}_{1/2} \fR_L^{(\eta_1; 0)}\Bigr)\\
&\qquad\qquad - \det \Bigl(I-\fK^{\hypo(\fh_0)}_{1/2} \fR_L^{(0; \eta_2)}\Bigr) + \det \Bigl(I-\fK^{\hypo(\fh_0)}_{1/2} \fR_L^{(\eta_1; \eta_2)}\Bigr),\label{eq:P_det}
\end{align}
where as before we write $\eta_i = 0$ if $\eps_i = \delta_i = 0$. Dependence on $\fx_1$, $\fx_2$ and $\fM$ of the right-hand side is through the function $\fg_{\eta_1, \eta_2}$ in the kernel \eqref{eq:fR}. 

One can see that computation of the iterated limits \eqref{e:densities} boils down to double differentiation of a Fredholm determinant. More precisely, for a function $f : \R^2 \to \R$, we denote the limit
\begin{equation}\label{eq:deriv_eta}
D_\eta f(\eta) := \mylim{\bar \eta \searrow 0} \,\frac{f(\eta + \bar \eta) - f(\eta)}{|\bar \eta|},
\end{equation}
provided it exists, where $\bar \eta \in \Dom$ is such that $|\bar \eta| > 0$ (recall the definition of the domain \eqref{eq:Dom} and the limit \eqref{eq:mylim}). Then, dividing \eqref{eq:P_det} by $|\eta_1| |\eta_2|$ and taking the limits as in \eqref{eqs:limitsF}, we get
\begin{equation}
\begin{split}
\dens^{(\eta_1; \star)}_{1, M, L}(x_1, x_2) &= D_{\eta_2} \det \Bigl(I-\fK^{\hypo(\fh_0)}_{1/2} \fR_L^{(\eta_1; \eta_2)}\Bigr) \Bigr|_{\eta_2 = 0}\\
&\qquad- D_{\eta_2} \det \Bigl(I-\fK^{\hypo(\fh_0)}_{1/2} \fR_L^{(0,  \eta_2)}\Bigr) \Bigr|_{\eta_2 = 0},\\[4pt]
\dens_{1, \fM, L}(\fx_1, \fx_2) &= D_{\eta_1} D_{\eta_2} \det \Bigl(I-\fK^{\hypo(\fh_0)}_{1/2} \fR_L^{(\eta_1; \eta_2)}\Bigr) \Bigr|_{\eta_1= \eta_2 = 0},
\end{split}\label{eqs:det_derivs}
\end{equation}
where the limits are uniform over $x_1$, $x_2$ and $M$ as in \eqref{e:densities}.

We compute the derivatives \eqref{eqs:det_derivs} in Section~\ref{ss:first_deriv}, making use of Lemma~\ref{lem:det} for differentiating Fredholm determinants. The required limits of the kernels $\fR_L^{(\eta_1; \eta_2)}$ in trace norm are computed in Lemma~\ref{lem:deriv1}, which relies on properties of auxiliary kernels provided in Lemmas~\ref{lem:Q_limits}--\ref{lem:A_formulas}.

 \subsubsection{Auxiliary results}
 \label{sec:Auxiliary}

The following lemmas provide some asymptotics, which we will use in the forthcoming section.

\begin{lemma}\label{lem:g_converges}
For some $a > 0$ and every $0 \leq \delta \leq a$, let the functions $g_{\delta} : [0, \infty) \to \R$ be equibounded with $g_\delta(0) = 0$. Moreover, assume that $g_\delta$ is continuously differentiable on $[0, \infty)$ and that the sequence $g'_\delta$ converges locally uniformly on $[0, \infty)$ as $\delta \searrow 0$ to a continuous function $g'_0 : [0, \infty) \to \R$. Then for every $\alpha > 0$ the following limit holds locally uniformly in $v \in [0, \infty)$:
\begin{equation}\label{eq:g_converges}
	\lim_{\delta \searrow 0} \frac{1}{\delta^\alpha} g_\delta (\delta^\alpha v) = g'_0(0) v.
\end{equation}
Moreover, there is $\delta_\star > 0$ and a constant $C \geq 0$, depending only on $\alpha$ and $\delta_\star$, such that
\begin{equation}\label{eq:g_is_bounded}
	\sup_{0 \leq \delta \leq \delta_\star} \sup_{0 < v \leq \delta^\alpha} \frac{|g_\delta(v)|}{v} \leq C.
\end{equation}
\end{lemma}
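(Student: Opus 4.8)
The plan is to reduce both assertions to the fundamental theorem of calculus. Since $g_\delta(0)=0$ and $g_\delta \in C^1([0,\infty))$, one has $g_\delta(x) = \int_0^x g'_\delta(u)\,\d u$ for all $x \ge 0$ and all $0 \le \delta \le a$; this single representation will drive everything, and in particular will handle the only mildly delicate point, namely getting \emph{locally uniform} rather than merely pointwise convergence in \eqref{eq:g_converges}.

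For \eqref{eq:g_converges}: fixing $\alpha>0$ and substituting $u = \delta^\alpha w$, I would first record the identity
\[
\frac{1}{\delta^\alpha}\,g_\delta(\delta^\alpha v) \;=\; \int_0^v g'_\delta(\delta^\alpha w)\,\d w
\qquad (\delta>0,\ v \ge 0).
\]
It then suffices to show that $g'_\delta(\delta^\alpha w) \to g'_0(0)$ as $\delta \searrow 0$ uniformly over $w$ in a fixed compact interval $[0,V]$, since integrating that uniform convergence over $w \in [0,v]\subseteq[0,V]$ yields $\tfrac{1}{\delta^\alpha}g_\delta(\delta^\alpha v) \to g'_0(0) v$ uniformly in $v \in [0,V]$. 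To prove the uniform convergence of the integrand I would split
\[
\bigl| g'_\delta(\delta^\alpha w) - g'_0(0) \bigr| \le \bigl| g'_\delta(\delta^\alpha w) - g'_0(\delta^\alpha w) \bigr| + \bigl| g'_0(\delta^\alpha w) - g'_0(0) \bigr| ;
\]
for $\delta$ small enough that $\delta^\alpha V \le 1$, the first term is at most $\sup_{s\in[0,1]}|g'_\delta(s)-g'_0(s)|\to 0$ by the assumed locally uniform convergence of the derivatives, and the second is at most $\sup_{s\in[0,\delta^\alpha V]}|g'_0(s)-g'_0(0)|\to 0$ by continuity of $g'_0$ at $0$; both bounds are independent of $w\in[0,V]$, which gives the required uniformity.

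For \eqref{eq:g_is_bounded}: using the locally uniform convergence of $g'_\delta$ together with the local boundedness of the continuous function $g'_0$, I would pick $\delta_0\le a$ with $K := \sup_{0\le\delta\le\delta_0}\sup_{s\in[0,1]}|g'_\delta(s)| < \infty$, and set $\delta_\star := \min(\delta_0,1)$, so that $\delta_\star^\alpha\le 1$. Then for $0\le\delta\le\delta_\star$ and $0<v\le\delta^\alpha\le 1$ the integral representation gives $|g_\delta(v)| \le \int_0^v|g'_\delta(u)|\,\d u \le Kv$, i.e. $|g_\delta(v)|/v \le K$, which is \eqref{eq:g_is_bounded} with $C := K$.

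I do not anticipate any genuine obstacle: the lemma is a routine real-analysis fact, and the only thing to watch is the uniformity in $v$ in \eqref{eq:g_converges}, which the passage to the integrated form handles automatically since a supremum bound on an integrand transfers directly to the integral. (Note that the equiboundedness hypothesis on the family $\{g_\delta\}$ itself is not used for this particular statement---only $g_\delta(0)=0$, the $C^1$ property, and the convergence of the derivatives are needed.)
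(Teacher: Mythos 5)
Your proof is correct and follows essentially the same route as the paper: writing $g_\delta$ via the fundamental theorem of calculus and exploiting the locally uniform convergence of $g'_\delta$ together with continuity of $g'_0$ at $0$, with the uniformity in $v$ coming for free from bounding the integrand. The only (harmless) differences are cosmetic: you change variables inside the integral and prove \eqref{eq:g_is_bounded} directly from the integral representation, whereas the paper keeps the original variable and deduces \eqref{eq:g_is_bounded} from the limit \eqref{eq:g_converges}.
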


\begin{proof}
Using properties of the functions $g_\delta$, for every $v \geq 0$ we have
\begin{equation}
\frac{1}{\delta^\alpha} g_\delta (\delta^\alpha v) = \frac{1}{\delta^\alpha} \bigl(g_\delta (\delta^\alpha v) - g_\delta(0)\bigr) = \frac{1}{\delta^\alpha} \int_0^{\delta^\alpha v} g'_\delta (r) \d r.
\end{equation}
This yields, for any $R > 0$,
\begin{align}
\sup_{0 \leq v \leq R}\left|\frac{1}{\delta^\alpha} g_\delta (\delta^\alpha v) - g_0' (0) v \right|&= \sup_{0 \leq v \leq R} \left|\frac{1}{\delta^\alpha} \int_0^{\delta^\alpha v} \bigl(g'_\delta (r) - g'_0 (0) \bigr) \d r \right|\\
&\leq R \sup_{0 \leq r \leq \delta^\alpha R} \bigl| g'_\delta (r) - g'_0 (0) \bigr|,
\end{align}
which vanishes as $\delta \searrow 0$, because $g'_\delta$ converge locally uniformly to $g_0'$.

The bound \eqref{eq:g_is_bounded} follows from the limit \eqref{eq:g_converges}.
\end{proof}

\begin{lemma}\label{lem:heat2}
For some $a > 0$ and every $0 < \delta \leq a$, let $g_{\delta} : [0, \infty) \to \R$ be a bounded continuous function, such that $g_\delta$ converge locally uniformly as $\delta \searrow 0$ to a continuous function $g_0 : [0, \infty) \to \R$. Then, for any $\alpha > 0$,
\begin{equation}\label{eq:heat2}
\lim_{\delta \searrow 0} \sup_{0 \leq \eps \leq \delta^\alpha} \int_{0}^\infty \d v\; p_{\delta}(v) g_\delta(v + \eps) = \frac{1}{2} g_0(0),
\end{equation}
where $p_\ell$ is the heat kernel \eqref{eq:heat_kernel}.
\end{lemma}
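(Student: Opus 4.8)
The plan is to exploit the symmetry of the heat kernel \eqref{eq:heat_kernel}, which gives $\int_0^\infty p_\delta(v)\,\d v=\tfrac12$ for every $\delta>0$, to write
\begin{equation}
\int_0^\infty p_\delta(v)\,g_\delta(v+\eps)\,\d v-\tfrac12 g_0(0)=\int_0^\infty p_\delta(v)\bigl(g_\delta(v+\eps)-g_0(0)\bigr)\,\d v ,
\end{equation}
and then to show that the right-hand side tends to $0$ as $\delta\searrow0$, uniformly over $\eps\in[0,\delta^\alpha]$. The guiding heuristic is that $p_\delta$ places essentially all of its mass within $O(\sqrt\delta)$ of the origin, while $\eps\le\delta^\alpha\to0$; hence, for the relevant values of $v$, the argument $v+\eps$ is driven to $0$, so $g_\delta(v+\eps)\to g_0(0)$ by the local uniform convergence $g_\delta\to g_0$ together with the continuity of $g_0$ at $0$. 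It is worth noting that $\delta^\alpha$ may be much larger than $\sqrt\delta$ when $\alpha<\tfrac12$, but this is harmless: all that matters is that $v+\eps\to0$ uniformly over $v$ in the bulk of $p_\delta$ and over $\eps\in[0,\delta^\alpha]$.

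To implement this, I would fix $\eta>0$ and choose parameters in three steps. First, by continuity of $g_0$ at $0$, pick $\rho>0$ with $|g_0(u)-g_0(0)|<\eta$ for $u\in[0,\rho]$. Second, by local uniform convergence of $g_\delta$ to $g_0$ on the compact set $[0,\rho]$, pick $\delta_1>0$ with $\sup_{u\in[0,\rho]}|g_\delta(u)-g_0(u)|<\eta$ for $0<\delta<\delta_1$. Third, using $\delta^\alpha\to0$ and the Gaussian concentration bound $\int_{\rho/2}^\infty p_\delta(v)\,\d v\le e^{-\rho^2/(16\delta)}$, pick $\delta_2>0$ with $\delta^\alpha<\rho/2$ and $\int_{\rho/2}^\infty p_\delta(v)\,\d v<\eta$ for $0<\delta<\delta_2$. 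Then for $0<\delta<\delta_1\wedge\delta_2$ and $\eps\in[0,\delta^\alpha]$ I split the integral at $v=\rho/2$: on $[0,\rho/2]$ one has $v+\eps\in[0,\rho]$, whence $|g_\delta(v+\eps)-g_0(0)|\le|g_\delta(v+\eps)-g_0(v+\eps)|+|g_0(v+\eps)-g_0(0)|<2\eta$, and (using $\int_0^{\rho/2}p_\delta\le\tfrac12$) the contribution is at most $\eta$; on $[\rho/2,\infty)$ I bound $|g_\delta(v+\eps)-g_0(0)|\le K+|g_0(0)|$, where $K:=\sup_{0<\delta\le a}\|g_\delta\|_\infty$, and the contribution is at most $(K+|g_0(0)|)\eta$. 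Altogether
\begin{equation}
\Bigl|\int_0^\infty p_\delta(v)\,g_\delta(v+\eps)\,\d v-\tfrac12 g_0(0)\Bigr|\le\bigl(1+K+|g_0(0)|\bigr)\eta
\end{equation}
for all sufficiently small $\delta$, uniformly in $\eps\in[0,\delta^\alpha]$, and letting $\eta\searrow0$ proves \eqref{eq:heat2}.

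This argument is essentially soft and I do not anticipate a genuine obstacle; the one ingredient beyond elementary manipulations is the uniform boundedness $K<\infty$ of the family $\{g_\delta\}$, which I take to be the intended meaning of the hypothesis (mirroring the equiboundedness required in Lemma~\ref{lem:g_converges}) and which is used only to estimate the negligible tail of $p_\delta$ beyond $\rho/2$. If one merely knew each $g_\delta$ to be bounded, with a bound allowed to grow polynomially in $\delta^{-1}$, the same conclusion would still follow because $\int_{\rho/2}^\infty p_\delta$ decays like $e^{-\rho^2/(16\delta)}$; I would record this as a remark but not belabour it.
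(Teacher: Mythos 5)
Your proof is correct and follows essentially the same route as the paper's: both subtract $\tfrac12 g_0(0)$ using $\int_0^\infty p_\delta(v)\,\d v=\tfrac12$ and then exploit the concentration of $p_\delta$ near the origin together with the local uniform convergence $g_\delta\to g_0$ and continuity of $g_0$ at $0$. The only difference is technical: the paper rescales $v=\sqrt{\delta}\,w$ and invokes dominated convergence (which, as your remark correctly notes, also implicitly relies on equiboundedness of the family $\{g_\delta\}$), whereas you split the integral at $v=\rho/2$ and estimate the Gaussian tail explicitly.
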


\begin{proof}
Denoting $p(u) := p_1(u)$, using the scaling property of the heat kernel $p_{\delta}(\sqrt{\delta} u) = p(u) / \sqrt \delta$, and changing the integration variable $v = \sqrt \delta w$, the integral in \eqref{eq:heat2} can be written as
\begin{align}\label{eq:change_of_variables}
	\int_{0}^\infty \d w\; p(w) g_\delta(\sqrt \delta w + \eps).
\end{align}
Since $\int_{0}^\infty \d w\; p(w) = \frac{1}{2}$, we can write
\begin{equation}
	\int_{0}^\infty \d v\; p_{\delta}(v) g_\delta(v + \eps) - \frac{1}{2} g_0(0) = \int_{0}^\infty \d w\; p(w) \bigl(g_\delta(\sqrt \delta w + \eps) - g_0(0)\bigr).
\end{equation}
The absolute value of this expression is bounded by
\begin{equation}
	\int_{0}^\infty \d w\; p(w) \sup_{0 \leq \eps \leq \delta^\alpha} \bigl|g_\delta(\sqrt \delta w + \eps) - g_0(0)\bigr|.
\end{equation}
Then boundedness of the functions $g_\delta$ and fast decay of $p$ at infinity allow us to apply the dominated convergence theorem and to conclude that the limit $\delta \searrow 0$ of this expression equals
\begin{equation}
	\int_{0}^\infty \d w\; p(w) \lim_{\delta \searrow 0} \sup_{0 \leq \eps \leq \delta^\alpha} \bigl|g_\delta(\sqrt \delta w + \eps) - g_0(0)\bigr| = 0.
\end{equation}
From this the limit \eqref{eq:heat2} follows.
\end{proof}

For $\ell_1 < \ell_2$ and $a \in \R$, we define the density function
\begin{equation}\label{e:Theta_def}
\Theta^{(a)}_{[\ell_1, \ell_2]}(u_1, u_2) := \P_{\fB(\ell_1)=u_1, \fB(\ell_2) = u_2}\Bigl(\fB(y) > a \text{ for } y \in [\ell_1,\ell_2]\Bigr) p_{\ell_2 - \ell_1}(u_2 - u_1),
\end{equation}
where $\fB$ is a Brownian motion with variance two, whose transition kernel $p_\ell$ is defined in \eqref{eq:heat_kernel}. In other words, $\Theta^{(a)}_{[\ell_1, \ell_2]}(u_1, u_2)$ may be interpreted as the probability that Brownian motion  moves from $(\ell_1, u_1)$ to $(\ell_2, u_2)$ while staying strictly above $a$. By the reflection principle, we may compute \eqref{e:Theta_def} explicitly:
\begin{equation}\label{e:Theta_formula}
\Theta^{(a)}_{[\ell_1, \ell_2]}(u_1, u_2) := \Bigl(p_{\ell_2 - \ell_1}(u_2 - u_1) - p_{\ell_2 - \ell_1}(u_1 + u_2 - 2a)\Bigr) \cdot \1{u_1, u_2 > a}.
\end{equation}

For fixed $u_1, u_2 > -M$ and for $\delta > 0$, let us define the functions
\begin{equation}\label{eq:functions_f_g}
	f_\delta(v) = \fP_{[-L,\fx_2 + \delta]}^{\nohit\,(-\fg_{\eta_1, 0})} (u_1, v), \qquad g_{\delta}(v) = \Theta^{(-\fM)}_{[x_2 + \delta, L]} (v, u_2).
\end{equation}
It is important that the function $f_\delta$ depends on $\eta_1$, which we however prefer not to indicate in our notation. We can prove that these functions have the properties stated in the preceding lemmas.

\begin{lemma}\label{lem:f_g}
The function $f_\delta$ is differentiable on $[-M, \infty)$ and both $f_\delta$ and $f'_\delta$ are continuous in $v \in [-M, \infty)$ and equibounded in $v \in [-M, \infty)$ and $u_1 \in \R$, for $\delta \in [0,1]$. Moreover, $f_\delta(v) = 0$ for $v \leq 0$.

Furthermore, if $x_2 - x_1 \geq A$, then for $0 \leq \delta < L - x_2$ the functions $g_{\delta} (\,\bigcdot\, - M)$ have the properties listed in Lemmas~\ref{lem:g_converges} and \ref{lem:heat2}, uniformly in $u_2 \in \R$.
\end{lemma}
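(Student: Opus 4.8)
The plan is to reduce both $f_\delta$ and $g_\delta(\,\bigcdot\,-M)$ to explicit integrals of Gaussian densities: split a Brownian bridge at the (finitely many) points where the barrier changes level, using the Markov property, and apply the reflection principle on each constant-barrier piece. After this reduction every claimed property becomes a routine estimate on heat kernels $p_\ell$ with $\ell$ ranging over a fixed compact subinterval of $(0,\infty)$.

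For $f_\delta$, recall that the barrier $-\fg_{\eta_1,0}$ equals the constant $-M$ off the half-open interval $[x_1,x_1+\delta_1)$, while the standing assumption $\delta_1<x_2-x_1$ places $x_1+\delta_1$ strictly to the left of the right endpoint $x_2+\delta$. Conditioning the bridge at $x_1+\delta_1$ (Markov property), I would write
\[
 f_\delta(v)=\int_{\R}\phi(w)\,\Theta^{(-M)}_{[x_1+\delta_1,\,x_2+\delta]}(w,v)\,\d w,\qquad
 \phi(w):=\fP^{\nohit(-\fg_{\eta_1,0})}_{[-L,\,x_1+\delta_1]}(u_1,w),
\]
where $\phi\ge 0$ is independent of $v$ and of $\delta$, satisfies $\int_{\R}\phi\le 1$, and $\Theta^{(-M)}_{[x_1+\delta_1,x_2+\delta]}$ is given by the reflection formula \eqref{e:Theta_formula} with time-length $\ell:=x_2+\delta-x_1-\delta_1$. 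The indicator $\mathbf 1_{\{v>-M\}}$ in \eqref{e:Theta_formula} immediately gives $f_\delta(v)=0$ for $v\le -M$. For $v>-M$ the dependence on $v$ enters only through the two smooth Gaussians $p_\ell(v-w)$ and $p_\ell(w+v+2M)$; since $\ell$ stays in a compact subinterval of $(0,\infty)$ as $\delta$ runs over $[0,1]$, the kernels $p_\ell$ and $p'_\ell$ are uniformly bounded, and differentiation under the integral sign — legitimate via a $\delta$-independent dominating function of the form $\phi(w)\,\|p'_\ell\|_\infty$ — yields that $f_\delta$ is $C^1$ (in fact smooth) on $(-M,\infty)$ with $\|f_\delta\|_\infty$ and $\|f'_\delta\|_\infty$ bounded uniformly in $\delta\in[0,1]$. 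Letting $v\downarrow -M$ shows the one-sided limits of $f_\delta$ and $f'_\delta$ exist at $-M$ (the latter reading off from \eqref{e:Theta_formula} as $2\int\phi(w)p'_\ell(-M-w)\,\d w$), so $f_\delta$ and $f'_\delta$ are continuous on all of $[-M,\infty)$, while both vanish for $v<-M$.

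For $g_\delta(\,\bigcdot\,-M)$ the barrier on $[x_2+\delta,L]$ is the single constant $-M$, so \eqref{e:Theta_formula} gives directly, for $w\ge 0$,
\[
 g_\delta(w-M)=\bigl(p_{\ell_\delta}(c-w)-p_{\ell_\delta}(c+w)\bigr)\,\mathbf 1_{\{w>0\}},
\]
with $c:=u_2+M>0$ (using $u_2>-M$ to drop the other indicator) and $\ell_\delta:=L-x_2-\delta$. Ignoring the indicator, the right-hand side is the restriction to $[0,\infty)$ of a function smooth on all of $\R$ that vanishes at $w=0$; hence $g_\delta(\,\bigcdot\,-M)$ is $C^\infty$ on $[0,\infty)$ with value $0$ at the origin, it is bounded, and since $x_2<L$ (as $x_2\in\bJ_L$ with $\beta<1$) the time $\ell_\delta$ stays in a compact subinterval of $(0,\infty)$ for $0\le\delta<L-x_2$ and converges to $L-x_2$ as $\delta\searrow 0$. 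Consequently $g_\delta(\,\bigcdot\,-M)$ is equibounded and both it and its derivative converge locally uniformly as $\delta\searrow 0$ to the $\delta=0$ functions — exactly the hypotheses required in Lemmas~\ref{lem:g_converges} and~\ref{lem:heat2}.

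The only steps needing genuine care are bookkeeping ones: tracking which time-lengths appear in each Gaussian factor and checking that each stays bounded away from $0$ and from $\infty$ uniformly over the stated $\delta$-ranges, so that ``equibounded'' and ``locally uniformly convergent'' hold with constants free of $\delta$; and, for $f_\delta$, the behaviour at the left endpoint, where $f_\delta$ passes from being identically zero to strictly positive and one must extract the one-sided derivative from the reflection formula. Beyond these, I expect no substantial obstacle.
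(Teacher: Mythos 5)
Your argument is correct and follows essentially the same route as the paper: the same Markov-property factorization of $f_\delta$ at $x_1+\delta_1$ (the paper's \eqref{eq:f_explicitly}), the same explicit reflection-principle formula for $g_\delta$, and the same reasoning via heat kernels whose time-lengths are controlled uniformly in $\delta$. Two harmless remarks: you establish $f_\delta(v)=0$ for $v\le -M$, which is what the paper's own proof shows and what is used later (the stated ``$v\le 0$'' reads as a typo), and the time-length $\ell_\delta=L-x_2-\delta$ is bounded away from zero only for $\delta$ in a neighbourhood of $0$ rather than on all of $[0,L-x_2)$, which is all that the hypotheses of Lemmas~\ref{lem:g_converges} and~\ref{lem:heat2} actually require.
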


\begin{proof}
The property $f_\delta(v) = 0$ for $v \leq 0$ follows readily from the definition \eqref{eq:functions_f_g}. Using the kernel \eqref{e:Theta_def}, the Markov property allows to write
\begin{equation}\label{eq:f_explicitly}
	f_\delta(v) = \Bigl(\fP_{[-L,\fx_1 + \delta_1]}^{\nohit\,(-\fg_{\eta_1, 0})} \, *\, \Theta^{(-\fM)}_{[\fx_1 + \delta_1, x_2 + \delta]}\Bigr) (u_1, v),
\end{equation}
where we write ``$*$'' for the convolution of two kernels. Since we have $\fx_1 + \delta_1 < x_2$ and the function $\fP_{[-L,\fx_1 + \delta_1]}^{\nohit\,(-\fg_{\eta_1, 0})}$ is integrable, we conclude that both $f_\delta$ and $f'_\delta$ are continuous in $v \in [-M, \infty)$ and equibounded in $v \in [-M, \infty)$ and $u_1 \in \R$.

Formula \eqref{e:Theta_formula} yields $g_{\delta}(v) = \bigl(p_{L - \fx_2 - \delta}(u_2 - v) - p_{L - \fx_2 - \delta}(v + u_2 + 2 \fM)\bigr) \cdot \1{v, u_2 > -\fM}$, which are continuous and equibounded on $\R$, and converge uniformly as $\delta \searrow 0$ to
\begin{equation}
	g(v) = \Bigl(p_{L - \fx_2}(u_2 - v) - p_{L - \fx_2}(v + u_2 + 2 \fM)\Bigr) \cdot \1{v, u_2 > -\fM}.
\end{equation}
Moreover, $g_{\delta}$ is continuously differentiable on $[-M, \infty)$, whose derivatives converge uniformly on this interval to $g'(v) = - p'_{L - \fx_2}(u_2 - v) - p'_{L - \fx_2}(v + u_2 + 2 \fM)$, where $p'_t(u) = \partial_u p_t(u)$. All these properties hold uniformly in $u_2 \in \R$.
\end{proof}

 \subsubsection{Trace class bounds on kernels}
\label{sec:trace}

The aim of this section is to compute limits of the kernel $\fR_L^{(\eta_1; \eta_2)}$, which are provided in Lemma~\ref{lem:deriv1}. For this, we need to compute limits of some auxiliary kernels, which we do in the forthcoming lemmas.

Let us fix $L > 0$, $\bar L > 0$ and $0 < A < \beta L$, where the constant $\beta$ is from the definition of $\bJ_L$ in \eqref{eq:setJ}; and let us define the sets
 \begin{equation}\label{eq:setsS}
 S = \{(x_1, x_2) \in \bJ^2_L : x_2 - x_1 \geq A \}, \qquad \bar S = \{ M \in \R : |M| \leq \bar L\}.
 \end{equation}
Now we prove that the differentiation of the Fredholm determinants, which appears in the derivation of \eqref{eqs:det_derivs}, can be performed uniformly over $(x_1, x_2) \in S$ and $M \in \bar S$. For this, we use Lemma~\ref{lem:det} and show that the respective operators are differentiable in trace class.

Recall the kernel \eqref{eq:no_hit}. For $L > 0$ and $\eta_i = (\eps_i, \delta_i) \geq 0$,
we denote
\begin{equation}\label{e:Q_def}
\fQ_L^{(\eta_1; \eta_2)}(u_1, u_2) := \fP_{[-L,L]}^{\nohit\,(-\fg_{\eta_1, \eta_2})}(u_1,u_2),
\end{equation}
which depends on $\fM$, $\fx_1$ and $\fx_2$, because the function $\fg_{\eta_1, \eta_2}$ in \eqref{eq:g_def} does.
In the next lemma, we compute this function's derivatives with respect to $\eta_1$ and $\eta_2$.

\begin{lem}\label{lem:Q_limits}
The following limits exist
\begin{subequations}\label{eqs:Q_limits}
\begin{align}
\fQ_L^{(\eta_1; \star)}(u_1, u_2) &:= \mylim{\eta_2 \searrow 0} \frac{1}{|\eta_2|} \Bigl(\fQ_L^{(\eta_1; \eta_2)} - \fQ_L^{(\eta_1; 0)}\Bigr)(u_1, u_2),\label{eq:Q_eps1_0}\\
\fQ_L^{(\star; 0)}(u_1, u_2) &:= \mylim{\eta_1 \searrow 0} \frac{1}{|\eta_1|} \Bigl(\fQ_L^{(\eta_1; 0)} - \fQ_L^{(0; 0)}\Bigr)(u_1, u_2),\label{eq:Q_0_eps2} \\
\fQ_L^{(\star; \star)}(u_1, u_2) &:= \mylim{\eta_1 \searrow 0} \frac{1}{|\eta_1|} \Bigl(\fQ_L^{(\eta_1; \star)} - \fQ_L^{(0; \star)}\Bigr)(u_1, u_2),\label{eq:Q_0_0}
\end{align}
\end{subequations}
uniformly over $u_1, u_2 \in \R$, $(x_1, x_2) \in S$ and $M \in \bar S$; and the limit \eqref{eq:Q_eps1_0} holds uniformly over $\eta_1 \in (0, \frac{A}{2}]^2$. Moreover, these kernels are rank-one and are given by the formulas
\begin{subequations}
\label{eqs:Q_formulas}
\begin{align}
\hspace{0.5cm}\fQ_L^{(\eta_1; \star)}(u_1, u_2) &= -\partial_{2} \fP_{[-L,\fx_2]}^{\nohit\,(-\fg_{\eta_1, 0})} (u_1, -\fM)\, \cdot\,\partial_{1} \Theta^{(-\fM)}_{[\fx_2, L]} (-\fM, u_2),\label{eq:Q_eps1_0_formula}\\
\fQ_L^{(\star; 0)}(u_1, u_2) &= -\partial_{2} \Theta^{(-\fM)}_{[-L, \fx_1]} (u_1, -\fM)\, \cdot\, \partial_{1} \Theta^{(-\fM)}_{[\fx_1, L]} (-\fM, u_2),\label{eq:Q_0_eps2_formula} \\
\fQ_L^{(\star; \star)}(u_1, u_2) &= -\frac{(\fx_2 - \fx_1)^{-\frac{3}{2}}}{\sqrt{4\pi}}\, \cdot\, \partial_{2} \Theta^{(-\fM)}_{[-L, \fx_1]} (u_1, -\fM)\, \cdot\, \partial_{1} \Theta^{(-\fM)}_{[\fx_2, L]} (-\fM, u_2),\label{eq:Q_0_0_formula}
\end{align}
\end{subequations}
where $\partial_1, \partial_2$ are the derivatives with respect to the first and second arguments respectively.
\end{lem}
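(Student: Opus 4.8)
The plan is to reduce all three limits in \eqref{eqs:Q_limits} to a single ``pinching at the barrier'' asymptotic, and then to read off the formulas \eqref{eqs:Q_formulas} by an elementary Gaussian computation. \textbf{Markov decomposition.} First I would use the Chapman--Kolmogorov property of the no-hit kernels \eqref{eq:no_hit} to split $[-L,L]$ at the endpoints of the relevant perturbation interval. Since $\delta_1,\delta_2<\fx_2-\fx_1$ the intervals $[\fx_1,\fx_1+\delta_1)$ and $[\fx_2,\fx_2+\delta_2)$ are disjoint, the barrier $-\fg_{\eta_1,\eta_2}$ equals $-\fM$ off them and is lifted by $\eps_i$ on $[\fx_i,\fx_i+\delta_i)$, and over any subinterval where the barrier is constant the corresponding factor is exactly the explicit kernel $\Theta^{(a)}_{[\cdot,\cdot]}$ of \eqref{e:Theta_def}--\eqref{e:Theta_formula} (the half-open/closed interval subtleties washing out by path continuity). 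With the abbreviations of \eqref{eq:functions_f_g}, splitting at $\fx_2$ and $\fx_2+\delta_2$ gives, for example,
\[
\bigl(\fQ^{(\eta_1;\eta_2)}-\fQ^{(\eta_1;0)}\bigr)(u_1,u_2)=\iint f_0(z)\,\bigl(\Theta^{(-\fM+\eps_2)}_{[\fx_2,\fx_2+\delta_2]}-\Theta^{(-\fM)}_{[\fx_2,\fx_2+\delta_2]}\bigr)(z,v)\,g_{\delta_2}(v)\,\d z\,\d v ,
\]
and the analogue of this for $\fQ^{(\eta_1;0)}-\fQ^{(0;0)}$ with $f_0,g_{\delta_2}$ replaced by $\Theta^{(-\fM)}_{[-L,\fx_1]}(u_1,\cdot)$, $\Theta^{(-\fM)}_{[\fx_1+\delta_1,L]}(\cdot,u_2)$ and the lift moved to $[\fx_1,\fx_1+\delta_1)$. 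All the test functions appearing here --- $f_0$, $g_{\delta_2}$ and the one-sided marginals of $\Theta^{(-\fM)}$ --- enjoy the properties recorded in Lemma~\ref{lem:f_g}: continuous, equibounded, vanishing at and below the barrier level $-\fM$, of class $C^1$ there, and converging together with their derivatives as the relevant $\delta$ tends to $0$ (cf.\ Lemma~\ref{lem:g_converges}).

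\textbf{The core asymptotic.} The main step is to prove that, for test functions $\phi,\psi_\delta$ with the above properties,
\[
\mylim{(\eps,\delta)\searrow 0}\;\frac{1}{\eps\,\delta}\iint\phi(z)\,\bigl(\Theta^{(-\fM+\eps)}_{[c,c+\delta]}-\Theta^{(-\fM)}_{[c,c+\delta]}\bigr)(z,v)\,\psi_\delta(v)\,\d z\,\d v=-\,\phi'(-\fM^+)\,\psi_0'(-\fM^+),
\]
uniformly over the parameters. To see this I would substitute \eqref{e:Theta_formula} and split the $\Theta$-difference into (a) an ``indicator-mismatch'' term carrying $p_\delta(v-z)\bigl(\mathbf{1}_{\{z,v>-\fM+\eps\}}-\mathbf{1}_{\{z,v>-\fM\}}\bigr)$, which is supported on a strip of width $\eps$ on which $\phi$ is $O(\eps)$, so its contribution is $O(\eps^2\sqrt\delta)$ and after division by $\eps\delta$ is $O(\eps/\sqrt\delta)=O(\delta^{\alpha_\star-1/2})\to 0$ --- this is precisely where the restriction $\eps\le\delta^{\alpha_\star}$ with $\alpha_\star>\tfrac12$ defining $\Dom$ in \eqref{eq:Dom} is used, in agreement with the remark preceding the lemma --- and (b) a ``reflection'' term. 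For (b), the substitution $z\mapsto(-\fM+\eps)+\tilde z$, $v\mapsto(-\fM+\eps)+\tilde v$ in the $(-\fM+\eps)$-piece turns the difference of the two integrals into $\iint_{\tilde z,\tilde v>0}\bigl(\phi(m'{+}\tilde z)\psi_\delta(m'{+}\tilde v)-\phi(m{+}\tilde z)\psi_\delta(m{+}\tilde v)\bigr)p_\delta(\tilde z+\tilde v)\,\d\tilde z\,\d\tilde v$ with $m=-\fM$, $m'=-\fM+\eps$; since $p_\delta(\tilde z+\tilde v)$ localizes $\tilde z,\tilde v$ at scale $\sqrt\delta$, one linearizes $\phi,\psi_\delta$ at $-\fM$ (the errors controlled by the $C^1$-equiboundedness of Lemma~\ref{lem:f_g} and the convergence of Lemma~\ref{lem:g_converges}), so the bracket equals $\phi'(-\fM^+)\psi_0'(-\fM^+)\,\eps(\eps+\tilde z+\tilde v)+o(\cdot)$; then $\int_0^\infty s^2 p_\delta(s)\,\d s=\delta$ and $\int_0^\infty s\,p_\delta(s)\,\d s=O(\sqrt\delta)$ give leading term $\phi'(-\fM^+)\psi_0'(-\fM^+)\,\eps\delta$, the residual $O(\eps^2\sqrt\delta)$ being again $o(\eps\delta)$ since $\eps\le\delta^{\alpha_\star}$. (Lemma~\ref{lem:heat2}, which extracts the prefactor $\tfrac12 g_0(0)$ from integrating a test function against the half-line heat kernel \eqref{eq:heat_kernel}, is the packaged one-variable form of these Gaussian integrals and supplies the uniformity over $0\le\eps\le\delta^{\alpha_\star}$.)

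\textbf{Reading off \eqref{eqs:Q_formulas}.} Applying the core asymptotic with $(\phi,\psi_\delta)=(f_0,g_{\delta_2})$ and $c=\fx_2$ gives \eqref{eq:Q_eps1_0_formula}, since $f_0'(-\fM^+)=\partial_2\fP_{[-L,\fx_2]}^{\nohit\,(-\fg_{\eta_1,0})}(u_1,-\fM)$ and $g_0'(-\fM^+)=\partial_1\Theta^{(-\fM)}_{[\fx_2,L]}(-\fM,u_2)$ by Lemma~\ref{lem:f_g}; applying it with $\phi=\Theta^{(-\fM)}_{[-L,\fx_1]}(u_1,\cdot)$, $\psi_\delta=\Theta^{(-\fM)}_{[\fx_1+\delta_1,L]}(\cdot,u_2)$ and $c=\fx_1$ gives \eqref{eq:Q_0_eps2_formula}. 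For \eqref{eq:Q_0_0_formula} I would iterate: the second factor in \eqref{eq:Q_eps1_0_formula} is $\eta_1$-free, so $\fQ^{(\star;\star)}$ reduces to the $\eta_1$-limit of $\partial_2\fP_{[-L,\fx_2]}^{\nohit\,(-\fg_{\eta_1,0})}(u_1,-\fM)-\partial_2\Theta^{(-\fM)}_{[-L,\fx_2]}(u_1,-\fM)$; running the decomposition and core asymptotic on $\fP_{[-L,\fx_2]}^{\nohit\,(-\fg_{\eta_1,0})}-\Theta^{(-\fM)}_{[-L,\fx_2]}$ with the lift on $[\fx_1,\fx_1+\delta_1)$, then differentiating in the right-endpoint variable and evaluating at $-\fM$ (legitimate, as the intermediate factor $\Theta^{(-\fM)}_{[\fx_1+\delta_1,\fx_2]}(w,v)$ is smooth in $v$ for $w>-\fM$), leaves the test function $w\mapsto\partial_2\Theta^{(-\fM)}_{[\fx_1,\fx_2]}(w,-\fM)$, whose one-sided slope at $w=-\fM$ is $\partial_1\partial_2\Theta^{(-\fM)}_{[\fx_1,\fx_2]}(-\fM,-\fM)$; a one-line computation from \eqref{e:Theta_formula}, using $p_\ell''(0)=-\tfrac1{2\ell}p_\ell(0)$, yields $\partial_1\partial_2\Theta^{(-\fM)}_{[\fx_1,\fx_2]}(-\fM,-\fM)=\tfrac{1}{\sqrt{4\pi}}(\fx_2-\fx_1)^{-3/2}$, the prefactor in \eqref{eq:Q_0_0_formula}. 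In each case the limit is visibly a product of a function of $u_1$ with a function of $u_2$, hence rank one; and the stated uniformity over $(x_1,x_2)\in S$, $M\in\bar S$, $\eta_1\in(0,\tfrac A2]^2$ follows by tracking constants --- Lemma~\ref{lem:f_g} giving equiboundedness of the test functions and their derivatives uniform in these ranges (the constraints $\fx_2-\fx_1\ge A>0$, $|M|\le\bar L$ keeping the heat kernels non-degenerate), and the moduli in Lemmas~\ref{lem:g_converges}--\ref{lem:heat2} depending only on $\alpha_\star$.

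\textbf{Main obstacle.} The delicate part is the core asymptotic and, above all, making it uniform: since taking $\delta\searrow 0$ with $\eps$ fixed fails --- the indicator-mismatch term then diverges, as anticipated in the text --- one must work genuinely in the joint domain $\Dom$ and verify both that the indicator-mismatch contribution is $o(\eps\delta)$ and that the subleading $O(\eps^2\sqrt\delta)$ piece of the reflection contribution is negligible against the main $\eps\delta$ term. It is exactly this bookkeeping that forces the choice $\alpha_\star>\tfrac12$ and the technical hypotheses isolated in Lemmas~\ref{lem:g_converges}, \ref{lem:heat2} and \ref{lem:f_g}; everything else is routine Gaussian calculus, and the resulting rank-one kernels then feed, via Lemma~\ref{lem:det}, into the trace-class computation of Lemma~\ref{lem:deriv1}.
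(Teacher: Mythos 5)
Your proposal is correct in substance and would prove the lemma, but it reaches the key limit by a genuinely different route than the paper. The paper time-reverses the bridge, applies the strong Markov property at the hitting time of the raised level $-\fM+\eps_2$, inserts the explicit first-hitting density $F^{(a)}_{[z,\delta_2]}(v)=-2p'_{\delta_2-z}(v-a)$ and integrates by parts, producing the two terms $\CI^{(\eta_2)}_1$ and $\CI^{(\eta_2)}_2$ that are then handled with Lemmas~\ref{lem:g_converges}--\ref{lem:f_g}; you instead bracket the perturbation window by Chapman--Kolmogorov at the deterministic times $\fx_2$ and $\fx_2+\delta_2$ and expand the difference of the explicit reflected kernels $\Theta^{(-\fM+\eps_2)}_{[\fx_2,\fx_2+\delta_2]}-\Theta^{(-\fM)}_{[\fx_2,\fx_2+\delta_2]}$ directly, so that everything stays at the level of elementary Gaussian formulas at the cost of controlling a two-variable linearization by hand. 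The two arguments are of comparable weight, rely on the same auxiliary lemmas, and isolate the same borderline quantity of order $\eps_2/\sqrt{\delta_2}$ (your indicator-mismatch term is the analogue of the paper's $\CI^{(\eta_2)}_1$, your reflection term of $\CI^{(\eta_2)}_2$), which is exactly why both need $\alpha_\star>\tfrac12$ in \eqref{eq:Dom}; your treatment of \eqref{eq:Q_0_eps2} and the iteration for \eqref{eq:Q_0_0} via the $\eta_1$-independence of the second factor and a derivative version of the same asymptotic coincide with the paper's. Two details deserve attention. First, the claimed $O(\eps^2\sqrt{\delta})$ bound for the indicator-mismatch term does not follow from the smallness of $\phi$ on the $\eps$-strip alone: that gives only $O(\eps^2)$, and $\eps^2/(\eps\delta)=\eps/\delta$ need not vanish on $\Dom$ when $\alpha_\star<1$; you must also use that $\psi_\delta$ vanishes at $-\fM$, so that $|\psi_\delta(v)|\lesssim|v-z|+\eps$ and the first moment of $p_\delta$ supplies the extra factor $\sqrt{\delta}+\eps$, after which the term is indeed $o(\eps\delta)$. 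Second, carried through consistently your computation yields $\fQ^{(\star;\star)}=+\tfrac{1}{\sqrt{4\pi}}(\fx_2-\fx_1)^{-3/2}\,\partial_2\Theta^{(-\fM)}_{[-L,\fx_1]}(u_1,-\fM)\,\partial_1\Theta^{(-\fM)}_{[\fx_2,L]}(-\fM,u_2)$ with a plus sign; indeed raising the barrier near $\fx_1$ flattens the slope of $\fP_{[-L,\fx_2]}^{\nohit\,(-\fg_{\eta_1,0})}$ at $-\fM$, so $\fQ^{(\eta_1;\star)}\geq\fQ^{(0;\star)}$ and the limit is nonnegative. This agrees with the final display of the paper's own proof but is opposite to the sign printed in \eqref{eq:Q_0_0_formula}, which appears to be a typo in the statement; you should say explicitly which sign your derivation produces rather than assert agreement with \eqref{eq:Q_0_0_formula} as written.
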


\begin{proof}
We recall our notation $\eta_i = (\eps_i, \delta_i) \geq 0$, and throughout the proof we consider values $\eps_i, \delta_i \in (0,\frac{A}{2}]$. This, in particular, implies that the two intervals in the definition \eqref{eq:g_def} do not intersect. We start by proving existence of the limit \eqref{eq:Q_eps1_0} and the formula \eqref{eq:Q_eps1_0_formula}.

Definitions \eqref{e:Q_def}, \eqref{eq:no_hit} and the definition in \eqref{eq:g_def} of the function $ \fg_{\eta_1, \eta_2}$ yield
\begin{equation}
\begin{split}
 \MoveEqLeft[7]
 \frac{\Bigl(\fQ_L^{(\eta_1; \eta_2)} - \fQ_L^{(\eta_1; 0)}\Bigr)(u_1, u_2)}{p_{2 L}(u_2 - u_1)}\\
 &=\P_{\fB(-L)=u_1, \fB(L)=u_2}\Bigl(\fB(y)>-\fg_{\eta_1, \eta_2}(y) \text{ for } y \in [-L,L]\Bigr) \\
 &\qquad- \P_{\fB(-L)=u_1, \fB(L)=u_2}\Bigl(\fB(y)>-\fg_{\eta_1, 0}(y) \text{ for } y \in [-L,L]\Bigr)\\
 &= - \P_{\fB(-L)=u_1, \fB(L)=u_2}\Bigl(\fB(y)>-\fg_{\eta_1, 0}(y) \text{ for } y \in [-L,L],\, \\
 & \hspace{2cm} \fB(z) \in (-\fM, -\fM + \eps_2] \text{ for some } z \in [\fx_2, \fx_2 + \delta_2)\Bigr),
 \end{split} \label{eq:Q_formula}
\end{equation}
where $\fB$ is a Brownian bridge with variance two. It will be convenient to view the Brownian bridge $\fB$ going from the point $u_2$ at time $L$ to the point $u_1$ at time $-L$. Let us then define the hitting time $\tau^{(a)} := \sup \{z \leq x_2 + \delta_2: \fB(z) = a\}$. Then formula \eqref{eq:Q_formula} can be written as
\begin{equation}
\begin{split}
\MoveEqLeft[6]
\frac{\Bigl(\fQ_L^{(\eta_1; \eta_2)} - \fQ_L^{(\eta_1; 0)}\Bigr)(u_1, u_2)}{p_{2 L}(u_2 - u_1)}\\
&= - \P_{\fB(-L)=u_1, \fB(L)=u_2}\Bigl(\fB(y)>-\fg_{\eta_1, 0}(y) \text{ for } y \in [-L,L],\, \\
& \hspace{6cm} \tau^{(-\fM + \eps_2)} \in [\fx_2, \fx_2 + \delta_2)\Bigr).
\end{split}\label{eq:Q_formula1}
\end{equation}
We note that if the value of the Brownian motion $ \fB(x_2 + \delta_2)$ is in $(-M, -M + \eps_2)$, then hitting of the box $[x_2, x_2 + \delta_2) \times (-M, -M + \eps_2]$ happens at time $x_2 + \delta_2$ (this is due to continuity of Brownian motion). Since we excluded the point $x_2 + \delta_2$ in \eqref{eq:Q_formula1}, hitting can happen only at the level $-M + \eps_2$ on the time interval $[x_2, x_2 + \delta_2)$. Moreover, if the value of $\fB(x_2 + \delta_2)$ is in $[-M + \eps_2, \infty)$, then hitting of the box happens almost surely at $\tau^{(-\fM + \eps_2)} \in [\fx_2, \fx_2 + \delta_2)$.

\begin{figure}[h]
\centering
\includegraphics[width=\textwidth]{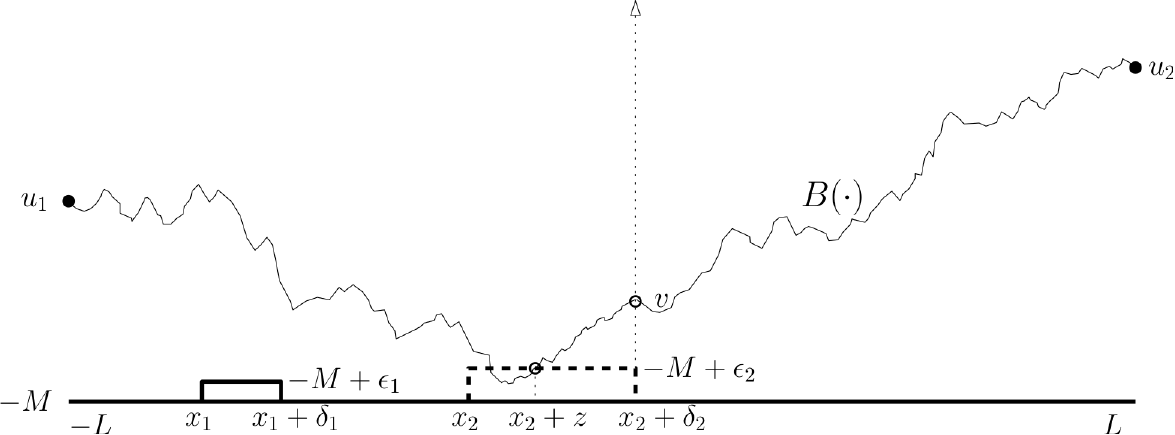}
 \captionsetup{width=.9\linewidth}
\caption{An illustration of \eqref{eq:Qform}. The Brownian motion ${\fB}$ starts at height $u_2$ at time $L$ and runs backwards in time until height $v$ at time $x_2+\delta_2$, conditioned to avoid the solid line at height $-M$ all the while. From height $v$, the path runs until a time $z$ with sub-probability measure $F^{(-M + \eps_2)}_{[z, \delta_2]} (v)dz$ on the interval $z\in [x_2,x_2+\delta)$. This point is the time when the Brownian motion hits height $-M+\eps_2$, provided it occurs in the time interval $[x_2,x_2+\delta_2)$. From there, the Brownian motion continues to height $u_1$ at time $-L$, conditioned to avoid the solid line at height $-M$ and the bump of height $-M+\eps_1$ on the time interval $[x_1,x_1+\delta_1)$.}
\label{Fig:Qform}
\end{figure}

Define the first hitting density function (existence follows from the Brownian reflection principle)
\begin{equation}\label{eq:density_stopping_time}
	F^{(a)}_{[z, \delta_2]} (v) := \frac{\d}{\d z}\P_{\fB(x_2 + \delta_2) = v} \bigl(\tau^{(a)} \leq x_2 + z\bigr),
\end{equation}
for $z < \delta_2$ and $v \geq a$. Let us also use the functions $f_\delta$ and $g_{\delta}$, defined in \eqref{eq:functions_f_g}, and the kernel \eqref{e:Theta_def}. Then the strong Markov property of Brownian motion yields the decomposition formula
\begin{equation}\label{eq:Qform}
\Bigl(\fQ_L^{(\eta_1; \eta_2)} - \fQ_L^{(\eta_1; 0)}\Bigr)(u_1, u_2) = - \int_{0}^{\delta_2} \d z \int_{-M + \eps_2}^{\infty} \d v\; f_z(-M + \eps_2) \,\cdot\, F^{(-M + \eps_2)}_{[z, \delta_2]} (v) \,\cdot\, g_{\delta_2}(v),
\end{equation}
where we used the property $\fB \bigl(\tau^{(-M + \eps_2)}\bigr) = -M + \eps_2$ and the fact that the ``No hit'' probability \eqref{eq:no_hit} is the same for the Brownian motion $\fB$ with the reverse time direction. See Figure~\ref{Fig:Qform} for an illustration.

From Eq.~6.3 in Section~2.6 of \cite{karatzas1998brownian} for $z < \delta_2$ and $v > a$ we have the formula
\begin{equation}\label{eq:density_stopping_time2}
	F^{(a)}_{[z, \delta_2]} (v) = \frac{v - a}{\delta_2 - z} p_{\delta_2 - z} (v - a) = - 2 p'_{\delta_2 - z} (v - a),
\end{equation}
where $p'_{\ell} (u) = \partial_u p_{\ell} (u)$. For $z < \delta_2$, this function is continuously extended to $v = a$ by $0$. Combining this formula with \eqref{eq:Qform} and substituting it into \eqref{eq:Q_formula1} yields
\begin{equation}
	\begin{split}
	\MoveEqLeft[6]
	\Bigl(\fQ_L^{(\eta_1; \eta_2)} - \fQ_L^{(\eta_1; 0)}\Bigr)(u_1, u_2)\\
	&= 2 \int_{0}^{\delta_2} \d z \int_{-M + \eps_2}^{\infty} \d v\; f_z(-M + \eps_2) \,\cdot\, p'_{\delta_2 - z}(v + M - \eps_2) \,\cdot\, g_{\delta_2}(v).
	\end{split}
\end{equation}
Since $f_z(-M) = 0$, we can write the preceding expression as
\begin{equation}
\begin{split}
\MoveEqLeft[6]
	\Bigl(\fQ_L^{(\eta_1; \eta_2)} - \fQ_L^{(\eta_1; 0)}\Bigr)(u_1, u_2)\\
	&= 	2 \eps_2 \int_{0}^{\delta_2} \d z \int_{-M + \eps_2}^{\infty} \d v\; \nabla_{\!\eps_2} f_z(-M) \,\cdot\, p'_{\delta_2 - z}(v + M - \eps_2) \,\cdot\, g_{\delta_2}(v).
\end{split}
\end{equation}
where $\nabla_{\!\eps_2} f_z(v) = \frac{1}{\eps_2} (f_z(v + \eps_2) - f_z(v))$ is a difference quotient. Applying integration by parts to the integral with respect to $v$, we get
\begin{equation*}
\begin{split}
\MoveEqLeft[7]
	\Bigl(\fQ_L^{(\eta_1; \eta_2)} - \fQ_L^{(\eta_1; 0)}\Bigr)(u_1, u_2)\\
	&= 	- 2 \eps_2 \int_{0}^{\delta_2} \d z\; \nabla_{\!\eps_2} f_z(-M) \,\cdot\, p_{\delta_2 - z}(0) \,\cdot\, g_{\delta_2}(-M + \eps_2)\\
	&\quad- 2 \eps_2 \int_{0}^{\delta_2} \d z \int_{-M + \eps_2}^{\infty} \d v\; \nabla_{\!\eps_2} f_z(-M) \,\cdot\, p_{\delta_2 - z}(v + M - \eps_2) \,\cdot\, g'_{\delta_2}(v),
\end{split}
\end{equation*}
where we used fast decay of the heat kernel at infinity. Hence, we have
\begin{equation}
\begin{split}
\MoveEqLeft[5]
	\frac{1}{|\eta_2|} \Bigl(\fQ_L^{(\eta_1; \eta_2)} - \fQ_L^{(\eta_1; 0)}\Bigr)(u_1, u_2)\\
	&= - \frac{2}{\delta_2} \int_{0}^{\delta_2} \d z\; \nabla_{\!\eps_2} f_z(-M) \,\cdot\, p_{\delta_2 - z}(0) \,\cdot\, g_{\delta_2}(-M + \eps_2)\\
	&\quad- \frac{2}{\delta_2} \int_{0}^{\delta_2} \d z \int_{-M + \eps_2}^{\infty} \d v\; \nabla_{\!\eps_2} f_z(-M) \,\cdot\, p_{\delta_2 - z}(v + M - \eps_2) \,\cdot\, g'_{\delta_2}(v).
\end{split}
\label{eq:Q_formula2}
\end{equation}
We denote these two terms by $\CI^{(\eta_2)}_1$ and $\CI^{(\eta_2)}_2$ respectively, and will compute their limits one by one.

\smallskip
\noindent {\bf The term $\CI^{(\eta_2)}_1$.} From Lemma~\ref{lem:f_g}, we conclude that $|\nabla_{\!\eps_2} f_z(-M)| \leq C_1$ uniformly in $\eps_2 \in (0,1]$, $z \in [0, \frac{A}{2}]$ and $u_1 \in \R$. Furthermore, for $\alpha_\star$ from the definition \eqref{eq:Dom}, Lemma~\ref{lem:g_converges} gives the bound $|g_{\delta_2}(-M + \eps_2)| \leq C_2 \eps_2$ uniformly in $\delta_2 \in [0, \delta_\star]$, $\eps_2 \in (0,\delta_2^{\alpha_\star}]$ and $u_2 \in \R$ (here, we use Lemma~\ref{lem:f_g}, which shows that $g_{\delta_2}$ has the required properties). Hence, for the constant $C = C_1 C_2 / \sqrt{\pi}$, we can bound
\begin{equation}
	|\CI^{(\eta_2)}_1| \leq C \frac{1}{\delta_2} \int_{0}^{\delta_2} \d z\; \frac{\eps_2}{\sqrt{\delta_2 - z}} = 2 C \frac{\eps_2}{\sqrt{\delta_2}}.
\end{equation}
Since for $\eta_2 \in \Dom$ we have $\eps_2 \leq \delta_2^{\alpha_\star}$ for $\alpha_\star > \frac{1}{2}$ (see \eqref{eq:Dom}), we conclude that
\begin{equation}\label{eq:I1_limit}
	\mylim{\eta_2 \searrow 0} |\CI^{(\eta_2)}_1| \leq 2 C \lim_{\delta_2 \searrow 0} \delta_2^{\alpha_\star - \frac{1}{2}} = 0,
\end{equation}
uniformly in $u_1, u_2 \in \R$.

\smallskip
\noindent {\bf The term $\CI^{(\eta_2)}_2$.} Changing the integration variable, we get
\begin{equation}
	\CI^{(\eta_2)}_2 = \frac{2}{\delta_2} \int_{0}^{\delta_2} \d z \int_{0}^{\infty} \d v\; \nabla_{\!\eps_2} f_z(-M) \,\cdot\, p_{\delta_2 - z}(v) \,\cdot\, g'_{\delta_2}(v - M + \eps_2).
\end{equation}
Let us define $\CI_2 = f'_0(-M) \cdot g'_{0}(-M)$ and
\begin{equation}\label{eq:I2_tilde}
	\wt{\CI}^{(\delta_2)}_2 = \frac{2}{\delta_2} \int_{0}^{\delta_2} \d z \int_{0}^{\infty} \d v\; f'_z(-M) \,\cdot\, p_{\delta_2 - z}(v) \,\cdot\, g'_{\delta_2}(v - M).
\end{equation}
We are going to prove that $\CI^{(\eta_2)}_2 - \wt{\CI}^{(\delta_2)}_2$ vanishes and $\wt{\CI}^{(\delta_2)}_2$ converges to $\CI_2$ as $\eta_2 \searrow 0$ in the domain $\Dom$.

We first prove that $\CI^{(\eta_2)}_2 - \wt{\CI}^{(\delta_2)}_2$ vanishes as $\eta_2 \searrow 0$ along $\Dom$. We have
\begin{align}
\MoveEqLeft
	\CI^{(\eta_2)}_2 - \wt{\CI}^{(\delta_2)}_2 \\
	&=\, \frac{2}{\delta_2} \int_{0}^{\delta_2} \d z \int_{0}^{\infty} \d v\; \bigl(\nabla_{\!\eps_2} f_z(-M) - f'_z(-M)\bigr) \,\cdot\, p_{\delta_2 - z}(v) \,\cdot\, g'_{\delta_2}(v - M + \eps_2)\\
	&\quad+ \frac{2}{\delta_2} \int_{0}^{\delta_2} \d z \int_{0}^{\infty} \d v\; f'_z(-M) \,\cdot\, p_{\delta_2 - z}(v) \,\cdot\, \bigl(g'_{\delta_2}(v - M + \eps_2) - g'_{\delta_2}(v - M)\bigr).
\end{align}
Denote these two terms by $\CI^{(\eta_2)}_{2, 1}$ and $\CI^{(\eta_2)}_{2, 2}$ respectively; we will show that they both vanish in the limit.

\smallskip
\noindent{\bf The term $\CI^{(\eta_2)}_{2,1}$.} We can bound
\begin{equation}
\begin{split}
|\CI^{(\eta_2)}_{2, 1}|
 &\leq \Big(\sup_{0 \leq z \leq \delta_2} \bigl|\nabla_{\!\eps_2} f_z(-M) - f'_z(-M)\bigr|\Big)\\
 &\qquad\times \frac{2}{\delta_2} \int_{0}^{\delta_2}\!\!\! \d z \left|\int_{0}^{\infty} \d v\;  p_{\delta_2 - z}(v) \,\cdot\, g'_{\delta_2}(v - M + \eps_2)\right|.
 \end{split}\label{eq:I21}
\end{equation}
From Lemma~\ref{lem:f_g} we get
\begin{equation}\label{eq:f_vanishes}
	\lim_{\eps_2 \searrow 0} \sup_{0 \leq z \leq \delta_2} \bigl|\nabla_{\!\eps_2} f_z(-M) - f'_z(-M)\bigr| = 0.
\end{equation}
Moreover, the monotonicity property of the Riemann integral gives
\begin{align}
\MoveEqLeft[6]
	\lim_{\delta_2 \searrow 0} \sup_{0 \leq \eps_2 \leq \delta_2^{\alpha_\star}} \frac{1}{\delta_2} \int_{0}^{\delta_2} \d z \left|\int_{0}^{\infty} \d v\;  p_{\delta_2 - z}(v) \,\cdot\, g'_{\delta_2}(v - M + \eps_2)\right| \\
	& \leq \lim_{\delta_2 \searrow 0} \frac{1}{\delta_2} \int_{0}^{\delta_2} \d z\; \sup_{0 \leq \eps_2 \leq \delta_2^{\alpha_\star}} \left|\int_{0}^{\infty} \d v\;  p_{\delta_2 - z}(v) \,\cdot\, g'_{\delta_2}(v - M + \eps_2)\right|.
\end{align}
From boundedness and continuity of the function $g'_{\delta_2}$ (Lemma~\ref{lem:f_g}) we conclude that the function
\begin{equation}
	z  \mapsto  \sup_{0 \leq \eps_2 \leq \delta_2^{\alpha_\star}} \left|\int_{0}^{\infty} \d v\;  p_{\delta_2 - z}(v) \,\cdot\, g'_{\delta_2}(v - M + \eps_2)\right|
\end{equation}
is continuous in $z \in (0, \delta_2]$. Then the fundamental theorem of calculus and Lemma~\ref{lem:heat2} yield
\begin{equation}
	\begin{split}
	\MoveEqLeft[7]
	\lim_{\delta_2 \searrow 0} \frac{1}{\delta_2} \int_{0}^{\delta_2} \d z\; \sup_{0 \leq \eps_2 \leq \delta_2^{\alpha_\star}} \left|\int_{0}^{\infty} \d v\;  p_{\delta_2 - z}(v) \,\cdot\, g'_{\delta_2}(v - M + \eps_2)\right| \\
		&=\lim_{\delta_2 \searrow 0} \sup_{0 \leq \eps_2 \leq \delta_2^{\alpha_\star}} \left|\int_{0}^{\infty} \d v\;  p_{\delta_2}(v) \,\cdot\, g'_{\delta_2}(v - M + \eps_2)\right| \\
		&\leq \lim_{\delta_2 \searrow 0} \sup_{0 \leq \eps_2 \leq \delta_2^{\alpha_\star}} \int_{0}^{\infty} \d v\;  p_{\delta_2}(v) \,\cdot\, |g'_{\delta_2}(v - M + \eps_2)| = \frac{1}{2} |g'_{0}(- M)|,
	\end{split}\label{eq:I21_intermediate}
\end{equation}
for $\alpha_\star$ from \eqref{eq:Dom}. From Lemma~\ref{lem:f_g} we conclude that all these limits hold uniformly in $u_1, u_2 \in \R$. Using \eqref{eq:I21_intermediate} together with \eqref{eq:f_vanishes} and \eqref{eq:I21} yields
\begin{equation}\label{eq:I21_limit}
	\mylim{\eta_2 \searrow 0} |\CI^{(\eta_2)}_{2,1}| = 0,
\end{equation}
uniformly in $u_1, u_2 \in \R$.

\smallskip
\noindent{\bf The term $\CI^{(\eta_2)}_{2,2}$.} We bound
\begin{equation}\label{eq:I22}
\begin{split}
	|\CI^{(\eta_2)}_{2, 2}| &\leq \Big( \sup_{0 \leq z \leq \delta_2} |f'_z(-M)|\Big)\\
	&\qquad\times \frac{2}{\delta_2} \int_{0}^{\delta_2}\!\!\! \d z \left|\int_{0}^{\infty}\!\!\! \d v\; p_{\delta_2 - z}(v) \,\cdot\, \bigl(g'_{\delta_2}(v - M + \eps_2) - g'_{\delta_2}(v - M)\bigr)\right|.
\end{split}
\end{equation}
Lemma~\ref{lem:f_g} implies that the supremum of $|f'_z(-M)|$ in \eqref{eq:I22} is bounded, and the function
\begin{equation}
	z  \mapsto  \int_{0}^{\infty} \d v\;  p_{\delta_2 - z}(v) \,\cdot\, \bigl(g'_{\delta_2}(v - M + \eps_2) - g'_{\delta_2}(v - M)\bigr)
\end{equation}
is continuous in $z \in (0, \delta_2]$. Then, in the same way as in \eqref{eq:I21_intermediate}, the fundamental theorem of calculus and Lemma~\ref{lem:heat2} give
\begin{align}
\MoveEqLeft[6]
	\lim_{\delta_2 \searrow 0} \sup_{0 \leq \eps_2 \leq \delta_2^{\alpha_\star}} \frac{1}{\delta_2} \int_{0}^{\delta_2} \d z \left|\int_{0}^{\infty} \d v\; p_{\delta_2 - z}(v) \,\cdot\, \bigl(g'_{\delta_2}(v - M + \eps_2) - g'_{\delta_2}(v - M)\bigr)\right|\\
	&= \lim_{\delta_2 \searrow 0} \sup_{0 \leq \eps_2 \leq \delta_2^{\alpha_\star}} \left|\int_{0}^{\infty} \d v\; p_{\delta_2}(v) \,\cdot\, \bigl(g'_{\delta_2}(v - M + \eps_2) - g'_{\delta_2}(v - M)\bigr)\right| = 0.
\end{align}
From Lemma~\ref{lem:f_g} we conclude that this limit and the bounds hold uniformly in $u_1, u_2 \in \R$. Using this limit in \eqref{eq:I22}, we see that
\begin{equation}\label{eq:I22_limit}
	\mylim{\eta_2 \searrow 0} |\CI^{(\eta_2)}_{2,2}| = 0.
\end{equation}
From \eqref{eq:I21_limit} and \eqref{eq:I22_limit}, we conclude that $\mylim{\eta_2 \searrow 0} \bigl|\CI^{(\eta_2)}_2 - \wt{\CI}^{(\delta_2)}_2 \bigr| = 0$ uniformly in $u_1, u_2 \in \R$.

We now prove that $\wt{\CI}^{(\delta_2)}_2$ converges to $\CI_2$ as $\eta_2 \searrow 0$. From Lemma~\ref{lem:f_g}, we conclude that the function
\begin{equation}
	z  \mapsto   \int_{0}^{\infty} \d v\; f'_z(-M) \,\cdot\, p_{\delta_2 - z}(v) \,\cdot\, g'_{\delta_2}(v - M)
\end{equation}
is continuous in $z \in (0, \delta_2]$. Then applying the fundamental theorem of calculus and Lemma~\ref{lem:heat2} to \eqref{eq:I2_tilde}, we get
\begin{align}\label{eq:I2_limit}
	\lim_{\delta_2 \searrow 0} \wt{\CI}^{(\delta_2)}_2 &= \lim_{\delta_2 \searrow 0} \frac{2}{\delta_2} \int_{0}^{\delta_2} \d z \int_{0}^{\infty} \d v\; f'_z(-M) \,\cdot\, p_{\delta_2 - z}(v) \,\cdot\, g'_{\delta_2}(v - M)\\
	&= \lim_{\delta_2 \searrow 0} 2 \int_{0}^{\infty} \d v\; f'_0(-M) \,\cdot\, p_{\delta_2}(v) \,\cdot\, g'_{\delta_2}(v - M) = \CI_2.
\end{align}
As before we conclude that the limit holds uniformly in $u_1, u_2 \in \R$.

From \eqref{eq:I1_limit} and \eqref{eq:I2_limit}, we obtain
\begin{equation}
	\begin{split}
	\MoveEqLeft[8]
	\mylim{\eta_2 \searrow 0} \frac{1}{|\eta_2|} \Bigl(\fQ_L^{(\eta_1; \eta_2)} - \fQ_L^{(\eta_1; 0)}\Bigr)(u_1, u_2)\\
	&= - f'_0(-M) \cdot g'_{0}(-M)\\
	&= -\partial_{2} \fP_{[-L,\fx_2]}^{\nohit\,(-\fg_{\eta_1, 0})} (u_1, -\fM)\,\cdot\,\partial_{1} \Theta^{(-\fM)}_{[\fx_2, L]} (-\fM, u_2),
	\end{split}\label{eq:Q1_limit_proof}
\end{equation}
uniformly in $u_1, u_2 \in \R$, which is the required formula \eqref{eq:Q_eps1_0_formula}.

 Dependence of the prelimiting kernel on $\eta_1$ is only through the function $\fP_{[-L,\fx_2]}^{\nohit\,(-\fg_{\eta_1, 0})}$ and  its $\partial_{2}$-derivative, which appear in \eqref{eq:Q_eps1_0_formula} and other formulas. From \eqref{eq:f_explicitly}, we can see that these functions are bounded uniformly in $\eta_1 \in (0, \frac{A}{2}]^2$; indeed, the kernel $\fP_{[-L,\fx_1 + \delta_1]}^{\nohit\,(-\fg_{\eta_1, 0})}$ is bounded because it is a density function, and boundedness of $\partial_{2} \Theta^{(-\fM)}_{[\fx_1 + \delta_1, x_2]}$ follows from \eqref{e:Theta_formula}. Hence, the limit \eqref{eq:Q_eps1_0} holds uniformly over $\eta_1 \in (0, \frac{A}{2}]^2$.

One can readily see that the limit \eqref{eq:Q_eps1_0} holds also uniformly over $(x_1, x_2) \in S$ and $M \in \bar S$. This is because all the involved kernels are given by hitting and transition distributions of Brownian motion, which are bounded uniformly in these variables. This finishes the proof of \eqref{eq:Q_eps1_0} and \eqref{eq:Q_eps1_0_formula}.
\smallskip

A proof of existence of the limit \eqref{eq:Q_0_eps2} and the formula \eqref{eq:Q_0_eps2_formula} is carried out in a similar manner. Indeed, we may write
\begin{align}
\frac{\Bigl(\fQ_L^{(\eta_1; 0)} - \fQ_L^{(0; 0)}\Bigr)(u_1, u_2)}{p_{2 L}(u_2 - u_1)} =& \P_{\fB(-L)=u_1, \fB(L) = u_2}\Bigl(\fB(y)>-\fg_{\eta_1, 0}(y) \text{ for } y \in [-L,L]\Bigr) \\
&- \P_{\fB(-L)=u_1, \fB(L) = u_2}\Bigl(\fB(y)>-\fM \text{ for } y \in [-L,L]\Bigr)\\
=& - \P_{\fB(-L)=u_1, \fB(L) = u_2}\Bigl(\fB(y)>-\fM \text{ for } y \in [-L,L],\, \\
& \hspace{0.5cm} \fB(z) \in (-\fM, -\fM + \eps_1] \text{ for some } z \in [\fx_1, \fx_1 + \delta_1)\Bigr).
\end{align}
Defining the stopping time $\bar{\tau}^{(a)} := \sup \{z < x_1 + \delta_1: \fB(z) = a\}$, as in \eqref{eq:Q_formula1} we write
\begin{align}
\frac{\Bigl(\fQ_L^{(\eta_1; 0)} - \fQ_L^{(0; 0)}\Bigr)(u_1, u_2)}{p_{2 L}(u_2 - u_1)}
&= - \P_{\fB(-L)=u_1, \fB(L)=u_2}\Bigl(\fB(y)>-M \text{ for } y \in [-L,L],\, \\
& \hspace{4.5cm} \bar{\tau}^{(-\fM + \eps_1)} \in [\fx_1, \fx_1 + \delta_1)\Bigr).\label{eq:Q2_stopping_time}
\end{align}
Then, as in \eqref{eq:Q_formula2}, we may show that
\begin{align}
	\frac{1}{|\eta_1|} \Bigl(\fQ_L^{(\eta_1; 0)} &- \fQ_L^{(0; 0)}\Bigr)(u_1, u_2) = - \frac{2}{\delta_1} \int_{0}^{\delta_1} \d z\; \nabla_{\!\eps_1} \bar{f}_z(-M) \,\cdot\, p_{\delta_1 - z}(0) \,\cdot\, \bar{g}_{\delta_1}(-M + \eps_1)\\
	&- \frac{2}{\delta_1} \int_{0}^{\delta_1} \d z \int_{-M + \eps_1}^{\infty} \d v\; \nabla_{\!\eps_1} \bar{f}_z(-M) \,\cdot\, p_{\delta_1 - z}(v + M - \eps_1) \,\cdot\, \bar{g}'_{\delta_1}(v),
\end{align}
where
\begin{equation}
	\bar{f}_\delta(v) = \Theta^{(-\fM)}_{[-L, x_1 + \delta]} (u_1, v), \qquad \bar{g}_{\delta}(v) = \Theta^{(-\fM)}_{[x_1 + \delta, L]} (v, u_2).
\end{equation}
Since these function are particular cases of the functions \eqref{eq:functions_f_g}, we can repeat the argument used in our derivation of \eqref{eq:Q1_limit_proof}. This shows that
\begin{align}
	\mylim{\eta_1 \searrow 0} \frac{1}{|\eta_1|} \Bigl(\fQ_L^{(\eta_1; 0)} - \fQ_L^{(0; 0)}\Bigr)(u_1, u_2) &= - \bar{f}'_0(-M) \cdot \bar{g}'_{0}(-M)\\
	&= -\partial_{2} \Theta^{(-\fM)}_{[-L, \fx_1]} (u_1, -\fM) \,\cdot\, \partial_{1} \Theta^{(-\fM)}_{[\fx_1, L]} (-\fM, u_2).\label{eq:Q1_limit_formula}
\end{align}
Uniformity of this limit over $(x_1, x_2) \in S$, $u_1, u_2 \in \R$ and $M \in \bar S$ can be seen as in \eqref{eq:Q1_limit_proof}. This finishes the proof of \eqref{eq:Q_0_eps2} and \eqref{eq:Q_0_eps2_formula}. To prove \eqref{eq:Q_0_0}, we also need the limit
\begin{equation}
\begin{split}
\MoveEqLeft[6]
	\mylim{\eta_1 \searrow 0} \frac{1}{|\eta_1|} \partial_2 \Bigl(\fQ_L^{(\eta_1; 0)} - \fQ_L^{(0; 0)}\Bigr)(u_1, u_2)\\
	&= -\partial_{2} \Theta^{(-\fM)}_{[-L, \fx_1]} (u_1, -\fM) \,\cdot\, \partial_{1} \partial_{2} \Theta^{(-\fM)}_{[\fx_1, L]} (-\fM, u_2),
\end{split}
\label{eq:Q1_limit_formula_derivative}
\end{equation}
which can be proved by repeating the proof of \eqref{eq:Q1_limit_formula}.
\smallskip

Now, we turn to the proof of existence of the last limit \eqref{eq:Q_0_0} and the formula \eqref{eq:Q_0_0_formula}. Using \eqref{eq:Q1_limit_proof}, we obtain
\begin{equation}\label{eq:Q_star_formula}
\begin{split}
\MoveEqLeft[3]
\Bigl(\fQ_L^{(\eta_1; \star)} - \fQ_L^{(0; \star)}\Bigr) (u_1, u_2)\\
&= \partial_{2} \Bigl(\fP_{[-L,\fx_2]}^{\nohit\,(-\fg_{\eta_1, 0})} - \fP_{[-L,\fx_2]}^{\nohit\,(-\fM)} \Bigr) (u_1, -\fM) \,\cdot\, \partial_{1} \Theta^{(-\fM)}_{[\fx_2, L]} (-\fM, u_2).
\end{split}
\end{equation}
We may assume that $\fx_1 + \delta_1 < \fx_2$. Using \eqref{eq:no_hit}, we may write the kernel in parentheses explicitly:
\begin{align}
\MoveEqLeft[11]
\frac{\Bigl(\fP_{[-L,\fx_2]}^{\nohit\,(-\fg_{\eta_1, 0})} - \fP_{[-L,\fx_2]}^{\nohit\,(-\fM)} \Bigr) (u_1, v_1)}{p_{\fx_2 + L}(v_1 - u_1)}\\
=&\, \P_{\fB(-L)=u_1, \fB(\fx_2) = v_1}\Bigl(\fB(y)>-\fg_{\eta_1, 0}(y) \text{ for } y \in [-L,\fx_2]\Bigr) \\
& - \P_{\fB(-L)=u_1, \fB(\fx_2) = v_1}\Bigl(\fB(y)>-\fM \text{ for } y \in [-L,\fx_2]\Bigr)\\
=& - \P_{\fB(-L)=u_1, \fB(\fx_2) = v_1}\Bigl(\fB(y)>-\fM \text{ for } y \in [-L,\fx_2],\, \\
& \hspace{0.5cm} \fB(z) \in (-\fM, -\fM + \eps_1] \text{ for some } z \in [\fx_1, \fx_1 + \delta_1)\Bigr).
\end{align}
As in \eqref{eq:Q2_stopping_time}, we can write
\begin{align}
\MoveEqLeft[2]
\frac{\Bigl(\fP_{[-L,\fx_2]}^{\nohit\,(-\fg_{\eta_1, 0})} - \fP_{[-L,\fx_2]}^{\nohit\,(-\fM)} \Bigr) (u_1, v_1)}{p_{\fx_2 + L}(v_1 - u_1)} \\
=&-\P_{\fB(-L)=u_1, \fB(x_2)=u_2}\Bigl(\fB(y)>-M \text{ for } y \in [-L,x_2],\, \bar{\tau}^{(-\fM + \eps_1)} \in [\fx_1, \fx_1 + \delta_1)\Bigr).
\end{align}
This formula is the same as \eqref{eq:Q2_stopping_time}, where the interval is $[-L, x_2]$ instead of $[-L, L]$ . In the same way as in \eqref{eq:Q1_limit_formula_derivative}, we arrive at
\begin{align}
\MoveEqLeft[11]
\mylim{\eta_1 \searrow 0} \frac{1}{|\eta_1|} \partial_{2} \Bigl(\fP_{[-L,\fx_2]}^{\nohit\,(-\fg_{\eta_1, 0})} - \fP_{[-L,\fx_2]}^{\nohit\,(-\fM)}\Bigr)(u_1, v_1)\\
&= -\partial_{2} \Theta^{(-\fM)}_{[-L, \fx_1]} (u_1, -\fM) \,\cdot\, \partial_{1} \partial_{2} \Theta^{(-\fM)}_{[\fx_1, \fx_2]} (-\fM, v_1).
\end{align}
Using this limit in \eqref{eq:Q_star_formula}, we obtain
\begin{align}
\mylim{\eta_1 \searrow 0} \frac{1}{|\eta_1|} &\Bigl(\fQ_L^{(\eta_1; \star)} - \fQ_L^{(0; \star)}\Bigr) (u_1, u_2) \\
&= \mylim{\eta_1 \searrow 0} \frac{1}{|\eta_1|} \partial_{2} \Bigl(\fP_{[-L,\fx_2]}^{\nohit\,(-\fg_{\eta_1, 0})} - \fP_{[-L,\fx_2]}^{\nohit\,(-\fM)} \Bigr) (u_1, -\fM) \,\cdot\, \partial_{1} \Theta^{(-\fM)}_{[\fx_2, L]} (-\fM, u_2)\\
&= -\partial_{2} \Theta^{-\fM}_{[-L, \fx_1]} (u_1, -\fM) \,\cdot\, \partial_{1} \partial_{2} \Theta^{(-\fM)}_{[\fx_1, \fx_2]} (-\fM, -\fM) \,\cdot\, \partial_{1} \Theta^{(-\fM)}_{[\fx_2, L]} (-\fM, u_2).
\end{align}
Using \eqref{e:Theta_formula}, we can compute explicitly
\begin{equation}
\partial_{1} \partial_{2} \Theta^{(-\fM)}_{[\fx_1, \fx_2]} (-\fM, -\fM) = -2 p''_{\fx_2 - \fx_1} (0) = \frac{1}{\sqrt{4\pi}} (\fx_2 - \fx_1)^{-\frac{3}{2}},
\end{equation}
which gives
\begin{align}
\MoveEqLeft[7]
\mylim{\eta_1 \searrow 0} \frac{1}{|\eta_1|} \Bigl(\fQ_L^{(\eta_1; \star)} - \fQ_L^{(0; \star)}\Bigr) (u_1, u_2)\\
&= \frac{1}{\sqrt{4\pi}} (\fx_2 - \fx_1)^{-\frac{3}{2}} \cdot \partial_{2} \Theta^{(-\fM)}_{[-L, \fx_1]} (u_1, -\fM) \cdot \partial_{1} \Theta^{(-\fM)}_{[\fx_2, L]} (-\fM, u_2).
\end{align}
As above, uniformity in the variables $u_1, u_2 \in \R$, $(x_1, x_2) \in S$ and $M \in \bar S$ follows readily from the properties of the kernels. This operator is rank one, which finishes the proof of \eqref{eq:Q_0_0} and \eqref{eq:Q_0_0_formula}.
\end{proof}

We will use the kernels \eqref{eqs:Q_limits} in Lemmas~\ref{lem:A_formulas} and \ref{lem:deriv1} to compute derivatives of the kernel \eqref{eq:fR}. For this, we will also use the following two functions
\begin{subequations}\label{eqs:D_def}
\begin{align}
\fD^{(\eta_1; \eta_2)}_{[\fx, L], +}(w) &:= \partial_{1} \Bigl(\fP_{[\fx, L]}^{\nohit\,(-\fg_{\eta_1, \eta_2})} \fT_{1/2,-L} \Gamma \varrho\Bigr) (-\fM, w),\label{eq:D1_def}\\
\fD^{(\eta_1; \eta_2)}_{[-L, \fx], -}(w) &:= \partial_{2} \Bigl(\varrho \Gamma \bigl(\fT_{1/2,-L}\bigr)^*\, \fP_{[-L, \fx]}^{\nohit\,(-\fg_{\eta_1, \eta_2})}\Bigr) (w, -\fM),\label{eq:D2_def}
\end{align}
\end{subequations}
which are defined for $|\fx| \leq L$. Here, the multiplicative operator $\Gamma := \Gamma_{1/2}$ is defined in \eqref{eq:Gamma_L}; $\varrho$ is the reflection operator $\varrho f(u) := f(-u)$; and we used operators defined in \eqref{eq:groupS} and \eqref{eq:no_hit}. To bound in Lemma~\ref{lem:A_formulas} the kernels in trace norm, we need the following bounds on the $L^2$ norms of the functions in  \eqref{eqs:D_def}.

\begin{lem}\label{lem:D_bounds}
Let the sets $S$ and $\bar S$ be defined in \eqref{eq:setsS}. The following bounds hold:
\begin{equation}\label{eqs:D_bounds}
\Bigl\| \fD^{(\eta_1; \eta_2)}_{[\fx, L], +} \Bigr\|_{L^2} \leq C, \qquad\qquad \Bigl\| \fD^{(\eta_1; \eta_2)}_{[-L, \fx], -} \Bigr\|_{L^2} \leq C,
\end{equation}
uniformly over $(\fx_1, \fx_2) \in S$, $|\fx| \leq \beta L$, $\fM \in \bar S$, and $\eta_1, \eta_2 \in [0, \frac{A}{2}]^2$. Here, $C \geq 0$ is a constant that depends only on $L$ and $\bar L$, the dependence of the functions in \eqref{eqs:D_def} on $x_1$, $x_2$ and $M$ comes through the function $\fg_{\eta_1, \eta_2}$ defined in  \eqref{eq:g_def}, and the constant $\beta > 0$ is from the definition of $\bJ_{L}$ in \eqref{eq:setJ}.
\end{lem}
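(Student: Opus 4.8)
The plan is to write each of the two functions in \eqref{eqs:D_def} as an explicit integral against elementary Brownian and Airy kernels and then read off the $L^2$ bound by a Minkowski-type inequality, keeping every constant uniform over the compact ranges of $(\fx_1,\fx_2)\in S$, $|\fx|\le\beta L$, $\fM\in\bar S$ and $\eta_1,\eta_2\in[0,\tfrac A2]^2$. Consider $\fD^{(\eta_1;\eta_2)}_{[\fx,L],+}$ first. Composing the operators as kernels and using that $\partial_1$ acts on the first (starting) coordinate, evaluated at the lowest level $-\fM$ of the barrier $-\fg_{\eta_1,\eta_2}$, one has $\fD^{(\eta_1;\eta_2)}_{[\fx,L],+}(w)=\int_\R\partial_1\fP_{[\fx,L]}^{\nohit(-\fg_{\eta_1,\eta_2})}(-\fM,v)\,\bigl(\fT_{1/2,-L}\Gamma\varrho\bigr)(v,w)\,\d v$, with $\bigl(\fT_{1/2,-L}\Gamma\varrho\bigr)(v,w)=\fT_{1/2,-L}(v+w)\,e^{-\sqrt2\,\kappa\,\sgn(w)|w|^{3/2}}$ by \eqref{eq:fTdef} and \eqref{eq:Gamma_L}. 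So it suffices to control (i) the $v$-profile $\partial_1\fP_{[\fx,L]}^{\nohit(-\fg_{\eta_1,\eta_2})}(-\fM,\,\bigcdot\,)$ and (ii) the $w$-profile of the kernel $\fT_{1/2,-L}\Gamma\varrho$.

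For (i) I would compute $\partial_1\fP^{\nohit}_{[\fx,L]}(-\fM,v)$ explicitly, distinguishing cases by the position of $\fx$ relative to the bump intervals $[\fx_1,\fx_1+\delta_1)$ and $[\fx_2,\fx_2+\delta_2)$. If $\fx$ lies strictly inside a bump, then $-\fM$ is below the barrier there, so $\fP^{\nohit}_{[\fx,L]}(u_1,v)\equiv0$ for $u_1$ in a neighbourhood of $-\fM$ and the derivative vanishes. Otherwise the Markov property factorizes $\fP^{\nohit}_{[\fx,L]}$ over the sub-intervals cut out by $\fx_1,\fx_1+\delta_1,\fx_2,\fx_2+\delta_2$ and $L$; flat stretches contribute reflected heat kernels $\Theta^{(-\fM)}_{[\cdot,\cdot]}$ whose closed form is \eqref{e:Theta_formula}, while the bump stretches contribute sub-probability no-hit densities. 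Differentiating in the starting coordinate at the wall level $-\fM$ — the same reflection-principle computation (Eq.~6.3 in Section~2.6 of \cite{karatzas1998brownian}) used in the proof of Lemma~\ref{lem:Q_limits} — yields a finite convolution of bounded Gaussian-type factors; the key point is that every time-gap that enters as a variance is bounded below, by $(1-\beta)L$ for the gaps to $\pm L$ and by at least $A/2$ between the bumps, so the resulting function is bounded in $L^1\cap L^2$ with Gaussian tails in $v$, uniformly in all the stated parameters. (This is exactly where $\beta<1$, $0<A<\beta L$ and $\delta_i\le A/2$ are needed, to prevent factors such as $(L\pm\fx)^{-1/2}$ from blowing up.)

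For (ii) I would invoke the kernel estimates of Appendix~\ref{sec:kernels_bounds}, the prelimiting analogues of the bounds in \cite[Appendix~A.1]{fixedpt} that make the Brownian scattering transform trace class when $0<\kappa<\kappa_\star$: these give, for each fixed $v$ in the Gaussian-decay region of the profile from (i), a bound on $\|(\fT_{1/2,-L}\Gamma\varrho)(v,\,\bigcdot\,)\|_{L^2}$ depending only on $L$, the super-exponential decay of the Airy factor on the negative $w$-axis dominating the super-exponential growth of $\Gamma$ there precisely because $\kappa<\kappa_\star$. Minkowski's integral inequality then gives $\|\fD^{(\eta_1;\eta_2)}_{[\fx,L],+}\|_{L^2}\le\int_\R|\partial_1\fP^{\nohit}_{[\fx,L]}(-\fM,v)|\,\|(\fT_{1/2,-L}\Gamma\varrho)(v,\,\bigcdot\,)\|_{L^2}\,\d v\le C$, uniformly. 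The bound for $\fD^{(\eta_1;\eta_2)}_{[-L,\fx],-}$ is obtained by the same two steps after noting $(\fT_{1/2,-L})^\ast=\fT_{-1/2,-L}$ (so its kernel is again an Airy kernel of the form \eqref{eq:fTdef}) and that here the derivative falls on the ending coordinate of $\fP^{\nohit}_{[-L,\fx]}$ and the sub-intervals left and right of $\fx$ swap roles; the case analysis and the estimates are identical.

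The main obstacle is step (i): justifying differentiability of $\fP^{\nohit}$ at the hard wall $-\fM$, expressing the derivative through hitting densities, and — in the cases where $\fx$ sits to the left of the bumps — handling the chain of convolution factors without ever dividing by a time-gap that can degenerate. Once this profile is under uniform control, the $L^2$ bounds on $\fD^{(\eta_1;\eta_2)}_{[\fx,L],\pm}$ follow routinely from the Airy-kernel estimates of Appendix~\ref{sec:kernels_bounds}.
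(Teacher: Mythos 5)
Your architecture is genuinely different from the paper's: you split the composition in \eqref{eq:D1_def} at the intermediate spatial variable $v$ (the endpoint value at time $L$) and invoke Minkowski, whereas the paper never introduces an unbounded intermediate variable\dash---it decomposes $\fP^{\nohit\,(-\fg_{\eta_1,\eta_2})}_{[\fx,L]} = \fT_{0,L-\fx} - \SS^{(\eta_1;\eta_2)}_{0,L-\fx,[\fx,L]}$ via the first hitting time of the barrier (see \eqref{eq:Theta_new_formula} and \eqref{eq:S_bold}) and then uses the semigroup identity \eqref{eq:S_properties} to collapse $\fT_{0,L-\fx}\fT_{1/2,-L}=\fT_{1/2,-\fx}$, so that every Airy kernel appearing in \eqref{eq:D1_new_formula} is evaluated with first argument at the bounded point $-\fM$ (or at $\fB(\tau_{-\fg})=-\fg(\tau_{-\fg})$), after which Lemma~\ref{lem:S_integral_estimate} applies directly. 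This matters because your step (ii) has a genuine gap: Lemma~\ref{lem:S_integral_estimate} bounds $\|(\fT^{(n)}_{t,x}\Gamma_t)(u,\bigcdot)\|_{L^2}$ only for $|u|\leq\bar L$, while your Minkowski bound needs $\|(\fT_{1/2,-L}\Gamma\varrho)(v,\bigcdot)\|_{L^2}$ for \emph{all} $v\in\R$, and this quantity is not bounded by a constant depending only on $L$: from \eqref{eq:fTdef}, $\fT_{1/2,-L}(z)=2^{1/3}e^{-\frac83L^3+2Lz}\Ai(2^{4/3}L^2-2^{1/3}z)$, which grows like $e^{2Lz}$ as $z\to+\infty$ (the Airy factor merely oscillates there), so the $w$-norm grows like $e^{2Lv}$ as $v\to+\infty$. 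The argument is repairable\dash---the Gaussian tail of the $v$-profile, with variance of order $L-\fx\leq(1+\beta)L$, eventually dominates $e^{2Lv}$\dash---but then you must prove a $v$-explicit strengthening of the appendix estimate and run that competition; this bookkeeping is precisely what the paper's semigroup collapse is designed to avoid, and as written your citation of Appendix~\ref{sec:kernels_bounds} does not cover it.

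Step (i) also contains an unsupported uniformity claim and leaves the central technical step undone. The claim that every time-gap entering as a variance is bounded below by $(1-\beta)L$ or $A/2$ fails for the gap between the starting point $\fx$ and the nearest bump: the lemma allows any $|\fx|\leq\beta L$, so $\fx$ may sit immediately to the left of $\fx_2$, and the naive Markov factorization then produces the factor $\partial_1\Theta^{(-\fM)}_{[\fx,\fx_2]}(-\fM,\bigcdot)=\frac{(\bigcdot+\fM)_+}{\fx_2-\fx}\,p_{\fx_2-\fx}(\bigcdot+\fM)$, whose $L^1$ mass is of order $(\fx_2-\fx)^{-1/2}$ and blows up; to restore uniformity you would need an extra idea, e.g.\ the comparison $0\leq\fP^{\nohit\,(-\fg)}_{[\fx,L]}(u_1,v)\leq\Theta^{(-\fM)}_{[\fx,L]}(u_1,v)$ with both vanishing at $u_1=-\fM$, so the wall derivative is dominated by $\frac{(v+\fM)_+}{L-\fx}p_{L-\fx}(v+\fM)$, or else the paper's route, in which the only singular factor is the integrable $(s-\fx)^{-1/2}$ bound on $\partial_v\P_{\fB(\fx)=v}(\tau_{-\fg}<s)\big|_{v=-\fM}$ handled by integration by parts against the hitting-time law in \eqref{eq:last_term_formula}. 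Finally, the differentiability of $\fP^{\nohit}$ at the wall and the interchange of $\partial_1$ with the composition, which you yourself flag as ``the main obstacle,'' are exactly what the paper's rewriting (hitting-time representation, integration by parts, and $\partial_x\fT_{t,x}=\fT''_{t,x}$) supplies; leaving them unresolved means the proposal omits the core of the proof rather than offering a complete alternative to it.
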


\begin{proof}
We start our analysis with the function $\fD^{(\eta_1; \eta_2)}_{[\fx, L], +}$, defined in \eqref{eq:D1_def}. First of all, we rewrite the formula to simplify the apparent dependence on $L$. Let $\fB$ be Brownian motion of variance two and define $\tau_g$ to be the first hitting time of a function $g : \R \to \R$ from above. That is to say, if the starting time for $\fB$ is $x$, then
\begin{equation}\label{eq:tau-def}
\tau_g := \inf\bigl\{\ell \geq x : \fB(\ell) \leq g(\ell)\bigr\}.
\end{equation}
Then the transition distribution \eqref{eq:no_hit} may be written as
\begin{equation}
\begin{split}
\MoveEqLeft[6]
\fP_{[\fx, L]}^{\nohit\,(-\fg_{\eta_1, \eta_2})}(u_1, u_2)\\
&= p_{L - \fx}(u_2 - u_1) - \P_{\fB(\fx)=u_1, \fB(L) = u_2}\bigl(\tau_{-\fg_{\eta_1, \eta_2}} \leq L\bigr) \,\cdot\, p_{L - \fx}(u_2 - u_1),
\end{split}
\end{equation}
where $p_\ell$ is the transition kernel of $\fB$, defined in \eqref{eq:heat_kernel}. Using the strong Markov property, we may rewrite the last expression as
\begin{align}
\MoveEqLeft[6]
\fP_{[\fx, L]}^{\nohit\,(-\fg_{\eta_1, \eta_2})}(u_1, u_2)\\
&= p_{L - \fx}(u_2 - u_1) - \int_{x}^L \P_{\fB(x) = u_1} \bigl(\tau_{-\fg_{\eta_1, \eta_2}} \in \d s\bigr) \cdot p_{L - s} \bigl(u_2 -\fg_{\eta_1, \eta_2}(s)\bigr).
\label{eq:earlierformula}
\end{align}
Here $\P_{\fB(x) = u_1} (\tau_{-\fg_{\eta_1, \eta_2}} \in \bigcdot\,)$ is the probability measure on $\R$, generated by the random variable $\tau_{-\fg_{\eta_1, \eta_2}}$ under the initial data that $\fB(x) = u_1$. Just looking directly at this integral, it may not be immediately clear that it is finite since if $u_2 \leq -M$, the heat kernel $p_{L - s}$ tends to a Dirac delta function when $s \to L$. Let us briefly explain why the integral is well-defined.  Another way to write the integral is as
\begin{align}
	\E_{\fB(\fx)=u_1} &\Bigl[p_{L - \tau_{-\fg_{\eta_1, \eta_2}}} \bigl(u_2 -\fg_{\eta_1, \eta_2}(\tau_{-\fg_{\eta_1, \eta_2}})\bigr) \,\cdot\, \1{\tau_{-\fg_{\eta_1, \eta_2}} \leq L}\Bigr]\\
	= &\,\E_{\fB(\fx)=u_1} \Bigl[p_{L - \tau_{-\fg_{\eta_1, \eta_2}}} \bigl(u_2 -\fg_{\eta_1, \eta_2}(\tau_{-\fg_{\eta_1, \eta_2}})\bigr) \,\cdot\, \1{\tau_{-\fg_{\eta_1, \eta_2}} \leq x_2 +
	\delta_2}\Bigr] \label{eq:earlierformula1}\\
	&+ \E_{\fB(\fx)=u_1} \Bigl[p_{L - \tau_{-\fg_{\eta_1, \eta_2}}} (u_2 + M) \,\cdot\, \1{x_2 +
	\delta_2 < \tau_{-\fg_{\eta_1, \eta_2}} \leq L}\Bigr],
\end{align}
where in the last term we used the identity $\fg_{\eta_1, \eta_2}(\tau_{-\fg_{\eta_1, \eta_2}}) = -M$ almost surely when $x_2 + \delta_2 < \tau_{-\fg_{\eta_1, \eta_2}} \leq L$. The first expectation on the right-hand side of \eqref{eq:earlierformula1} is finite, because the heat kernel is bounded. Since $\tau_{-\fg_{\eta_1, \eta_2}}$ is a hitting time of a piecewise constant function, from \cite[Section~2.6]{karatzas1998brownian} we conclude that its probability distribution has a density
$\frac{\d}{\d s} \P_{\fB(x) = u_1} \bigl(\tau_{-\fg_{\eta_1, \eta_2}} \leq s\bigr)$,
and, moreover, that on the time interval $s \in (x_2 + \delta_2, L]$ this density is bounded above by a constant which depends on $x_2 + \delta_2$ and $L$ (cf. \eqref{eq:density_stopping_time} and \eqref{eq:density_stopping_time2} for  similar densities). Thus, the last expectation in \eqref{eq:earlierformula1} is bounded by a constant multiple of
$\int_{x_2 + \delta_2}^L \d s\; p_{L - s} (u_2 + M)$, which is finite as desired.

Returning to \eqref{eq:earlierformula}, we claim that this implies that
\begin{equation}\label{eq:Theta_new_formula}
\fP_{[\fx, L]}^{\nohit\,(-\fg_{\eta_1, \eta_2})}(u_1, u_2) = \Bigl(\fT_{0, L - \fx} - \SS_{0, L, [\fx, L]}^{(\eta_1; \eta_2)}\Bigr) (u_1, u_2),
\end{equation}
where $\fT_{t,x}(u,v)$ is defined in  \eqref{eq:fTdef} and
\begin{equation}\label{eq:S_bold}
\SS_{t, \fy, [\fx, L]}^{(\eta_1; \eta_2)}(u_1, u_2) := \E_{\fB(\fx)=u_1}\Bigl[ \fT_{t, \fy - \tau_{-\fg_{\eta_1, \eta_2}}} \bigl(\fB(\tau_{-\fg_{\eta_1, \eta_2}}), u_2 \bigr) \,\cdot\, \1{\tau_{-\fg_{\eta_1, \eta_2}} \leq L}\Bigr].
\end{equation}
To justify the claim, we observe that $p_\ell(u) = \fT_{0,\ell}(u)$ and $\fT_{t,x}(u,v) = \fT_{t,x}(u - v)$. Using these facts, \eqref{eq:Theta_new_formula} follows readily from \eqref{eq:earlierformula}. The first of these facts, that $p_\ell(u) = \fT_{0,\ell}(u)$, follows from \eqref{eq:groupS}.

Using \eqref{eq:Theta_new_formula}, we write  the kernel \eqref{eq:D1_def} in the form
\begin{equation}
\begin{split}
	\fD^{(\eta_1; \eta_2)}_{[\fx, L], +}(w) =& \int_{-\infty}^\infty \d v\; \partial_{1} \fT_{0, L - \fx} (-M, v) \cdot \bigl(\fT_{1/2,-L} \Gamma \varrho\bigr) (v, w)\\
	&- \int_{-\infty}^\infty \d v\; \partial_{1} \SS_{0, L, [\fx, L]}^{(\eta_1; \eta_2)} (-M, v) \cdot \bigl(\fT_{1/2,-L} \Gamma \varrho\bigr) (v, w).
\end{split}
\label{eq:D1_intermediate}
\end{equation}
Applying the composition identity \eqref{eq:S_properties}, the first integral on the right-hand side of \eqref{eq:D1_intermediate} equals
\begin{equation}\label{eq:D1_intermediate0}
	\partial_{1} \bigl(\fT_{1/2,-\fx} \Gamma \varrho\bigr) (-\fM, w).
\end{equation}
Furthermore, fast decay of the functions \eqref{eq:fTdef} at infinity allows us to apply Fubini's theorem and write the second integral in \eqref{eq:D1_intermediate} as
\begin{equation}\label{eq:D1_intermediate1}
\begin{split}
\MoveEqLeft[11]
	\partial_{u} \E_{\fB(\fx)=u}\Bigl[ \int_{-\infty}^\infty \d v\; \fT_{0, L - \tau_{-\fg_{\eta_1, \eta_2}}} \bigl(\fB(\tau_{-\fg_{\eta_1, \eta_2}}), v \bigr)\\
	&\times \bigl(\fT_{1/2,-L} \Gamma \varrho\bigr) (v, w) \,\cdot\, \1{\tau_{-\fg_{\eta_1, \eta_2}} \leq L}\Bigr] \bigg|_{u = -M},
\end{split}
\end{equation}
where we used the definition \eqref{eq:S_bold}. Applying the identity \eqref{eq:S_properties}, the integral inside the expectation can be written as
\begin{equation}
	\bigl(\fT_{1/2, - \tau_{-\fg_{\eta_1, \eta_2}}} \Gamma \varrho\bigr) \bigl(\fB(\tau_{-\fg_{\eta_1, \eta_2}}), w\bigr).
\end{equation}
Combining this identity with \eqref{eq:D1_intermediate0} and \eqref{eq:D1_intermediate1}, we get from \eqref{eq:D1_intermediate} that
\begin{equation}\label{eq:D1_new_formula}
\fD^{(\eta_1; \eta_2)}_{[\fx, L], +}(w) = \partial_{1} \bigl(\fT_{1/2,-\fx} \Gamma \varrho\bigr) (-\fM, w) - \partial_{1} \bigl( \SS_{1/2, 0, [\fx, L]}^{(\eta_1; \eta_2)} \Gamma \varrho\bigr) (-\fM, w).
\end{equation}
Dependence of this expression on $L$ is now more apparent, allowing us to take the limit $L \to \infty$.

Now, we will bound the $L^2$ norm of \eqref{eq:D1_new_formula}. For this, we need to bound the $L^2$ norms of the two terms in \eqref{eq:D1_new_formula}. The reflection operator $\varrho$ in \eqref{eq:D1_new_formula} does not influence the $L^2$ norm and can be omitted. Then Lemma~\ref{lem:S_integral_estimate} yields the  bound on the first term in \eqref{eq:D1_new_formula},
\begin{equation}
\Bigl\| \partial_{1} \bigl(\fT_{1/2, - \fx} \Gamma \varrho\bigr) (-\fM, \bigcdot) \Bigr\|_{L^2} \leq C_1,
\end{equation}
for all $|\fx| \leq \beta L$ and $\fM \in \bar S$, and for some constant $C_1 \geq 0$, depending on $L$ and $\bar L$.

Next, we turn to the second term in \eqref{eq:D1_new_formula}. Using \eqref{eq:S_bold} and integration by parts, we can write
\begin{equation}
\begin{split}
\SS_{1/2, 0, [\fx, L]}^{(\eta_1; \eta_2)} (v, w)
&= \int_\fx^{L} \P_{\fB(\fx)=v} \bigl( \tau_{-\fg_{\eta_1, \eta_2}} \in \d s\bigr) \,\cdot\, \bigl( \fT_{1/2, - s} \Gamma \varrho \bigr) \bigl(-\fg_{\eta_1, \eta_2}(s), w\bigr)\\
&= \int_\fx^{L} \P_{\fB(\fx)=v} \bigl( \tau_{-\fg_{\eta_1, \eta_2}} < s\bigr) \,\cdot\, \partial_s \bigl( \fT_{1/2, - s} \Gamma \varrho \bigr) \bigl(-\fg_{\eta_1, \eta_2}(s), w\bigr) \d s\\
&\quad\qquad + \P_{\fB(\fx)=v} \bigl( \tau_{-\fg_{\eta_1, \eta_2}} \leq L\bigr) \,\cdot\, \bigl( \fT_{1/2, - L} \Gamma \varrho \bigr) \bigl(-M, w\bigr)\\
&\quad\qquad - \P_{\fB(\fx)=v} \bigl( \tau_{-\fg_{\eta_1, \eta_2}} \leq \fx\bigr) \,\cdot\, \bigl( \fT_{1/2, - \fx} \Gamma \varrho \bigr) \bigl(-\fg_{\eta_1, \eta_2}(x), w\bigr),
\end{split}
\label{eq:last_term_formula}
\end{equation}
where we used $\fg_{\eta_1, \eta_2}(L) = M$. As in \eqref{eq:earlierformula}, we use in this formula the probability distribution $\P_{\fB(\fx)=v} ( \tau_{-\fg_{\eta_1, \eta_2}} \in \d s\,)$ generated by the random variable $\tau_{-\fg_{\eta_1, \eta_2}}$. The integration-by-parts formula holds for any probability distribution (not necessarily absolutely continuous) and can be found in \cite[Theorem~21.67]{Hewitt}. From \eqref{e:Theta_formula}, one can readily conclude that
\begin{align}
\Bigl|\partial_{v}\P_{\fB(\fx)=v} \bigl( \tau_{-\fg_{\eta_1, \eta_2}} < s\bigr) \big|_{v = -\fM} \Bigr| &\leq C (s - \fx)^{-\frac{1}{2}}, \quad \text{for}~s > \fx,\\
\Bigl|\partial_{v}\P_{\fB(\fx)=v} \bigl( \tau_{-\fg_{\eta_1, \eta_2}} < \fx\bigr) \big|_{v = -\fM} \Bigr| &= 0.
\end{align}
Formula \eqref{eq:fTdef} implies that $\partial_x \fT_{t,x}(z) = \fT_{t,x}''(z)$, where the derivatives on the right-hand side are taken with respect to $z$. Applying these bounds and identities to \eqref{eq:last_term_formula}, we learn that
\begin{align}
\Bigl|\partial_{1} \bigl(\SS_{1/2, 0, [\fx, L]}^{(\eta_1; \eta_2)} \Gamma \varrho\bigr) (-\fM, w)\Bigr| &\leq C \int_\fx^{L} (s - \fx)^{-\frac{1}{2}} \bigl|\bigl( \fT_{1/2, - s}'' \Gamma \varrho \bigr) (-\fM - w) \bigr| \d s \\
&\hspace{2cm} + C (L - \fx)^{-\frac{1}{2}} \bigl|\bigl( \fT_{1/2, -L} \Gamma \varrho \bigr) (-\fM - w)\bigr|.
\end{align}
Using the triangle inequality for the $L^2$ norm, we obtain
\begin{align}
\Bigl\| \partial_{1} \bigl(\SS_{1/2, 0, [\fx, L]}^{(\eta_1; \eta_2)} \Gamma \varrho\bigr)  (-\fM, \bigcdot) \Bigr\|_{L^2} &\leq C \int_\fx^{L} (s - \fx)^{-\frac{1}{2}} \bigl\| \bigl( \fT_{1/2, - s}'' \Gamma \varrho \bigr) (-\fM - \bigcdot) \bigr\|_{L^2} \d s \\
&\hspace{1.5cm} + C (L - \fx)^{-\frac{1}{2}} \bigl\|\bigl( \fT_{1/2, -L} \Gamma \varrho \bigr) (-\fM - \bigcdot)\bigr\|_{L^2}.\label{eq:term2}
\end{align}
In view of Lemma~\ref{lem:S_integral_estimate}, the last term in this expression is bounded by $(L - \fx)^{-\frac{1}{2}} C_2$.
Since $-L \leq - s \leq L$, the assumptions of Lemma~\ref{lem:S_integral_estimate} are satisfied, and the first term in \eqref{eq:term2} is bounded by
\begin{equation}
C_3 \int_\fx^{L} (s - \fx)^{-\frac{1}{2}} \d s =   \frac{1}{2} C_3 (L-\fx)^{\frac{1}{2}}.
\end{equation}
Combining the derived bounds on \eqref{eq:D1_new_formula}, we obtain the first bound in \eqref{eqs:D_bounds}.
\smallskip

Now, we will prove the second bound in \eqref{eqs:D_bounds}. From definition \eqref{eq:D2_def}, we get
\begin{equation}\label{eq:D-new}
\fD^{(\eta_1; \eta_2)}_{[-L, \fx], -}(w) = \partial_{1} \Bigl(\fP_{[-\fx, L]}^{\nohit\,(-\bar{\fg}_{\eta_2, \eta_1})} \fT_{1/2,-L} \Gamma \varrho \Bigr) (-\fM, w),
\end{equation}
for a new function $\bar \fg_{\eta_2, \eta_1}(y) := \fM - \eps_2 \cdot \1{y \in (-\fx_2 - \delta_2, -\fx_2]} - \eps_1 \cdot \1{y \in (-\fx_1 - \delta_1, -\fx_1]}$. The formula \eqref{eq:D-new} is the same as \eqref{eq:D1_def}, but for this new function $\bar \fg_{\eta_2, \eta_1}$. Then the second bound in \eqref{eqs:D_bounds} follows from the first one.
\end{proof}

Using the functions \eqref{eqs:D_def}, we define the rank-one kernels
\begin{subequations}\label{eqs:A_defs}
\begin{align}
\fA_L^{(\eta_1; \star)}(u_1, u_2) &:= \fD^{(\eta_1; 0)}_{[-L, \fx_2], -}(u_1) \,\cdot\, \fD^{(\eta_1; 0)}_{[\fx_2, L], +}(u_2), \label{eq:A1}\\
\fA_L^{(\star; 0)}(u_1, u_2) &:= \fD^{(0; 0)}_{[-L, \fx_1], -}(u_1) \,\cdot\, \fD^{(0; 0)}_{[\fx_1, L], +}(u_2),\label{eq:A2}\\
\fA_L^{(\star; \star)}(u_1, u_2) &:= \frac{1}{\sqrt{4\pi}} (\fx_2 - \fx_1)^{-\frac{3}{2}} \, \cdot\, \fD^{(0; 0)}_{[-L, \fx_1], -}(u_1) \,\cdot\, \fD^{(0; 0)}_{[\fx_2, L], +}(u_2).\label{eq:A3}
\end{align}
\end{subequations}
We prove in Lemma~\ref{lem:deriv1} that these kernels equal to certain derivatives of the kernel \eqref{eq:fR}. For this we need to show that \eqref{eqs:A_defs} are trace class.

\begin{lem}\label{lem:A_formulas}
The kernels \eqref{eqs:A_defs} can be written in terms of \eqref{eqs:Q_limits} as
\begin{subequations}\label{eqs:A_formulas}
\begin{align}
\fA_L^{(\eta_1; \star)} &= \varrho\Gamma(\fT_{1/2,-L})^* \fQ_L^{(\eta_1; \star)} \fT_{1/2,-L} \Gamma\varrho, \label{eq:A1_formula}\\
\fA_L^{(\star; 0)} &= \varrho\Gamma(\fT_{1/2,-L})^* \fQ_L^{(\star; \eta_2)} \fT_{1/2,-L} \Gamma\varrho,\label{eq:A2_formula}\\
\fA_L^{(\star; \star)} &= \varrho\Gamma (\fT_{1/2,-L})^* \fQ_L^{(\star; \star)} \fT_{1/2,-L} \Gamma \varrho,\label{eq:A3_formula}
\end{align}
\end{subequations}
where as before $\Gamma = \Gamma_{1/2}$.

Let the sets $S$ and $\bar S$ be defined in \eqref{eq:setsS}. Then these kernels are trace class, with trace norms bounded as follows:
\begin{equation}\label{eq:A_bounds}
\bigl\| \fA_L^{(\eta_1; \star)} \bigr\|_1 \leq C, \qquad \bigl\| \fA_L^{(\star; 0)} \bigr\|_1 \leq C, \qquad \bigl\|\fA_L^{(\star; \star)} \bigr\|_1 \leq C |\fx_2 - \fx_1|^{-\frac{3}{2}},
\end{equation}
uniformly in $(x_1, x_2) \in S$ and $M \in \bar S$.
\end{lem}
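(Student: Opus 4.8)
The statement splits into two halves: the operator identities \eqref{eqs:A_formulas}, and the trace-class bounds \eqref{eq:A_bounds}. I would establish the identities first and then read off the bounds from the rank-one structure together with Lemma~\ref{lem:D_bounds}.

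For the identities, the key observation is that each of $\fQ^{(\eta_1;\star)}$, $\fQ^{(\star;0)}$, $\fQ^{(\star;\star)}$ supplied by Lemma~\ref{lem:Q_limits} is rank one: by \eqref{eqs:Q_formulas} each equals, up to the scalar $\tfrac{1}{\sqrt{4\pi}}(\fx_2-\fx_1)^{-3/2}$ in the third case, the operator with kernel $\phi(u_1)\psi(u_2)$, where e.g. for $\fQ^{(\eta_1;\star)}$ one takes $\phi = -\partial_2\fP_{[-L,\fx_2]}^{\nohit\,(-\fg_{\eta_1,0})}(\,\bigcdot\,,-\fM)$ and $\psi = \partial_1\Theta^{(-\fM)}_{[\fx_2,L]}(-\fM,\,\bigcdot\,)$, and analogously (with all barriers constant) for the other two. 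Write $T := \fT_{1/2,-L}\Gamma\varrho$, whose adjoint is $T^* = \varrho\Gamma(\fT_{1/2,-L})^*$ because $\varrho$ and $\Gamma = \Gamma_{1/2}$ are self-adjoint. Denoting by $\phi\otimes\psi$ the rank-one operator with kernel $\phi(u_1)\psi(u_2)$, one has the elementary identity $T^*(\phi\otimes\psi)T = (T^*\phi)\otimes(T^*\psi)$, so it remains to identify $T^*\phi$ with $\fD^{(\eta_1;0)}_{[-L,\fx_2],-}$ and $T^*\psi$ with $\fD^{(\eta_1;0)}_{[\fx_2,L],+}$ (and correspondingly in the other two cases). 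This is a direct comparison with the definitions \eqref{eqs:D_def}: the derivatives $\partial_1$, $\partial_2$ may be moved inside the compositions, the relevant differentiation-under-the-integral being justified by the rapid decay of the kernels $\fT_{1/2,x}$ and of the no-hit kernels; and on the interval $[\fx_2,L]$ the barrier $-\fg_{\eta_1,0}$ is the constant $-\fM$ — since $\delta_1\le \tfrac A2 < A \le x_2-x_1$ forces $\fx_1+\delta_1<\fx_2$ — so that $\fP_{[\fx_2,L]}^{\nohit\,(-\fg_{\eta_1,0})} = \Theta^{(-\fM)}_{[\fx_2,L]}$, whereas $\fP_{[-L,\fx_2]}^{\nohit\,(-\fg_{\eta_1,0})}$ genuinely carries the bump, matching the asymmetry of \eqref{eq:Q_eps1_0_formula}. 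The signs are tracked using the minus signs already present in \eqref{eqs:Q_formulas} and in the $\partial_2$ of \eqref{eq:D2_def}. For \eqref{eq:A2_formula} and \eqref{eq:A3_formula} the argument is identical, now with $\eta_1=\eta_2=0$ so every barrier is constant, the scalar $\tfrac{1}{\sqrt{4\pi}}(\fx_2-\fx_1)^{-3/2}$ being carried along from \eqref{eq:Q_0_0_formula}.

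Once this is done, each $\fA$ is displayed as a rank-one operator, namely $\fD^{(\eta_1;0)}_{[-L,\fx_2],-}\otimes\fD^{(\eta_1;0)}_{[\fx_2,L],+}$, respectively $\fD^{(0;0)}_{[-L,\fx_1],-}\otimes\fD^{(0;0)}_{[\fx_1,L],+}$, respectively $\tfrac{1}{\sqrt{4\pi}}(\fx_2-\fx_1)^{-3/2}\,\fD^{(0;0)}_{[-L,\fx_1],-}\otimes\fD^{(0;0)}_{[\fx_2,L],+}$. A rank-one operator $\xi_1\otimes\xi_2$ with $\xi_1,\xi_2\in L^2(\R)$ is trace class with trace norm $\|\xi_1\|_{L^2}\|\xi_2\|_{L^2}$ (the $n=1$ instance of the bounds recalled in Appendix~\ref{sec:Fredholm}); in particular this confirms that the right-hand sides of \eqref{eqs:A_formulas} are genuine trace-class operators on $L^2(\R)$, which is not a priori obvious since $T$ contains the unbounded multiplier $\Gamma=\Gamma_{1/2}$. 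Lemma~\ref{lem:D_bounds} bounds $\|\fD^{(\eta_1;\eta_2)}_{[\fx,L],+}\|_{L^2}$ and $\|\fD^{(\eta_1;\eta_2)}_{[-L,\fx],-}\|_{L^2}$ by a constant $C$ depending only on $L$ and $\bar L$, uniformly over $(\fx_1,\fx_2)\in S$, $|\fx|\le\beta L$, $\fM\in\bar S$ and $\eta_1,\eta_2\in[0,\tfrac A2]^2$; multiplying the two bounds yields the first two inequalities in \eqref{eq:A_bounds}, and retaining the scalar prefactor yields $\|\fA^{(\star;\star)}\|_1\le C|\fx_2-\fx_1|^{-3/2}$, all with the stated uniformity.

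The bulk of the work, and the main obstacle, is the first part: carrying the derivatives through the operator compositions while tracking which of the two subintervals carries the $\eps$-bump of $\fg_{\eta_1,\eta_2}$ and matching every sign against those already present in Lemma~\ref{lem:Q_limits}. The trace-class bounds are then essentially immediate from the rank-one structure, with the single caveat — noted above — that the $L^2$-estimates of Lemma~\ref{lem:D_bounds} are needed not merely to bound the trace norms but, in the presence of the unbounded $\Gamma$, to make sense of the two-sided conjugations in \eqref{eqs:A_formulas} as identities of trace-class operators in the first place.
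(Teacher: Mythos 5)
Your argument is correct and essentially the same as the paper's: the paper likewise substitutes the rank-one formulas \eqref{eqs:Q_formulas} into the double-integral kernel of the conjugation $\varrho\Gamma(\fT_{1/2,-L})^*\,\fQ\,\fT_{1/2,-L}\Gamma\varrho$, identifies the two resulting factors with the functions $\fD_{-}$ and $\fD_{+}$ of \eqref{eqs:D_def} (using that the barrier is constant on the right subinterval), and then bounds the trace norm of the rank-one operator by the product of the $L^2$ norms from Lemma~\ref{lem:D_bounds}, carrying the scalar $(4\pi)^{-1/2}(\fx_2-\fx_1)^{-3/2}$ for $\fA^{(\star;\star)}$. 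Your handling of the overall sign is no more explicit than the paper's own, and in any case it does not affect the trace-norm bounds \eqref{eq:A_bounds}.
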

\begin{proof}
Using formulas \eqref{eq:Q_eps1_0_formula} and \eqref{eqs:D_def}, we may write the right-hand side of \eqref{eq:A1_formula} in the form
\begin{align}
\MoveEqLeft
\int_{-\infty}^\infty \d u_1 \int_{-\infty}^\infty \d u_2\; \varrho \Gamma (\fT_{1/2,-L})^*(w_1, u_1) \,\cdot\, \fQ_L^{(\eta_1; \star)} (u_1, u_2) \,\cdot\, (\fT_{1/2,-L} \Gamma \varrho) (u_2, \bar v_2)\\
&= \partial_{2} \Bigl( \varrho \Gamma (\fT_{1/2,-L})^* \fP_{[-L,\fx_2]}^{\nohit\,(-\fg_{\eta_1, 0})}\Bigr) (w_1, -\fM) \,\cdot\, \partial_{1} \Bigl(\Theta^{(-\fM)}_{[\fx_2, L]} \fT_{1/2,-L} \Gamma \varrho\Bigr) (-\fM, \bar v_2)\\
&= \fD^{(\eta_1; 0)}_{[-L, \fx_2], -}(w_1) \,\cdot\, \fD^{(0; 0)}_{[\fx_2, L], +}(\bar v_2),
\end{align}
which is exactly $\fA_L^{(\eta_1; \star)}$. Since the kernel \eqref{eq:A1} is rank-one, from \eqref{eq:norms_products} we conclude that
\begin{equation}
\bigl\| \fA_L^{(\eta_1; \star)} \bigr\|_1 \leq \bigl\| \fD^{(\eta_1; 0)}_{[-L, \fx_2], -} \bigr\|_{L^2} \, \cdot\, \bigl\| \fD^{(\eta_1; 0)}_{[\fx_2, L], +} \bigr\|_{L^2}.
\end{equation}
Then Lemma~\ref{lem:D_bounds} yields the first bound in \eqref{eq:A_bounds}.

In the same way, we prove formulas \eqref{eq:A2_formula} and \eqref{eq:A3_formula}; and bounds on the trace norms of \eqref{eq:A2} and \eqref{eq:A3}.
\end{proof}

Next we differentiate the kernel $\fR_L^{(\eta_1; \eta_2)}$, defined in \eqref{eq:fR}, with respect to $\eta_1$ and $\eta_2$.

\begin{lem}\label{lem:deriv1}
Let the sets $S$ and $\bar S$ be defined in~\eqref{eq:setsS}. The following limits hold in trace norm:
\begin{subequations}\label{eqs:derivs}
\begin{align}\label{eq:deriv1}
\mylim{\eta_2 \searrow 0} \frac{1}{|\eta_2|} \Bigl( \fR_L^{(\eta_1; \eta_2)} -  \fR_L^{(\eta_1; 0)}\Bigr) &= \fA_L^{(\eta_1; \star)},\\
\mylim{\eta_1 \searrow 0} \frac{1}{|\eta_1|} \Bigl( \fR_L^{(\eta_1; 0)} -  \fR_L^{(0; 0)}\Bigr)  &= \fA_L^{(\star; 0)}, \label{eq:deriv2}\\
\mylim{\eta_1 \searrow 0} \frac{1}{|\eta_1|} \Bigl( \fA_L^{(\eta_1; \star)} -  \fA_L^{(0; \star)}\Bigr) &= \fA_L^{(\star; \star)},\label{eq:deriv3}
\end{align}
\end{subequations}
 uniformly over $(x_1, x_2) \in S$ and $M \in \bar S$; and the limit \eqref{eq:deriv1} holds uniformly over $\eta_1 \in (0, \frac{A}{2}]^2$.
\end{lem}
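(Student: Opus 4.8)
The plan is to reduce each of the three trace-norm limits to the kernel-level limits of Lemma~\ref{lem:Q_limits}, conjugated by the composite operators $\fT_{1/2,-L}\Gamma$ and $\varrho\Gamma(\fT_{1/2,-L})^*$ (with $\Gamma=\Gamma_{1/2}$), and then to read off the limits using the $L^2$-bounds of Lemma~\ref{lem:D_bounds} and the identities \eqref{eqs:A_formulas}.

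First I would observe, from \eqref{eq:fR}, \eqref{eq:Kd_epi} and \eqref{eq:Kd_prelimit}, that $\fR^{(\eta_1;\eta_2)}=\varrho\Gamma(\fT_{1/2,-L})^*\fP_{[-L,L]}^{\hit(-\fg_{\eta_1,\eta_2})}\fT_{1/2,-L}\Gamma\varrho$; since $\fP^{\hit(\ff)}=I-\fP^{\nohit(\ff)}$ by \eqref{eq:hit_def} and $\fP_{[-L,L]}^{\nohit(-\fg_{\eta_1,\eta_2})}=\fQ^{(\eta_1;\eta_2)}$ by \eqref{e:Q_def}, forming differences cancels the identity operator and yields
\[
\fR^{(\eta_1;\eta_2)}-\fR^{(\eta_1;0)}=-\,\varrho\Gamma(\fT_{1/2,-L})^*\bigl(\fQ^{(\eta_1;\eta_2)}-\fQ^{(\eta_1;0)}\bigr)\fT_{1/2,-L}\Gamma\varrho ,
\]
and likewise with $(\eta_1;\eta_2),(\eta_1;0)$ replaced by $(\eta_1;0),(0;0)$ for \eqref{eq:deriv2}. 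So it is enough to show that conjugation $\varrho\Gamma(\fT_{1/2,-L})^*[\,\bigcdot\,]\fT_{1/2,-L}\Gamma\varrho$ sends the normalized differences $|\eta_2|^{-1}(\fQ^{(\eta_1;\eta_2)}-\fQ^{(\eta_1;0)})$ and $|\eta_1|^{-1}(\fQ^{(\eta_1;0)}-\fQ^{(0;0)})$ to operators converging in trace norm to the conjugates of $\fQ^{(\eta_1;\star)}$ and $\fQ^{(\star;0)}$; Lemma~\ref{lem:A_formulas} then identifies those conjugates with $\fA^{(\eta_1;\star)}$ and $\fA^{(\star;0)}$, giving \eqref{eq:deriv1} and \eqref{eq:deriv2}.

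To carry this out I would rerun the computation in the proof of Lemma~\ref{lem:Q_limits} with the conjugating operators attached. In \eqref{eq:Qform} the function $f_z$ enters only through $\fP_{[-L,\fx_2+z]}^{\nohit(-\fg_{\eta_1,0})}(u_1,\bigcdot)$, a function of the left variable, and $g_{\delta_2}$ only through $\Theta^{(-\fM)}_{[\fx_2+\delta_2,L]}(\bigcdot,u_2)$, a function of the right variable, so $\varrho\Gamma(\fT_{1/2,-L})^*$ on the left and $\fT_{1/2,-L}\Gamma\varrho$ on the right commute with the $z$- and $v$-integrations — a Fubini interchange justified by the super-exponential decay of the kernel \eqref{eq:fTdef}, which absorbs the growth of $\Gamma$, exactly as in the proof of Lemma~\ref{lem:D_bounds}. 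Reorganizing as in Lemma~\ref{lem:Q_limits} (and using the fundamental theorem of calculus together with Lemmas~\ref{lem:g_converges}--\ref{lem:heat2} for the $z$-average and the heat-kernel smoothing) then expresses the conjugated difference quotient as a $z$-average of rank-one operators whose two $L^2$-factors are conjugates of $f'_z(-\fM)$ and of $\int_0^\infty p_{\delta_2-z}(v)g'_{\delta_2}(v-\fM)\,\d v$, plus error terms mirroring $\CI_1,\CI_{2,1},\CI_{2,2}$. For the errors I would bound the trace norm of each rank-one summand by the product of the $L^2$-norms of its factors, bound those factors uniformly by Lemma~\ref{lem:D_bounds} (and by the same argument applied to the truncated ``No hit''/$\Theta$ kernels appearing here), and observe that the surviving scalar prefactors — $\sup_z|\nabla_{\!\eps_2}f_z(-\fM)-f'_z(-\fM)|$, $\delta_2^{\alpha_\star-1/2}$, and the heat-kernel averages treated in Lemmas~\ref{lem:g_converges}--\ref{lem:heat2} — tend to $0$ as $\eta_2\searrow 0$ in $\Dom$, precisely by \eqref{eq:I1_limit}, \eqref{eq:I21_limit} and \eqref{eq:I22_limit}. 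For the main term, Lemma~\ref{lem:f_g} gives $f'_z(-\fM)\to f'_0(-\fM)$ and Lemma~\ref{lem:heat2} gives $\int_0^\infty p_{\delta_2-z}(v)g'_{\delta_2}(v-\fM)\,\d v\to\tfrac12 g'_0(-\fM)$, both uniformly in $z\in[0,\delta_2]$ and locally uniformly in the remaining variable; together with the uniform $L^2$-bounds this upgrades to $L^2$-convergence of the two factors, so the $z$-average converges in trace norm to $\fD^{(\eta_1;0)}_{[-L,\fx_2],-}\otimes\fD^{(0;0)}_{[\fx_2,L],+}=\fA^{(\eta_1;\star)}$ by \eqref{eq:A1}. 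This establishes \eqref{eq:deriv1}, and \eqref{eq:deriv2} is identical, using the $\bar f_\delta,\bar g_\delta$ variant of Lemma~\ref{lem:Q_limits}. For \eqref{eq:deriv3}, since $\fx_1+\delta_1<\fx_2$ the right factor of $\fA^{(\eta_1;\star)}$ is independent of $\eta_1$, so by the rank-one trace-norm bound it suffices to prove the $L^2$-limit $|\eta_1|^{-1}\bigl(\fD^{(\eta_1;0)}_{[-L,\fx_2],-}-\fD^{(0;0)}_{[-L,\fx_2],-}\bigr)\to\tfrac1{\sqrt{4\pi}}(\fx_2-\fx_1)^{-3/2}\fD^{(0;0)}_{[-L,\fx_1],-}$; by \eqref{eq:D2_def} this difference quotient, after conjugation by $\varrho\Gamma(\fT_{1/2,-L})^*$, comes down to the pointwise limit \eqref{eq:Q1_limit_formula_derivative} computed over $[-L,\fx_2]$ and evaluated at second argument $-\fM$ (using $\partial_1\partial_2\Theta^{(-\fM)}_{[\fx_1,\fx_2]}(-\fM,-\fM)=\tfrac1{\sqrt{4\pi}}(\fx_2-\fx_1)^{-3/2}$), upgraded to $L^2$-convergence in the same way. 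Uniformity over $(x_1,x_2)\in S$, $M\in\bar S$, and over $\eta_1\in(0,\tfrac A2]^2$ for \eqref{eq:deriv1}, is inherited from Lemmas~\ref{lem:g_converges}--\ref{lem:heat2}, \ref{lem:Q_limits} and \ref{lem:D_bounds}.

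The main obstacle is exactly this last upgrade, from the pointwise and locally uniform convergences of Lemma~\ref{lem:Q_limits} to convergence in trace norm. It hinges on two points: the Fubini interchange of the conjugation by $\varrho\Gamma(\fT_{1/2,-L})^*$ and $\fT_{1/2,-L}\Gamma\varrho$ with the $z,v$-integrations — which uses that the super-exponential Airy decay in the kernel \eqref{eq:fTdef} beats the growth of $\Gamma$, the same mechanism already at work in Lemma~\ref{lem:D_bounds} — and uniform $L^2$-bounds for the conjugates of the truncated ``No hit'' and $\Theta$ kernels and of their difference quotients entering the error terms, which are variants of Lemma~\ref{lem:D_bounds} and are the most technical ingredient.
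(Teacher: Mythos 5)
Your proposal is correct and takes essentially the same route as the paper: write $\fR^{(\eta_1;\eta_2)}-\fR^{(\eta_1;0)}$ (and the analogous differences) as $\varrho\Gamma(\fT_{1/2,-L})^*\bigl[\,\bigcdot\,\bigr]\fT_{1/2,-L}\Gamma\varrho$ applied to the $\fQ$-differences, pass the limits of Lemma~\ref{lem:Q_limits} through the conjugation using the decay guaranteed by Lemma~\ref{lem:S_integral_estimate}, and identify the limits via \eqref{eqs:A_formulas} and the rank-one trace-norm bound from Lemma~\ref{lem:D_bounds}. The only difference is one of detail: where the paper disposes of the trace-norm upgrade with a one-line appeal to dominated convergence, you spell it out by decomposing the conjugated difference quotient into a $z,v$-integral of rank-one operators with uniformly $L^2$-bounded factors, which is a legitimate (indeed more explicit) justification of the same step.
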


\begin{proof}
To compute the limits of the kernels, we use the definitions of the $\epi$-kernel and $\hypo$-kernels, provided in \eqref{eq:Kd_epi} and \eqref{eq:Kd} respectively. We start by proving \eqref{eq:deriv1}. We have 
\begin{align}
\fR_L^{(\eta_1; \eta_2)} - \fR_L^{(\eta_1; 0)} &= \varrho \Bigl(\fK^{\hypo(-\fg_{\eta_1, \eta_2})}_{1/2, [-L,L]} - \fK^{\hypo(-\fg_{\eta_1, 0})}_{1/2, [-L,L]}\Bigr) \varrho\\
& = \varrho \Gamma(\fT_{1/2,-L})^* \Bigl(\fP_{[-L,L]}^{\hit(-\fg_{\eta_1, \eta_2})} - \fP_{[-L,L]}^{\hit(-\fg_{\eta_1, 0})}\Bigr)\fT_{1/2,-L} \Gamma \varrho\\
& = \varrho\Gamma(\fT_{1/2,-L})^* \Bigl(\fQ_L^{(\eta_1; 0)} - \fQ_L^{(\eta_1; \eta_2)}\Bigr) \fT_{1/2,-L} \Gamma \varrho,\label{e:kernel_d2}
\end{align}
where in the last identity we made use of \eqref{eq:hit_def} and the operator \eqref{e:Q_def}. Applying \eqref{eq:norms_products}, we get 
\begin{align}
&\bigl\| |\eta_2|^{-1} \bigl(\fR_L^{(\eta_1; \eta_2)} - \fR_L^{(\eta_1; 0)}\bigr) + \varrho\Gamma(\fT_{1/2,-L})^* \fQ_L^{(\eta_1; \star)} \fT_{1/2,-L} \Gamma\varrho \bigr\|_1 \\
&\qquad \leq \bigl\| \varrho\Gamma(\fT_{1/2,-L})^* \bigr\|_2 \bigl\| \bigl(|\eta_2|^{-1} \bigl(\fQ_L^{(\eta_1; 0)} - \fQ_L^{(\eta_1; \eta_2)}\bigr) + \fQ_L^{(\eta_1; \star)}\bigr) \fT_{1/2,-L} \Gamma \varrho \bigr\|_2.
\end{align}
Lemma~\ref{lem:S_integral_estimate} guarantees that the kernels $\Gamma(\fT_{1/2,-L})^*$ and $\fT_{1/2,-L} \Gamma$ have fast decays at infinity. Moreover, Lemma~\ref{lem:Q_limits} implies that the kernel in the princesses vanishes uniformly as $\eta_2 \searrow 0$. Hence, from the dominated convergence theorem we conclude that the preceding expression vanishes as $\eta_2 \searrow 0$, and we have the limit in trace class
\begin{align}
\mylim{\eta_2 \searrow 0} \frac{1}{|\eta_2|} \Bigl( \fR_L^{(\eta_1; \eta_2)} - \fR_L^{(\eta_1; 0)}\Bigr) &= - \varrho\Gamma (\fT_{1/2,-L})^* \mylim{\eta_2 \searrow 0} \frac{1}{|\eta_2|} \Bigl(\fQ_L^{(\eta_1; \eta_2)} - \fQ_L^{(\eta_1; 0)}\Bigr) \fT_{1/2,-L} \Gamma \varrho \\
&= - \varrho\Gamma(\fT_{1/2,-L})^* \fQ_L^{(\eta_1; \star)} \fT_{1/2,-L} \Gamma\varrho.\label{eq:limit1}
\end{align}
By \eqref{eq:A1_formula}, this is equal to the operator $-\fA_L^{(\eta_1; \star)}$, which is trace class and rank-one.
This finishes the proof of \eqref{eq:deriv1}.

In the same way, we can prove existence of the limit \eqref{eq:deriv2}. We have the limit in trace class
\begin{equation}
\mylim{\eta_1 \searrow 0} \frac{1}{|\eta_1|} \Bigl(  \fR_L^{(\eta_1; 0)} -   \fR_L^{(0; 0)}\Bigr) = - \varrho \Gamma (\fT_{1/2,-L})^* \fQ_L^{(\star; 0)} \fT_{1/2,-L} \Gamma \varrho,
\end{equation}
which, according to \eqref{eq:A2_formula}, equals $-\fA_L^{(\star; 0)}$. Moreover, formulas \eqref{eq:A1_formula} and \eqref{eq:A3_formula}, the dominated convergence theorem and Lemma~\ref{lem:Q_limits} yield the limit in trace class
\begin{align}
\mylim{\eta_1 \searrow 0} \frac{1}{|\eta_1|} \Bigl( \fA_L^{(\eta_1; \star)} -  \fA_L^{(0; \star)}\Bigr) &= - \varrho \Gamma (\fT_{1/2,-L})^* \mylim{\eta_1 \searrow 0} \frac{1}{|\eta_1|} \Bigl( \fQ_L^{(\eta_1; \star)} - \fQ_L^{(0; \star)} \Bigr) \fT_{1/2,-L} \Gamma \varrho\\
&= - \varrho \Gamma (\fT_{1/2,-L})^* \fQ_L^{(\star; \star)} \fT_{1/2,-L} \Gamma \varrho,
\end{align}
which, according to \eqref{eq:deriv3}, equals $- \fA_L^{(\star; \star)}$.
\end{proof}

\subsubsection{Proof of the limit \eqref{eqs:densities}}
\label{ss:first_deriv}

To prove existence of the limit \eqref{eqs:densities}, we compute the two limits in \eqref{eqs:limitsF} in turn.

We start by computing the limit \eqref{eq:limitF1}. By \cite[Theorem~4.1]{fixedpt}, the kernel $\fK^{\hypo(\fh_0)}_{1/2}$ is trace class; and \eqref{eq:norms} and \eqref{eq:norms_products} imply that the limits in Lemma~\ref{lem:deriv1} hold in trace class also for the kernels $\fK^{\hypo(\fh_0)}_{1/2} \fR_L^{(\eta_1; \eta_2)}$. Then using definition \eqref{eq:deriv_eta},  identity \eqref{eq:Fredholm_deriv} and Lemma~\ref{lem:deriv1}, we obtain
\begin{equation}
\begin{split}
\MoveEqLeft
 \dens^{(\eta_1; \star)}_{1, M, L}(x_1, x_2) = \mylim{\eta_2 \searrow 0} \frac{1}{|\eta_2|} \dens^{(\eta_1; \eta_2)}_{1, M, L}(x_1, x_2) \\
 &= D_{\eta_2} \det\Bigl(I- \fK^{\hypo(\fh_0)}_{1/2} \fR_L^{(\eta_1; \eta_2)} \Bigr) \Bigr|_{\eta_2 = 0} - D_{\eta_2} \det \Bigl(I-\fK^{\hypo(\fh_0)}_{1/2} \fR_L^{(0;  \eta_2)}\Bigr) \Bigr|_{\eta_2 = 0} \\
&= \det \Bigl(I- \fK^{\hypo(\fh_0)}_{1/2} \fR_L^{(\eta_1; 0)} \Bigr) \tr\bigg[\Bigl(I - \fK^{\hypo(\fh_0)}_{1/2} \fR_L^{(\eta_1; 0)}\Bigr)^{-1} \fK^{\hypo(\fh_0)}_{1/2} \fA_L^{(\eta_1; \star)}\bigg]\\
&\quad - \det \Bigl(I- \fK^{\hypo(\fh_0)}_{1/2} \fR_L^{(0; 0)} \Bigr) \tr\bigg[\Bigl(I - \fK^{\hypo(\fh_0)}_{1/2} \fR_L^{(0; 0)}\Bigr)^{-1} \fK^{\hypo(\fh_0)}_{1/2} \fA_L^{(0; \star)}\bigg].
\end{split}
\label{e:det_d2}
\end{equation}
The uniformity of the limits over $(x_1, x_2) \in S$, $M \in \bar S$ and $\eta_1 \in (0,\frac{A}{2}]^2$ follows from the uniformity of the kernels.
Thus we have shown that the limit in \eqref{eq:limitF1} (and hence also \eqref{e:densities1}) exists in the desired uniform manner. Moreover, we have provided a formula for this limit.  The formula can be simplified further. In shorthand, denote the kernels
\begin{equation}\label{eq:kernels12}
\fG_L^{(\eta_1)} := \fR_L^{(\eta_1; 0)}, \qquad\qquad \fH_L^{(\eta_1)} := \fR_L^{(\eta_1; 0)} - \fA_L^{(\eta_1; \star)}.
\end{equation}
Since the operator $\fA_L^{(\eta_1; \star)}$, given in  \eqref{eq:deriv1}, is rank one, we can use identity \eqref{eq:Fredholm_rank1} to write the first term in \eqref{e:det_d2} as
\begin{align}
\MoveEqLeft[9]
\det \Bigl(I- \fK^{\hypo(\fh_0)}_{1/2} \fR_L^{(\eta_1; 0)} \Bigr) \tr\bigg[\Bigl(I - \fK^{\hypo(\fh_0)}_{1/2} \fR_L^{(\eta_1; 0)}\Bigr)^{-1} \fK^{\hypo(\fh_0)}_{1/2} \fA_L^{(\eta_1; \star)}\bigg]\\
&= \det\Bigl(I- \fK^{\hypo(\fh_0)}_{1/2} \fH_L^{(\eta_1)} \Bigr) - \det\Bigl(I- \fK^{\hypo(\fh_0)}_{1/2} \fG_L^{(\eta_1)}\Bigr),
\end{align}
while the second term is obtained by setting $\eta_1 = 0$. This yields
\begin{align}
 \dens^{(\eta_1; \star)}_{1, M, L}(x_1, x_2) &= \det\Bigl(I- \fK^{\hypo(\fh_0)}_{1/2} \fH_L^{(\eta_1)} \Bigr) - \det\Bigl(I- \fK^{\hypo(\fh_0)}_{1/2} \fG_L^{(\eta_1)}\Bigr) \\
&\qquad \qquad  - \det\Bigl(I- \fK^{\hypo(\fh_0)}_{1/2} \fH_L^{(0)} \Bigr) + \det\Bigl(I- \fK^{\hypo(\fh_0)}_{1/2} \fG_L^{(0)}\Bigr).\label{e:det_d2_final}
 \end{align}

We now compute the limit \eqref{eq:limitF2} (and hence also \eqref{e:densities}). Formula \eqref{e:det_d2_final}, definition \eqref{eq:deriv_eta} and the differentiation formula \eqref{eq:Fredholm_deriv} yield
\begin{equation}
\begin{split}
&\dens_{1, M, L}(x_1, x_2) = \mylim{\eta_1 \searrow 0} \frac{1}{|\eta_1|} \dens^{(\eta_1; \star)}_{1, M, L}(x_1, x_2) \\
&= \det\Bigl(I- \fK^{\hypo(\fh_0)}_{1/2} \fH_L^{(0)}\Bigr) \tr\bigg[\Bigl(I- \fK^{\hypo(\fh_0)}_{1/2} \fH_L^{(0)}\Bigr)^{-1}  \fK^{\hypo(\fh_0)}_{1/2}  \mylim{\eta_1 \searrow 0} \frac{1}{|\eta_1|} \bigl( \fH_L^{(\eta_1)} - \fH_L^{(0)}\bigr)\bigg]\\
&\quad - \det\Bigl(I-\fK^{\hypo(\fh_0)}_{1/2} \fG_L^{(0)}\Bigr) \tr\bigg[\Bigl(I- \fK^{\hypo(\fh_0)}_{1/2} \fG_L^{(0)}\Bigr)^{-1} \fK^{\hypo(\fh_0)}_{1/2} \mylim{\eta_1 \searrow 0} \frac{1}{|\eta_1|} \bigl( \fG_L^{(\eta_1)} - \fG_L^{(0)}\bigr)\bigg].
\end{split}
\label{e:det_d2_final2}
\end{equation}
We will here rewrite the limits of the kernels, as we did  in \eqref{e:det_d2_final}. Using Lemma~\ref{lem:deriv1} and formula \eqref{eq:Fredholm_rank1}, the expression in the last line of \eqref{e:det_d2_final2} equals
\begin{align}
 \MoveEqLeft[6]
 \det \Bigl(I-\fK^{\hypo(\fh_0)}_{1/2} \fG_L^{(0)}\Bigr) \tr\bigg[\Bigl(I-\fK^{\hypo(\fh_0)}_{1/2} \fG_L^{(0)}\Bigr)^{-1} \fK^{\hypo(\fh_0)}_{1/2} \fA_L^{(\star; 0)}\bigg]\\
&= \det \Bigl(I- \fK^{\hypo(\fh_0)}_{1/2} \Bigl(\fG_L^{(0)} - \fA_L^{(\star; 0)}\Bigr) \Bigr) - \det \Bigl(I-\fK^{\hypo(\fh_0)}_{1/2} \fG_L^{(0)}\Bigr),\label{e:det2_d1_final}
\end{align}
where we used that $\fA_L^{(\star; 0)}$ is rank-one.

Now we turn to the kernel $\fH_L^{(\eta_1)}$ in \eqref{e:det_d2_final2}, for which we have
\begin{align}
\mylim{\eta_1 \searrow 0} \frac{1}{|\eta_1|} \bigl( \fH_L^{(\eta_1)} - \fH_L^{(0)}\bigr) 
&= \mylim{\eta_1 \searrow 0} \frac{1}{|\eta_1|} \bigl( \fG_L^{(\eta_1)} - \fG_L^{(0)}\bigr) - \mylim{\eta_1 \searrow 0} \frac{1}{|\eta_1|} \Bigl( \fA_L^{(\eta_1; \star)} - \fA_L^{(0; \star)}\Bigr)\\
&= \fA_L^{(\star; 0)} - \fA_L^{(\star; \star)},
\label{eq:G1_deriv}
\end{align}
where we used definitions \eqref{eq:kernels12} and Lemma~\ref{lem:deriv1}. The operators $\fA_L^{(\star; 0)}$ and $\fA_L^{(\star; \star)}$ are rank one. Hence, plugging \eqref{eq:G1_deriv} into \eqref{e:det_d2_final2} and applying \eqref{eq:Fredholm_rank1}, the expression in the second line of \eqref{e:det_d2_final2} can be written as
\begin{align}
\det&\Big(I-\fK^{\hypo(\fh_0)}_{1/2} \fH_L^{(0)}\Bigr) \tr\bigg[\Bigl(I-\fK^{\hypo(\fh_0)}_{1/2} \fH_L^{(0)}\Bigr)^{-1} \fK^{\hypo(\fh_0)}_{1/2} \Bigl(\fA_L^{(\star; 0)} - \fA_L^{(\star; \star)}\Bigr)\bigg]\\
&= \det\Bigl(I- \fK^{\hypo(\fh_0)}_{1/2} \Bigl(\fH_L^{(0)} - \fA_L^{(\star; 0)} + \fA_L^{(\star; \star)}\Bigr) \Bigr) - \det\Bigl(I- \fK^{\hypo(\fh_0)}_{1/2} \fH_L^{(0)} \Bigr). \label{e:det1_d1_final}
\end{align}

Combining formulas \eqref{e:det_d2_final2}, \eqref{e:det2_d1_final} and \eqref{e:det1_d1_final}, we conclude that
\begin{align}
\dens_{1, M, L}(x_1, x_2) &= \det\Bigl(I- \fK^{\hypo(\fh_0)}_{1/2} \Bigl(\fH_L^{(0)} - \fA_L^{(\star; 0)} + \fA_L^{(\star; \star)}\Bigr) \Bigr) \\
&\quad - \det\Bigl(I- \fK^{\hypo(\fh_0)}_{1/2} \fH_L^{(0)} \Bigr) \\
&\quad- \det \Bigl(I- \fK^{\hypo(\fh_0)}_{1/2} \Bigl(\fG_L^{(0)} - \fA_L^{(\star; 0)}\Bigr) \Bigr) + \det \Bigl(I-\fK^{\hypo(\fh_0)}_{1/2}\fG_L^{(0)}\Bigr).
\end{align}
Recalling the definition of the kernels \eqref{eq:kernels12}, we can write the last expression as 
\begin{equation}
\begin{split}
\dens_{1, M, L}(x_1, x_2) &= \det\Bigl(I- \fK^{\hypo(\fh_0)}_{1/2} \Bigl(\fR_L^{(0; 0)} - \fA_L^{(0; \star)} - \fA_L^{(\star; 0)} + \fA_L^{(\star; \star)}\Bigr) \Bigr) \\
&\quad - \det\Bigl(I- \fK^{\hypo(\fh_0)}_{1/2} \Bigl(\fR_L^{(0; 0)} - \fA_L^{(0; \star)}\Bigr) \Bigr)\\
&\quad - \det \Bigl(I- \fK^{\hypo(\fh_0)}_{1/2} \Bigl(\fR_L^{(0; 0)} - \fA_L^{(\star; 0)}\Bigr) \Bigr) + \det \Bigl(I-\fK^{\hypo(\fh_0)}_{1/2}\fR_L^{(0; 0)}\Bigr).
\end{split}
\label{e:final}
\end{equation}
This exact formula for $\dens_{1, M, L}(x_1, x_2)$ will be useful next, in Section \ref{sec:properties_of_P}.

\subsubsection{Taking the limit $L \to \infty$}
\label{sec:L-to-infinity}

Now we will prove that the limit \eqref{eq:P_limit} exists. For this we need to show that all kernels in \eqref{e:final} converge in trace norm as $L \to \infty$. Convergence of the kernel \eqref{eq:fR} follows from \eqref{eq:Kd}, i.e., 
\begin{equation}\label{eq:R-limit}
\fR := \lim_{L \to \infty} \fR_L^{(0; 0)} = \fK^{\epi(-\fM)}_{1/2},
\end{equation}
for the constant function $\fg_{0, 0} \equiv \fM$ defined in \eqref{eq:g_def}. 

Due to the identities \eqref{eqs:A_defs}, in order to prove convergence of the kernels $\fA_L$ in trace norm, we need to show convergence of the functions \eqref{eqs:D_def} in $L^2$. In \eqref{eq:D1_new_formula} we derived the formula 
\begin{equation}
\fD^{(0; 0)}_{[\fx, L], +}(w) = \partial_{1} \bigl(\fT_{1/2,-\fx} \Gamma \varrho\bigr) (-\fM, w) - \partial_{1} \bigl( \SS_{1/2, 0, [\fx, L]}^{(0; 0)} \Gamma \varrho\bigr) (-\fM, w),
\end{equation}
and we need to show that the last term converges in $L^2$ as $L \to \infty$. Using \eqref{eq:last_term_formula}, we have 
\begin{equation}
\SS_{1/2, 0, [\fx, L]}^{(0; 0)} (v, w) = \int_\fx^{L} \P_{\fB(\fx)=v} \bigl( \tau_{-\fM} \in \d s\bigr) \,\cdot\, \bigl( \fT_{1/2, - s} \Gamma \varrho \bigr) \bigl(-\fM, w\bigr),
\end{equation}
where $\tau_{-\fM}$ is the first hitting time of the set $(-\infty, -\fM]$, as defined in \eqref{eq:tau-def}. From \cite[Proposition~3.7]{flat} we get 
\begin{equation}
\lim_{L \to \infty} \partial_{1} \SS_{1/2, 0, [\fx, L]}^{(0; 0)} (v, w) = - \partial_{1} \bigl( \fT_{1/2, - \fx} \Gamma \varrho \bigr) \bigl(-2\fM - v, w\bigr),
\end{equation}
and a fast decay of the functions allows to get this convergence in $L^2$. Then
\begin{equation}\label{eq:D-limit}
\fD_{x}(w) := \lim_{L \to \infty} \fD^{(0; 0)}_{[\fx, L], +}(w) = 2 \partial_{1} \bigl(\fT_{1/2,-\fx} \Gamma \varrho\bigr) (-\fM, w).
\end{equation}
Furthermore, from \eqref{eq:D-new} we get $\lim_{L \to \infty} \fD^{(0; 0)}_{[-L, \fx], -}(w) = \fD_{-x}(w)$ in $L^2$. Then \eqref{eqs:A_defs} yields the following limits in trace norm
\begin{equation}\label{eq:A-limits}
\begin{aligned}
\fA^{(0; \star)}(u_1, u_2) := \lim_{L \to \infty} \fA_L^{(0; \star)}(u_1, u_2) &= \fD_{-\fx_2}(u_1) \,\cdot\, \fD_{\fx_2}(u_2), \\
\fA^{(\star; 0)}(u_1, u_2) := \lim_{L \to \infty} \fA_L^{(\star; 0)}(u_1, u_2) &= \fD_{-\fx_1}(u_1) \,\cdot\, \fD_{\fx_1}(u_2), \\
\fA^{(\star; \star)}(u_1, u_2) := \lim_{L \to \infty} \fA_L^{(\star; \star)}(u_1, u_2) &= \frac{1}{\sqrt{4\pi}} (\fx_2 - \fx_1)^{-\frac{3}{2}} \, \cdot\, \fD_{-\fx_1}(u_1) \,\cdot\, \fD_{\fx_2}(u_2). 
\end{aligned}
\end{equation}
As before, these kernels depend on $x_i$ and $M$, but we prefer to suppress it from the notation. Using continuity of the Fredholm determinant, we get from \eqref{e:final} the limit \eqref{eq:P_limit} with the limiting function
\begin{equation}
\begin{split}
\dens_{1, M}(x_1, x_2) &= \det\Bigl(I- \fK^{\hypo(\fh_0)}_{1/2} \Bigl(\fR - \fA^{(0; \star)} - \fA^{(\star; 0)} + \fA^{(\star; \star)}\Bigr) \Bigr) \\
&\quad - \det\Bigl(I- \fK^{\hypo(\fh_0)}_{1/2} \Bigl(\fR - \fA^{(0; \star)}\Bigr) \Bigr)\\
&\quad - \det \Bigl(I- \fK^{\hypo(\fh_0)}_{1/2} \Bigl(\fR - \fA^{(\star; 0)}\Bigr) \Bigr) + \det \Bigl(I-\fK^{\hypo(\fh_0)}_{1/2}\fR\Bigr).
\end{split}
\label{e:final-infinity}
\end{equation}

\subsection{Properties of the function $\dens_{t, \fM, L}$}
\label{sec:properties_of_P}

We now prove the properties of the function $\dens_{t, \fM, L}$, which are stated in Proposition~\ref{prop:densities}.

\subsubsection{Continuity}

Dependence of the function $\dens_{t, \fM, L}$ on $x_1$, $x_2$ and $M$ comes through the kernels $\fA_L^{(0; \star)}$, $\fA_L^{(\star; 0)}$ and $\fA_L^{(\star; \star)}$ in formula \eqref{e:final}. These kernels are rank-one, and are defined in \eqref{eqs:A_defs} via the two functions \eqref{eqs:D_def}. Furthermore, these two functions depend on these variables only through the kernels $\fP_{[a, b]}^{\nohit\,(-\fg_{\eta_1, \eta_2})}$ and their first-order derivatives. This non-hitting density of Brownian motion depends continuously on $x_1$, $x_2$ and $M$, from which we can conclude that the kernels in \eqref{e:final} are continuous as well. Then Lemma~\ref{lem:det}(\ref{it:det_continuous}) implies continuity of the Fredholm determinants in \eqref{e:final} and hence of the function $\dens_{t, M, L}(x_1, x_2)$.

\subsubsection{Scaling property}
\label{ss:scaling_of_P}

From identity \eqref{eq:density2}, we conclude
\begin{equation}
\dens_{t, \fM, L}^{(\eta_1; \eta_2)}(x_1, x_2) = \dens_{1, t^{-1/3} \fM, t^{-2/3} L}^{(\eta^{(t)}_1, \eta^{(t)}_2)} \bigl(t^{-\frac{2}{3}} x_1, t^{-\frac{2}{3}} x_2\bigr),
\end{equation}
where $\eta^{(t)}_i = (t^{-1/3} \eps_i, t^{-2/3} \delta_i)$. Hence, from \eqref{e:densities}, we have
\begin{align}
\dens_{t, \fM, L}(x_1, x_2) &= t^{-2} \mylim{\eta_1 \searrow 0}\, \mylim{\eta_2 \searrow 0} \frac{1}{|\eta^{(t)}_1| |\eta^{(t)}_2|} \dens_{1, t^{-1/3} \fM, t^{-2/3} L}^{(\eta^{(t)}_1, \eta^{(t)}_2)} \bigl(t^{-\frac{2}{3}} x_1, t^{-\frac{2}{3}} x_2\bigr) \\
&= t^{-2} \dens_{1, t^{-1/3} \fM, t^{-2/3} L} \bigl(t^{-\frac{2}{3}} x_1, t^{-\frac{2}{3}} x_2\bigr),
\end{align}
which is exactly \eqref{e:densities_scale}.

\subsubsection{Boundedness}
\label{ss:bound_on_P}


Now, we will prove the bound \eqref{eq:P_bound}. For this, we write \eqref{e:final-infinity} as
\begin{equation}
\dens_{1, \fM, L} = \dens^{(1)}_{1, \fM, L} + \dens^{(2)}_{1, \fM, L} + \dens^{(3)}_{1, \fM, L},
\end{equation}
where
\begin{subequations}\label{eq:P_definitions}
\begin{align}
\dens^{(1)}_{1, \fM, L}(\fx_1, \fx_2) :=& \det\Bigl(I- \fK^{\hypo(\fh_0)}_{1/2} \Bigl(\fR^{(0; 0)}_L - \fA^{(0; \star)}_L - \fA^{(\star; 0)}_L + \fA^{(\star; \star)}_L\Bigr) \Bigr) \\
&- \det\Bigl(I- \fK^{\hypo(\fh_0)}_{1/2} \Bigl(\fR^{(0; 0)}_L - \fA^{(0; \star)}_L - \fA^{(\star; 0)}_L \Bigr) \Bigr),\label{eq:P1_def}\\
\dens^{(2)}_{1, \fM, L}(\fx_1, \fx_2) :=& \det\Bigl(I- \fK^{\hypo(\fh_0)}_{1/2} \Bigl(\fR^{(0; 0)}_L - \fA^{(0; \star)}_L - \fA^{(\star; 0)}_L\Bigr) \Bigr) \\
& - \det\Bigl(I- \fK^{\hypo(\fh_0)}_{1/2} \Bigl(\fR^{(0; 0)}_L - \fA^{(0; \star)}_L \Bigr) \Bigr)\label{eq:P2_def}\\
\dens^{(3)}_{1, \fM, L}(\fx_1, \fx_2) :=& \det \Bigl(I-\fK^{\hypo(\fh_0)}_{1/2} \fR^{(0; 0)}_L\Bigr) \\
&- \det \Bigl(I- \fK^{\hypo(\fh_0)}_{1/2} \Bigl(\fR^{(0; 0)}_L - \fA^{(\star; 0)}_L\Bigr) \Bigr). \label{eq:P3_def}
\end{align}
\end{subequations}
Then \eqref{eq:P_bound} will follow immediately, if we prove that
\begin{subequations}\label{eq:P_bounds_new}
\begin{align}
\dens^{(1)}_{1, \fM, L}(\fx_1, \fx_2) &\leq C |\fx_2 - \fx_1|^{-\frac{3}{2}},\label{eq:P1_bound}\\
\dens^{(2)}_{1, \fM, L}(\fx_1, \fx_2) \leq C,& \qquad \dens^{(3)}_{1, \fM, L}(\fx_1, \fx_2) \leq C,\label{eq:P2_bound}
\end{align}
\end{subequations}
uniformly in $\fx_1, \fx_2 \in \bJ_L$, $\fx_1 < \fx_2$ and $|\fM| \leq \bar L$, where $C\geq 0$ depends only on $L$, $\bar L$ and $\alpha$.
\smallskip

Thus, it remains to prove \eqref{eq:P_bounds_new}. Let us recall the definitions of the kernels involved in \eqref{eq:P_definitions} and show that their trace norms are bounded. The kernel $\fK^{\hypo(\fh_0)}_{1/2}$ is defined in \eqref{eq:Kd}, from which we conclude that $\bigl\| \fK^{\hypo(\fh_0)}_{1/2} \bigr\|_1 \leq C_1$, for a finite constant $C_1 \geq 0$ which depends on the initial data $\fh_0$ merely in terms of its maximal value (see the comments after \eqref{eq:Kd}). The kernel $\fR^{(0; 0)}_L$ is defined in \eqref{eq:fR}. Since the prelimiting kernel in \eqref{eq:Kd} is trace class, the definitions \eqref{eq:fR} and \eqref{eq:Kd_epi} imply that $\fR^{(0; 0)}_L$ is trace class. The kernels $\fA^{(\star; 0)}_L$,  $\fA^{(0; \star)}_L$ and $\fA^{(\star; \star)}_L$ are defined in \eqref{eqs:A_defs}, and Lemma~\ref{lem:A_formulas} proves that they are trace class.

Applying the bounds \eqref{eq:det_bound_trace}, \eqref{eq:norms} and  \eqref{eq:norms_products} to the determinants in \eqref{eq:P1_def}, and using boundedness of the trace norms of the involved operators, we obtain
\begin{align}\label{eq:P1_bound_intermediate}
\dens^{(1)}_{1, \fM, L}(\fx_1, \fx_2) \leq C_2 \bigl\| \fA_L^{(\star; \star)} \bigr\|_1,
\end{align}
for some constant $C_2 \geq 0$. Using the bound \eqref{eq:A_bounds} in \eqref{eq:P1_bound_intermediate}, we obtain \eqref{eq:P1_bound}, as required. 

Consider now the functions $\dens^{(2)}_{1, \fM, L}$ and $\dens^{(3)}_{1, \fM, L}$.  We apply \eqref{eq:det_bound_trace}, \eqref{eq:norms} and \eqref{eq:norms_products} to the two pairs of determinants in \eqref{eq:P2_def} and \eqref{eq:P3_def}. For this, we use boundedness of the trace norms of the involved kernels, which we stated above. Then we find that
\begin{equation}\label{eq:P2_bound_intermediate1}
\dens^{(2)}_{1, \fM, L}(\fx_1, \fx_2) \leq C_4 \bigl\| \fA^{(\star; 0)}_L \bigr\|_1, \qquad \dens^{(3)}_{1, \fM, L}(\fx_1, \fx_2) \leq C_4 \bigl\| \fA^{(0; \star)}_L \bigr\|_1.
\end{equation}
Applying \eqref{eq:A_bounds} to \eqref{eq:P2_bound_intermediate1}, we obtain the required bounds \eqref{eq:P2_bound}.

\subsection{Upper bound on the twin peaks probability}\label{sec:uppbd}

Using Proposition~\ref{prop:densities}, we can prove the bound in Theorem~\ref{thm:densities2-intro} on the twin peaks probability. 

%
%

\begin{proof}[Proof of Theorem~\ref{thm:densities2-intro}]
We may write the probability on the left-hand side of \eqref{eq:q_def} for the cut-off as
\begin{align}\label{eq:q_equality}
\P_{\fh_0} \Bigl(\fh^L_t \in \TP{A, \bar L}^{\eps}\Bigr) = \sum_{M \in \eps \zz} \P_{\fh_0} \Bigl(\fh^L_t \in \TP{A, \bar L}^{\eps},\, \Max(\fh^L_t) \in (M-\eps, M]\Bigr).
\end{align}
Let us define the $\eps$-fattening of $\bJ_{\bar L}$ by $\bJ_{\bar L}^{\eps} := \bigcup_{x \in \bJ_{\bar L}} [x - \eps, x + \eps]$. From the definition of the set $\TP{A, \bar L}^{\eps}$, we conclude that, if $\bJ_{\bar L^{1/2}}^{\eps} \cap (M-\eps, M] = \emptyset$, then the term indexed by $M$ in the sum \eqref{eq:q_equality} vanishes. When this term does not vanish, we can estimate it by
\begin{align}
&\P_{\fh_0} \Bigl(\fh^L_t \in \TP{A, \bar L}^{\eps},\, \Max(\fh^L_t) \in (M-\eps, M]\Bigr) \\
&\quad \leq \P_{\fh_0} \Bigl(\exists\, x_1, x_2 \in \bJ_{L} : x_2 - x_1 \geq A,\; \fh_{t}(x_i) > M - 2 \eps,\; \fh_{t}(y) \leq M ~\text{for}~ y \in [- L,  L]\Bigr).
\end{align}
For $0 < \delta \leq A$, denote $\overline \eta = (2 \eps, \delta)$, and let $A_\delta$ be the largest value in $\delta \zz$ such that $0 < A_\delta \leq A$. Then, using the function $\dens_{t, M, L}^{(\eta_1; \eta_2)}(x_1, x_2)$, defined in \eqref{eq:density}, we have the upper bound
\begin{equation}\label{eq:bound_by_sums}
\P_{\fh_0} \Bigl(\fh^L_t \in \TP{A, \bar L}^{\eps}\Bigr) \leq \sum_{\substack{x_1, x_2 \in (\delta \zz) \cap \bJ_{ L}^{\delta} \\ x_2 - x_1 \geq A_\delta}} \sum_{M \in (\eps \zz) \cap \bJ_{\bar L^{1/2}}^{\eps}} \dens^{(\overline\eta; \overline\eta)}_{t, M, L}(x_1, x_2).
\end{equation}
For $0 < \delta_1 \leq \delta_2$ and $0 < \eps_1 \leq \eps_2$, let us set $\overline \eta_1 = (2 \eps_1, \delta_1)$ and $\overline \eta_2 = (2 \eps_2, \delta_2)$. Moreover, we define
\begin{equation}
	\CI^{(\overline\eta_1; \overline\eta_2)} := \sum_{\substack{x_1, x_2 \in (\delta_1 \zz) \cap \bJ_{ L}^{\delta_1} \\ x_2 - x_1 \geq A_{\delta_1}}} \sum_{M \in (\eps_1 \zz) \cap \bJ_{\bar L^{1/2}}^{\eps_1}} \dens^{(\overline\eta_1; \overline\eta_2)}_{t, M, L}(x_1, x_2),
\end{equation}
where we prefer to suppress dependence on the variables $t$, $A$, etc. Then \eqref{eq:bound_by_sums} reads as
\begin{equation}
\P_{\fh_0} \Bigl(\fh^L_t \in \TP{A, \bar L}^{\eps}\Bigr) \leq \CI^{(\overline\eta; \overline\eta)}.
\end{equation}

From \eqref{e:densities1}, we conclude that for every fixed $\bar \eta_1 > 0$ the following limit holds:
\begin{equation}\label{eq:limit_to_integrals1}
\mylim{\bar \eta_2 \searrow 0} \frac{1}{|\bar \eta_2|} \CI^{(\bar \eta_1; \bar \eta_2)} = \sum_{\substack{x_1, x_2 \in (\delta_1 \zz) \cap \bJ_{ L}^{\delta_1} \\ x_2 - x_1 \geq A_{\delta_1}}} \sum_{M \in (\eps_1 \zz) \cap \bJ_{\bar L^{1/2}}^{\eps_1}} \dens^{(\overline\eta_1; \star)}_{t, M, L}(x_1, x_2)=:\CI^{(\bar \eta_1; \star)}.
\end{equation}
Since the limit \eqref{e:densities1} holds uniformly over $\eta_1$ and since the function $\dens^{(\overline\eta; \overline\eta)}_{t, M, L}(x_1, x_2)$ is positive, we conclude that there exist constants $\chi_1 > 0$ and $C_1 > 0$, independent of $\bar \eta_1$ and $\bar \eta_2$, such that
\begin{equation}\label{eq:I1_bound}
\CI^{(\bar \eta_1; \bar \eta_2)} \leq C_1 |\bar \eta_2|\, \CI^{(\bar \eta_1; \star)},
\end{equation}
for all $\bar \eta_1, \bar \eta_2 \in (0, \chi_1]^2$ and $\bar \eta_2 \in \Dom$ (this domain is defined in \eqref{eq:Dom}). Now we claim that
\begin{equation}\label{eq:limit_to_integrals2}
\mylim{\bar \eta_1 \searrow 0} \eps_1^{-1} |\bar \eta_1| \CI^{(\bar \eta_1; \star)} = C_2 \int_{-\beta L}^{\beta L - A} \d x_1 \int_{x_1 + A}^{\beta L} \d x_2 \int_{-\beta \sqrt{\bar L}}^{\beta \sqrt{\bar L}} \d M\; \dens_{t, M, L}(x_1, x_2)=:\CI,
\end{equation}
for a constant $C_2 > 0$ which we define later, and where the constant $\beta$ arises from the definition of $\bJ_{ L}$. We note that continuity of the function $\dens_{t, M, L}$, proved in Proposition~\ref{prop:densities}, guarantees that the latter triple integral is well-defined. We postpone our proof of this limit for the moment, and show how it implies the bound \eqref{eq:q_def}.

From \eqref{eq:limit_to_integrals2}, we conclude that there exist constants $\chi_3 > 0$ and $C_3 > 0$ such that
\begin{equation}\label{eq:I2_bound}
\CI^{(\bar \eta_1; \star)} \leq C_3 \frac{\eps_1}{|\bar \eta_1|} \CI,
\end{equation}
for all $\bar \eta_1 \in (0,\chi_3]^2 \cap \Dom$. Hence, from  \eqref{eq:bound_by_sums}, and from the two bounds \eqref{eq:I1_bound} and \eqref{eq:I2_bound}, we get
\begin{equation}\label{eq:C4eCI}
	\P_{\fh_0} \Bigl(\fh^L_t \in \TP{A, \bar L}^{\eps}\Bigr) \leq C_4 \eps \CI
\end{equation}
where $C_4 = C_1 C_2 C_3$. Using the scaling property \eqref{e:densities_scale}, changing the variables of integration, denoting $\bar L_t := t^{-2/3} \bar L$, $L_t := t^{-2/3} L$ and $A_t := t^{-2/3} A$, and using the bound in \eqref{eq:P_bound} on $\dens_{1, M, L_t} ( x_1, x_2)$, we may write the preceding expression explicitly as
\begin{equation}
C_4 \eps \CI = C_5 \eps t^{-\frac{1}{3}} \int_{-\beta  L_t}^{\beta L_t - A_t} \d x_1 \int_{x_1 + A_t}^{\beta  L_t} \d x_2 \int_{-\beta \sqrt{\bar L_t}}^{\beta \sqrt{\bar L_t}} \d M\; |x_2 - x_1|^{-\frac{3}{2}}
\end{equation}
where the constants $C_5$ and $c$ depend on $L_{T_0}$, $\bar L_{T_0}$ and $\alpha$ (the constant $\alpha$ arises from the upper bound on the initial data $\fh_0$). In Theorem \ref{thm:densities2-intro}, we have assumed that $t\in [T_0,T]$. For this range of~$t$ and given the condition that $x_2-x_1\geq A_t\leq A_{T_0}$ from the integral above, we can bound above $\exp\big\{c |x_2 - x_1|^{-\frac{3}{2}}\big\} \leq \exp \big\{ c A_{T_0}^{-3/2} \big\}$. Inserting this bound and evaluating the integrals yields another constant depending just on $\beta, T_0, L, \bar{L}$ and $A$. Thus we have shown that
$C_4 \eps \CI \leq C_6 \eps t^{-1 / 3}$. In light of \eqref{eq:C4eCI}, this yields~\eqref{eq:q_def} and completes the proof.

\smallskip

All that remains is to prove the limit \eqref{eq:limit_to_integrals2}. For this, let us define the finite sets
\begin{equation}
S_\delta := \left\{(x_1, x_2) : x_i \in (\delta \zz) \cap \bJ_{ L}^{\delta},\; x_2 - x_1 \geq A_\delta\right\}, \qquad \bar S_\eps := (\eps \zz) \cap \bJ_{\bar L^{1/2}}^{\eps},
\end{equation}
 over which the summation in \eqref{eq:bound_by_sums} is performed. Recalling that $\overline \eta_1 = (2 \eps_1, \delta_1)$, we may write
\begin{align}
\eps_1^{-1} |\bar \eta_1| \CI^{(\bar \eta_1; \star)} - \CI &= \eps_1^{-1} |\bar \eta_1|^2 \sum_{\substack{(x_1, x_2) \in S_{\delta_1} \\ M \in \bar S_{\eps_1}}} \left( \frac{1}{|\overline \eta_1|} \dens^{(\overline\eta_1; \star)}_{t, M, L}(x_1, x_2) - \dens_{t, M, L}(x_1, x_2) \right)\\
&\hspace{3cm} + \biggl( \eps_1^{-1} |\bar \eta_1|^2 \sum_{\substack{(x_1, x_2) \in S_{\delta_1} \\ M \in \bar S_{\eps_1}}} \dens_{t, M, L}(x_1, x_2) - \CI \biggr).\label{eq:limit_to_integrals_proof}
\end{align}
Call the first term on the right-hand side $\CI_1$ and the second one $\CI_2$.
The cardinalities $|S_{\delta_1}|$ and $|\bar S_{\eps_1}|$ are respectively proportional to $\delta_1^{-2}$ and $\eps_1^{-1}$, and we can define the finite constants $\lim_{\delta_1 \searrow 0} \delta_1^2 |S_{\delta_1}| = C_7$ and $\lim_{\eps_1 \searrow 0} \eps_1 |\bar{S}_{\eps_1}| = C_8$, where $C_7$ depends on $L$, $A$ and $\beta$, and $C_8$ depends on $\bar L$ and $\beta$. Moreover, we define the sets
\begin{equation}
S := \bigcap_{\delta_1 > 0} S_{\delta_1} = \bigl\{(x_1, x_2) : x_i \in \bJ_{L},\; x_2 - x_1 \geq A\bigr\}, \qquad \bar S := \bigcap_{\eps_1 > 0} \bar{S}_{\eps_1} = \bJ_{\bar L^{1/2}}.
\end{equation}
Proposition~\ref{prop:densities} implies continuity of the function $\dens_{t, M, L}(x_1, x_2)$ for $(x_1, x_2) \in S$ and $M \in \bar S$. Hence, if we take the constant $C_2 = \lim_{\eps_1, \delta_1 \searrow 0} \eps_1^{-1} |\bar \eta_1|^2 |S_{\delta_1}| |\bar{S}_{\eps_1}| = 4 C_7 C_8$ in \eqref{eq:limit_to_integrals2}, then $|\CI_2|\to 0$ in the limit $\eps_1, \delta_1 \searrow 0$, in view of the definition of a multiple integral as a limit of Riemann sums.

To deal with the other term, observe that
\begin{equation}\label{eq:limit_to_integrals_proof2}
|\CI_2|\leq \eps_1^{-1} |\bar \eta_1|^2 |S_{\delta_1}| |\bar S_{\eps_1}| \sup_{\substack{(x_1, x_2) \in S_{\delta_1} \\ M \in \bar S_{\eps_1}}} \left| \frac{1}{|\overline \eta_1|} \dens^{(\overline\eta_1; \star)}_{t, M, L}(x_1, x_2) - \dens_{t, M, L}(x_1, x_2) \right|.
\end{equation}
The multiplier $\eps_1^{-1} |\bar \eta_1|^2 |S_{\delta_1}| |\bar S_{\eps_1}|$ converges to $C_2$ as $\eps_1, \delta_1 \searrow 0$, and Proposition~\ref{prop:densities} implies that the expression \eqref{eq:limit_to_integrals_proof2} vanishes as $\overline \eta_1 \searrow 0$ along the domain $\Dom$. This finishes the proof of \eqref{eq:limit_to_integrals2}.
\end{proof}

With this, we may close out the proof of Theorem~\ref{thm:johansson}.

\begin{proof}[Proof of Theorem~\ref{thm:johansson}]
Lemma~\ref{lem:KPZ_is_bounded} implies that, for any $t > 0$, the KPZ fixed point $\fh_t$ almost surely has a maximizer. Then uniqueness of the maximizer follows immediately from Theorem~\ref{thm:densities2-intro} by first taking the limit $\varepsilon\to 0$, then $\bar L \to \infty$ and then $A \to 0$.
\end{proof}



\renewcommand{\red}[1]{{\color{red} #1}}

\section{The lower bound on the twin peaks probability}
\label{sec:TP_probability}




In this section, we prove Theorem~\ref{thm:TP_probability}: at any positive time, the KPZ fixed point lies in the set $\TP{A,L}^{\eps}$ defined in \eqref{eq:setTP} with probability at least a constant multiple of $\varepsilon$.


Theorem~\ref{thm:TP_probability} offers a lower bound on the probability of the twin peaks' event. The general~$t$ version of this result follows from the $t=1$ case in light of the 1:2:3 invariance of the KPZ fixed point recalled in Proposition~\ref{prop:sym}(\ref{123a}). Thus in this section we set $t=1$ and, for notational convenience, we will denote $\hlim_1$ by $\hlim$; the claim in Theorem~\ref{thm:TP_probability} that $\smallconst$ and $L_0$ can be taken to depend on $t$ continuously will be handled separately. Also, we prefer to suppress the initial state $\fh_0$ from notation for the probability measure.

Recall that Assumption~\ref{a:initial_state_pd} says that there exist $\alpha$, $\gamma$, and $\lambda$ such that  $\fh_0(y)$ is $-\infty$ for all $y< -\lambda$ and satisfies $\fh_0(y) \leq \alpha -\gamma |y|^{1/2}$ for all $y\in\R$. In this section, we assume without loss of generality that $\alpha = \lambda = 0$. We may do so because the occurrence of the twin peaks' event $\TP{A,L}^{\eps}$ is unchanged, provided that $L$ is altered suitably, under arbitrary shifts of $\fh_0$ in the horizontal and vertical directions; i.e., under transformations of the form $\fh_0(\bigcdot) \mapsto \fh_0(\bigcdot+z_1) + z_2$ for fixed $z_1,z_2\in\R$.
So we consider the function $\tilde\fh_0(y) := \fh_0(y-\lambda) - (\alpha+\gamma |\lambda|^{1/2})$, which satisfies $\tilde\fh_0(y) = -\infty$ for all $y<0$. For this function, we have that
$$\tilde\fh_0(y) \leq \alpha -(\alpha + \gamma |\lambda|^{1/2}) - \gamma |y-\lambda|^{1/2} \leq -\gamma|y|^{1/2}, \textrm{ for all }y\in \R,$$
by noting that, for $a,b>0$, $a^{1/2}+b^{1/2} \geq (a+b)^{1/2}$ and that $|y-\lambda|+|\lambda| \geq |y|$ by the triangle inequality. 
 Thus, for $\fh_0$  satisfying Assumption~\ref{a:initial_state_pd}, we may indeed assume that $\alpha = \lambda = 0$, i.e.,
 \begin{equation}\label{eq:h0bound}
 \fh_0(y) \leq -\gamma |y|^{1/2} \textrm{ for all }y\in \R \text{ and } \fh_0(y) = -\infty \text{ for } y < 0;
 \end{equation}
of course, as mentioned previously, making this simplification may need a modification in the value of $L$ for which we consider the event $\TP{A,L}^\eps$. More precisely, we have that $\fh_t \in \TP{A,L}^\eps \iff \tilde\fh_t \in \TP{A,\smash{\tilde L}}^\eps$ where $\smash{\tilde\fh_t}$ is obtained from $\fh_t$ by applying the same horizontal and vertical shift that was applied above to $\fh_0$ to give $\tilde \fh_0$ (so $\tilde\fh_t$ has the distribution of the KPZ fixed point started from initial condition $\tilde\fh_0$ at time $t$) and $\tilde L$ is a function of $L$, $\alpha$, $\gamma$ and $\lambda$. This simplification will aid us in some later technical arguments.

Many of the estimates made in this section will depend on $\fh_0$, and it will be helpful to be precise about which aspects of $\fh_0$ are relevant. Thus, in this section the parameter $\theta$ will be such that
$$\sup_{|y|\leq \theta}\fh_0(y) \geq -\theta.$$
Estimates will depend on $\fh_0$ only through $\gamma$ and $\theta$; this is because we assume as above that $\alpha = \lambda = 0$, but note that the transformation described to make this simplification modifies the value of~$\theta$.

In contrast to the paper at large, we work here mainly with the prelimiting model of Brownian last passage percolation. By doing so, we gain access to certain important tools: these include the Brownian description given by the distributional identification of the ``melon'' function of Brownian LPP (to be introduced shortly) with Dyson Brownian motion; and a remarkable deterministic identity (Proposition \ref{p.melon lpp identity}) relating last passage values in the Brownian environment with those in the melon-transformed environment. (A few words on this topic have been offered in explaining Item \ref{heur:3} in Section \ref{sec:heuristic}.) We work with the prelimiting model since the key identity (given as\eqref{e.prelimiting gen profile formula} ahead) is easier to work with there than at the level of the KPZ fixed point. (More precisely, while one can define $\fh_1$ via an LPP problem through the infinite parabolic Airy line ensemble directly as was done in \cite{sarkar2020brownian}, this approach would be more technically involved for our argument---for example in making sense of the weight of an infinite path, and since the Markov property holds for Dyson Brownian motion in the prelimit in a technically simpler form compared to the full infinite line ensemble. For such reasons we have adopted the approach through the prelimit.) Once we establish a prelimiting version of Theorem \ref{thm:TP_probability} (Proposition \ref{p.twin peaks prelimit} below), we use the convergence of Brownian LPP to the KPZ fixed point (Lemma \ref{l.convergence to fixed point}) to deduce the theorem.
Below, in Figure~\ref{fig:outline_lower}, is a diagrammatic representation of the structure of this section.

{
  \footnotesize
  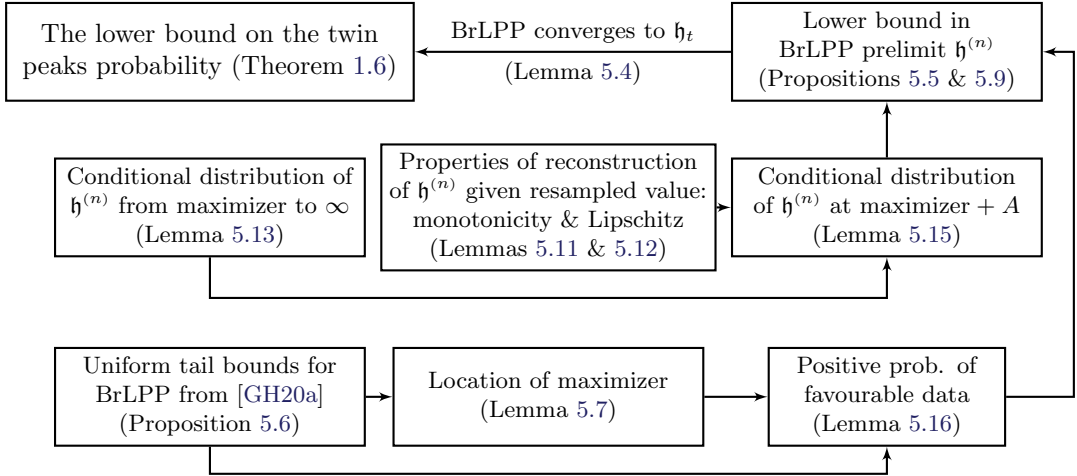
\begin{figure}[h]
  \begin{tikzpicture}[scale=0.9, auto,
  	block_main/.style = {rectangle, draw=black, thick, fill=white, text width=16em, text centered, minimum height=4em, font=\small},
	block_med/.style = {rectangle, draw=black, fill=white, thick, text width=12em, text centered, minimum height=4em},
	block_large/.style = {rectangle, draw=black, fill=white, thick, text width=13em, text centered, minimum height=4em},
	block_small/.style = {rectangle, draw=black, fill=white, thick, text width=9em, text centered, minimum height=4em},
        line/.style ={draw, thick, -latex', shorten >=0pt}]
      	
     \node [block_main] (lower_bound) at (-4,0) {The lower bound on the twin peaks probability (Theorem~\ref{thm:TP_probability})};
	
	\node [block_med] (prelimit) at (6,0) {Lower bound in BrLPP prelimit $\h$\\(Propositions~\ref{p.twin peaks prelimit} \& \ref{p.resampling twin peaks})};
	
	\node [block_med] (tail_bounds) at (-4,-5.1) {Uniform tail bounds for BrLPP from \cite{LPPtools}\\(Proposition~\ref{l.BrLPP uniform tail bound})};

	\node [block_med] (max_loc) at (1,-5.1) {Location of maximizer\\(Lemma~\ref{l.tightness of prelimiting maximizer})};
	
	\node [block_small] (fav_data) at (6,-5.1) {Positive prob. of favourable data\\(Lemma~\ref{l.fav probability})};

	\node [block_med] (Z_dist) at (6,-2.3) {Conditional distribution of $\h$ at $\text{maximizer} + A$\\(Lemma~\ref{l.conditional distribution})};

	\node [block_large] (reconstruction_prop) at (1,-2.3) {Properties of reconstruction of $\h$ given resampled value: monotonicity \& Lipschitz\\(Lemmas~\ref{l.monotonicity of L^f} \& \ref{l.lipschitz})};

	\node [block_med] (cond_ray) at (-4,-2.3) {Conditional distribution of $\h$ from maximizer to $\infty$\\(Lemma~\ref{l.first conditional statement})};

    \begin{scope}[every path/.style=line]
		\path (prelimit) -- node[above] {BrLPP converges to $\fh_t$} node[below] {(Lemma~\ref{l.convergence to fixed point})} (lower_bound);
		\path (fav_data.east) -- ++(1,0) |- (prelimit);
		\path (Z_dist.north) -- (Z_dist.north |- prelimit.south);
		\path (tail_bounds) -- (max_loc);
		\path (max_loc) -- (fav_data);
		\path (tail_bounds.south) -- ++(0,-0.4) -| (fav_data);
		\path (reconstruction_prop) -- (Z_dist);
		\path (cond_ray.south) -- ++(0,-0.6) -| (Z_dist);
    \end{scope}
  \end{tikzpicture}
  \captionof{figure}{Structure of this section.}
  \label{fig:outline_lower}
  \end{figure}
}

\subsection{Preliminaries}
\subsubsection{The model}
We denote the integer interval $\{1, \ldots, n\}$ by $\intint{n}$. Consider a sequence of continuous functions $f=(f_1, \ldots, f_n) : \intint{n}\times [0,\infty)\to \R$. We will depict these functions as in Figure~\ref{f.BrlPP}. The functions $f_1$ through $f_n$ are each indexed by a spatial variable which lies respectively along one of $n$ horizontal \emph{lines}, with the top line indexing $f_1$ and the bottom line indexing $f_n$. The function values along these lines represent an \emph{environment}.

Let $0\leq y\leq x$. An upright path $\gamma$ from $(y,n)$ (i.e., position $y$ on line $n$) to $(x,1)$ is a path which starts at $(y,n)$ and moves rightwards, jumping up from one line  to the next at certain times until it reaches $(x,1)$: see Figure~\ref{f.BrlPP}. An upright path is parametrized by its \emph{jump times} $\{t_i\}_{i=1}^{n-1}$ at which it jumps from the $(i+1)$\textsuperscript{st} line to the $i$\textsuperscript{th} line. Define $\Pi^n_{y,x}$ to be the set of upright paths from $(y,n)$ to $(x,1)$. The \emph{weight} of $\gamma \in \Pi^n_{y,x}$ in $f$ is denoted $f[\gamma]$ and defined by
\begin{equation}\label{e.path weight definition}
f[\gamma] = \sum_{i=1}^{n-1} \Big(f_i(t_{i-1}) - f_i(t_{i})\Big),
\end{equation}
where $\{t_i\}_{i=1}^{n-1}$ are the jump times of $\gamma$, with $t_n=y$ and $t_0=x$. This expression is thus the sum of increments of $f$ along the portions of $\gamma$ on each line.
We define the last passage value in $f$ from $(y,n)$ to $(x,1)$ by
\begin{equation}\label{e.lpp definition}
f[(y,n)\to (x,1)] = \sup_{\gamma\in \Pi^n_{y,x}} f[\gamma].
\end{equation}
If the set $\Pi^n_{y,x}$ is empty, i.e., if $y>x$, we define the passage value to be $-\infty$. The model of Brownian LPP is specified by taking $f$ to be a collection of $n$ independent standard Brownian motions defined on $[0, \infty)$.

\begin{figure}
\includegraphics{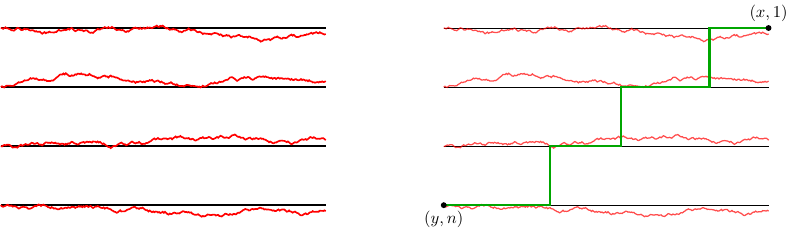}
\caption{Left: A depiction of the environment given by $f$. The functions $f_i$ corresponding to each line are graphed in red on the corresponding black line for visual clarity; the function values themselves need not be ordered. Right: An upright path $\gamma$ from $(y,n)$ to $(x,1)$ is depicted in green; note that in a formal sense the depicted vertical portions are not part of the path. The path's weight is the sum of the increments of $f_i$ along the portion of the $i$\textsuperscript{th} line $\gamma$ spends on it.}
\label{f.BrlPP}
\end{figure}

\subsubsection{The melon function}
We define the weight of a collection of disjoint (except possibly at shared endpoints) upright paths as the sum of the weights of the constituent paths. Then for $j\in\intint{n}$, we define $f[(y,n)^j\to (x,1)^j]$ to be the maximum weight over all collections of $j$ disjoint paths from $(y,n)$ to $(x,1)$.

With this, we may define the \emph{melon} function $\Wf = ((\Wf)_1, \ldots, (\Wf)_n):\intint{n}\times[0,\infty) \to \R$ by
%
\begin{equation}\label{e.Wf definition}
(\Wf)_j(x) = f[(0,n)^j\to (x,1)^j] - f[(0,n)^{j-1}\to (x,1)^{j-1}],
\end{equation}
for $j\geq 2$ and $(\Wf)_1(x) = f[(0,n)\to (x,1)]$. Two important deterministic properties are that the curves of $\Wf$ are \emph{ordered}, meaning that $(\Wf)_i(\bigcdot) \geq (\Wf)_{i+1}(\bigcdot)$ for each $i\in\intint{n-1}$ (see, for instance, the discussion at the start \cite[Section 4]{Landscape} and references given there); and an inference concerning last passage values in the melon environment:
\begin{proposition}[Special case of Proposition 4.1 of \cite{Landscape}]\label{p.melon lpp identity}
Let $f=(f_1,\ldots, f_n):\intint{n}\times[0,\infty)\to \R$ be continuous, and let $y\leq x$. Then
$$f[(y,n)\to (x,1)] = (\Wf)[(y,n)\to (x,1)].$$
\end{proposition}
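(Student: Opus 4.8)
\textbf{Proof plan for Proposition~\ref{p.melon lpp identity}.}

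The statement asserts that the last passage value from $(y,n)$ to $(x,1)$ is unchanged when one replaces the environment $f$ by its melon $\Wf$. Since this is quoted as a special case of \cite[Proposition~4.1]{Landscape}, the plan is to reduce to the genuine content of that result, namely the statement about collections of $j$ disjoint paths, and then specialize to $j=1$. The key observation is that the melon is built precisely so that partial sums of its curves record the multi-path last passage values: summing \eqref{e.Wf definition} telescopes to give $\sum_{i=1}^j (\Wf)_i(x) = f[(0,n)^j\to(x,1)^j]$ for every $j$, with the convention that the empty collection has weight $0$. Thus the melon environment is a rearrangement of the data of $f$ that is canonically adapted to the last passage problem, and one expects last passage values to be preserved.

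First I would recall (or, if one wanted a self-contained treatment, prove via the Robinson--Schensted--Knuth/Pitman correspondence for continuous data) the full statement that for every $j\in\intint{n}$ and every $0\le y\le x$ one has
\begin{equation}
f[(y,n)^j\to(x,1)^j] = (\Wf)[(y,n)^j\to(x,1)^j].
\end{equation}
The case $y=0$ of this identity for the maximal index $j=n$ and intermediate $j$ is essentially the defining property: $f[(0,n)^j\to(x,1)^j]=\sum_{i=1}^j(\Wf)_i(x)$, and the ordering $(\Wf)_1\ge\cdots\ge(\Wf)_n$ guarantees that for the melon environment the optimal $j$ disjoint paths are simply the $j$ horizontal segments along the top $j$ lines, so $(\Wf)[(0,n)^j\to(x,1)^j]=\sum_{i=1}^j(\Wf)_i(x)$ as well. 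The content of \cite[Proposition~4.1]{Landscape} is that this persists for general starting point $y$, not just $y=0$; this is where the RSK-type invariance is genuinely used. Taking $j=1$ in the displayed identity yields exactly $f[(y,n)\to(x,1)]=(\Wf)[(y,n)\to(x,1)]$, which is the claim. The degenerate case $y>x$ is immediate since $\Pi^n_{y,x}=\emptyset$ in both environments and both sides equal $-\infty$, and the boundary case $y=x$ is handled by continuity or directly.

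The main obstacle, were one to prove this from scratch rather than cite \cite{Landscape}, is establishing the invariance of last passage values under the melon (Pitman/continuous-RSK) transformation for an arbitrary left endpoint $y$; this requires either a careful induction on $n$ using the interlacing structure produced by one step of the transform, or an appeal to the discrete RSK bijection followed by a limiting argument (as in \cite{noumi2002tropical, corwin2020invariance, dauvergne2020hidden}). Since the excerpt permits us to invoke \cite[Proposition~4.1]{Landscape} directly, the proof reduces to the bookkeeping above: telescoping the definition \eqref{e.Wf definition}, using the ordering of the melon curves to identify the optimal multi-path configuration when $y=0$, and specializing the cited general-$y$ identity to $j=1$. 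No further estimates are needed.
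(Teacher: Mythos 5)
The paper offers no argument for this statement beyond the citation---by its own notational convention, ``Proposition'' marks results restated from other works---so your route of invoking the general $j$-disjoint-path identity of \cite[Proposition 4.1]{Landscape} and specializing to $j=1$ is exactly the paper's approach. Your surrounding discussion (telescoping \eqref{e.Wf definition}, the $y=0$ case via the ordering of the melon curves, and what a from-scratch RSK/Pitman-type argument would require) is accurate motivation but plays no role in the proof as the paper presents it.
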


In particular, this proposition applies when $f = B$, a collection of $n$ Brownian motions, as considered below in Section \ref{sec:blpp}.
An important technical condition imposed by the definition of $\Wf$, as well as by Proposition~\ref{p.melon lpp identity}, is that the domain of each line is $[0,\infty)$, rather than $\R$. It is because of this condition that we consider only initial conditions which are $-\infty$ for all small enough arguments; which is to say, this is why we require Assumption~\ref{a:initial_state_pd}(b).

\subsubsection{Scaling limit of Brownian LPP}\label{sec:blpp}
We will need the convergence of Brownian LPP values to the parabolic Airy sheet, a convergence that holds uniformly on compact sets. Let $B:\intint{n}\times \R\to \R$ be a collection of $n$ independent two-sided Brownian motions. Define $\S_n:\R^2\to\R\cup\{-\infty\}$ by
\begin{equation}\label{e.S_n}
\S_n(y,x) := n^{-1/3}\left(B[(2y n^{2/3},n) \to (n+2x n^{2/3}, 1)] - 2n - 2(x-y)n^{2/3}\right);
\end{equation}
the co-domain includes $-\infty$ simply to handle the case that $2yn^{2/3}> n+2xn^{2/3}$.
(Although here we allow $y$ to be negative, the definitions in \eqref{e.path weight definition} and \eqref{e.lpp definition} easily generalize.)

Here is a statement of convergence of Brownian LPP to the parabolic Airy sheet. (It is simply to have a cleaner version of this statement that we allowed $y$ to be negative in \eqref{e.S_n}.)

\begin{prop}[Theorem~1.3 of \cite{Landscape}]\label{t.BrLPP to Airy sheet}
In the topology of uniform convergence on compact sets, we have the convergence in law
\begin{equation}\label{e.para Airy BrLPP connection}
\S(y, x) = \lim_{n\to\infty} \S_n(y, x).
\end{equation}
\end{prop}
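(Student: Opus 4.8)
\textbf{Proof proposal for Proposition \ref{t.BrLPP to Airy sheet}.}

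This statement is quoted verbatim from \cite[Theorem~1.3]{Landscape}, so the ``proof'' I would give here is really a pointer to that reference together with a brief sketch of the architecture of the argument there, adapted to the normalization \eqref{e.S_n}. The plan is as follows. First I would record that the last passage value $B[(2yn^{2/3},n)\to(n+2xn^{2/3},1)]$ is, by the Brown--König / RSK-type correspondence, equal in distribution to the top curve of a system of $n$ non-intersecting Brownian bridges (equivalently, to the largest eigenvalue process of $\beta=2$ Dyson Brownian motion started from the origin), evaluated at the appropriate time; the two-point structure in $(y,x)$ is captured by looking at two endpoints of this last passage problem. This is the prelimiting exactly solvable input.

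Second, I would invoke the one-point and finite-dimensional convergence: for fixed $(y,x)$, the classical result of Baik--Deift--Johansson-type asymptotics (or its Brownian LPP analogue, e.g.\ via the determinantal formulas available for the Brownian watermelon) gives that $\S_n(y,x)$ converges in distribution to $\CS(y,x)$, the value of the parabolic Airy sheet, with the $n+2(x-y)n^{2/3}$ recentering and the $n^{1/3}$ rescaling being exactly the KPZ $1{:}2{:}3$ scaling that produces the $(x-y)^2$ parabolic curvature in the limit. Joint convergence of finitely many such values follows from the multi-point determinantal formulas (the extended Airy kernel, or the Airy sheet's finite-dimensional laws constructed in \cite{Landscape}).

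Third, and this is where the real work lies, I would upgrade finite-dimensional convergence to convergence in the topology of uniform convergence on compact subsets of $\R^2$. The key obstacle---and the technical heart of \cite{Landscape}---is establishing tightness, i.e.\ a uniform modulus of continuity for the random functions $(y,x)\mapsto\S_n(y,x)$ on compact sets. This requires two ingredients: (a) control of the fluctuations of $B[(y',n)\to(x',1)]$ under small perturbations of the endpoints, which one obtains from one-sided tail bounds on last passage increments (these are precisely the Brownian LPP tail estimates that appear elsewhere in this paper, e.g.\ around Proposition~\ref{l.BrLPP uniform tail bound}), combined with a chaining argument; and (b) ruling out macroscopic jumps, which uses the monotonicity and superadditivity (the composition/triangle inequality) of last passage values together with the parabolic decay of the expected profile. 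With tightness in hand, any subsequential limit has the finite-dimensional distributions of the parabolic Airy sheet, and since those determine the law, the full convergence in law on $C(\R^2)$ follows. I would then simply cite \cite[Theorem~1.3]{Landscape} for the complete argument, noting that our \eqref{e.S_n} is the same object up to the cosmetic relabeling that allows $y$ to be negative.
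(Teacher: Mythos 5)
The paper offers no proof of this statement: by its own convention, ``Proposition'' marks a result restated from another work, and Proposition~\ref{t.BrLPP to Airy sheet} is exactly Theorem~1.3 of \cite{Landscape}, so deferring to that reference is precisely what the paper does, and your proposal takes the same approach. One caveat about your sketch of the argument inside \cite{Landscape}: the joint laws of the sheet in the starting-point variable $y$ are \emph{not} accessible through extended-Airy-kernel (or any other known) determinantal multipoint formulas---only the fixed-$y$ marginal, the parabolic Airy$_2$ process, is determinantal---and indeed the absence of such formulas is the reason the construction in \cite{Landscape} was needed. Their actual route is to use the melon/RSK identity (their Proposition~4.1, quoted in this paper as Proposition~\ref{p.melon lpp identity}) to rewrite the prelimiting sheet as a last passage problem across the curves of the Brownian melon, to show that this functional depends, up to quantifiable errors, only on the top few curves, which converge to the Airy line ensemble, and then to combine this with two-parameter tightness; uniqueness of the limit comes from this description rather than from exact multipoint formulas in $y$. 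Since your write-up ultimately defers to the reference, this mischaracterization is not a logical gap, but the sketch should not be read as a summary of how the theorem is actually proved there.
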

%
%

We will generally work with a centred and scaled version of $\WB$. Indeed, let $\L_n=(\L_{n,1},\ldots, \L_{n,n}): \intint{n}\times[-\frac{1}{2}n^{1/3}, \infty) \to \R$ be given by
\begin{align}\label{e.P_n definition}
\L_{n,j}(x) = n^{-1/3}\left((\WB)_j(n+2xn^{2/3}) - (2n+2xn^{2/3})\right).
\end{align}
%
Here, $\L$ indicates ``parabolic'', as these objects converge to the parabolic Airy line ensemble (though we will not use this fact, as we only require the convergence of Brownian LPP values to the parabolic Airy \emph{sheet} as in Proposition 5.3). 
Since $(\WB)_j(\bigcdot)$ is ordered, and $(\WB)_j(0) = 0$ for $j\in \intint{n}$, we see that, for $x\geq -\frac{1}{2}n^{1/3}$,
\begin{equation}\label{e.P S identity}
\begin{split}
\L_{n,1}(x) 
&= n^{-1/3}\left(B[(0,n)\to(n+2xn^{2/3}, 1)] - (2n+2xn^{2/3})\right)\\
&= \S_n(0,x).
\end{split}
\end{equation}
We used the definition of $\WB_{n,1}$ \eqref{e.Wf definition} for the first equality and \eqref{e.S_n} for the second.

Note also that
\begin{equation}\label{e.P LPP value}
\begin{split}
\MoveEqLeft[4]
\L_n[(y,n) \to (x,1)]\\
&= n^{-1/3}\left((\WB)[(n+2yn^{2/3}, n) \to (n+2xn^{2/3}, 1)] - 2(x-y)n^{2/3}\right),
\end{split}
\end{equation}
for all $-\frac{1}{2}n^{1/3}\leq y<x$. We find then that, for $y>0$ with $x > y-\frac{1}{2}n^{1/3}$,
\begin{equation}\label{e.melon LPP and S_n}
\S_n(y,x) = \L_n[(-\tfrac{1}{2}n^{1/3}+y, n) \to (x, 1)] - n^{2/3}
\end{equation}
by comparing \eqref{e.P LPP value} to the definition  \eqref{e.S_n} of $\S_n$, and using Proposition~\ref{p.melon lpp identity}.

 We may now define the prelimiting version of $\hlim$, denoted $\h:[-n^{1/60}, n^{1/60}] \to \R\cup\{-\infty\}$, by
\begin{equation}\label{e.prelimiting gen profile formula}
\begin{split}
\h(x) &= \sup_{0 \leq y \leq n^{1/60}} \Big(\hlim_0(y) + \S_n(y, x)\Big) \\
%
%
&=\sup_{0 \leq y \leq n^{1/60}} \left(\hlim_0(y) + \L_n[(-\tfrac{1}{2}n^{1/3}+y, n) \to (x, 1)] - n^{2/3}\right),
\end{split}
\end{equation}
since by assumption $\fh_0(y) = -\infty$ for $y< 0$.
The final equality follows from \eqref{e.melon LPP and S_n}. We adopt the upper limit of $n^{1/60}$ on $y$ and $|x|$ in order to meet a technical hypothesis in the application of an upcoming estimate Proposition~\ref{l.BrLPP uniform tail bound}  from \cite{LPPtools}; note that $n^{1/60}\to\infty$ as $n\to\infty$ and so in the limit $y$ and $x$ can be thought of as respectively taking any non-negative value and any real value.
Finally, for given $x$, a path in the environment defined by $\L_n$ which achieves the supremum implicit in $\L_n[(-\tfrac{1}{2}n^{1/3}+y, n) \to (x, 1)]$ in the last equality of \eqref{e.prelimiting gen profile formula} is called a \emph{geodesic}.

%
%
The next lemma translates the well-known fact that $\WB$ can be described as non-intersecting Brownian motions to a similar statement about $\L_n$'s distribution; see also Figure~\ref{f.P_n}.

\begin{figure}
\includegraphics[scale=1.2]{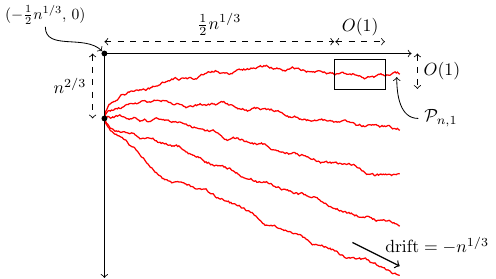}
\caption{A depiction of $\L_n$ from \eqref{e.P_n definition}. The vertical shift by $n^{-2/3}$, drift of $-n^{-1/3}$, and scaling by $n^{-1/3}$ are picked so that the height and fluctuations of $\L_{n,1}$ in a unit order interval are also of unit order, as emphasized by the black box of unit order height and width.} 
\label{f.P_n}
\end{figure}

\begin{lemma}\label{l.dyson brownian motion}
The distribution of $\L_n:\intint{n}\times [-\frac{1}{2}n^{1/3}, \infty)\to \R$ is that of $n$ independent Brownian motions of rate two and of drift $-n^{1/3}$, with common initial value $-n^{2/3}$, and conditioned on mutual non-intersection.
\end{lemma}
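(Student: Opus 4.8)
The plan is to deduce the lemma from the classical identification of the melon of a family of independent standard Brownian motions with a system of non-intersecting Brownian motions, composed with the deterministic rescaling that defines $\L_n$. The substantive input — the ``well-known fact'' alluded to just before the lemma statement — is that if $B=(B_1,\dots,B_n):\intint n\times[0,\infty)\to\R$ is a family of $n$ independent standard Brownian motions, then the melon $\WB=((\WB)_1,\dots,(\WB)_n):\intint n\times[0,\infty)\to\R$, as a process on the half-line, has the law of $n$ independent standard Brownian motions all issued from $0$ and then conditioned (in the Doob $h$-transform sense, with $h$ the Vandermonde determinant) never to intersect on $(0,\infty)$; equivalently, the law of $\beta=2$ Dyson Brownian motion started from the origin. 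I would invoke this via the discussion at the start of \cite[Section~4]{Landscape} (see also \cite{BrownianGibbs}). It is precisely here that the half-line domain matters — the melon construction \eqref{e.Wf definition} and the identity of Proposition~\ref{p.melon lpp identity} are only available on $[0,\infty)$ — which is the reason Assumption~\ref{a:initial_state_pd}(b) is imposed.

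Next I would push this law forward through the change of variables in \eqref{e.P_n definition} and check that it preserves the non-intersection structure while transforming the parameters. Writing $s=n+2xn^{2/3}$, the map $x\mapsto s$ is a strictly increasing affine bijection of $[-\tfrac12 n^{1/3},\infty)$ onto $[0,\infty)$, and \eqref{e.P_n definition} becomes $\L_{n,j}(x)=n^{-1/3}\bigl((\WB)_j(s)-n-s\bigr)$. Since one and the same deterministic, $x$-monotone operation is applied to every line $j$, the conditioning on mutual non-intersection is carried over intact. Brownian scaling supplies the bookkeeping: a standard Brownian motion run under the time change $x\mapsto n+2xn^{2/3}$ and scaled in value by $n^{-1/3}$ becomes a Brownian motion of rate $n^{-2/3}\cdot 2n^{2/3}=2$; subtracting the linear function $x\mapsto n^{-1/3}(2n+2xn^{2/3})=2n^{2/3}+2xn^{1/3}$ introduces the stated drift and shifts the initial level. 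Finally, at the left endpoint $x=-\tfrac12 n^{1/3}$ one has $s=0$, so every line equals $(\WB)_j(0)=0$, while the subtracted quantity is $n^{-1/3}(2n-n)=n^{2/3}$, yielding the common initial value $-n^{2/3}$. Assembling these observations gives the asserted law of $\L_n$.

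I do not expect a genuine obstacle here: the mathematical content is imported from the classical melon/RSK identification, and the remainder is the deterministic rescaling above. The only points demanding care are (i) invoking the melon identification in the correct normalisation so that the rate-$2$ and drift constants emerge correctly, and (ii) observing that every operation taking $B$ to $\L_n$ is deterministic and applied uniformly across the $n$ lines, so that the non-intersection conditioning transports without alteration; both are routine once the half-line domain convention of \eqref{e.Wf definition} is kept in view.
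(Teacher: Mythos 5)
Your proposal is correct and takes essentially the same route as the paper's proof: identify the melon $\WB$ with $n$-level Dyson Brownian motion (independent standard Brownian motions from the origin conditioned, via a Doob $h$-transform or limiting procedure, on mutual non-intersection — the paper cites \cite{o2002representation} for this), and then read off the rate, drift, and initial value from the deterministic affine change of variables in \eqref{e.P_n definition}, which you carry out explicitly where the paper leaves it implicit. One small remark: the linear term you subtract, $2n^{2/3}+2xn^{1/3}$, has slope $2n^{1/3}$ in $x$, so your bookkeeping actually yields drift $-2n^{1/3}$ rather than the lemma's stated $-n^{1/3}$; this constant is never used downstream (only the rate-two Brownian bridge structure enters the Gibbs resampling), so the discrepancy is immaterial, but you should not describe it as matching "the stated drift."
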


\begin{proof}
We may identify $\WB$ with $n$-level Dyson Brownian motion \cite{o2002representation}, which may be defined as $n$ independent Brownian motions of rate one and zero drift, with common initial value zero, conditioned on mutual non-intersection (the singular conditioning made precise via a suitable limiting procedure or a Doob $h$-transform); see, for example, \cite{dyson,mehta,grabiner1999brownian}. The expressions for the rate, drift, and initial value in the sought statement
follow from the definition~\eqref{e.P_n definition} of $\L_n$.
\end{proof}

%

Since we ultimately need to make inferences about $\hlim$, we require that $\h\to \hlim$ on compact sets. This is recorded in the next statement, which we will prove in Section~\ref{s.technical estimates} using the convergence of $\S_n$ to $\S$ from Proposition~\ref{t.BrLPP to Airy sheet}.

\begin{lemma}\label{l.convergence to fixed point} Let $\hlim_0:\R\to\R\cup\{-\infty\}$ satisfy Assumption~\ref{a:initial_state_pd}. Then we have that $\h \to \hlim$ in distribution in the topology of uniform convergence on compact sets.
\end{lemma}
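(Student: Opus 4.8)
\textbf{Proof plan for Lemma~\ref{l.convergence to fixed point}.}
The plan is to pass to the limit $n\to\infty$ directly in the variational formula \eqref{e.prelimiting gen profile formula}, namely in
\[
\h(x) = \sup_{0\leq y\leq n^{1/60}}\bigl(\hlim_0(y) + \S_n(y,x)\bigr),
\]
using the joint convergence $\S_n\to\S$ on compact sets from Proposition~\ref{t.BrLPP to Airy sheet}, together with the parabolic decay of $\hlim_0$ from Assumption~\ref{a:initial_state_pd}(a) and the Landscape bound Proposition~\ref{prop:Landscape_bound} (which controls $\S=\CL(\,\bigcdot\,,0;\,\bigcdot\,,1)$), to argue that in both the prelimit and the limit the relevant supremum is effectively over a fixed compact $y$-interval. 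Recall that the target object is $\hlim(x) = \sup_{y\in\R}(\hlim_0(y) + \CL(y,0;x,1))$ (Definition~\ref{def:KPZfp}), and that on the relevant event $\hlim_0(y)=-\infty$ for $y<0$, so the supremum is really over $y\geq 0$.

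First I would establish a deterministic-type localization: fix a compact set $K\subset\R$ of $x$-values. Using Proposition~\ref{prop:Landscape_bound} with, say, $\delta=\tfrac1{12}$, there is an almost surely finite random variable $\CC$ with
\[
\CL(y,0;x,1) \leq \CC\bigl(1 + |y|^{1/5} + |x|^{1/5}\bigr) - (x-y)^2
\]
for all $y\in\R$, $x\in K$, while $\hlim_0(y)\leq \alpha - \gamma y^2$. Hence for $x\in K$ the map $y\mapsto \hlim_0(y)+\CL(y,0;x,1)$ is bounded above by $\alpha + \CC(1+|y|^{1/5}+\sup_{x\in K}|x|^{1/5}) - \gamma y^2 - (x-y)^2$, which is negative for $|y|\geq R_0$ where $R_0=R_0(K,\CC,\alpha,\gamma)$ is random but finite; on the other hand, evaluating at a point $y_0\geq 0$ with $\hlim_0(y_0)>-\infty$ shows the supremum is $\geq$ some finite random lower bound. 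Therefore $\hlim(x) = \sup_{0\leq y\leq R_0}(\hlim_0(y)+\CL(y,0;x,1))$ for all $x\in K$. The prelimiting analogue requires a uniform-in-$n$ version of the same estimate: one needs a tail bound for $\S_n$ of the form $\S_n(y,x)\leq \CC_n(1+|y|^{1/5}+|x|^{1/5}) - c(x-y)^2$ with $\CC_n$ tight in $n$. This is exactly the kind of estimate supplied by the uniform Brownian-LPP tail bound Proposition~\ref{l.BrLPP uniform tail bound} from \cite{LPPtools} (and is the reason for the truncation at $n^{1/60}$); it gives, with high probability uniformly in $n$, that the supremum defining $\h(x)$ over $x\in K$ is attained with $y\leq R_0'$ for a deterministic $R_0'$ (taking $R_0'\geq R_0$). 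So both $\h|_K$ and $\hlim|_K$ are, up to events of probability $<\epsilon$ uniformly in $n$, suprema over the fixed compact box $[0,R_0']\times K$.

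Once the problem is localized to a compact box, I would conclude by the continuous mapping theorem / Skorokhod representation. The functional $\Phi:C([0,R_0']\times K)\to C(K)$ given by $\Phi(g)(x) = \sup_{0\leq y\leq R_0'}\bigl(\hlim_0(y)+g(y,x)\bigr)$ is continuous with respect to uniform convergence, provided one handles the (upper semicontinuous, possibly $-\infty$) function $\hlim_0$ correctly: since $\hlim_0$ is u.s.c.\ and bounded above, $y\mapsto \hlim_0(y)+g(y,x)$ is u.s.c.\ on the compact set $[0,R_0']$, so the supremum is attained and depends continuously (indeed $1$-Lipschitz) on $g$ in sup norm, uniformly in $x\in K$; and the map $x\mapsto \sup_y(\hlim_0(y)+g(y,x))$ is continuous because $g$ is uniformly continuous on the compact box. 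Applying this $\Phi$ to $\S_n\to\S$ (convergence in distribution in $C$ of the restriction to $[0,R_0']\times K$, from Proposition~\ref{t.BrLPP to Airy sheet}) yields $\Phi(\S_n)\to\Phi(\S)$ in distribution in $C(K)$. Combining with the localization estimates of the previous paragraph — which say $\h|_K = \Phi(\S_n)$ and $\hlim|_K = \Phi(\S)$ off an event of probability $<\epsilon$ uniformly in $n$ — gives, after letting $\epsilon\to 0$, that $\h|_K \to \hlim|_K$ in distribution. Since $K$ was an arbitrary compact set, this is convergence in distribution in the topology of uniform convergence on compact sets, as claimed.

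\textbf{Main obstacle.} The genuinely delicate point is the uniform-in-$n$ localization of the supremum in $y$: one must rule out that, for large $n$, the variational formula \eqref{e.prelimiting gen profile formula} picks up a nontrivial contribution from $y$ near the artificial cutoff $n^{1/60}$, or more generally from $y$ outside a fixed compact set. This needs the uniform tail control on $\S_n$ (equivalently on the melon/Brownian-LPP values $\L_n[\cdot\to\cdot]$) from Proposition~\ref{l.BrLPP uniform tail bound}, matched against the strict parabolic gain $-\gamma y^2$ from Assumption~\ref{a:initial_state_pd}(a); getting the quadratic-versus-$|y|^{1/5}$ comparison to hold with $n$-uniform constants, for all $x$ in a compact set simultaneously, is where the real work lies. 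Everything else — the continuous mapping step and the handling of the u.s.c.\ initial data $\hlim_0$ on a compact interval — is routine.
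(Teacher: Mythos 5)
Your proposal is correct and follows essentially the same route as the paper: both arguments localize the $y$-supremum in \eqref{e.prelimiting gen profile formula} to a compact set, uniformly in $n$, by playing the uniform tail bound of Proposition~\ref{l.BrLPP uniform tail bound} against the parabolic decay $\fh_0(y)\leq\alpha-\gamma y^2$, and then invoke the convergence $\S_n\to\S$ on compacts from Proposition~\ref{t.BrLPP to Airy sheet} to compare the two variational formulas. The only difference is presentational: you package the final step as a continuous-mapping argument over a deterministic box off an $\epsilon$-probability event, whereas the paper keeps a random compact set containing the maximizers $y_n(x)$ and bounds $\sup_{x}|\h(x)-\hlim(x)|$ directly by $\sup_{x,\,y\in\mc K}|\S_n(y,x)-\S(y,x)|$.
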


We will prove Theorem~\ref{thm:TP_probability} by deriving the following analogous statement for $\h$. 

\begin{proposition}\label{p.twin peaks prelimit}
Let $\hlim_0:\R\to\R\cup\{-\infty\}$ satisfy Assumption~\ref{a:initial_state_pd} and suppose that $A>0$. There exist $L_0$ and $\smallconst>0$ (both depending on $\gamma$, $\theta$, and $A$) such that, for all $L> L_0$, there exists $n_0$ (depending on $\gamma$, $\theta$, and $L$) so that for all $n>n_0$ and $\varepsilon\in(0,1)$,
$$\P\left(\h\in\TP{A,L}^{\eps}\right) \geq \smallconst\varepsilon.$$
Further, $L_0$ and $\smallconst$ may be taken to depend continuously on $\gamma$, $\theta$, and $A$.
\end{proposition}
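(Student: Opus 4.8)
\textbf{Proof plan for Proposition~\ref{p.twin peaks prelimit}.}

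The plan is to work in the prelimiting Brownian LPP model and to exploit the Dyson Brownian motion description of $\L_n$ from Lemma~\ref{l.dyson brownian motion}, together with its Brownian Gibbs resampling invariance. First I would reduce to $t=1$ (which the excerpt has already done) and use the simplification $\alpha=\lambda=0$, so that $\fh_0(y)\leq-\gamma y^2$ and $\fh_0(y)=-\infty$ for $y<0$. The starting point is to locate the global maximizer of $\h$: using the uniform tail bounds for Brownian LPP of Proposition~\ref{l.BrLPP uniform tail bound} (from \cite{LPPtools}) one shows (Lemma~\ref{l.tightness of prelimiting maximizer}) that, with probability bounded below by a constant depending only on $\gamma$, $\theta$, $A$, the maximizer $X^*$ of $\h$ lies in a compact set, that $\h$ is bounded above and below on a neighbourhood of $X^*$ by constants, and that the geodesic to $(X^*,1)$ in the $\L_n$ environment has its top-line portion confined to a compact interval. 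Call the intersection of these confinement events the event of \emph{favourable data}, of probability at least some fixed $p_0=p_0(\gamma,\theta,A)>0$ (Lemma~\ref{l.fav probability}).

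Next I would condition on the favourable data and, crucially, on the entire configuration of $\L_n$ \emph{except} the top line $\L_{n,1}$ on an interval $[X^*, X^*+A']$ just to the right of the maximizer, for a suitable $A'$ slightly larger than $A$; here the resampling is performed on a random interval anchored at $X^*$, which is what makes the argument non-Markovian. To handle this I would invoke the path decomposition of Markov processes at special times from \cite{millar1978path}, exactly as alluded to in Section~\ref{sec:heuristic}: conditionally on the favourable data and on $\L_{n,1}(X^*)$ and $\L_{n,1}(X^*+A')$ and on $\L_{n,2}$ restricted to $[X^*,X^*+A']$, the law of $\L_{n,1}$ on $[X^*,X^*+A']$ is that of a rate-two Brownian bridge conditioned to stay above $\L_{n,2}$ on that interval (Lemma~\ref{l.first conditional statement} and Lemma~\ref{l.conditional distribution}). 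I would then observe that since $\h$ is obtained from $\L_n$ by a last-passage/variational operation (the second line of \eqref{e.prelimiting gen profile formula}), and since on the favourable data the relevant geodesics stay in a compact region, the value $\h(X^*+A)$ is a monotone and Lipschitz function of the resampled top-line path $\L_{n,1}\restriction[X^*,X^*+A']$ (Lemmas~\ref{l.monotonicity of L^f} and \ref{l.lipschitz}); in particular raising $\L_{n,1}(X^*+A)$ by a small amount raises $\h(X^*+A)$ by at least a comparable amount.

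Putting these together: on favourable data, $\Max(\h)=\h(X^*)$ is a fixed (random but bounded) value $m$, and I need the resampled Brownian bridge to come within $\varepsilon$ of achieving a second near-maximum at $X^*+A$, i.e.\ $\h(X^*+A)\geq m-\varepsilon$. Because a rate-two Brownian bridge over a unit-order interval, conditioned to stay above a fixed continuous barrier and pinned at fixed endpoints, has probability $\Theta(\varepsilon)$ of having its value at a given interior point lie within $\varepsilon$ of any fixed target in the appropriate range (reflection-principle/small-ball estimate for the bridge), and since the monotone-Lipschitz dependence transfers this to $\h(X^*+A)$, one gets $\P(\h\in\TP{A,L}^\varepsilon\mid\text{fav.\ data})\geq c_1\varepsilon$ for $L$ large enough and $n>n_0$. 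Multiplying by $p_0$ gives $\eta=p_0 c_1>0$. The continuity of $\eta$ and $L_0$ in $(\gamma,\theta,A)$ follows since every constant produced is manifestly continuous (indeed monotone) in these parameters, the tail bounds of \cite{LPPtools} being uniform.

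The main obstacle I expect is the non-Markovian resampling: justifying that, conditionally on the favourable data, the restriction of $\L_{n,1}$ to the \emph{random} interval $[X^*,X^*+A']$ has the claimed Brownian-bridge-above-a-barrier law. One cannot directly quote the ordinary Brownian Gibbs property because the interval endpoints depend on $X^*$, which in turn depends on $\L_{n,1}$ itself on all of $[0,\infty)$. The path-decomposition result of Millar \cite{millar1978path} is the right tool, but applying it cleanly requires carefully identifying the pre-$X^*$ and post-$X^*$ pieces and checking that the favourable-data event is measurable with respect to the correct sigma-algebra; a secondary technical point is ensuring the geodesics defining $\h(X^*)$ and $\h(X^*+A)$ do not use the top line on $[X^*,X^*+A']$ in a way that would spoil monotonicity (handled by enlarging $A'$ beyond $A$ and by the confinement of geodesics on favourable data). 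Once this conditional-law statement is in hand, the remaining small-ball estimate for the bridge and the monotone-Lipschitz transfer are routine.
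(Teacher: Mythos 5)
Your outline follows the same route as the paper's proof of this proposition (favourable event of positive probability, Millar's path decomposition at the maximizer, Gibbsian resampling on an interval anchored at $x_0$, a monotonicity/Lipschitz transfer, and a small-ball estimate of order $\eps$), but the central step---the identification of the conditional law on the resampling interval---is incorrect as stated, and the error propagates through the rest. Since the conditioning data determines $X^*$ as the \emph{global maximizer}, the conditional law of $\L_{n,1}$ on $[X^*,X^*+A']$ is not merely a rate-two Brownian bridge conditioned to avoid $\L_{n,2}$: it must in addition be conditioned on the event that the reconstructed profile nowhere exceeds $\h(X^*)$. This is exactly the second constraint in Lemma~\ref{l.first conditional statement}, encoded in Lemma~\ref{l.conditional distribution} by the requirement $Z\in[\Corner^{\downarrow},\Corner^{\uparrow}]$, and it is not a technicality. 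Under the correctly constrained law, your target event---that the value at the \emph{single point} $X^*+A$ comes within $\eps$ of the maximum---has probability of strictly smaller order than $\eps$: already in the narrow-wedge caricature, the density of a bridge constrained to stay below a ceiling vanishes quadratically at the ceiling at any fixed interior time, so the fixed-point event costs order $\eps^3$, not $\eps$. The $\Theta(\eps)$ rate is recovered in the paper precisely because only the single value $Z=\L_{n,1}(x_0+A+1)$ is resampled, with the two side bridges frozen inside $\F$: the conditional law of $Z$ is then a truncated Gaussian whose density at the upper corner is bounded below on a favourable event controlling $\Corner^{\uparrow}$ and the mean, and landing within $\eps/2$ of $\Corner^{\uparrow}$ forces, via \eqref{e.when z equals corner} and Lemma~\ref{l.lipschitz}, the \emph{supremum over the window} $[x_0+A,x_0+A+2]$ to come within $\eps$ of the maximum.

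Two further points in your plan would fail as written. First, $\h(X^*+A)$ is not a monotone function of the resampled top-line path in the pointwise order: last-passage weights depend on increments $\L_{n,1}(x)-\L_{n,1}(u)$ of the top line, so raising the path on part of the interval lowers the weight of any path that jumps to line $1$ inside the raised region. This is why the paper proves monotonicity in the scalar $z$ only \emph{path-by-path} (Lemma~\ref{l.monotonicity of L^f}) and notes that even then $\hz{z}(x)$ itself is not monotone for $x\in[x_0+A+1,x_0+A+2]$. Second, your favourable event must include quantitative upper bounds on $\Corner^{\uparrow}$ and on the Gaussian mean (the paper's events $\msf F_1$ and $\msf F_2$, established through a genuinely nontrivial argument that routes a candidate path through a reference point far to the left of the maximizer); confinement of the maximizer and boundedness of $\h$ near it do not by themselves give this. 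If you replace the fixed-point event by the supremum over a window, freeze the side bridges, and resample only the midpoint value, your outline essentially becomes the paper's argument; as proposed, the conditional-law identification, the monotonicity claim, and the $\Theta(\eps)$ small-ball step each contain genuine gaps.
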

We show how Proposition~\ref{p.twin peaks prelimit} implies Theorem~\ref{thm:TP_probability}, and then turn to the proof of the former.

\begin{proof}[Proof of Theorem~\ref{thm:TP_probability}]
As we have noted, we may take $t=1$ and use the notation $\hlim$ in place of~$\fh_1$. 
%
%
%
%
%
%
Let $L>L_0$ with $L_0$ as given in Proposition~\ref{p.twin peaks prelimit}. By combining the fact that $\h$ converges to $\hlim$ uniformly on compact sets (Lemma \ref{l.convergence to fixed point}) with the Portmanteau theorem and Proposition~\ref{p.twin peaks prelimit}, we see that
$$\P\big(\hlim\in\TP{A,L}^{\eps}\big) \geq \lim_{n\to\infty}\P\left(\h\in\TP{A,L}^{\eps}\right) \geq \smallconst\varepsilon.$$
For general $t>0$, we must show that $\smallconst$ and $L_0$ can be taken to depend on $t$ continuously. This follows since the KPZ scaling, to move from the $\varepsilon$-twin peaks' event for $t>0$  to $t=1$, modifies $A$, $\gamma$, $\theta$, and $\varepsilon$ in a manner that depends continuously on $t$; the dependence of $\smallconst$ and $L_0$ on these quantities in Proposition~\ref{p.twin peaks prelimit} is also continuous.
This completes the proof of Theorem~\ref{thm:TP_probability}.
\end{proof}


We now turn to the proof of Proposition~\ref{p.twin peaks prelimit}. We start with a section on some technical estimates regarding the location of maximizers of $\h$ as well as of the maximum in its
 definition \eqref{e.prelimiting gen profile formula}.

\subsection{Locations of maximizers}\label{s.technical estimates}

 We will need a uniform tail bound for $\S_n(y, x)$ as $x$ and $y$ vary over compact intervals. Such a bound is proved in \cite{LPPtools}, and we state it here now.

\begin{proposition}[Proposition~3.15 of \cite{LPPtools}]\label{l.BrLPP uniform tail bound}
There exist finite positive constants $n_0$, $K_0$, $C$, and~$c$ such that, for $n\geq n_0$, $K_0\leq K \leq n^{1/30}$, and $0<R < n^{1/46}$,
$$\P\left(\sup_{x,y\in[-R,R]} \left|\S_n(y, x) + (y-x)^2\right| > K\right) \leq CR^2\exp(-cK^{3/2}).$$
\end{proposition}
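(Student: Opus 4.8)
The statement is quoted from \cite{LPPtools}; I outline the strategy by which such a uniform tail bound is established. Write $\hat\S_n(y,x):=\S_n(y,x)+(y-x)^2$. By a union bound it suffices to estimate separately $\P\bigl(\sup_{[-R,R]^2}\hat\S_n>K\bigr)$ and $\P\bigl(\inf_{[-R,R]^2}\hat\S_n<-K\bigr)$, bounding each by $\tfrac12 CR^2\exp(-cK^{3/2})$. Both reductions run the same way; the upper-tail event is the more constraining one at the level of a single point (the one-point upper tail is of order $\exp(-cK^{3/2})$, while the one-point lower tail is lighter still), so I describe the upper tail.

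The first ingredient is a one-point estimate, uniform in the endpoints. By the exact solvability of Brownian last passage percolation (the one-point law of $B[(0,n)\to(t,1)]$ is that of $\sqrt{t}\,\lambda_{\max}$ of an $n\times n$ GUE matrix, by Baryshnikov's theorem / the RSK correspondence; two-sidedness of $B$ renders the starting coordinate irrelevant by stationarity of increments), combined with the Taylor expansion
\begin{equation*}
2\sqrt{nT_n}=2n+2(x-y)n^{2/3}-(x-y)^2n^{1/3}+O\bigl(|x-y|^3\bigr),\qquad T_n:=n+2(x-y)n^{2/3},
\end{equation*}
whose residual is $o(1)$ after multiplication by $n^{-1/3}$ because $|x-y|\le 2R<2n^{1/46}$, and with the uniform (in $n$) moderate-deviation tail bounds for the largest GUE eigenvalue (of Ledoux--Rider type, in the form recorded in \cite{LPPtools}), one obtains constants $C,c,n_0$ such that for $n\ge n_0$, $(y,x)\in[-R,R]^2$ with $R<n^{1/46}$, and $K_0\le s\le n^{1/30}$,
\begin{equation*}
\P\bigl(|\hat\S_n(y,x)|>s\bigr)\le C\exp(-cs^{3/2}).
\end{equation*}
Indeed the above shows that $\hat\S_n(y,x)$ differs from a Tracy--Widom$_{\mathrm{GUE}}$ variable by a factor $1+o(1)$ and an additive $o(1)$, uniformly over $[-R,R]^2$.

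The passage to the supremum uses a unit-scale grid together with a modulus-of-continuity estimate. Let $\mathcal G:=\mathbb Z^2\cap[-R-1,R+1]^2$, so $|\mathcal G|\le 9R^2$ for $R\ge 1$, and every $(y,x)\in[-R,R]^2$ lies within $\ell^\infty$-distance $1$ of some $p\in\mathcal G$. Then
\begin{equation*}
\sup_{[-R,R]^2}\hat\S_n\ \le\ \max_{p\in\mathcal G}\hat\S_n(p)\ +\ \Delta_n,\qquad \Delta_n:=\sup_{\substack{u,v\in[-R,R]^2\\ |u-v|_\infty\le 1}}\bigl|\hat\S_n(u)-\hat\S_n(v)\bigr|,
\end{equation*}
so, using the one-point bound and a union bound over $\mathcal G$, it remains to show $\P(\Delta_n>K/2)\le CR^2\exp(-cK^{3/2})$. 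This is the prelimiting counterpart of the local H\"older-$\tfrac12^{-}$ regularity of the parabolic Airy sheet (cf. \cite[Proposition~10.5]{Landscape}); the corresponding uniform tail bound for Brownian LPP over unit cells is supplied by \cite{LPPtools}. Its proof rests on the observation that, on a unit cell, the parabolic drift of the last passage value is exactly offset by the counterterm $-(y-x)^2$, so that the increment $\hat\S_n(u)-\hat\S_n(v)$ reduces to a difference of last passage values across nearby (nested) regions; this difference, analysed through backward-geodesic comparisons and the one-point input above, fluctuates on unit scale with an $\exp(-cK^{3/2})$ tail. Covering $[-R,R]^2$ by $O(R^2)$ unit cells and summing yields the bound for $\Delta_n$. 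The infimum is handled symmetrically. Finally, the halving of $K$ at each step only rescales $c$; the window $K\le n^{1/30}$ is inherited from the moderate-deviation regime of the one-point estimate; and the constraint $R<n^{1/46}$ is precisely what makes the cubic Taylor error in the parabolic shift negligible, so the hypotheses of the proposition are reproduced.

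The main obstacle is the modulus-of-continuity estimate for $\Delta_n$. A naive attempt that merely discretizes $[-R,R]^2$ and invokes monotonicity of last passage values in the endpoints fails to yield an $n$-independent prefactor: over the full range $[-R,R]$ the unscaled last passage value — and the deterministic part of $\hat\S_n$ — vary on the scale $n^{1/3}$, so to control oscillations one would need a grid of mesh $\sim n^{-1/3}$, producing a spurious factor $n^{2/3}$ in the bound that cannot be absorbed for $K$ of constant order. Working with increments of $\hat\S_n$ (in which the parabolic and linear drifts cancel between the last passage value and the $-(y-x)^2$ counterterm), rather than with the raw last passage value, is therefore essential; this is exactly where the quantitative input from \cite{LPPtools} does the work.
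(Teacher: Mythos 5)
You should first note the context: the paper itself does not prove this statement. Under the paper's own convention it is a restated external result, imported from \cite{LPPtools}, and the only thing the paper says about its proof is the remark immediately following it: the argument in \cite{LPPtools} follows \cite{basu2014last}, using a fine discretization of the endpoint set in $[-R,R]$, known one-point tail bounds, and a union bound. At the level of ingredients your outline (Baryshnikov's identity with the GUE edge, uniform Ledoux--Rider-type moderate deviations, the Taylor expansion of the parabolic centering with the constraint $R<n^{1/46}$ killing the cubic error, then discretization plus a union bound) is consistent with that description, so in spirit you are on the same track as the cited source.

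As a proof, however, your attempt has a genuine gap, and it sits exactly where all the difficulty lies. Your reduction through a unit-scale grid requires $\P(\Delta_n>K/2)\le CR^2\exp(-cK^{3/2})$, i.e.\ a uniform tail bound for the oscillation of $\S_n(y,x)+(y-x)^2$ over unit cells anywhere in $[-R,R]^2$; up to translating the Brownian environment this is the essential content of the quoted proposition itself, and you dispose of it by citing \cite{LPPtools} --- the very source being quoted --- so the argument is circular rather than a proof. Moreover, the diagnosis in your closing paragraph is inaccurate: the deterministic part of $\S_n(y,x)+(y-x)^2$ is $O(1)$ uniformly over $[-R,R]^2$ (its mean is essentially the Tracy--Widom mean everywhere); it is only the uncentred passage value whose deterministic part varies on scale $n^{1/3}$, and that object never needs to be controlled. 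The true obstruction to a unit grid is different: the standard monotonicity device in Brownian LPP (for fixed $y$, the map $x\mapsto B[(2yn^{2/3},n)\to(n+2xn^{2/3},1)]-B_1(n+2xn^{2/3})$ is non-decreasing) controls the cell oscillation of $\S_n$ by corner values plus Brownian oscillation at scale $n^{2/3}$, which has Gaussian tails after rescaling; but when the parabola is added back, its variation over a unit cell at distance $D\approx|x-y|$ from the diagonal is of order $D\lesssim R$, which may vastly exceed $K$. This is precisely why the proof described in the paper's remark takes the discretization fine rather than of unit mesh, and why closing your bound on $\Delta_n$ requires a quantitative argument (finer meshes or chaining within cells, with the mesh adapted to $K$ and to the distance from the diagonal) rather than a reference back to the proposition being proved.
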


The proof of Proposition~\ref{l.BrLPP uniform tail bound} as given in \cite{LPPtools} is not difficult and follows a strategy used earlier in \cite{basu2014last} to prove a similar statement in another model of LPP; essentially, one considers a fine discretization of the set of endpoints in $[-R,R]$ and uses known one-point tail bounds and a union bound to get the uniform-over-endpoints statement.

It is to handle the bounds of $n^{1/30}$ and $n^{1/46}$ on $K$ and $R$ that we have imposed that $y\leq n^{1/60}$ in the definition of $\h$ and restricted $\h$'s domain to $[-n^{1/60}, n^{1/60}]$.

Next, we show uniform-in-$n$ control over $x_0^{n}$, the maximizer of $\h$.



\begin{lemma}\label{l.tightness of prelimiting maximizer}Let $\hlim_0:\R\to\R\cup\{-\infty\}$ satisfy Assumption~\ref{a:initial_state_pd} .
Let $x_0^{n}$ be the maximizer of $\h$ of largest absolute value. Given $\delta>0$, there exist $n_0$ and $M<\infty$ (both depending on $\gamma$, $\theta$, and $\delta$) such that, for all $n\geq n_0$,
$$\P\left(|x_0^{n}| > M\right) \leq \delta.$$
Further, $M$ can be taken to depend on $\gamma$, $\theta$, and $\delta$ continuously.
\end{lemma}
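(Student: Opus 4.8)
\textbf{Proof plan for Lemma~\ref{l.tightness of prelimiting maximizer}.}
The plan is to show that with high probability the maximum of $\h$ on a fixed compact interval exceeds a definite value, while on the complement of a larger interval $\h$ is forced to be strictly smaller. First I would produce a lower bound on $\Max(\h)$ restricted to a compact set: by \eqref{e.prelimiting gen profile formula} and the assumption on $\theta$, there is a point $y_0 \in [0,\theta]$ with $\hlim_0(y_0) \geq -\theta$, so $\h(0) \geq -\theta + \S_n(y_0, 0)$. Using Proposition~\ref{l.BrLPP uniform tail bound} with $R$ of order $\theta$ (and $K$ a suitable constant, legitimate once $n \geq n_0(\theta)$), the quantity $\S_n(y_0,0)$ lies within a constant of $-y_0^2 \geq -\theta^2$ with probability at least $1 - \delta/3$; this gives a (deterministic, $\gamma,\theta$-dependent) lower bound $\Max(\h) \geq -c_1(\gamma,\theta)$ on an event $E_1$ of probability $\geq 1-\delta/3$.

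Next I would establish a parabolic upper bound for $\h(x)$ valid for all large $|x|$. From \eqref{e.prelimiting gen profile formula}, $\h(x) = \sup_{0 \le y \le n^{1/60}}\bigl(\hlim_0(y) + \S_n(y,x)\bigr)$; using $\hlim_0(y) \le -\gamma y^2$ (by \eqref{eq:h0bound}, after the reduction to $\alpha=\lambda=0$ made in Section~\ref{sec:TP_probability}) and the tail bound of Proposition~\ref{l.BrLPP uniform tail bound} applied on the box $[-R,R]^2$ with $R = \lceil M \rceil$, on an event $E_2$ of probability $\geq 1 - CR^2\exp(-cK^{3/2})$ we have $\S_n(y,x) \le -(y-x)^2 + K$ for all $y,x \in [-R,R]$. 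Hence on $E_2$, for $|x| \le R$,
\begin{equation}
\h(x) \le \sup_{0 \le y}\bigl\{-\gamma y^2 - (y-x)^2\bigr\} + K \le -\frac{\gamma}{1+\gamma}x^2 + K,
\end{equation}
the last inequality by the same elementary calculus used in the proof of Lemma~\ref{lem:KPZ_is_bounded} (optimizing $\gamma y^2 + (y-x)^2 \ge \frac{\gamma}{1+\gamma}x^2$). Choosing $K = K(\delta, \gamma, \theta)$ fixed and then $M$ large enough that $-\frac{\gamma}{1+\gamma}M^2 + K < -c_1(\gamma,\theta)$, and simultaneously choosing $R = \lceil M\rceil$ so that $CR^2\exp(-cK^{3/2}) \le \delta/3$ after possibly enlarging $K$ — more carefully, first fix $M$ via the deterministic inequality, then choose $K$ large enough (which forces us to reselect $M$); a standard bootstrap, since enlarging $K$ only helps the first inequality's slack grow — we obtain that on $E_1 \cap E_2$, every $x$ with $M \le |x| \le R$ has $\h(x) < \Max(\h)$, so no maximizer lies in that annulus.

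It remains to rule out maximizers with $|x| > R = \lceil M \rceil$; but by the reduction in Section~\ref{sec:TP_probability} the relevant $x$-range is $[-n^{1/60}, n^{1/60}]$, and one cannot simply apply Proposition~\ref{l.BrLPP uniform tail bound} on all of this growing box at fixed $K$. The cleanest fix is to note that the parabolic decay $\hlim_0(y)\le -\gamma y^2$ together with the a priori bound $\S_n(y,x) \ge -(y-x)^2 - \CC\Vert\cdot\Vert^{1/5}(\cdot)$ is not what we have prelimitingly; instead I would invoke Proposition~\ref{l.BrLPP uniform tail bound} once on the full box $[-n^{1/60}, n^{1/60}]^2$ with the \emph{permitted} growing choice $K = K_n := c_2 (n^{1/60})^{4/3}$ (which satisfies $K_n \le n^{1/30}$ for large $n$, and $R = n^{1/60} < n^{1/46}$), giving $\S_n(y,x) \le -(y-x)^2 + K_n$ off an event of probability $\le C n^{1/30}\exp(-c' n^{1/40}) \to 0$; then on this event, for $|x| > M$ and $|x| \le n^{1/60}$,
\begin{equation}
\h(x) \le -\tfrac{\gamma}{1+\gamma}x^2 + K_n \le -\tfrac{\gamma}{1+\gamma}M^2 + K_n,
\end{equation}
which is \emph{not} immediately $< -c_1$. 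So the actual argument must instead exploit that $\h(x)$ is \emph{decreasing in $|x|$ beyond the bulk} up to fluctuations: more precisely, I would compare $\h(x)$ to $\h$ evaluated near its bulk maximizer by using that the geodesic endpoint is localized, or — simpler — use that $\h(x) \le \L_{n,1}$-type bounds combined with the fact that for the largest-$|x|$ maximizer we need $\h(x) = \Max(\h) \ge -c_1$, hence $x^2 \le \frac{1+\gamma}{\gamma}(c_1 + K_n)$; this only bounds $|x|$ by $O(n^{1/90})$, not $O(1)$. Therefore \textbf{the main obstacle}, and the step requiring real care, is upgrading the weak polynomial-in-$n$ localization to the $n$-uniform $O(1)$ bound: this is done by a two-scale argument — first use the growing-$K$ version to confine the maximizer to $[-n^{1/90}, n^{1/90}]$ off a vanishing-probability event, then apply Proposition~\ref{l.BrLPP uniform tail bound} a second time on that now-\emph{smaller} box with a \emph{fixed} $K$ (legitimate since $n^{1/90} \le n^{1/30}$), yielding on an event of probability $\ge 1-\delta/3$ the clean bound $\h(x) \le -\frac{\gamma}{1+\gamma}x^2 + K$ for all $|x| \le n^{1/90}$, and then choosing $M$ with $-\frac{\gamma}{1+\gamma}M^2 + K < -c_1(\gamma,\theta)$ forces the maximizer into $[-M,M]$. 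The continuity of $M$ in $(\gamma,\theta,\delta)$ is immediate since $c_1$, $K$, and the final inequality defining $M$ all depend continuously (indeed explicitly) on these parameters. Intersecting the three events (each of probability $\ge 1-\delta/3$) and taking $n_0$ to be the maximum of the finitely many thresholds encountered completes the proof.
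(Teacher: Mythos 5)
Your overall strategy---lower-bound the maximum via $\h(0)\geq \hlim_0(y_0)+\S_n(y_0,0)$ and force $\h(x)$ below that level for $|x|>M$ using the parabolic decay of $\hlim_0$ and the parabolic curvature of $\S_n$---is the same comparison the paper makes, and your treatment of the lower bound and of the continuity of $M$ in $(\gamma,\theta,\delta)$ is fine. But there is a genuine gap at precisely the step you flag as the main obstacle. Your second-stage application of Proposition~\ref{l.BrLPP uniform tail bound} ``with a fixed $K$'' on the box of radius $R=n^{1/90}$ does not produce an event of probability at least $1-\delta/3$ uniformly in $n$: the proposition's bound is $CR^2\exp(-cK^{3/2})$, and with $R=n^{1/90}$ and $K$ fixed this is $Cn^{1/45}e^{-cK^{3/2}}$, which diverges as $n\to\infty$. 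Your parenthetical ``legitimate since $n^{1/90}\le n^{1/30}$'' only addresses the hypotheses $K\le n^{1/30}$ and $R<n^{1/46}$, not the size of the resulting probability; the $R^2$ prefactor defeats any single uniform-threshold application over a region whose radius grows with $n$. (A related problem already appears in your event $E_2$: it controls $\S_n(y,x)$ only for $y\in[-R,R]$, while the supremum in \eqref{e.prelimiting gen profile formula} runs over $y$ up to $n^{1/60}$; the growing-$K_n$ event patches this only if $c_2$ is small relative to $\gamma$, and in any case it reintroduces an error of size $K_n$.)

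The fix---and what the paper actually does---is not a two-scale bootstrap with uniform thresholds but a union bound over unit boxes with thresholds that grow with the box location. Cover $\{M<|x|<n^{1/60}\}\times\{0\le y\le n^{1/60}\}$ by boxes $[i,i+1]\times[j,j+1]$ and, in the box indexed by $(i,j)$, apply Proposition~\ref{l.BrLPP uniform tail bound} at level $-K+\gamma j^2+(|i-j|-1)^2\gtrsim \tfrac{\gamma}{4+\gamma}(i^2+j^2)$, the term $\gamma j^2$ coming from $\hlim_0(y)\le-\gamma y^2$ and $(|i-j|-1)^2$ from the parabola in $\S_n$. Each box contributes at most $C\max\{i^2,j^2\}\exp\{-c(i^3+j^3)\}$; the thresholds remain below $n^{1/30}$ because $i,j\le n^{1/60}$; and summing over $j\ge0$ and $i\ge M$ yields a quantity that is small for large $M$, uniformly in $n$. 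This single union bound replaces your $E_2$, your growing-$K_n$ event, and the problematic second-stage event, and the rest of your argument then goes through as in the paper.
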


\begin{proof}
Since by definition $x_0^{n} \in [-n^{1/60}, n^{1/60}]$, it is enough to prove that 
$$\P\left(\sup_{M < |x| < n^{1/60}} \h(x) > \h(0)\right) < \delta$$
for large enough $M$ depending only on $\delta$, $\gamma$ and $\theta$. For any $K>0$, we may bound above the left-hand side by
\begin{equation}\label{e.prelimit maximizer location breakup}
\begin{split}
\MoveEqLeft[10]
\P\left(\sup_{M<|x|<n^{1/60}} \h(x) > \h(0)\right)\\
&\leq \P\left(\sup_{M<|x|<n^{1/60}} \h(x) > -K\right) + \P\left(\h(0) < -K\right).
\end{split}
\end{equation}
We will find a $K$ such that both terms are less than $\delta/2$. The second term is easier to bound, and we address it first. Let $y_0 \in [-\theta, \theta]$ be such that $\hlim_0(y_0) \geq -2\theta$ and set $K \geq 4\theta$. From the formula \eqref{e.prelimiting gen profile formula} for $\h$, we see that
\begin{align*}
\P\left(\h(0) < -K\right) \leq \P\Big(\hlim_0(y_0) + \S_n(y_0,0) < -K\Big)
&\leq \P\Big(\S_n(y_0, 0) < -K/2\Big).
\end{align*}
We can bound this probability using the one-point tail information for $\S_n(y, x)$ from Proposition~\ref{l.BrLPP uniform tail bound}. Doing so shows that, for large enough $K$, the probability is less than $\delta/2$.

Returning to the first term on the right-hand side of \eqref{e.prelimit maximizer location breakup}, recall that, by \eqref{eq:h0bound}, there exists $\gamma>0$ such that $\hlim_0(y) \leq -\gamma |y|^{1/2}$ for all $y\in \R$. Then we have from \eqref{e.prelimiting gen profile formula} that
\begin{align}
\P\left(\sup_{M<|x|<n^{1/60}} \h(x) > -K\right) &\leq \P\Bigg(\sup_{\substack{M<|x|<n^{1/60}\\ 0\leq y\leq n^{1/60}}}\left(\S_n(y, x) - \gamma |y|^{1/2}\right) > -K\Bigg)\nonumber\\
&\leq \sum_{i=M}^{\lceil n^{1/60}\rceil}\,\, \sum_{j=0}^{\lceil n^{1/60}\rceil}\P\Bigg(\sup_{\substack{|x|\in [i,i+1]\\ y \in [j,j+1]}}\S_n(y, x) > -K + \gamma |j|^{1/2}\Bigg). \label{e.maximizer bounding sum breakup}
\end{align}
Now we want to apply Proposition~\ref{l.BrLPP uniform tail bound} to bound each summand. We see that
\begin{align}
\MoveEqLeft[4]
\P\Bigg(\sup_{\substack{|x|\in [i,i+1]\\ y\in [j,j+1]}}\S_n(y, x) > -K + \gamma |j|^{1/2}\Bigg)\nonumber\\
& \leq \P\Bigg(\sup_{\substack{|x|\in [i,i+1]\\ y\in [j,j+1]}}\left(\S_n(y, x) + (x-y)^2\right) > -K + \gamma|j|^{1/2} + (|i-j|-1)^2\Bigg).\label{e.prob breakup double sum}
\end{align}
We need a lower bound on the right-hand side of the preceding line's probability, which we record next.

\begin{lemma}
For $i\geq 4$ and $j\geq 1$,
\begin{equation}\label{e.to show square}
\gamma j^{1/2} + (|i-j|-1)^2 \geq \frac{\gamma}{2}j^{1/2} + c_\gamma i^{1/2},
\end{equation}
where $c_\gamma = \min\left(\frac{1}{16}, \frac{\gamma}{2\sqrt{2}}\right)$.
\end{lemma}

\begin{proof}
Note that
\begin{align*}
\gamma j^{1/2} + (|i-j|-1)^2 = \gamma j^{1/2} + i^2(|1-j/i|-1/i)^2.
\end{align*}
If $j/i\geq 1/2$, we ignore the second term and write $\gamma j^{1/2}$ as $\gamma j^{1/2}/2 + \gamma j^{1/2}/2 \geq \gamma j^{1/2}/2 + \gamma i^{1/2}/2\sqrt2$.
If $j/i<1/2$ and $i\geq 4$, then
\begin{align*}
(|1-j/i|-1/i)^2 \geq (\tfrac{1}{2}-1/i)^2 \geq \frac{1}{16}.
\end{align*}
Noting that $i^2 \geq i^{1/2}$ completes the proof.
\end{proof}

%
%

Using \eqref{e.to show square} in \eqref{e.prob breakup double sum} gives
\begin{align*}
\MoveEqLeft[6]
\P\Bigg(\sup_{\substack{|x|\in [i,i+1]\\ y\in \gamma^{-1/2}[j,j+1]}}\left(\S_n(y, x) + (x-y)^2\right) > -K + \gamma j^{1/2} + (|i-j|-1)^2\Bigg)\\
&\leq \P\Bigg(\sup_{\substack{|x|\in [i,i+1]\\ y\in [j,j+1]}}\left(\S_n(y, x) + (x-y)^2\right) > \gamma j^{1/2} + \frac{1}{2}c_\gamma i^{1/2}\Bigg).
\end{align*}
Here we assumed that $K \leq \smash{\frac{c_\gamma}{2}i^{1/2}}$ for $i\geq M$, which can be assured by supposing that $M$ is large enough.
We apply Proposition~\ref{l.BrLPP uniform tail bound} after noting that we are permitted to do so since for $i \leq n^{1/60}$ and $j\leq n^{1/60}$, we have $i^{1/2}, j^{1/2} < n^{1/120}$ and $[j,j+1],[i, i+1] \subseteq [-n^{1/46}, n^{1/46}]$. Thus we see, for positive constants $c$ and $C$ depending on $\gamma$,
$$\P\Bigg(\sup_{\substack{|x|\in [i,i+1]\\ y\in [j,j+1]}}\S_n(y, x) > -K+ \gamma j^{1/2}\Bigg) \leq C\max \{ i^2, j^2\} \exp \big\{ -c(i^{3/4}+ j^{3/4}) \big\}.$$
Returning to the sum in \eqref{e.maximizer bounding sum breakup} and substituting this bound yields, for $M$ large enough,
\begin{align*}
\P\left(\sup_{M<|x|<n^{1/60}} \h(x) > -K\right)
&\leq \sum_{i=M}^{\lceil n^{1/60}\rceil}\,\,\sum_{j=0}^{\lceil n^{1/60}\rceil} C\max \{ i^2, j^2 \} \exp \big\{ -c(i^{3/4}+ j^{3/4}) \big\}\\
&\leq \sum_{i=M}^{\infty} C'i^2\exp \{ -ci^{3/4} \} ,
\end{align*}
which may be made smaller than $\delta/2$ by choosing $M$ suitably high (which overall depends on $\gamma$ and $\theta$)
 and by further assuming that $n> M^{60}$, if required. It may be easily checked that the dependence of $M$ on these quantities is continuous. This completes the proof of Lemma~\ref{l.tightness of prelimiting maximizer}.
\end{proof}


The proof of Lemma~\ref{l.convergence to fixed point} on the convergence of $\h$ to $\hlim$ follows similar lines, relying on a bound on the location of the maximizer in the definition of $\h$ \eqref{e.prelimiting gen profile formula}. We give it now.

\begin{proof}[Proof of Lemma~\ref{l.convergence to fixed point}]
Set $y_n(x)$ equal to $\argmax_y \left(\hlim_0(y) + \S_n(y,x)\right)$ (taking the choice of largest absolute value when it is not unique). Fix $M>0$. We claim that $(y_n(x))_{n\in\N}$ is uniformly tight for $x\in[-M,M]$. To show this, let $\varepsilon>0$ be given and let $R\geq M\vee \theta$, so that we may choose $y_0 \in[0,R]$ such that $\hlim_0(y_0) \geq -2\theta$. Then we observe that, for every $K>0$,
\begin{equation}
\begin{split}
\MoveEqLeft
\P\big(y_n(x) > R\big) \\
&\leq \P\left(\sup_{R\leq y \leq n^{1/60}} \big(\hlim_0(y) + \S_n(y,x)\big) > \hlim_0(y_0) + \S_n(y_0,x)\right)\\
&\leq \P\left(\sup_{R\leq y \leq n^{1/60}} \big(\hlim_0(y) + \S_n(y,x)\big) > -K\right) + \P\Big(\hlim_0(y_0) + \S_n(y_0,x) < -K\Big).\end{split}
\label{e.maximizer tightness split up}
\end{equation}
We set $K \geq 4\theta$ large enough that $\hlim_0(y_0) \geq -2\theta \geq -K/2$ and $K \geq (y_0-x)^2$ (for example by setting $K\geq 4R^2$). Then we bound the second term of \eqref{e.maximizer tightness split up} by $CR^2\exp(-cK^{3/2})$ (uniformly for all $x\in[-M,M]$) by Proposition~\ref{l.BrLPP uniform tail bound}. Thus, for all $K$ large enough, the second term is at most $\varepsilon/2$.

Next we bound the first term of \eqref{e.maximizer tightness split up}. By a union bound,
\begin{align*}
\P\left(\sup_{R\leq y \leq n^{1/60}} \big(\hlim_0(y) + \S_n(y,x)\big) > -K\right)
&\leq \sum_{j=R}^{\lceil n^{1/60}\rceil } \P\left(\sup_{y \in [j,j+1]} \big(\S_n(y,x) - \gamma |y|^{1/2}\big) > -K\right)\\
&\leq \sum_{j=R}^{\lceil n^{1/60}\rceil } \P\left(\sup_{y \in [j,j+1]} \S_n(y,x) > -K + \gamma j^{1/2}\right).
\end{align*}
Setting $R$ such that $\gamma j^{1/2} > 2K$ for all $j\geq R$ shows that each summand in the last display is bounded above by $Cj^2 \exp(-c j^{3/4})$, uniformly over $x\in[-M,M]$, again by Proposition~\ref{l.BrLPP uniform tail bound}. This expression is summable in $j$. So taking $R$ sufficiently large implies that the sum is bounded above by $\varepsilon/2$. Thus, for such $R$, we find that, for $x\in[-M,M]$,
$$\P\big(y_n(x) > R) \leq \varepsilon,$$
so that the claimed uniform tightness is obtained, because $y_n(x) \geq 0$ almost surely by our assumption on~$\fh_0$.

That the maximizer sequence has a convergent subsequence, combined with the uniform convergence on compact sets of $\S_n$ to $\S$, implies that $\h\to \hlim$ uniformly on compact sets as well. To see this, fix $M>0$ and let $\mc K$ be a random compact set such that $y_n(x) \in \mc K$ for all $n$ and $x\in[-M,M]$. Then simple manipulations show that
\begin{equation*}
\sup_{x\in[-M,M]} \left|\h(x) - \hlim(x)\right| \leq \sup_{\substack{x\in[-M,M]\\ y\in \mc K}} \Big|\S_n(y,x) - \S(y,x)\Big| \to 0.\qedhere
\end{equation*}
\end{proof}

\subsection{The resampling framework}
To prove Proposition~\ref{p.twin peaks prelimit}, we will prove the following stronger proposition, from which the former immediately follows. Let $\beta$ be as in the definition of $\bJ_L$ in \eqref{eq:setJ}.

\begin{proposition}\label{p.resampling twin peaks}
Let $\hlim_0:\R\to\R\cup\{-\infty\}$ satisfy Assumption~\ref{a:initial_state_pd} and suppose  $A>0$. There exist $\smallconst >0$ and $L_0>0$ (both depending on $\gamma$, $\theta$, and $A$) such that, for all $L>L_0$, there exists $n_0$ (depending on $\gamma$, $\theta$, and $L$) such that, for all $n>n_0$ and $\varepsilon\in(0,1)$,
%
$$\P\bigg(\sup_{x\in[x_0^{n}+A,x_0^{n}+A+2]}\!\!\!\! \h(x) > \h(x_0^{n}) - \varepsilon;\,\,  |\h(x_0^{n})| \leq \beta L^{1/2};\,\,  |x_0^{n}|\leq \beta L-A-2\bigg) \geq \smallconst\varepsilon,$$
where $x_0^{n} = \argmax_{|x|\leq n^{1/60}} \h(x)$ and is taken to be the largest (i.e., not necessarily greatest in absolute value) one on the event that it is not unique; we will use shorthand $\smash{x_0=x_0^{n}}$ below at times. Further, $L_0$ and $\smallconst$ can be taken to depend on $\gamma,$ $\theta,$ and $A$ continuously.
\end{proposition}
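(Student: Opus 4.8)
The plan is to prove Proposition~\ref{p.resampling twin peaks} via a Gibbsian resampling argument carried out in the melon environment $\L_n$, exploiting its description as non-intersecting Brownian motions (Lemma~\ref{l.dyson brownian motion}) together with the deterministic LPP identity of Proposition~\ref{p.melon lpp identity}. The key idea is that $\h(x_0+A)$ is controlled by a last passage value through $\L_n$ ending at $(x_0+A,1)$, and that last passage value is essentially governed by the top curve $\L_{n,1}$ near $x_0+A$. If we condition on the value $Z := \h(x_0+A)$ and on $\mathcal{F}$-data away from a small interval $J$ containing $x_0+A$, then resampling the portion of $\L_{n,1}$ over $J$ as a Brownian bridge-like object (using the Brownian Gibbs property) gives an unconditional $\Theta(\varepsilon)$ probability that $\h$ comes within $\varepsilon$ of its maximum at a point in $[x_0+A, x_0+A+2]$, since Brownian bridges on unit-order intervals come within $\varepsilon$ of any fixed target level with probability $\Theta(\varepsilon)$.

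The steps, in order, would be: (1) Reduce to a favourable event $\mathcal{G}$ on which $|x_0^n|$ is bounded (Lemma~\ref{l.tightness of prelimiting maximizer}), on which $\h(x_0^n)$ lies in a bounded window, and on which the maximizer $y^*$ of the variational formula \eqref{e.prelimiting gen profile formula} is bounded; this event has probability bounded below uniformly in $n$ by the uniform tail bounds of Proposition~\ref{l.BrLPP uniform tail bound} (this is the content of Lemma~\ref{l.fav probability}). (2) Work conditionally on the $\sigma$-algebra generated by $\L_n$ outside a window $J$ of unit order around $x_0^n+A$ and on the geodesic structure to the left of $J$, so that $\h$ restricted to $[x_0+A, x_0+A+2]$ is a deterministic, monotone, Lipschitz functional of the resampled top curve segment (Lemmas~\ref{l.monotonicity of L^f} and \ref{l.lipschitz} in the flowchart). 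The delicate point here is that the conditioning interval $J$ is chosen \emph{relative to the random location} $x_0^n$, which is not a stopping time; this non-Markovian conditioning is handled via the path-decomposition result of Millar \cite{millar1978path} (as flagged in Section~\ref{sec:heuristic}), which describes the law of a Markov process split at such special times. (3) Identify the conditional law of $\h$ at $x_0^n+A$ (Lemma~\ref{l.conditional distribution}) and, using the Brownian Gibbs resampling of $\L_{n,1}$ over $J$, show that with conditional probability $\geq c\varepsilon$ the resampled path creates a near-maximizer in $[x_0+A, x_0+A+2]$: because a Brownian bridge on a unit interval has probability of order $\varepsilon$ of attaining a value within $\varepsilon$ of a prescribed (conditionally fixed, order-one) level, and the twin-peaks shortfall $\h(x_0)-\h(x_0+A+s)$ is with positive probability order one, so "lifting" it to within $\varepsilon$ costs a factor $\varepsilon$. (4) Integrate: combine the $\Theta(1)$ lower bound on $\P(\mathcal{G})$ with the $\Theta(\varepsilon)$ conditional lower bound to get $\P(\cdots)\geq \eta\varepsilon$, and track that all constants depend only on $\gamma,\theta,A$ and do so continuously (the continuity being inherited from the continuity statements in Lemmas~\ref{l.tightness of prelimiting maximizer} and the explicit Brownian estimates).

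The main obstacle I expect is step (2): making rigorous the resampling on an interval whose location is defined through the global maximizer $x_0^n$ of $\h$. A naive application of the Brownian Gibbs property fails because $x_0^n$ is not measurable with respect to the data outside $J$ — indeed $x_0^n$ depends on the very curve segment we wish to resample. The resolution is to decompose the picture at the random time $x_0^n$ using the Markov-process path decomposition of \cite{millar1978path}: one shows that, conditionally on the pre-$x_0^n$ trajectory and on the value $\h(x_0^n)$, the post-$x_0^n$ evolution of the relevant curve is a Markov process started afresh, subject to the constraint of staying below $\h(x_0^n)$ until $x_0^n+A$ and whatever ordering constraints $\L_n$ imposes; one then resamples \emph{within this post-$x_0^n$ conditional law} over the window $[x_0^n, x_0^n + A + 2]$, where the Gibbs/bridge structure is again available. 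Secondary obstacles are: verifying the monotonicity and Lipschitz dependence of the reconstructed profile on the resampled segment (needed so the favourable resampled configurations form an event of the right size), and controlling the $n\to\infty$ truncation artifacts (the $n^{1/60}$ cutoffs in the domain of $\h$ and the $-n^{1/3}$ drift in Lemma~\ref{l.dyson brownian motion}), which are absorbed using Proposition~\ref{l.BrLPP uniform tail bound} exactly as in the proofs of Lemmas~\ref{l.tightness of prelimiting maximizer} and \ref{l.convergence to fixed point}.
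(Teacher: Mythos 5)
Your proposal follows essentially the same route as the paper's proof: Gibbsian resampling of the top melon curve on a unit-order window to the right of $x_0^{n}$, with the path decomposition of \cite{millar1978path} handling the fact that $x_0^{n}$ is not a stopping time, the monotonicity and Lipschitz lemmas for the reconstructed profile $\hz{z}$, a favourable $\F$-measurable event of uniformly positive probability obtained from the uniform tail bounds of Proposition~\ref{l.BrLPP uniform tail bound}, and a conditional probability of order $\varepsilon$ for the resampled value to land near the upper corner of its admissible interval. The only slip is notational: the resampled quantity is the top-curve value $Z=\L_{n,1}(x_0+A+1)$, whose $\F$-conditional law is a Gaussian truncated to $[\Corner^{\downarrow},\Corner^{\uparrow}]$, and not $\h(x_0+A)$, which is $\F$-measurable and unchanged by the resampling; with that correction your outline matches the paper's argument.
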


The strengthening of Proposition~\ref{p.resampling twin peaks} relative to Proposition~\ref{p.twin peaks prelimit} is that we now assert that it is possible to achieve the twin peaks' event of separation $A$ by moving at most distance two to the right of the maximizer $x_0$ beyond the imposed distance $A$.

The proof of Proposition~\ref{p.resampling twin peaks} follows a Gibbsian resampling argument. (We will recall the Brownian Gibbs property precisely in Section~\ref{s.brownian gibbs}.) This argument is considerably easier in the case where $\fh_0$ is a narrow wedge; in Section \ref{sec:nwoutline}, we explain how this case works and then give the more general proof of this proposition. To set up the argument, we must first recall that $\h$ can be expressed in terms of $\L$ via the variational problem in \eqref{e.prelimiting gen profile formula}; and that $\L$ can be expressed in terms of non-intersecting Brownian motions via Lemma \ref{l.dyson brownian motion}. Roughly put, then, $\h$ may be expressed in terms of non-intersecting Brownian motions. We will make use of the Gibbs resampling property for these motions, filtered through the variational problem. To do this, we need to define a $\sigma$-algebra $\F$ that contains the data which will \emph{not} be resampled. We will study the $\F$-conditional distribution of $\h$ on $[x_0+A,x_0+A+2]$ and show that, with probability at least $\smallconst \varepsilon$, an event occurs  which implies that $\h \in \TP{A,L}^\eps$.

To describe $\F$, we need some notation: for a function $f:I\to \R$ and an interval $[a,b]\subseteq I$,  the \emph{bridge} of $f$ on $[a,b]$, denoted $f^{[a,b]}:[a,b]\to \R$, is given by
\begin{equation}\label{e.bridge notation}
f^{[a,b]}(x) = f(x) - \frac{x-a}{b-a}\cdot f(b) - \frac{b-x}{b-a}\cdot f(a);
\end{equation}
this is the function obtained by affinely transforming $f$ so that its values at $a$ and $b$ vanish. This notation clashes with a similar one in Section~\ref{sec:KPZ_application}, but \eqref{e.bridge notation} will define $f^{[a,b]}$ within this section.

The $\sigma$-algebra $\F$ is defined to be the one generated by the following collection of random variables:
\begin{enumerate}
  \item The maximizer of $\h$: $x_0 = x_0^{n} = \argmax_{x\in[-n^{1/60}, n^{1/60}]} \h(x)$.
  \item The curve data of $\L_n$: $\big\{\L_{n,j}(x): j \in\llbracket2,n\rrbracket,\, x\geq -\frac{1}{2}n^{1/3} \text{ or } j=1,\, x\not\in [x_0+A, x_0+A+2]\big\}$.
  \item The side bridge data of the top curve in $[x_0+A, x_0+A+2]$: $\L_{n,1}^{[x_0+A,x_0+A+1]}$ and $\L_{n,1}^{[x_0+A+1,x_0+A+2]}$.
  (Here $\L_{n,1}^{[x_0+A,x_0+A+1]}$ is the function on $[x_0+A,x_0+A+1]$ defined via \eqref{e.bridge notation} with $f= \L_{n,1}$, and similarly for $\L_{n,1}^{[x_0+A+1,x_0+A+2]}$.)
\end{enumerate}

Conditionings on similar collections of data have been used in earlier works such as \cite{hammond2017brownian,calvert2019brownian}. There, however, the interval of focus---our $[x_0+A,x_0+A+2]$---is either deterministic or a stopping domain (an analogue of a stopping time suited to the spatial nature of the Brownian Gibbs property used there). This means that the conditional distribution is more easily analysed using standard Markovian properties. Here, $x_0$ is a random variable which depends on the entirety of $\L_n$ and so is rather non-Markovian. This complicates the analysis considerably; a careful treatment will be provided in Section~\ref{s.F-conditionl distribution}, for which we set up some notation and record some observations in the rest of this section.

Conditional on $\F$, the only randomness left in determining $\h$ is the value of the random variable $Z :=\L_{n,1}(x_0+A+1)$. Given a value of $Z$ labelled $z$, and the data of $\F$, we can reconstruct $\L_{n,1}(\bigcdot)$; when $z$ is distributed according to the correct $\F$-conditional distribution of $Z$, this reconstruction may be thought of as the $\F$-conditional distribution of $\L_n$. We will denote the reconstruction by $\L_{n,1}^{z}(\bigcdot):[-\tfrac{1}{2}n^{1/3}, \infty)\to \R$.
It is given by the formula
\begin{equation}\label{e.L^z formula}
\L_{n,1}^{z}(x) = \begin{cases}
\L_{n,1}(x) &
\begin{aligned}[l]
\textrm{for }&x\in [-\tfrac{1}{2}n^{1/3}, \infty)\\&\setminus [x_0+A, x_0+A+2],
\end{aligned}
\\[18pt]
\begin{aligned}[l]
&(x_0+A+1-x)\L_{n,1}(x_0+A)\\&\ \ +(x-(x_0+A))z + \L_{n,1}^{[x_0+A+0,x_0+A+1]}(x)
\end{aligned} &  
\begin{aligned}[l]
 \textrm{for } x\in(x_0&+A,\\
  &x_0+A+1);
 \end{aligned}\\[26pt]
\begin{aligned}
&(x-(x_0+A+1))\L_{n,1}(x_0+A+2)\\&\ \ + (x_0+A+2-x)z+ \L_{n,1}^{[x_0+A+1,x_0+A+2]}(x)
\end{aligned}
 &  
 \begin{aligned}[l]
 \textrm{for } x\in[x_0&+A+1,\\
  &x_0+A+2);
 \end{aligned}
\end{cases}
\end{equation}
while for $j\geq 2$ and $x\in \R$, $\L_{n,j}^{z}(x) = \L_{n,j}(x)$. Note that $\L_n^z$ is $\F$-measurable.


\subsection{The Brownian Gibbs property}\label{s.brownian gibbs}

\begin{figure}
\includegraphics[width=\linewidth]{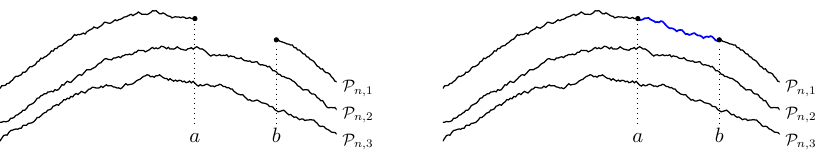}
\caption{A depiction of the Brownian Gibbs resampling procedure. On the left in black is all the data contained in the $\sigma$-algebra $\F_{\mathrm{ext}}$. The $\F_{\mathrm{ext}}$-conditional distribution of $\L_{n,1}$ on $[a,b]$ is that of a Brownian bridge (in blue) of rate two from $(a,\L_{n,1}(a))$ to $(b,\L_{n,1}(b))$ conditioned on not intersecting $\L_{n,2}$ on $[a,b]$.}
\label{f.bg resample}
\end{figure}

Here, we explain this property of $\L_n$, which was introduced and significantly leveraged in  \cite{BrownianGibbs}. It will be used in the proof of an important upcoming statement, Lemma~\ref{l.conditional distribution}, on the $\F$-conditional distribution of $Z$.

For a fixed interval $[a,b]\subseteq (-\frac{1}{2}n^{1/3}, \infty)$, define $\F_{\mathrm{ext}}$ to be the $\sigma$-algebra generated by $\{\L_{n,1}(x) : x\in[-\frac{1}{2}n^{1/3}, \infty) \setminus [a,b]\}$ and $\{\L_{n,j}(x) : j\in \llbracket2,n\rrbracket, x\geq [-\frac{1}{2}n^{1/3},\infty)\}$, i.e., the data of everything \emph{external} to $[a,b]$ on the top line. The \emph{Brownian Gibbs} property asserts that the $\F_{\mathrm{ext}}$-conditional distribution of $\L_{n,1}(\bigcdot)$ on $[a,b]$ is that of a Brownian bridge of rate two from $(a, \L_{n,1}(a))$ to $(b, \L_{n,1}(b))$ which is conditioned  not to intersect the second curve $\L_{n,2}(\bigcdot)$. This can be interpreted as saying that $\L_{n,1}$ can be \emph{resampled} on $[a,b]$ without changing its law by sampling a Brownian bridge with prescribed endpoints and conditioning it to avoid the second curve: see Figure~\ref{f.bg resample}.


\begin{lemma}\label{l.P_n Brownian gibbs}
The ensemble $\L_{n}$ has the Brownian Gibbs property.
\end{lemma}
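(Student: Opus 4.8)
The statement to prove is Lemma~\ref{l.P_n Brownian gibbs}: the ensemble $\L_n$ has the Brownian Gibbs property. The plan is to trace back through the chain of definitions and identifications that have already been set up, reducing everything to the well-known Brownian Gibbs property of Dyson Brownian motion with $\beta = 2$. The key structural fact is Lemma~\ref{l.dyson brownian motion}, which asserts that $\L_n$ is distributed as $n$ independent rate-two Brownian motions with drift $-n^{1/3}$, common initial value $-n^{2/3}$, conditioned on mutual non-intersection; this in turn was deduced from the identification of $\WB$ with $n$-level Dyson Brownian motion (via \cite{o2002representation} and the references \cite{dyson,mehta,grabiner1999brownian}).

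First I would recall that the Brownian Gibbs property is a statement about the conditional law of the top curve (more generally the top $k$ curves) on a finite interval $[a,b]$, given all the data external to $[a,b]$ on that curve, being a Brownian bridge of the appropriate rate conditioned to avoid the curve below. The essential point is that this property is invariant under three operations that relate Dyson Brownian motion to $\L_n$: (i) an affine time-independent shift of all curves by a common deterministic function (here the parabolic recentring $x \mapsto 2x n^{2/3}$ absorbed into the drift and the vertical shift by $-n^{2/3}$ and scaling by $n^{-1/3}$ in \eqref{e.P_n definition}); (ii) a deterministic linear rescaling of space and of the curve values, which changes the rate of the underlying Brownian motions but preserves the bridge-conditioned-to-avoid structure; and (iii) addition of a common deterministic drift, which again only changes the law of the bridge in a way consistent with the Gibbs description. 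Since Dyson Brownian motion with $\beta = 2$ is the canonical example of an ensemble with the Brownian Gibbs property (this is exactly the motivating example in \cite{BrownianGibbs}), and $\L_n$ is obtained from it by precisely such an affine deterministic transformation together with a deterministic drift, the Gibbs property is inherited.

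Concretely, the steps would be: (1) State that $n$-level Dyson Brownian motion has the Brownian Gibbs property with rate-one Brownian bridges — this is classical and is the foundational example in \cite{BrownianGibbs}; (2) invoke Lemma~\ref{l.dyson brownian motion} to write $\L_{n,j}(x) = n^{-1/3}\bigl((\WB)_j(n + 2xn^{2/3}) - (2n + 2xn^{2/3})\bigr)$ where $\WB$ is $n$-level Dyson Brownian motion (up to the deterministic start/drift bookkeeping already carried out in the proof of that lemma); (3) observe that the map taking $\WB$ to $\L_n$ is a composition of a deterministic affine reparametrisation of the spatial variable, a deterministic affine transformation of the curve values (shift by the parabola/line $2n + 2xn^{2/3}$ and multiplicative scaling by $n^{-1/3}$), which sends rate-one Brownian motions to rate-two Brownian motions with a deterministic drift; (4) check that under such a transformation the conditional law of the top curve on $[a,b]$ given the external data transforms into a rate-two Brownian bridge with the shifted drift conditioned to avoid $\L_{n,2}$ on $[a,b]$ — this is a routine change-of-variables computation for Brownian bridge measures and for the non-intersection conditioning, which is preserved because the ordering $\WB_i \geq \WB_{i+1}$ is preserved by the (orientation-preserving) transformation.

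The main obstacle, such as it is, is bookkeeping rather than anything deep: one must be careful that the drift $-n^{1/3}$ and the parabolic centring are deterministic (so they can be freely conditioned on and hence absorbed into or out of the Gibbs description), and that the rate of the resampling bridge is correctly rate two rather than rate one after the $n^{-1/3}$ scaling is applied to rate-one motions (scaling values by $c$ scales the quadratic variation by $c^2$, and here $c^2 n^{2/3}$-type factors combine with the temporal rescaling to produce exactly rate two). Since all of this is standard and the affine-covariance of the Brownian Gibbs property is well documented — indeed the parabolic Airy line ensemble, the $n\to\infty$ limit of $\L_n$, is shown in \cite{BrownianGibbs} to have the Brownian Gibbs property by precisely this route — I would keep the proof of Lemma~\ref{l.P_n Brownian gibbs} very short: a one-paragraph deduction citing Lemma~\ref{l.dyson brownian motion}, the classical Gibbs property of Dyson Brownian motion, and the deterministic affine nature of the transformation in \eqref{e.P_n definition}.
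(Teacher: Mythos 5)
Your reduction is the same one the paper uses: Lemma~\ref{l.dyson brownian motion} identifies $\L_n$ as a deterministic affine transformation (spatial reparametrisation, vertical recentring, $n^{-1/3}$ scaling, drift) of $n$-level Dyson Brownian motion, and the Brownian Gibbs property is inherited under such transformations, with the rate-two bookkeeping you describe. That part is fine and matches the paper's one-line deduction.

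The weak point is your step (1), where you dispose of the key input — that Dyson Brownian motion itself has the Brownian Gibbs property — by calling it ``classical'' and ``the foundational example in \cite{BrownianGibbs}.'' This is precisely the step the paper refuses to take on faith: the authors state that, although the fact is well known and used in \cite{BrownianGibbs,dauvergne2018basic}, they could not locate a precise proof of it in the literature, and so they sketch one. The sketch is not entirely trivial, because DBM is defined through a singular conditioning (a limiting procedure or Doob $h$-transform), so the Gibbs property does not follow by simply ``conditioning on a positive-probability event.'' The paper's route is: fix $[a,b]\subseteq(0,\infty)$, condition on the data of \emph{all} $n$ curves outside $[a,b]$; the Markov property of DBM shows the conditional law depends only on the boundary values at $a$ and $b$; one then identifies this conditional law with that of $n$ non-intersecting Brownian bridges with those endpoints by comparing transition probabilities via the Karlin--McGregor formula; finally, the fact that non-intersecting Brownian bridges with strictly ordered endpoints (a positive-probability conditioning) have the Brownian Gibbs property is invoked — and this last fact, intuitive as it is, was only formally proved recently, in \cite[Lemma 2.13]{dimitrov2020characterization}. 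So your proof as written leans on a citation that does not actually contain the needed statement; to close the gap you should either reproduce an argument along the above lines or cite \cite{dimitrov2020characterization} together with the Karlin--McGregor identification, rather than \cite{BrownianGibbs} alone. (Your aside that \cite{BrownianGibbs} establishes the Gibbs property of the parabolic Airy line ensemble ``by precisely this route'' is also not quite accurate, but it is not load-bearing for the lemma.)
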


This statement is equivalent to the one that $n$-level Dyson Brownian motion has the Brownian Gibbs property. Though well-known and used in previous works \cite{BrownianGibbs,dauvergne2018basic}, we were unable to locate a precise proof of this fact in the literature. However, it is fairly straightforward given the fact that an ensemble of Brownian bridges with strictly ordered endpoints conditioned on the (positive probability) event of non-intersection has the Brownian Gibbs property, and we will sketch the proof of Dyson Brownian motion having the Brownian Gibbs property given this fact. That non-intersecting Brownian bridges have the Brownian Gibbs property is very intuitive, but was formally proved only recently, in \cite{dimitrov2020characterization}.

\begin{proof}[Proof of Lemma~\ref{l.P_n Brownian gibbs}]
As mentioned, this follows from $\L_n$ being an affine transformation of $n$-level Dyson Brownian motion (Lemma~\ref{l.dyson brownian motion}) and the latter ensemble having the Brownian Gibbs property. We briefly outline how to show that $n$-level Dyson Brownian motion $\mathrm{DBM}_n : \intint{n}\times[0,\infty)\to\R$ has this property.

Let $[a,b] \subseteq (0,\infty)$. We first condition on the $\sigma$-algebra generated by $\{\mathrm{DBM}_{n,j}(x) : j\in\llbracket 1,n\rrbracket, x \in [0, \infty)\setminus [a,b]\}$. The Markov property of $\mathrm{DBM}_n$ then implies that this conditional distribution depends on only the boundary values $\{\mathrm{DBM}_{n,j}(x) : j\in\intint{n}, x\in\{a,b\}\}$. Then the conditional distribution is that of $n$ non-intersecting Brownian bridges with the given endpoints, as can be verified by comparing the transition probabilities of this ensemble (which makes use of the Karlin-McGregor formula \cite{karlinMcGregor} for non-intersecting strong Markov processes) with that of the conditioned Dyson Brownian motion (see, for example, \cite[Section~3]{warren2007dyson} for the transition probability formulas of Dyson Brownian motion). The ensemble of non-intersecting Brownian bridges, quite naturally, has the Brownian Gibbs property \cite[Lemma 2.13]{dimitrov2020characterization}.
 \end{proof}

\subsection{An outline of the argument in the narrow-wedge case}\label{sec:nwoutline}
Before proving Proposition~\ref{p.resampling twin peaks}, we give an outline of the argument in the simpler narrow-wedge case, under which $\fh_0$ is zero at the origin and $-\infty$ elsewhere.

First observe from \eqref{e.prelimiting gen profile formula} that, for this initial condition, $\h(x) = \S_n(0,x) = \L_{n,1}(x)$ for $|x|\leq n^{1/60}$. In particular, $\h$ is a function of only the top line of $\L_n$, and the same is true for $x_0=x_0^{n}$ defined earlier. The collection of curves $\L$ can be expressed in terms of non-intersecting Brownian motions via Lemma \ref{l.dyson brownian motion}. We will show that, for some $\smallconst$ (depending on $A$), it is with probability at least $\smallconst \eps$ that the curve $\L_{n,1}$ comes within $\eps$ of its maximum $\h(x_0)$ in the window $[x_0+A,x_0+A+2]$. Our first step, below, will be to identify the $\F$-conditional distribution of $Z=\L_{n,1}(x_0+A+1)$. (The event $\h\in\TP{A,L}^\eps$ also imposes conditions on the location of the maximizer and the value of the maximum, but these are more easily handled and not discussed here.)

%

\smallskip

\emph{Step 1: The $\F$-conditional distribution of $Z$}. As mentioned, $x_0$ is defined in terms of the whole curve $\L_{n,1}(\bigcdot)$ and so is non-Markovian; in particular, it is not a stopping time. But it is intuitively plausible, based on the definition of $x_0$ and the Brownian Gibbs property of $\L_n$, that the distribution of $\L_{n,1}(\bigcdot)$ on $[x_0, n^{1/60}]$ conditional on $\L_{n,1}(\bigcdot)$ outside of the interval $(x_0, n^{1/60})$ and the lower curves $\L_{n,2},\L_{n,3},\ldots$ is of a Brownian bridge (of rate two) between the appropriate endpoints conditioned on (i) not intersecting the lower curve $\L_{n,2}$ and (ii) not exceeding $\L_{n,1}(x_0)$. This intuition is correct and is carefully stated in Lemma~\ref{l.first conditional statement}.

This description of $\L_{n,1}$ on $[x_0, n^{1/60}]$ makes it easy to derive the distribution of $Z$ conditional on~$\F$. Indeed, when we also condition on the data of $\L_{n,1}$ on $[x_0,x_0+A]$ and $[x_0+A+2, n^{1/60}]$, we see that $\L_{n,1}$ on $[x_0+A,x_0+A+2]$ has the law of Brownian bridge of rate two with endpoints $\L_{n,1}(x_0+A)$ and $\L_{n,1}(x_0+A+2)$ which is conditioned to again (i) not intersect the lower curve and (ii) not exceed $\L_{n,1}(x_0)$. To get from this collection of conditioning data to~$\F$, we only have to include the side bridge data $\L_{n,1}^{[x_0+A,x_0+A+1]}$ and $\L_{n,1}^{[x_0+A+1,x_0+A+2]}$; classical decompositions of Brownian bridge then say that the $\F$-conditional distribution of $Z$ is that of a normal random variable of appropriate $\F$-measurable mean and unit variance, conditioned on the reconstruction $\L_{n,1}^Z(\bigcdot)$ again satisfying (i) and (ii).

\begin{figure}
\includegraphics[width=0.8\textwidth]{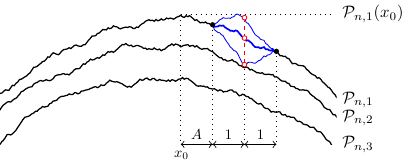}
\caption{A depiction of the resampling in the proof outline for the narrow-wedge case. The sigma-algebra $\F$ contains information about all of the thick curves including the thick blue curve, except that it forgets the location of the red circle whose value is denoted by $Z=\L_{n,1}(x_0+A+1)$. The reconstruction $\L^z_{n,1}$ is given by linearly shifting the left and right blue bridges to meet the value $z$ of the red circle as it varies. The thin blue lines here represent two possible reconstructions. The random variable $Z$ is restricted by the fact that the reconstruction must not exceed the global maximum $\L_{n,1}(x_0)$ (here denoted by a horizontal dotted black line) and must not intersect the second curve $\L_{n,2}$. The vertical dashed red line represents the possible range $[\Corner^{\downarrow}, \Corner^{\uparrow}]$ for $Z$ (with the upper and lower red circles corresponding to these bounds).}
\label{f.Z resampling}
\end{figure}

We can simplify this description of $Z$. Essentially, condition (i) places a lower bound on how large $Z$ can be, while (ii) places an upper bound: see Figure~\ref{f.Z resampling}. To make this rigorous, we observe that the reconstruction $\L_{n,1}^z(x)$ is monotone in $z$ for every $x$ from its formula \eqref{e.L^z formula}. These lower and upper bounds are $\F$-measurable random variables; they are \emph{corners} of the acceptable range of values $Z$ can take, and we label them respectively $\Corner^{\downarrow}$ and $\Corner^{\uparrow}$. Thus, $Z$ is a normal random variable with explicit $\F$-measurable mean and unit variance, conditioned on lying in the interval $[\Corner^{\downarrow}, \Corner^{\uparrow}]$.

\smallskip

\emph{Step 2: Finding the sweet spot for $Z$}. It is easy to see that $\Corner^{\uparrow}$ is such that, when $z=\Corner^{\uparrow}$, $\sup_{x\in [x_0+A,x_0+A+2]}\L_{n,1}^z(x) = \L_{n,1}(x_0)$. With this in hand, the linear---and so certainly Lipschitz---relationship of $\L_{n,1}^z(x)$ with $z$ for every fixed $x$ tells us that reducing $z$ by $\varepsilon$ from $\Corner^{\uparrow}$ reduces the value of $\sup_{x\in [x_0+A,x_0+A+2]}\L_{n,1}^z(x)$ by an amount of order $\eps$. Thus, to cause $\TP{A,L}^\eps$ to occur, it is enough to have $Z$ fall within a sweet spot interval $I_{\eps}$ of length of order $\eps$ with upper endpoint $\Corner^{\uparrow}$.

\smallskip

\emph{Step 3: The probability of hitting the sweet spot.} It remains to bound below the probability that $Z$ falls in $I_\eps$. To do so, we need control over two things: the $\F$-measurable mean of $Z$, and the value of $\Corner^{\uparrow}$. (We can ignore $\Corner^{\downarrow}$, i.e., take it to be $-\infty$, as we are only aided in proving a probability lower bound if its value is larger.) For this purpose, we consider a selection of favourable $\F$-measurable data $\fav$ which is defined by demanding bounds on these quantities, as well as on the location of the maximizer and the value of the maximum: see Section~\ref{s.favourable data} ahead. We then show that $\fav$ occurs with a probability that is uniformly positive in  $n$. Given control over the mean and $\Corner^{\uparrow}$ on a positive probability event, the form of the normal density guarantees that the probability of $Z$ falling in the order $\varepsilon$ length interval is at least some constant multiple of $\varepsilon$. This completes the proof outline in the narrow-wedge case.

\smallskip

\emph{Complications with general initial data.} The narrow-wedge case provided a number of simplifications, the primary being the equality of $\h$ and the top line of $\L_n$. This had two effects, both in Step 1: we could define $x_0$ in terms of only $\L_{n,1}(\bigcdot)$, i.e., without the lower curves (making it simpler to consider that process' distribution on $[x_0, n^{1/60}]$); and we could infer the existence of a valid interval $[\Corner^{\downarrow}, \Corner^{\uparrow}]$ for the $\F$-conditional distribution of $Z$ via monotonicity properties of only $\L_n$. Both these aspects will need modification in the general case.

Because we can perform Brownian resamplings only with $\L_n$, we need the representation of $\h$ in terms of $\L_n$ recorded in the last equality of \eqref{e.prelimiting gen profile formula}, which relies on the identification of LPP values in the original and melon environments cited in Proposition~\ref{p.melon lpp identity}. Note that $\h$, and so also $x_0$, is now defined in terms of all the curves of $\L_n$, not just the first. More specifically, while in the narrow wedge case we could work with the function values of $\L_{n,1}(\bigcdot)$, in the general case we have to analyse \emph{last passage values through the environment given by $\L_n$}. This is the underlying complication that causes all the others in the general case.

To achieve a description of $Z$ in terms of $[\Corner^{\downarrow}, \Corner^{\uparrow}]$ in the general case, we first need a formula for $\hz{z}$, the reconstruction of $\h$ when $Z=z$, in terms of $\L_n^z$. This will be recorded shortly in \eqref{e.L^f representation}. Then we need a monotonicity statement about $\hz{z}(x)$ for fixed $x$ that will allow us to express the condition that $z$ is such that $\sup_{x\in[x_0+A,x_0+A+2]} \hz{z}(x) \leq \h(x_0)$ as an upper bound on $z$, just as we did with the monotonicity statement for $\L_{n,1}^z$ above in simplifying the condition (ii). Such a monotonicity statement is actually not true for $\hz{z}$, and we circumvent this difficulty by deriving one instead for the weights of individual paths (as opposed to their supremum $\hz{z}$) in the reconstructed environment. This is Lemma~\ref{l.monotonicity of L^f} recorded ahead. Finally, we need to know that $\hz{z}(x)$ is Lipschitz in $z$ for each fixed $x$: see  Lemma~\ref{l.lipschitz} whose argument also proceeds in a pathwise manner.

With these modifications, the proof in the general case proceeds largely along the lines of the narrow-wedge case outlined here. We move to giving the details next, starting with the facts needed to handle the  general case's main complications, namely the monotonicity and Lipschitz properties of $\hz{z}$.

\subsection{The reconstructed $\h$ and its properties}
Using \eqref{e.prelimiting gen profile formula}, we provide a formula in terms of~$\L_n^z$ for the reconstructed $\h$, denoted $\hz{z}:[-n^{1/60}, n^{1/60}]\to\R\cup\{-\infty\}$. It is:
\begin{equation}\label{e.L^f representation}
\hz{z}(x) = \sup_{0\leq y\leq n^{1/60}} \Big(\hlim_0(y) + \L_n^{z}[(-\tfrac12n^{1/3}+ y, n) \to (x, 1)] - n^{2/3}\Big).
\end{equation}
%
%
Since $\L_n^{z}$ is $\F$-measurable for all $z\in \R$, so is $\hz{z}$. As we noted in the preceding section, it is also immediate from the formula of $\L_{n,1}^{z}(x)$ that, for any $x\in \R$, $\L_{n,1}^z(x)$ is non-decreasing as a function of $z$. The function $\hz{z}(x)$ further enjoys a monotonicity property in $z$ that is slightly more complicated, which we record in the next lemma.

Recall that, for an upright path $\gamma$, $\L_n^z[\gamma]$ is the weight 
of $\gamma$ in the environment $\L_n^z$. Imitating \eqref{e.prelimiting gen profile formula}, let
$$\hz{z}(\gamma) = \hlim_0(y) + \L^z[\gamma] - n^{2/3},$$
where $y$ is the starting coordinate of $\gamma$ on line $n$.

\begin{lemma}[Monotonicity of $\hz{z}$ in $z$]\label{l.monotonicity of L^f}
For each upright path $\gamma$ starting on line $n$ and ending on  line $1$, the process $z \mapsto \hz{z}(\gamma)$ is non-increasing almost surely; or constant almost surely; or non-decreasing almost surely. Moreover, it is an almost sure event that:
\begin{enumerate}
  \item if $-n^{1/60}\leq x\leq x_0+A$, then $\hz{z}(x) = \h(x)$ for all $z\in \R$; and
  \item if $x\geq x_0+A+2$, then $\hz{z}(x)$ is non-increasing in $z$.
\end{enumerate}
\end{lemma}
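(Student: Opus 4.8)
\textbf{Proof plan for Lemma~\ref{l.monotonicity of L^f}.}

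The plan is to reduce every claim to the behaviour of a single path weight $\L_n^z[\gamma]$ as a function of the single scalar $z=\L_{n,1}(x_0+A+1)$, since $\L_n^z$ differs from $\L_n$ only on the top line over $[x_0+A,x_0+A+2]$, and there only in a way that is affine in $z$. First I would fix an upright path $\gamma$ from $(y,n)$ to $(x,1)$ and examine $\L_n^z[\gamma]$. By \eqref{e.path weight definition}, $\L_n^z[\gamma]$ is a sum of increments $\L_{n,i}^z(t_{i-1})-\L_{n,i}^z(t_i)$; only the $i=1$ term can depend on $z$, and only through the values of $\L_{n,1}^z$ at the two endpoints $t_0=x$ and $t_1$ (the last jump time) of the top-line portion of $\gamma$. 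From the explicit formula \eqref{e.L^z formula}, $\L_{n,1}^z(u)$ is an affine (indeed linear with slope in $\{0\}\cup(0,1]$, depending on where $u$ falls relative to $x_0+A$, $x_0+A+1$, $x_0+A+2$) function of $z$; concretely the slope is $\tfrac{u-(x_0+A)}{1}$ on $(x_0+A,x_0+A+1]$, $\tfrac{x_0+A+2-u}{1}$ on $[x_0+A+1,x_0+A+2)$, and $0$ elsewhere. Hence $u\mapsto(\text{slope of }\L_{n,1}^z(u)\text{ in }z)$ is a fixed, $\F$-measurable, continuous, nonnegative function, call it $s(u)$, supported in $[x_0+A,x_0+A+2]$. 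Therefore $\L_{n,1}^z(t_0)-\L_{n,1}^z(t_1)=c_\gamma + (s(t_0)-s(t_1))z$ for an $\F$-measurable constant $c_\gamma$, so $z\mapsto\L_n^z[\gamma]$ is affine with $\F$-measurable slope $s(t_0)-s(t_1)=s(x)-s(t_1)$. Adding the $z$-independent constant $\fh_0(y)-n^{2/3}$ gives that $z\mapsto\hz{z}(\gamma)$ is affine, hence non-increasing, constant, or non-decreasing according to the sign of this slope; this is the first assertion. (Strictly, since $\gamma$ ranges over a continuum, one notes the slope is determined by the pair $(x,t_1)$ and these are fixed once $\gamma$ is fixed, so there is no measurability subtlety: for each fixed $\gamma$ the statement is a deterministic affine fact given the $\F$-data.)

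For part (1), suppose $-n^{1/60}\le x\le x_0+A$. I would argue that $\hz{z}(x)$ does not see the resampling at all. In the variational formula \eqref{e.L^f representation}, $\hz{z}(x)=\sup_{0\le y\le n^{1/60}}(\fh_0(y)+\L_n^z[(-\tfrac12 n^{1/3}+y,n)\to(x,1)]-n^{2/3})$; a geodesic realizing the inner last passage value to $(x,1)$ with $x\le x_0+A$ travels along the top line only on an interval $[t_1,x]\subseteq(-\infty,x_0+A]$, on which $s\equiv 0$, so $\L_{n,1}^z$ agrees with $\L_{n,1}$ there. More carefully, for \emph{every} upright path $\gamma$ ending at $(x,1)$ with $x\le x_0+A$, its top-line portion is contained in $[t_1,x]\subseteq(-\infty,x_0+A]$ where $s$ vanishes, so $\L_n^z[\gamma]=\L_n[\gamma]$ for all $z$; taking the supremum, $\hz{z}(x)=\h(x)$ for all $z$. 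This is an almost sure statement because it is a pathwise identity given the $\F$-data (which a.s.\ determines $x_0$ and all the curve values involved).

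For part (2), suppose $x\ge x_0+A+2$. Here a path $\gamma$ ending at $(x,1)$ has its top-line portion equal to $[t_1,x]$ with $x\ge x_0+A+2$, and I must show the slope $s(x)-s(t_1)$ is $\le 0$. Since $x\ge x_0+A+2$ we have $s(x)=0$, and $s(t_1)\ge 0$ always, so $s(x)-s(t_1)=-s(t_1)\le 0$; thus $z\mapsto\hz{z}(\gamma)$ is non-increasing for every such $\gamma$. Taking the supremum over $\gamma$ (a supremum of non-increasing functions of $z$ is non-increasing) gives that $z\mapsto\hz{z}(x)$ is non-increasing, as claimed; again this holds a.s.\ as a pathwise consequence of the $\F$-data. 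The main obstacle I anticipate is not any of these sign computations but rather making the ``for every upright path'' reductions fully rigorous: one must be careful that the supremum in \eqref{e.L^f representation} is attained (or handled via an approximating sequence of paths) so that pathwise monotonicity of each $\hz{z}(\gamma)$ transfers to $\hz{z}(x)$, and that the exceptional null sets (where $x_0$ is ill-defined, or geodesics fail to exist, or $\L_n$ is not continuous) are swept into the ``almost surely'' qualifier uniformly over the uncountable families of $x$ and $\gamma$ involved; monotone/continuity arguments reduce these to countable dense sets. I would close by remarking that the key structural input is simply that $\L_{n,1}^z$ is linear in $z$ with a nonnegative, top-line-only, $[x_0+A,x_0+A+2]$-supported slope profile $s(\cdot)$, from which all three conclusions follow.
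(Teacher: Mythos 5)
Your proposal is correct and follows essentially the same route as the paper: both isolate the top-line increment $\L_{n,1}^z(x)-\L_{n,1}^z(u)$ (your $s(x)-s(t_1)$ slope computation is just an explicit rendering of the paper's case analysis of \eqref{e.L^z formula}), deduce that each fixed path's weight is affine hence monotone in $z$, and pass to the supremum for (1) and (2). The measurability/attainment worries you raise are not needed, since constancy and monotonicity in $z$ pass to suprema of path weights without any attainment argument.
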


\begin{proof}
Let $u$ be the coordinate at which $\gamma$ jumps to the top line (i.e., line $1$); and let $x$ be its ending coordinate. Let $\gamma^{u-}$ be $\gamma$ restricted to its path before coordinate $u$, i.e., $\gamma$'s restriction to the lower $n-1$ lines,  indexed by $\llbracket 2,n \rrbracket$. Then
\begin{align}
\hz{z}(\gamma) &= \hz{z}(\gamma^{u-}) + \L_{n,1}^{z}(x) - \L_{n,1}^z(u)\nonumber\\
&= \h(\gamma^{u-}) + \L_{n,1}^{z}(x) - \L_{n,1}^z(u);\label{e.path weight in resampled env}
\end{align}
the latter equality because the environment of the lower $n-1$ lines of $\L^z_n$ does not depend on $z$. The claim that $\hz{z}(\gamma)$ is monotone and the nature of its monotonicity now follow readily by examining the increment $\L_{n,1}^z(x) - \L_{n,1}^z(u)$ from the definitions in \eqref{e.L^z formula}.

Now we move to proving the two numbered claims.
For (1), consider the set of paths which end at $x$. We {\em claim} that, for such paths $\gamma$, $\hz{z}(\gamma)$ is constant in $z$;  which implies (1). The claim follows by noting that, if $x\leq x_0+A$, then $u\leq x_0+A$; and so $\L_{n,1}^{z}(x) - \L_{n,1}^{z}(u)$ does not depend on $z$ from \eqref{e.L^z formula}. This completes the claim by the decomposition \eqref{e.path weight in resampled env}.

A similar argument holds for (2). We {\em claim} that, for any path $\gamma$ which ends at $x$, $\hz{z}(\gamma)$ is non-increasing in $z$. To see this, we use the same decomposition as \eqref{e.path weight in resampled env}, and observe that it is enough to prove that $\L_{n,1}^z(x) - \L_{n,1}^z(u) = \L_{n,1}(x) - \L_{n,1}^z(u)$ is non-increasing in $z$. There are two cases: $u\in [x_0+A,x_0+A+ 2]$ and $u\not\in [x_0+A,x_0+A+2]$. In the first, $\L_{n,1}^z(u)$ is non-decreasing in $z$; while, in the second, it is constant, as we see from \eqref{e.L^z formula}.  This completes the proof of the claim and thus also of Lemma~\ref{l.monotonicity of L^f}.
%
%
%
%
%
\end{proof}

While a similar monotonicity statement in $z$ as Lemma~\ref{l.monotonicity of L^f}(1) and (2) holds for $\hz{z}(x)$ when $x\in [x_0+A, x_0+A+1]$, there is no counterpart when $x\in [x_0+A+1, x_0+A+2]$. In the notation of the preceding proof, this is because, in the latter case, the type of monotonicity for the weight of a given path ending at $x$ depends on the value of $u$, the location at which the path jumps to the top line from the second line: for a certain range of $u$ depending on $x$, the weight of any path with jump point $u$ will be increasing in $z$; while, for larger $u$, it will be decreasing. Since $\hz{z}(x)$ maximizes over all such paths, no monotonicity holds for this quantity.
In contrast, notice from the proof of Lemma~\ref{l.monotonicity of L^f} that, for $x\in[-n^{1/60}, x_0+A]$ or $x\geq x_0+A+2$, the weight of any path ending at $x$ has a single form of monotonicity for all possible $u$. It is in order to handle this absence of $z$-monotonicity for $\hz{z}(x)$ when $x\in [x_0+A+1, x_0+A+2]$ that we proved the first statement of Lemma~\ref{l.monotonicity of L^f}, concerning the monotonicity of the weight of \emph{single} paths.

\begin{lemma}[$\sup \hz{z}$ is Lipschitz in $z$]\label{l.lipschitz}
It holds almost surely that, for any $z_1, z_2\in \R$,
$$\left|\sup_{x\in [x_0+A,x_0+A+ 2]}\hz{z_1}(x) - \sup_{x\in[x_0+A,x_0+A+ 2]}\hz{z_2}(x)\right| \leq 2 |z_1-z_2|.$$
\end{lemma}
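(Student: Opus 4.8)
The plan is to reduce the Lipschitz statement for the supremum to a pathwise Lipschitz bound on the individual reconstructed path weights $\hz{z}(\gamma)$, exactly as the monotonicity Lemma~\ref{l.monotonicity of L^f} was reduced. First I would recall the decomposition \eqref{e.path weight in resampled env}: for an upright path $\gamma$ ending at a coordinate $x \in [x_0+A, x_0+A+2]$ and jumping to the top line at coordinate $u$, one has $\hz{z}(\gamma) = \h(\gamma^{u-}) + \L_{n,1}^z(x) - \L_{n,1}^z(u)$, and the only $z$-dependence is through $\L_{n,1}^z(x) - \L_{n,1}^z(u)$. From the explicit reconstruction formula \eqref{e.L^z formula}, $\L_{n,1}^z(v)$ is, for each fixed $v \in [x_0+A, x_0+A+2]$, an affine function of $z$ whose slope is $x_0+A+1-v$ on $(x_0+A, x_0+A+1]$, $v-(x_0+A+1)$ on $[x_0+A+1, x_0+A+2)$, and $0$ for $v$ outside $(x_0+A,x_0+A+2)$; in all cases the slope lies in $[0,1]$. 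Hence $v\mapsto \L_{n,1}^z(v)$ is $1$-Lipschitz in $z$, uniformly in $v$, and therefore $z\mapsto \L_{n,1}^z(x)-\L_{n,1}^z(u)$ is $2$-Lipschitz in $z$ for every choice of $x$ and $u$; consequently $|\hz{z_1}(\gamma)-\hz{z_2}(\gamma)| \le 2|z_1-z_2|$ for every such $\gamma$.

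Next I would pass from paths to the supremum. Since $\hz{z}(x) = \sup_{\gamma} \hz{z}(\gamma)$, where the supremum is over upright paths from line $n$ ending at $(x,1)$, and each $\hz{z}(\gamma)$ is $2$-Lipschitz in $z$ with the same constant, the pointwise supremum $\hz{z}(x)$ is also $2$-Lipschitz in $z$ for every fixed $x \in [x_0+A, x_0+A+2]$ (the supremum of a family of $K$-Lipschitz functions is $K$-Lipschitz, provided it is finite, which it is here almost surely by Lemma~\ref{lem:KPZ_is_bounded}-type boundedness of $\h$, or more directly because the relevant last passage values are almost surely finite). Then $\sup_{x\in[x_0+A,x_0+A+2]} \hz{z}(x)$ is a further supremum, now over $x$, of a family of functions of $z$ each of which is $2$-Lipschitz, so it too is $2$-Lipschitz in $z$; writing this out gives precisely
\begin{equation*}
\left|\sup_{x\in [x_0+A,x_0+A+2]}\hz{z_1}(x) - \sup_{x\in[x_0+A,x_0+A+2]}\hz{z_2}(x)\right| \leq 2|z_1-z_2|.
\end{equation*}

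One technical point to be careful about is that the supremum over $x$ and over paths should be taken over a range where everything is almost surely finite; since $\hlim_0$ satisfies Assumption~\ref{a:initial_state_pd}, the supremum defining $\h$ and $\hz{z}$ is almost surely attained at a bounded location and the values are finite, so there is no issue with $+\infty - (+\infty)$ indeterminacy; the statement is an almost sure one precisely to accommodate the null set where the relevant last passage geodesics might misbehave. The only mildly delicate step is confirming the slope bound: one must check the three cases in \eqref{e.L^z formula} and observe that in the middle two the coefficient of $z$ is $x-(x_0+A) \in [0,1]$ and $x_0+A+2-x \in [0,1]$ respectively, so $\L_{n,1}^z$ itself is $1$-Lipschitz in $z$ and the difference of two such values is $2$-Lipschitz. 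I do not anticipate any real obstacle here; the whole argument is a short pathwise estimate followed by the elementary fact that suprema preserve a common Lipschitz constant, which is exactly the template already used for the monotonicity lemma.
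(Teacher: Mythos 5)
Your proposal is correct and follows essentially the same route as the paper: the key step in both is that the environments $\L_n^{z_1}$ and $\L_n^{z_2}$ differ only on the top line, where \eqref{e.L^z formula} shows the $z$-dependence at any point has coefficient in $[0,1]$, so any single path weight changes by at most $2|z_1-z_2|$. The only cosmetic difference is that you invoke the general fact that a supremum of uniformly $2$-Lipschitz functions is $2$-Lipschitz, while the paper carries out the same comparison explicitly by evaluating a maximizing path for $z_1$ in the $z_2$-environment.
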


\begin{proof}
For convenience of notation, let us define
$$h(z) = \sup_{x\in[x_0+A,x_0+A+ 2]}\hz{z}(x).$$
The arguments that we will present hold on the probability one event $\Omega$ that, for each $z\in \R$, there exist $x\in [x_0+A,x_0+A+ 2]$, $y\in [0, n^{1/60}]$ and an upright path $\Gamma^{z,x}$ (for which we use the capital Greek letter to emphasise the path's randomness) ending at $x$, such that
$$h(z) = \hlim_0(y) + \L_n^z[\Gamma^{z,x}] - n^{2/3};$$
that the supremum in the definition of $h$ is achieved uses the compactness of $[x_0+A,x_0+A+ 2]$.

By symmetry, it is enough to prove that $h(z_1) - h(z_2) \leq 2 |z_1-z_2|.$
On the event $\Omega$, we see that, for some $y\in [0, n^{1/60}]$, $x\in [x_0+A,x_0+A+ 2]$, and upright path $\Gamma^{z_1,x}$,
\begin{align*}
h(z_1) &= \hlim_0(y) + \L^{z_1}_n[\Gamma^{z_1,x}] - n^{2/3},\\
h(z_2) &\geq \hlim_0(y) + \L^{z_2}_n[\Gamma^{z_1,x}] - n^{2/3},
\end{align*}
and so
$$h(z_1) - h(z_2) \leq \L_n^{z_1}[\Gamma^{z_1,x}] - \L_n^{z_2}[\Gamma^{z_1,x}].$$
Let $u$ be the coordinate where $\Gamma^{z_1,x}$ jumps to the top line. Since the environments defined by $\L_n^{z_1}$ and $\L_n^{z_2}$ differ only in the top line, we see that
\begin{align*}
\L_n^{z_1}[\Gamma^{z_1,x}] - \L_n^{z_2}[\Gamma^{z_1,x}]
&=\big[\L_{n,1}^{z_1}(x) - \L_{n,1}^{z_1}(u)\big] - \big[\L_{n,1}^{z_2}(x) - \L_{n,1}^{z_2}(u)\big] \\
&= \big[\L_{n,1}^{z_1}(x) - \L_{n,1}^{z_2}(x)\big] - \big[\L_{n,1}^{z_1}(u) - \L_{n,1}^{z_2}(u)\big]\\
&\leq 2|z_1-z_2|.
\end{align*}
The inequality follows from the definition of $\L^z_{n,1}$ in \eqref{e.L^z formula}.
The equalities and the bound hold deterministically  on $\Omega$. This completes the proof of Lemma~\ref{l.lipschitz}.
\end{proof}

\subsection{The $\F$-conditional distribution of $Z$}\label{s.F-conditionl distribution}
We next move towards a description of the $\F$-conditional distribution of $Z$.
First we define the canonical filtration for the top curve, $\F^{\mathrm{past}}_t = \sigma\left(\L_{n,1}(s) : s\in[ -\frac{1}{2}n^{1/3}, t]\right)$. We also define a filtration that captures the future of the process by $\F^{\mathrm{future}}_t = \sigma\left(\L_{n,1}(s) :s \geq t\right)$.
Certain additional $\sigma$-algebras are needed. Let $\F_{\mathrm{lower}}$ be the $\sigma$-algebra generated by the lower $n-1$ curves, i.e., $\F_{\mathrm{lower}} = \sigma(\L_{n,j}(x) : x\geq -\frac{1}{2}n^{1/3}, j\in\llbracket 2,n\rrbracket)$. Let $\F^{\mathrm{past}}_{x_0}$ be the $\sigma$-algebra generated by all sets of the form
\begin{align}\label{e.sigma-algebra defn}
F_s \cap \{x_0 > s\},
\end{align}
where $F_s$ ranges over all elements of $\F^{\mathrm{past}}_s$ and $s$ ranges over $[-\frac{1}{2}n^{1/3},\infty)$. This $\sigma$-algebra encodes the information known by time $x_0$. If $x_0$ were a stopping time, $\F^{\mathrm{past}}_{x_0}$ would coincide with the usual $\sigma$-algebra associated with such times. Let $\F^{\mathrm{past}}_{x_0+A}$ be defined similarly to $\F^{\mathrm{past}}_{x_0}$ in \eqref{e.sigma-algebra defn} with $x_0+A$ replacing $x_0$. Let $\F^{\mathrm{future}}_{x_0+A+2}$ be defined as the $\sigma$-algebra generated by all sets of the form $F_s \cap \{x_0+A+2 < s\}$, where $F_s$ ranges over all elements of $\F^{\mathrm{future}}_s$ and $s$ ranges over $[-\frac{1}{2}n^{1/3}, \infty)$.

Finally, let $\F'$ be generated by $\F_{\mathrm{lower}} \cup \F^{\mathrm{past}}_{x_0}\cup \F^{\mathrm{future}}_{n^{1/60}}$, and let $\F''$ be generated by $\F'\cup\F^{\mathrm{past}}_{x_0+A}\cup\F^{\mathrm{future}}_{x_0+A+2}$ (which should be thought of as equalling $\F_{\mathrm{lower}}\cup\F^{\mathrm{past}}_{x_0+A}\cup\F^{\mathrm{future}}_{x_0+A+2}$, since typically $x_0+A+2$ will be less than $n^{1/60}$). Observe that $\F$ is the $\sigma$-algebra generated by $\F''$ along with the side bridge data on $[x_0+A,x_0+A+1]$ and $[x_0+A+1,x_0+A+ 2]$; i.e., $\L_{n,1}^{[x_0+A,x_0+A+1]}$ and $\L_{n,1}^{[x_0+A+1,x_0+A+ 2]}$. See \eqref{e.bridge notation} to recall the notation.

We start by describing the $\F'$-conditional distribution of $\L_{n,1}(\bigcdot)$, which will then be used to give the $\F$-conditional distribution of $Z=\L_{n,1}(x_0+A+1)$ in Lemma~\ref{l.conditional distribution}.
To give the first statement,  and with a slight abuse of notation that, we hope, will not  cause confusion with the earlier defined $\L_n^z$, define $\L_{n,1}^B(\bigcdot):[-\frac{1}{2}n^{1/3}, \infty)$  by
$$\L_{n,1}^B(x) = \begin{cases}
\L_{n,1}(x)  & \textrm{for }-\frac{1}{2}n^{1/3}\leq x\leq x_0 \text{ or } x\geq n^{1/60}\\
B(x) & \textrm{for } x_0\leq x\leq n^{1/60},
\end{cases}$$
where $B:[x_0, n^{1/60}]\to \R$ is a given function with $B(x_0) = \L_{n,1}(x_0)$ and $B(n^{1/60}) = \L_{n,1}(n^{1/60})$; also, let $\L_{n,j}^B(x) = \L_{n,j}(x)$ for $j\in \llbracket2,n\rrbracket$ and $x$ in the domain.
Define $\hz{B}$ (with a similar notational abuse) to be the reconstructed $\h$, given as in \eqref{e.L^f representation} by
$$
\hz{B}(x) = \sup_{0\leq y\leq n^{1/60}} \Big(\hlim_0(y) + \L_n^{B}[(-\tfrac12n^{1/3}+ y, n) \to (x, 1)]-n^{2/3}\Big).
$$

\begin{lemma}\label{l.first conditional statement}
Conditionally on $\F'$,  $\left\{\L_{n,1}(x) : x\geq -\frac{1}{2}n^{1/3}\right\}$ has the law of $\L_{n,1}^B$, where $B:[x_0, n^{1/60}]\to \R$ is Brownian bridge of rate two from $(x_0, \L_{n,1}(x_0))$ to $(n^{1/60}, \L_{n,1}(n^{1/60}))$ conditioned on non-intersection of the second curve $\L_{n,2}(\bigcdot)$ and on $\sup_{x_0\leq x\leq n^{1/60}}\hz{B}(x) \leq \h(x_0)$.
\end{lemma}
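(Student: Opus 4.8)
The plan is to obtain this description as the output of two successive conditioning steps, combining the Brownian Gibbs property of $\L_n$ (Lemma~\ref{l.P_n Brownian gibbs}) with the path-decomposition-at-a-special-time result of Millar \cite{millar1978path}, applied to the top curve $\L_{n,1}$. The key point is that $x_0$ is not a stopping time for $\L_{n,1}$ — it depends on the entire ensemble $\L_n$, through the variational formula \eqref{e.prelimiting gen profile formula} — so we cannot invoke the ordinary strong Markov property; instead $x_0$ is a "last exit''-type time (the location of a pathwise maximum of $\hlim$) of exactly the kind handled in \cite{millar1978path}.

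\textbf{Step 1: condition on $\F_{\mathrm{lower}}\cup\F^{\mathrm{future}}_{n^{1/60}}$.} First I would condition on the lower curves $\L_{n,2},\ldots,\L_{n,n}$ together with the top-curve data $\{\L_{n,1}(s): s\geq n^{1/60}\}$ to the right of the window we will resample. By the Brownian Gibbs property (Lemma~\ref{l.P_n Brownian gibbs}), conditionally on this data the restriction of $\L_{n,1}$ to $[-\tfrac12 n^{1/3}, n^{1/60}]$ is a rate-two Brownian motion started at $-n^{2/3}$ at time $-\tfrac12 n^{1/3}$, pinned at $\L_{n,1}(n^{1/60})$ at time $n^{1/60}$, and conditioned to stay strictly above $\L_{n,2}(\bigcdot)$ on that interval. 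Call this conditioned process $W$; it is a (rate-two, drifted, endpoint-pinned) diffusion on a compact time interval, obtained from Brownian motion by a Doob $h$-transform for the non-intersection event, so it is still a strong Markov process and Millar's theory applies to it.

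\textbf{Step 2: decompose $W$ at the location of the maximum of $\hlim$.} Now observe that, once $\F_{\mathrm{lower}}$ is fixed, the map $w\mapsto \hlim$ built from $w=\L_{n,1}$ via \eqref{e.prelimiting gen profile formula} is a deterministic functional, and $x_0$ is the location of the (largest) maximizer of $\hlim$. I would show that $x_0$ is precisely a time of the type covered by \cite{millar1978path} — a time at which the post-$x_0$ path is governed by the $h$-transform of $W$ conditioned both to stay above $\L_{n,2}$ and to keep $\sup_{x\geq x_0}\hz{\bigcdot}(x)$ from ever exceeding the value $\hlim(x_0)$ already achieved. Concretely: conditioning additionally on $\F^{\mathrm{past}}_{x_0}$ (the information up to the special time $x_0$, as defined via \eqref{e.sigma-algebra defn}), Millar's decomposition gives that $\{\L_{n,1}(x): x_0\leq x\leq n^{1/60}\}$ has the law of $W$ run from the space-time point $(x_0,\L_{n,1}(x_0))$, pinned at $(n^{1/60},\L_{n,1}(n^{1/60}))$, and conditioned on the event that $\sup_{x_0\leq x\leq n^{1/60}}\hz{B}(x)\leq \hlim(x_0)$, where $\hz{B}$ is the reconstruction built from the candidate path $B$. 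Unwinding the definition of $W$, this conditioned process is exactly: rate-two Brownian bridge from $(x_0,\L_{n,1}(x_0))$ to $(n^{1/60},\L_{n,1}(n^{1/60}))$, conditioned to avoid $\L_{n,2}$ and to satisfy the $\hlim$-ceiling constraint — which is the asserted law of $\L_{n,1}^B$. The two constraints commute (both are positive-probability events measurable with respect to the resampled path and the $\F'$-data), so the $h$-transform can be taken in either order.

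\textbf{Main obstacle.} The delicate point is Step~2: justifying that $x_0$ — the argmax of a nonlinear functional $\hlim$ of the whole path — falls within the scope of the path-decomposition result of \cite{millar1978path}, and correctly tracking exactly which conditioning survives into the post-$x_0$ law. Millar's results are stated for a class of "times of optimality'' (e.g.\ times of a pathwise maximum) of a strong Markov process; one must verify that $\hlim$, as a function of $\L_{n,1}$ with $\F_{\mathrm{lower}}$ frozen, is of an admissible form — monotone and continuous enough in the path increments — so that $x_0$ is such a time, and that the "Markovian post-maximum'' description translates, after undoing the $h$-transform, into the two explicit conditionings (non-intersection of $\L_{n,2}$ and $\sup \hz{B}\leq \h(x_0)$) claimed in the lemma. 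I would also need to check a mild regularity/uniqueness point — that the maximizer $x_0$ of $\hlim$ is a.s.\ unique (or, since the statement selects the largest maximizer, that this selection is measurable and compatible with Millar's decomposition) — using the almost-sure local Brownianity of $\hlim$ inherited from that of $\L_{n,1}$ via \eqref{e.prelimiting gen profile formula} together with the tightness of $x_0^n$ from Lemma~\ref{l.tightness of prelimiting maximizer}. Everything else — the Gibbs resampling in Step~1, and the final identification of the conditioned diffusion with a conditioned Brownian bridge — is routine.
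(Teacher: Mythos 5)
Your overall route---Brownian Gibbs resampling to describe the conditioned top curve, followed by the path decomposition of \cite{millar1978path} at the maximizer---is the same as the paper's, but there is a genuine gap at the decisive step. Millar's theorem decomposes a homogeneous strong Markov process $X$ at the time where $\Phi(X(t))$ is maximal for a continuous function $\Phi$ on the \emph{state space}; it does not apply to the time at which a general path functional of the process is maximal. In your Step~2 you apply it with the Markov process being the conditioned top curve $W$ and the maximized quantity being $\h$, but $\h(t)$ is not a function of $W(t)$: even with $\F_{\mathrm{lower}}$ frozen, it depends on the whole trajectory of $\L_{n,1}$ up to time $t$ through the variational formula \eqref{e.prelimiting gen profile formula}. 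Your proposed fix---checking that $\h$ is ``monotone and continuous enough in the path increments''---is not the hypothesis of \cite{millar1978path} and does not make $x_0$ a time of the required type.

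The missing idea, which is the heart of the paper's argument, is a state-space augmentation: one considers $X(t) = \bigl(\L_{n,1}(t\wedge n^{1/60}), \h(t\wedge n^{1/60}), t\bigr)$, with compactified state space, and proves that conditionally on $\F_{\mathrm{lower}}$, $\F^{\mathrm{past}}_{-n^{1/60}}$ and $\F^{\mathrm{future}}_{n^{1/60}}$ this augmented process is a homogeneous strong Markov process. This is not automatic; it rests on the update formula \eqref{e.h^n markov}, which writes $\h(t+s)$ as the maximum of two terms, one equal to $\h(t)$ plus a top-line increment (when the geodesic passes through $(t,1)$) and the other determined by lower-curve data together with the top line after time $t$ (when the geodesic jumps to line $1$ after $t$). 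Only after this is the maximized quantity a coordinate projection $\Phi(X(t))$, so that Millar's theorem applies and, upon projecting to the first coordinate, yields exactly the two conditionings (avoidance of $\L_{n,2}$ and $\sup_{x_0\leq x\leq n^{1/60}}\hz{B}(x) \leq \h(x_0)$) asserted in the lemma. Two smaller points: the argmax must be taken over $[-n^{1/60},n^{1/60}]$, where $\h$ is defined, which the paper arranges by starting $X$ at time $-n^{1/60}$ and killing it at $n^{1/60}$; and what is needed is not uniqueness of the maximizer but that the largest maximizer is almost surely not the endpoint $n^{1/60}$, which is the content of Lemma~\ref{l.maximizer has no atoms}.
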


The proof will mainly rely on \cite{millar1978path}. This paper identifies the distribution of a homogeneous strong Markov process $X : [0,\infty) \to E$ on the unbounded interval whose left endpoint is the maximizer $t_0$ of $\Phi(X(t))$, for a given continuous function $\Phi: E\to [-\infty, \infty]$, as the same Markov process started at $X(t_0)$ but under the (typically singular) conditioning that the earlier maximum value is respected, i.e., that $\Phi(X_t)$ does not attain a higher maximum after~$t_0$. Here, $E$ is the state space of the Markov process, a set that is supposed compact in \cite{millar1978path}.

\begin{proof}[Proof of Lemma~\ref{l.first conditional statement}]
First we recall that $x_0\geq -n^{1/60}$ by definition and so $\F^{\mathrm{past}}_{-n^{1/60}}\subseteq \F^{\mathrm{past}}_{x_0}$. Conditional on $\F_{\mathrm{lower}}$, $\F^{\mathrm{past}}_{-n^{1/60}}$ and $\F^{\mathrm{future}}_{n^{1/60}}$, the distribution of $\{\L_{n,1}(x) : -n^{1/60} \leq x \leq n^{1/60}\}$ is that of a Brownian bridge of rate two on $[-n^{1/60},n^{1/60}]$ with starting value $\L_{n,1}(-n^{1/60})$ and ending value $\L_{n,1}(n^{1/60})$ conditioned on non-intersection with the second curve $\L_{n,2}(\bigcdot)$. This is the statement of the Brownian Gibbs property of $\L_{n}$, Lemma~\ref{l.P_n Brownian gibbs}. In particular, the conditioned process is Markov (and non-homogeneous), and, since Brownian bridge is a strong Markov process and the conditioning event is almost surely of positive probability, the same is true of the conditioned space-time process. (Here we consider the space-time process so as to have a time-homogeneous Markov process: see \cite[Chapter III, exercise 1.10]{revuz2013continuous}.)

Consider the process $X : [-n^{1/60}, \infty)\to [-\infty,\infty]^2\times[-n^{1/60},\infty]$ defined by $X(t) := (\L_{n,1}(t\wedge n^{1/60}), \h(t\wedge n^{1/60}), t)$; here $[-\infty,\infty]$ and $[-n^{1/60},\infty]$ are compactifications of $\R$ and $[-n^{1/60},\infty)$, and are employed so that the state space of $X$ is compact. We consider $X$ to start at time $-n^{1/60}$, and to be killed at time $n^{1/60}$, so that the maximizer of the second component of $X$ is $x_0^{n} = \argmax_{|x|\leq n^{1/60}} \h(x)$. (To be precise, as earlier we will be working with the final maximizer, i.e., the largest one, on the event that there are several. To see that there is such a final maximizer for the process $\h(t\wedge n^{1/60})$, note that there must be a final one on the interval $[-n^{1/60}, n^{1/60}]$ by continuity of $\h$, and that, by Lemma~\ref{l.maximizer has no atoms} ahead, the final one is almost surely not $n^{1/60}$.)

We claim that, conditionally on $\F_{\mathrm{lower}}$, $\F^{\mathrm{past}}_{-n^{1/60}}$ and $\F^{\mathrm{future}}_{n^{1/60}}$, $X$ is a homogeneous strong Markov process. To see this, first define the process $X'$ by $X'(t) = \big( \L_{n,1}(t\wedge n^{1/60}), \h(t \wedge n^{1/60}) \big)$. It is enough to prove that $X'$ is a \emph{non-homogeneous} Markov process, as then, by the same trick as used a few lines above, the space-time process $X(t) = (X'(t), t)$ is necessarily a homogeneous Markov process.

To show that $X'$ is Markov under this conditioning, we state a formula that expresses $\h(t+s)$ in terms of $\h(t)$ and data contained in $\F_{\mathrm{lower}}$, $\F^{\mathrm{past}}_{-n^{1/60}}$ and $\F^{\mathrm{future}}_{n^{1/60}}$:
\begin{equation}\label{e.h^n markov}
\begin{split}
\h(t+s) &= \max\bigg\{\h(t) + \L_{n,1}(t+s) - \L_{n,1}(t),\\
&\hspace{-0.3cm} \sup_{\substack{y\\ u> t}}\Big(\hlim_0(y) + \L_n[(-\tfrac{1}{2}n^{1/3}+y, n)\to (u,2)] + \L_{n,1}(t+s) - \L_{n,1}(u) - n^{2/3}\Big)\bigg\};
\end{split}\end{equation}
the supremum over $y$ and $u$ is taken over choices such that $0\leq y\leq n^{1/60}$ and $y\leq -\frac{1}{2}n^{1/3}+~u$.

This formula follows by considering the location $u$ that the geodesic for endpoint $t+s$ jumps to the top line. It is the first term that attains the maximum when $u\leq t$; and it is the second that does so when $u>t$. In the first case, $\h(t+s)$ is equal to $\h(t)$ plus the remaining increment on the top line as the $(t+s)$-geodesic must pass through $(t,1)$; this is because $\h(t)$ is the value attained by a similar maximization problem. In the second case, we have rewritten the formula \eqref{e.prelimiting gen profile formula} for $\h$ by decomposing the last passage value at $(u,2)$, which lies on the geodesic by definition.

Observe that, since we have conditioned on the lower $n-1$ curves, $\L_n[(-\frac{1}{2}n^{1/3}+y,n)\to (u,2)]$ is a deterministic function of $y$ and $u$. Thus, conditional on $(\L_{n,1}(t), \h(t))$, $\F_{\mathrm{lower}}$, $\F^{\mathrm{past}}_{-n^{1/60}}$ and $\F^{\mathrm{future}}_{n^{1/60}}$, it holds that $\h\left((t+s)\wedge n^{1/60}\right)$ is measurable with respect to $\{\L_{n,1}(x): t\leq x \leq n^{1/60}\}$, which is conditionally independent of $\F^{\mathrm{past}}_t$ given $\L_{n,1}(t)$ by the Markov property of $\L_{n,1}(\bigcdot)$. This proves that $X'$ is Markov, and so $X$ is a homogeneous Markov process. (We also used that the canonical filtration of $\h$ is contained in the filtration generated by $\F^{\mathrm{past}}_t$ and $\F_{\mathrm{lower}}$.) This argument reduced the Markov property of $X'$ to that of $\L_{n,1}$; the reduction also works to show that $X'$ is strong Markov since $\L_{n,1}$ is strong Markov.

Since the state space of $X$ is compact, we may apply the results of \cite{millar1978path}.
Consider the projection map $\Phi: [-\infty,\infty]^2\times[-n^{1/60},\infty] \to [-\infty, \infty]$ given by $(x,y,z)\mapsto y$. Then $x_0=x_0^{n} = \argmax_{t\geq -n^{1/60}} \Phi(X(t))$, the largest one if the argmax is not unique. The main theorem of \cite{millar1978path} implies that, conditionally on $\F_{\mathrm{lower}}$, $\F^{\mathrm{past}}_{-n^{1/60}}$ and $\F^{\mathrm{future}}_{n^{1/60}}$, the process $\{X(x_0+t) : t> 0\}$ is conditionally independent of $\F^{\mathrm{past}}_{x_0}$ given the data $(\L_{n,1}(x_0), \h(x_0), x_0)$. Further, this process is Markov and has the law of $X$ conditioned on the event that $\Phi(X(t)) \leq \Phi(X(x_0))$ for all $t$.

By projecting to the first coordinate of $X$, we see that this statement is equivalent to $\{\L_{n,1}(x) : x\geq -\frac{1}{2}n^{1/3}\}$ having the distribution of $\L_n^B$, where $B:[x_0, n^{1/60}] \to \R$ is a Brownian bridge of rate two from $(x_0, \L_{n,1}(x_0))$ to $(n^{1/60},\L_{n,1}(n^{1/60}))$ conditioned on (i) $B(\bigcdot) > \L_{n,2}(\bigcdot)$ on $[x_0, n^{1/60}]$ and (ii) $\sup_{x_0 \leq x \leq n^{1/60}}\hz{B}(x) \leq \h(x_0)$. We get an equivalent condition on projecting because the second component of $X$ is determined by the first along with the lower curve data.
This completes the proof of Lemma~\ref{l.first conditional statement}.
\end{proof}

The following lemma was used in the proof of Lemma~\ref{l.first conditional statement} and establishes that the distribution of the maximizer of $\h(\bigcdot)$ has no atoms.

\begin{lemma}\label{l.maximizer has no atoms}
Fix $z$ with $-n^{1/60}<z\leq n^{1/60}$. Almost surely, $\h(z) \neq \sup_{|x|\leq n^{1/60}}\h(x)$.
\end{lemma}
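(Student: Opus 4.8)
The claim is that, for each fixed $z \in (-n^{1/60}, n^{1/60}]$, almost surely $\h(z)$ is not the global maximum of $\h$ over $[-n^{1/60}, n^{1/60}]$. The plan is to reveal just enough of the randomness so that, conditionally, the value $\h(z)$ has a continuous distribution while the maximum of $\h$ on the complementary region has a conditionally fixed (finite) value, and then argue that a continuous random variable equals a constant with probability zero; finally one sums over a countable decomposition to handle the fact that the ``maximum of the rest'' is itself random.

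First I would set up the conditioning. Fix $z$ and pick a small $\rho>0$ with $[z-\rho, z+\rho] \subseteq (-n^{1/60}, n^{1/60}]$ (if $z = n^{1/60}$ take a one-sided interval $[z - \rho, z]$; the argument is the same with an obvious modification, using that $\h$ is a.s.\ not maximized at the right endpoint follows trivially once we prove the interior statement, or one runs the same conditioning). Condition on the $\sigma$-algebra $\F_{\mathrm{out}}$ generated by: the lower curves $\L_{n,j}$, $j \in \llbracket 2, n \rrbracket$; the top curve $\L_{n,1}$ outside $(z-\rho, z+\rho)$; and the values $\L_{n,1}(z-\rho)$, $\L_{n,1}(z+\rho)$. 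By the Brownian Gibbs property of $\L_n$ (Lemma~\ref{l.P_n Brownian gibbs}), the $\F_{\mathrm{out}}$-conditional law of $\L_{n,1}$ on $[z-\rho, z+\rho]$ is that of a rate-two Brownian bridge with the prescribed endpoints, conditioned on staying above $\L_{n,2}$ there; this conditioning is on a positive-probability event. The key observation is that $\h(z) = \sup_{0 \le y \le n^{1/60}}(\hlim_0(y) + \L_n[(-\tfrac12 n^{1/3}+y, n) \to (z,1)] - n^{2/3})$, and every geodesic achieving a term in this supremum must pass through $(z,1)$, hence jumps to the top line at some coordinate $u \le z$ and contributes $\L_{n,1}(z) - \L_{n,1}(u)$ to its weight; restricting $u$ to the left of $z-\rho$ versus inside $(z-\rho, z)$, one sees $\h(z) = \L_{n,1}(z) + \max\{W_{\mathrm{out}},\, W_{\mathrm{in}}(z)\}$ where $W_{\mathrm{out}}$ is $\F_{\mathrm{out}}$-measurable and $W_{\mathrm{in}}(z) = \sup_{z-\rho \le u \le z}(-\L_{n,1}(u)) + (\text{something})$ depends on the resampled bridge. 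More simply: decompose $\h(z)$ according to whether the maximizing geodesic's last jump to the top line occurs at $u \le z - \rho$ or $u \in (z-\rho, z]$.

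The cleanest route, I think, is the following. Conditionally on $\F_{\mathrm{out}}$, write $\h(z) = \L_{n,1}(z) + V$ where $V := \sup_{u \le z}\big(\sup_{0 \le y \le n^{1/60}, \text{geodesic jumps at }u}(\hlim_0(y) + \L_n[\cdots] - n^{2/3}) - \L_{n,1}(z)\big)$; but cleaner still is to note that for the maximum of $\h$ over the complement of a neighborhood of $z$, namely $M_\rho := \sup_{|x| \le n^{1/60},\, |x - z| \ge \rho} \h(x)$, this quantity is $\F_{\mathrm{out}}$-measurable, because the only way $\L_{n,1}$ restricted to $(z-\rho, z+\rho)$ influences $\h(x)$ for $|x-z|\ge \rho$ is through values of $\L_{n,1}$ at points in $[z-\rho,z+\rho]$ that an $x$-geodesic might traverse — but such a geodesic, ending at $x$ with $|x-z| \ge \rho$, once it is on the top line stays on it, so if $x \le z - \rho$ it never enters $(z-\rho, z+\rho)$, and if $x \ge z + \rho$ it can enter, hence $M_\rho$ for $x \ge z+\rho$ need not be $\F_{\mathrm{out}}$-measurable. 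To fix this asymmetry I would instead condition only on the left endpoint's past and the bridge on a left-neighborhood: condition on $\F_{\mathrm{lower}}$, on $\{\L_{n,1}(x): x \le z - \rho\}$, and on $\{\L_{n,1}(x) : x \ge z\}$. Then by the Brownian Gibbs / Markov property the conditional law of $\L_{n,1}$ on $[z-\rho, z]$ is a rate-two Brownian bridge conditioned to avoid $\L_{n,2}$, and $\h(x)$ for $x \notin (z-\rho, z)$ is measurable w.r.t.\ this $\sigma$-algebra (a top-line geodesic ending at such $x$ never traverses the open interval $(z-\rho,z)$ beyond endpoints we have fixed); in particular $M' := \sup_{|x| \le n^{1/60},\, x \notin (z-\rho,z)} \h(x)$ is conditionally a finite constant. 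Meanwhile, $\h(z) = \L_{n,1}(z) + \text{(conditionally-fixed nonpositive quantity)}$ plus a correction — precisely, $\h(z)$ equals the max of an $\F$-measurable constant and $\L_{n,1}(z) + (\text{const})$, where $\L_{n,1}(z)$ is fixed by the conditioning. Hmm — that last point shows $\h(z)$ itself is conditionally deterministic, which kills this particular decomposition.

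Therefore the correct variable to resample is not near $z$ but the endpoints; the right statement to prove is: after conditioning on everything except $\L_{n,1}$ on an interval $(a,b) \ni z$ in its interior, $\h(z)$ has a conditional density. Concretely, condition on $\F_{\mathrm{lower}}$ together with $\{\L_{n,1}(x) : x \notin (a,b)\}$ and the endpoint values, i.e.\ the $\F_{\mathrm{ext}}$ of Section~\ref{s.brownian gibbs} with $[a,b]$ a small interval strictly containing $z$. By the Brownian Gibbs property the conditional law of $\L_{n,1}$ on $[a,b]$ is a non-intersecting-with-$\L_{n,2}$ bridge. Now $M^{\flat} := \sup_{x \notin (a,b)} \h(x)$ may fail to be conditionally constant when $b > z$ because geodesics ending to the right of $b$ can enter $(a,b)$; so I restrict attention to the left: take $b = n^{1/60}$, i.e.\ resample $\L_{n,1}$ on $(a, n^{1/60})$ with $a < z$, conditioned on $\F_{\mathrm{ext}}$ with $[a,n^{1/60}]$. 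Then $\sup_{x \le a}\h(x)$ is conditionally constant, and by Brownian local non-degeneracy $\h(z)$, which is at least $\L_{n,1}(z) - \L_{n,1}(a) + \h(a) \wedge \cdots$ and is a continuous functional of the resampled bridge that strictly increases as we push the bridge up at $z$ (shifting the bridge upward by $\lambda$ near $z$ increases $\h(z)$ by exactly $\lambda$ for small $\lambda$, by the geodesic-through-$(z,1)$ argument above), has no atoms: the event $\{\h(z) = c\}$ for any constant $c$ has conditional probability zero since $\h(z)$ dominates a nondegenerate continuous random variable (the bridge value at $z$ plus constants) up to the max with a constant, and a max of a constant with a continuous variable is continuous except possibly an atom at that constant — which is exactly the value $\sup_{x \le a}\h(x)$, a different constant than a generic $c$, handled by taking $c$ ranging and integrating. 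I would then write $\{\h(z) = \sup_{|x| \le n^{1/60}}\h(x)\} \subseteq \{\h(z) = \sup_{x \le a}\h(x)\} \cup \{\h(z) \le \sup_{a < x \le n^{1/60}, x \ne z}\h(x),\ \h(z) = \text{that sup}\}$ and show each piece has probability zero, the first by the no-atom argument (the conditionally-continuous $\h(z)$ equals the conditional constant $\sup_{x\le a}\h(x)$ with probability zero), the second by a further application of the same idea on a shrinking interval around $z$ together with a countable union over rational sub-intervals. \textbf{The main obstacle} is exactly this bookkeeping: arranging a conditioning under which $\h(z)$ is conditionally a genuinely non-atomic random variable while the ``supremum over the rest'' is conditionally constant, and doing so on an interval whose one endpoint can be the domain boundary $n^{1/60}$ so that rightward geodesics do not reintroduce dependence; the Brownian Gibbs property supplies the conditional law, but verifying measurability of the competing supremum and the strict monotonicity/non-degeneracy of $\h(z)$ in the resampled bridge is where the care lies.
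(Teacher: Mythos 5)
Your plan has a genuine gap, and it sits exactly where the lemma's difficulty lies. Your final decomposition,
\[
\bigl\{\h(z)=\textstyle\sup_{|x|\le n^{1/60}}\h(x)\bigr\}\ \subseteq\ \bigl\{\h(z)=\textstyle\sup_{x\le a}\h(x)\bigr\}\ \cup\ \bigl\{\h(z)=\textstyle\sup_{a<x\le n^{1/60},\,x\ne z}\h(x)\bigr\},
\]
is false: it omits the event that $\h(z)$ \emph{strictly exceeds} both partial suprema, i.e.\ that $z$ is the unique maximizer. For a fixed $z$ this is precisely the main event the lemma must rule out, and it cannot be handled by any ``$\h(z)$ has no conditional atoms'' argument, because it is not an equality with any conditionally fixed quantity. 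Even for the second piece of your union, both $\h(z)$ and the competing supremum over $(a,n^{1/60}]\setminus\{z\}$ are functionals of the \emph{same} resampled bridge, so the constant-versus-continuous dichotomy you want does not apply; your suggestion to handle it by ``the same idea on a shrinking interval'' just reproduces the original problem on a subinterval and does not terminate. (Your intermediate observations are symptomatic of the same issue: you correctly notice that natural conditionings either make $\h(z)$ conditionally deterministic or destroy the measurability of the competitor, and your claim that shifting the bridge near $z$ by $\lambda$ raises $\h(z)$ by exactly $\lambda$ is not a legitimate operation under the conditioned bridge law.)

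The paper avoids all of this with a short local perturbation argument rather than a no-atoms/resampling argument. By the Brownian Gibbs property, each curve $\L_{n,j}(\bigcdot)-\L_{n,j}(z-1)$ is absolutely continuous with respect to rate-two Brownian motion on $[z-1,z+1]$, so almost surely the fixed point $z$ is not a (one-sided) local maximizer of any $\L_{n,j}$. On that event, take the geodesic $\Gamma$ attaining $\h(z)$ and let $J$ be the line it occupies at time $z^-$; since $\L_{n,J}$ takes a strictly larger value at some $\tilde z$ in the stretch $\Gamma$ spends on line $J$, rerouting $\Gamma$ to jump to line $1$ at $\tilde z$ and end at $(\tilde z,1)$ gives $\h(\tilde z)>\h(z)$ with $|\tilde z|\le n^{1/60}$, so $z$ is not a maximizer. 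If you want to salvage your approach, the missing ingredient is exactly such a pointwise improvement (or, equivalently, local absolute continuity of $\h$ itself near $z$ together with the fact that Brownian motion a.s.\ exceeds its value at a fixed time in any neighborhood); the conditional-density bookkeeping is not needed.
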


\begin{proof}
We use the formula \eqref{e.prelimiting gen profile formula} for $\h(z)$ in terms of a last passage problem through $\L_n$. It is a consequence of Lemma~\ref{l.P_n Brownian gibbs} that $\L_{n,j}(\bigcdot) - \L_{n,j}(z-1)$ is absolutely continuous with respect to Brownian motion of rate two on $[z-1,z+1]$ for each $j\in\intint{n}$ (see for example \cite[Proposition~4.1]{BrownianGibbs}). Thus we have with probability one that $z$ is not a local maximizer of $\L_{n,j}$ for any $j\in\intint{n}$, which is the event we work on now. Let $\Gamma$ be the geodesic associated to $\h(z)$ implicit in \eqref{e.prelimiting gen profile formula} and let $J\in\intint{n}$ be the index of the line that $\Gamma$ visits at time $z^-$ (if $J>1$, this means that the geodesic jumps to line 1 at location $z$, and so the top line's values do not contribute to the last passage value). Let $\tilde z$ belong to the interval of time that $\Gamma$ spends on line $J$ and be such that $\L_{n,J}(\tilde z) > \L_{n,J}(z)$. Now we can consider a modification of $\Gamma$ that has endpoint $(\tilde z,1)$, which it jumps to from $(\tilde z, J)$; this implies that $z$ is not a maximizer of $\h(\bigcdot)$ since $\h(\tilde z) > \h(z)$ and $|\tilde z| \leq n^{1/60}$.
\end{proof}

With these preliminaries, we now state what the $\F$-conditional distribution of $Z$ is; recall that $Z = \L_{n,1}(x_0+A+1)$.

\begin{lemma}\label{l.conditional distribution}
There exist $\F$-measurable random variables $\Corner^{\downarrow}$ and $\Corner^{\uparrow}$ such that the following holds. Conditionally on $\F$, and on the $\F$-measurable event that $x_0+A+2 \leq n^{1/60}$, the distribution of $Z$ is a normal random variable with mean $\frac{1}{2}(\L_{n,1}(x_0+A) + \L_{n,1}(x_0+A+2))$ and variance one, conditional on lying inside $[\Corner^{\downarrow}, \Corner^{\uparrow}]$. Further, when $z=\Corner^{\uparrow}$,
\begin{equation}\label{e.when z equals corner}
\sup_{x\in [x_0+A,x_0+A+2]} \hz{z}(x) = \h(x_0).
\end{equation}
\end{lemma}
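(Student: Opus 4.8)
\textbf{Proof plan for Lemma~\ref{l.conditional distribution}.}

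The plan is to obtain the $\F$-conditional law of $Z$ in two stages, following the narrow-wedge outline in Section~\ref{sec:nwoutline} but carrying along the lower-curve complications. First I would invoke Lemma~\ref{l.first conditional statement} to describe the $\F'$-conditional law of the top curve $\L_{n,1}$ on $[x_0,n^{1/60}]$: it is a rate-two Brownian bridge between the prescribed endpoints, conditioned on (i)~non-intersection with $\L_{n,2}$ and (ii)~$\sup_{x_0\le x\le n^{1/60}}\hz{B}(x)\le\h(x_0)$. Then I would enlarge the conditioning to $\F''$, which additionally fixes $\L_{n,1}$ on $[x_0,x_0+A]$ and on $[x_0+A+2,n^{1/60}]$ (via $\F^{\mathrm{past}}_{x_0+A}$ and $\F^{\mathrm{future}}_{x_0+A+2}$). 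By the (strong) Markov property of Brownian bridge, the $\F''$-conditional law of $\L_{n,1}$ restricted to $[x_0+A,x_0+A+2]$ is a rate-two Brownian bridge from $(x_0+A,\L_{n,1}(x_0+A))$ to $(x_0+A+2,\L_{n,1}(x_0+A+2))$, still conditioned on (i) and (ii); since conditions (i) and (ii) on the remaining part of the domain are now $\F''$-measurable, what survives is the conditioning that $\L_{n,1}^Z(\bigcdot)$ (the reconstruction) avoids $\L_{n,2}$ and keeps $\sup_{x\in[x_0+A,x_0+A+2]}\hz{Z}(x)\le\h(x_0)$ on $[x_0+A,x_0+A+2]$.

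Next I would pass from $\F''$ to $\F$ by further conditioning on the two side-bridge functions $\L_{n,1}^{[x_0+A,x_0+A+1]}$ and $\L_{n,1}^{[x_0+A+1,x_0+A+2]}$. A classical decomposition of Brownian bridge (decomposing a bridge on $[a,b]$ into its value at an interior point $a+1$ and the two independent bridges on $[a,a+1]$ and $[a+1,b]$, which are jointly independent of that midpoint value, the midpoint being Gaussian) shows that, conditionally on the side-bridge data, the only remaining randomness is $Z=\L_{n,1}(x_0+A+1)$, which is a normal random variable with mean $\tfrac12\bigl(\L_{n,1}(x_0+A)+\L_{n,1}(x_0+A+2)\bigr)$ and variance one, still subject to conditions (i) and (ii) expressed through the reconstruction $\L_{n,1}^Z$ and $\hz{Z}$. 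The reconstruction formula \eqref{e.L^z formula} makes $\L_{n,1}^z(x)$ affine, hence monotone non-decreasing, in $z$ for each fixed $x$; so condition (i) (that $\L_{n,1}^z(\bigcdot)>\L_{n,2}(\bigcdot)$ on $[x_0+A,x_0+A+2]$) translates into a lower bound $z\ge\Corner^{\downarrow}$. For the upper bound, I would use the pathwise monotonicity supplied by Lemma~\ref{l.monotonicity of L^f}: on the relevant window the reconstructed profile's supremum is governed by path weights whose $z$-monotonicity is controlled, and combined with Lemma~\ref{l.lipschitz} (that $\sup_{x\in[x_0+A,x_0+A+2]}\hz{z}(x)$ is Lipschitz in $z$) one deduces that the set of $z$ satisfying $\sup_{x\in[x_0+A,x_0+A+2]}\hz{z}(x)\le\h(x_0)$ is a (possibly empty) interval unbounded below, hence of the form $(-\infty,\Corner^{\uparrow}]$; both $\Corner^{\downarrow}$ and $\Corner^{\uparrow}$ are $\F$-measurable since they are determined by $\F$-measurable data. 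Finally, the identity \eqref{e.when z equals corner} at $z=\Corner^{\uparrow}$ follows because $\Corner^{\uparrow}$ is defined as the threshold value: by the Lipschitz continuity of $z\mapsto\sup_{x\in[x_0+A,x_0+A+2]}\hz{z}(x)$ the supremum of the left side over $z\le\Corner^{\uparrow}$ is attained at $\Corner^{\uparrow}$ and equals the constraining value $\h(x_0)$ (one also checks that the constraint is effective, i.e.\ that at the largest admissible $z$ the global max on the window is not strictly below $\h(x_0)$, using that $\hz{z}$ matches $\h$ on $[-n^{1/60},x_0+A]$ by Lemma~\ref{l.monotonicity of L^f}(1) so the window constraint is the only active one).

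The main obstacle I anticipate is justifying the passage through the enlarging sequence of $\sigma$-algebras $\F'\subseteq\F''\subseteq\F$ while the non-Markovian random location $x_0$ is present: one must be careful that at each stage the conditioning events retained from Lemma~\ref{l.first conditional statement} either become measurable or reduce cleanly to constraints on $[x_0+A,x_0+A+2]$, and that the Brownian-bridge Markov and decomposition properties may be applied on the random interval $[x_0+A,x_0+A+2]$ — this is legitimate because, conditionally on $\F'$, the process on $[x_0,n^{1/60}]$ is a genuine (conditioned) strong Markov process started from the $\F'$-measurable point $x_0$, so subsequent conditionings on $\F^{\mathrm{past}}_{x_0+A}$ and $\F^{\mathrm{future}}_{x_0+A+2}$ behave like conditionings at fixed times of that process. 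A secondary technical point is verifying that the normalizing constant is positive, i.e.\ that $\Corner^{\downarrow}<\Corner^{\uparrow}$ almost surely (equivalently, the conditioning event has positive probability), which follows since the true $Z$ lies in $[\Corner^{\downarrow},\Corner^{\uparrow}]$ by construction.
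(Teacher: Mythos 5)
Your overall architecture matches the paper's (Lemma~\ref{l.first conditional statement}, then conditioning up through $\F''$ to $\F$, the Brownian bridge midpoint decomposition giving the normal law with mean $\tfrac12(\L_{n,1}(x_0+A)+\L_{n,1}(x_0+A+2))$ and variance one, then translating the event $\mathsf{Val}$ into an interval constraint on $Z$), but the step where you translate the conditioning event into $[\Corner^{\downarrow},\Corner^{\uparrow}]$ contains a genuine error. You assert that, after conditioning on $\F''$, condition (ii) ``on the remaining part of the domain'' is $\F''$-measurable, so that only the window constraint $\sup_{x\in[x_0+A,x_0+A+2]}\hz{Z}(x)\le\h(x_0)$ survives. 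This is false for general initial data: for $x\ge x_0+A+2$, the value $\hz{z}(x)$ still depends on $z$, because a path ending at such $x$ may jump to the top line at a location $u$ inside $[x_0+A,x_0+A+2]$, so its weight contains $\L_{n,1}(x)-\L_{n,1}^z(u)$; by Lemma~\ref{l.monotonicity of L^f}(2) these values are non-increasing in $z$, and the requirement $\hz{Z}(x)\le\h(x_0)$ for such $x$ imposes \emph{lower} bounds on $Z$ that are not captured by your non-intersection corner. Relatedly, your claim that the window constraint alone defines a set of the form $(-\infty,\Corner^{\uparrow}]$ is also wrong: for $x\in[x_0+A+1,x_0+A+2]$ the map $z\mapsto\hz{z}(x)$ is not monotone (a path jumping to the top line at $u\in[x_0+A+1,x]$ has weight non-increasing in $z$, while one jumping at $u\le x_0+A$ has weight non-decreasing in $z$), and Lipschitz continuity (Lemma~\ref{l.lipschitz}) gives no monotonicity or ``unbounded below'' structure. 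The net effect is that you would be conditioning the Gaussian on a strictly larger set than the true event $\mathsf{Val}$, so the conditional law you obtain is incorrect; this is exactly the complication the paper flags when it says that, unlike in the narrow-wedge case, $\Corner^{\downarrow}$ may also arise from the constraint that $\hz{Z}$ not exceed $\h(x_0)$.

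The paper's fix, which your proposal needs, is to work path by path rather than with the supremum: for each upright path $\gamma$ ending in $[x_0,n^{1/60}]$, Lemma~\ref{l.monotonicity of L^f} says $z\mapsto\hz{z}(\gamma)$ is monotone (in one direction or the other, or constant), so the constraint $\hz{z}(\gamma)\le\h(x_0)$ defines a ray $I_\gamma$; intersecting these rays over a \emph{countable} dense family of paths with rational start, end, and jump times (this countability is also needed for the $\F$-measurability of the corners, a point your argument does not address) together with the non-intersection ray yields an interval $I$, nonempty since the true $Z\in I$, whose endpoints are $\Corner^{\downarrow}$ and $\Corner^{\uparrow}$. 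Your argument for \eqref{e.when z equals corner} should likewise be repaired: rather than saying ``the window constraint is the only active one because $\hz{z}=\h$ on $[-n^{1/60},x_0+A]$'' (which ignores $x\ge x_0+A+2$), one argues by contradiction that if the window supremum at $z=\Corner^{\uparrow}$ were strictly below $\h(x_0)$, then for small $\varepsilon>0$ the value $z=\Corner^{\uparrow}+\varepsilon$ would still be admissible, since $\hz{z}(x)$ is non-increasing in $z$ off the window (Lemma~\ref{l.monotonicity of L^f}), the window supremum moves Lipschitz-continuously (Lemma~\ref{l.lipschitz}), and raising $z$ only helps non-intersection; this contradicts $\Corner^{\uparrow}=\sup I$.
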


In the discussion of the narrow-wedge case, the $\Corner^\uparrow$ and $\Corner^\downarrow$ random variables had a clear interpretation respectively as the largest value of $Z$ such that the reconstruction $\L_{n,1}^{Z}$ at no point exceeds $\L_{n,1}(x_0)$ and the smallest value of $Z$ such that the reconstruction at no point intersects $\L_{n,2}$; each variable handled one condition. For general initial conditions, these random variables play analogous but slightly different roles. In particular, they are respectively the largest and smallest values of $Z$ such that $\hz{Z}$ at no point exceeds $\h(x_0)$ and $\L_{n,1}^Z$ does not intersect $\L_{n,2}$.
However, it may not be the case that each variable separately handles one of the conditions: because of a larger class of possible geodesic paths, $\Corner^{\downarrow}$ may also play a role in preventing $\hz{Z}$ from exceeding $\h(x_0)$, unlike in the narrow-wedge case. This will be seen in $\Corner^{\downarrow}$'s definition in the proof, to which we turn  now.

\begin{proof}[Proof of Lemma~\ref{l.conditional distribution}]
From Lemma~\ref{l.first conditional statement}, we know that the process $\{\L_{n,1}(x) : x_0\leq x\leq n^{1/60}\}$, conditionally on $\F'$, has the law of Brownian bridge of rate two with appropriate endpoints conditioned on not intersecting $\L_{n,2}$ and on $\sup_{x_0\leq x\leq n^{1/60}}\h(x) \leq \h(x_0)$. Let $\mathsf{Val}$ (short for ``valid'') be this conditioning event, i.e.,
\begin{equation}\label{e.conditioning event definition}
\mathsf{Val} := \Big\{\L_{n,1}(x) > \L_{n,2}(x) \ \ \  \forall x\in[x_0, n^{1/60}]\Big\} \cap \left\{\sup_{x_0\leq x\leq n^{1/60}}\h(x) \leq \h(x_0)\right\}.
\end{equation}
Recall that $\F''$ is generated by $\F'$ and the additional data of $\L_{n,1}(\bigcdot)$ on $[x_0,x_0+A] \cup [x_0+A+2, n^{1/60}]$. Note that the $\sigma$-algebra $\F$ is generated by $\F''$ along with the additional side bridge data, i.e., $\L_{n,1}^{[x_0+A,x_0+A+1]}$ and $\L_{n,1}^{[x_0+A+1,x_0+A+2]}$.

Recall the following decomposition of a Brownian bridge $B$ of arbitrary endpoints and rate $\sigma^2$ on an interval $[a,b]$, with $c\in[a,b]$: conditionally on $B(a)$, $B(b)$, and the side bridges $B^{[a,c]}$ and $B^{[c,b]}$, the distribution of $B(c)$ is that of a normal random variable with mean $\frac{b-c}{b-a}B(a)+\frac{c-a}{b-a}B(b)$ and variance $\sigma^2\frac{(c-a)(b-c)}{b-a}$. This is because such a Brownian bridge can be decomposed into three independent parts: the side bridges $B^{[a,c]}$ and $B^{[c,b]}$ (both of which have the Brownian bridge law) and the value of $B(c)$ (which is normally distributed as specified).

This decomposition implies the following. By conditioning $\L_{n,1}(\bigcdot)$ on the side bridge data in addition to $\F''$, and on the $\F$-measurable event that $x_0+A+2 \leq n^{1/60}$, the $\F$-conditional distribution of $Z =\L_{n,1}(x_0+A+1)$ is that of a normal random variable with mean $\frac{1}{2}(\L_{n,1}(x_0+A) + \L_{n,1}(x_0+A+2))$ and variance one (as the Brownian bridge is of rate two), conditioned on $\mathsf{Val}$ occurring.

We claim that there exist $\F$-measurable random variables $\Corner^{\downarrow}$ and $\Corner^{\uparrow}$ such that the occurrence of $\mathsf{Val}$ is equivalent to $Z$ lying in $[\Corner^{\downarrow}, \Corner^{\uparrow}]$.

We start by focusing on the second event on the right-hand side of \eqref{e.conditioning event definition}. Consider a fixed upright path $\gamma$, and recall the definition of $\hz{z}(\gamma)$ from \eqref{e.L^f representation}. The second event is equivalent to the event that $Z$ is such that $\hz{Z}(\gamma) \leq \h(x_0)$ for each upright path $\gamma$ with endpoint lying in $[x_0, n^{1/60}]$. Now recall that, by Lemma~\ref{l.monotonicity of L^f}, with probability one, $\hz{z}(\gamma)$ is monotone (i.e., non-increasing, non-decreasing, or constant) in $z$. Thus the condition that $\hz{z}(\gamma) \leq \h(x_0)$ yields, for each such upright path $\gamma$, a condition that $z$ lies in an interval $I_\gamma$ of the form $(-\infty, \infty)$, $(-\infty, r_\gamma^{\uparrow})$, or $(r_\gamma^{\downarrow}, \infty)$ for some real number $r_\gamma^{\uparrow}$ or $r_{\gamma}^{\downarrow}$; which form of interval applies depends on the nature of the monotonicity of $\hz{z}(\gamma)$. Note that $r_\gamma^{\uparrow}$ and $r_{\gamma}^{\downarrow}$ are $\F$-measurable.

To satisfy the second event in the intersection defining $\mathsf{Val}$ in \eqref{e.conditioning event definition}, $Z$ must lie in the intersection of all of the $I_\gamma$ as $\gamma$ varies over the set of upright paths with endpoint in $[x_0, n^{1/60}]$. To satisfy the first event in \eqref{e.conditioning event definition}, we must also ensure that the value of $Z$ gives non-intersection of $\L_{n,1}^z(\bigcdot)$ with $\L_{n,2}(\bigcdot)$. Recall from the definition  \eqref{e.L^z formula} of $\L_{n,1}^z$ that $\L_{n,1}^z(x)$ is non-decreasing in $z$ for all $x$. Since the $\L_n$ curves are also ordered, it follows that satisfying the first event in the definition of $\mathsf{Val}$ is equivalent to $Z$ lying in an infinite ray $I_{\mathrm{lower}} = (r_{\mathrm{lower}},\infty)$. Further, $r_{\mathrm{lower}}$ is an $\F$-measurable random variable.

The idea now is to consider the intersection of $I_{\mathrm{lower}}$ and the intervals $I_{\gamma}$ corresponding to all paths $\gamma$ with endpoint in $[x_0, n^{1/60}]$. So, let
$$I = I_{\mathrm{lower}} \cap \bigcap_{\gamma} I_\gamma.$$
Since we need $I$ to be $\F$-measurable, we take the big intersection over only a countable collection of upright paths $\gamma$. More precisely, the intersection is taken over the set of upright paths $\gamma$ which have start point and endpoint lying in $\Q$ (with the endpoint in $[x_0, n^{1/60}]$) and all of whose jump times from one line to the next occur at values in~$\Q$. The continuity of the curves of $\L_n$ implies that this countable dense intersection is sufficient to ensure that $Z\in I$ implies the satisfaction of $\mathsf{Val}$.

In principle, $I$ may be the empty set. But it is not: by the definition of $x_0$ and $I$, and since the curves of $\L_n$ obey the non-intersection condition, the value $\L_{n,1}(x_0+A+1)$ almost surely lies in~$I$.

We define $\Corner^{\downarrow} = \inf I$ and $\Corner^{\uparrow} = \sup I$, which are clearly $\F$-measurable. We note the characterization of $\Corner^{\uparrow}$ as the largest value of $Z$ which satisfies the second event of $\mathsf{Val}$, i.e., that $\sup_{x_0\leq x \leq n^{1/60}}\hz{z}(x) \leq \h(x_0)$.

We are left with proving the last assertion \eqref{e.when z equals corner}. Note that it is immediate from the definition of $I$ that, when $Z=\Corner^{\uparrow}$, there exists some $x'\neq x_0$ such that $\hz{z}(x) = \h(x_0)$. We claim that at least one such $x'$ must lie in $[x_0+A,x_0+A+2]$. Suppose to the contrary that $\sup_{x\in [x_0+A,x_0+A+2]} \hz{z}(x) < \h(x_0)$. Consider $\hz{Z+\varepsilon}$ for small $\varepsilon>0$. We see from Lemma~\ref{l.monotonicity of L^f} that $\hz{Z+\varepsilon}(x) \leq \hz{Z}(x) \leq \h(x_0)$ for all $x\not\in [x_0+A,x_0+A+2]$. But for all small enough $\varepsilon$, we would still have $\sup_{x\in [x_0+A,x_0+A+2]} \hz{Z+\varepsilon}(x) < \h(x_0)$ by Lemma~\ref{l.lipschitz}, contradicting the definition of $I$ via the characterization of $\Corner^{\uparrow}$ noted in the previous paragraph. This completes the proof of Lemma~\ref{l.conditional distribution}.
\end{proof}

\subsection{Positive probability favourable data} \label{s.favourable data}

We next define a $\F$-measurable favourable event $\fav_{K, L}$, which we will show holds with positive probability. Recall that we are proving Proposition~\ref{p.resampling twin peaks}, which asserts a lower bound on the probability of the twin peaks' event. The argument for this proposition will rely on resampling some randomness, namely $Z = \L_{n,1}(x_0+A+1)$, conditionally on the data in $\F$. The role of the favourable event is to specify a class of good $\F$-measurable data under which the resampling can be analysed more easily.

We again adopt the shorthand of $x_0$ for $x_0^{n}$, and let $\mu := \frac{1}{2}\bigl(\L_{n,1}(x_0+ A) + \L_{n,1}(x_0+A+2)\bigr)$ be the $\F$-measurable mean of the normal random variable in the description of $Z$'s $\F$-conditional distribution from Lemma~\ref{l.conditional distribution}. Also let $\beta$ be as in the definition of $\bJ_L$ in \eqref{eq:setJ}. We set
$$\fav_{K,L} = \msf F_1 \cap \msf F_2 \cap \msf F_3' \cap \msf F_4',$$
where
\begin{align*}
\msf F_1 &= \Big\{\Corner^{\uparrow} \leq 4K\Big\}, \qquad &\msf F_2 &= \Big\{\mu \in [-K, K] \Big\},\\
\msf F_3' &= \Bigl\{|\h(x_0)| \leq \beta L^{1/2}\Bigr\}, \qquad &\msf F_4' &= \Big\{|x_0|\leq \beta L-A-2\Big\},
\end{align*}
(we will shortly define $\msf F_3$ and $\msf F_4$ to be modified versions of $\msf F_3'$ and $\msf F_4'$ which will be more convenient to work with).

Let us now say a few words on the form of $\fav_{K,L}$ and the proof idea of Proposition~\ref{p.resampling twin peaks}. From Lemma~\ref{l.conditional distribution}, we see that, when $Z = \Corner^{\uparrow}$, we have that $\sup_{x\in [x_0+A,x_0+A+2]} \h(x) = \h(x_0)$. Also, we see from Lemma~\ref{l.lipschitz} that reducing $Z$'s value from this level affects $\sup_{x\in [x_0+A,x_0+A+2]} \h(x)$ in a Lipschitz manner. Thus the event in Proposition~\ref{p.resampling twin peaks} occurs if $Z$ is within order $\varepsilon$ of $\Corner^{\uparrow}$.

We know from Lemma~\ref{l.conditional distribution} that, conditionally on $\F$, $Z$ is distributed as a normal random variable with mean $\mu$ and variance one conditioned on lying inside $[\Corner^{\downarrow}, \Corner^{\uparrow}]$. Thus to get a good lower bound on the event that $Z$ is close to $\Corner^{\uparrow}$, it is enough to know that, with positive probability, the mean of $Z$ is not too extreme and that the upper limit $\Corner^{\uparrow}$ is not too high. These are the first two events in the intersection defining $\fav_{K,L}$, and the mentioned positive probability lower bound is the content of the next lemma. The third event  handles the first extra condition in the definition of $\TP{A,L}^\eps$ \eqref{eq:setTP} on the value of $\h$'s maximum, while the final event in $\fav_{K,L}$'s intersection is imposed merely to ensure the second extra condition in \eqref{eq:setTP}, that twin peaks occurs in the interval $[-\beta L, \beta L]$.

\begin{lemma}\label{l.fav probability}
Let $\hlim_0:\R\to\R\cup\{-\infty\}$ satisfy Assumption~\ref{a:initial_state_pd} and consider $A>0$. There exist $K$ and $L_0$ (both depending on $\gamma$, $\theta$, and $A$) such that, for all $L>L_0$, there exists $n_0$ (depending on $\gamma$, $\theta$, and $L$) so that, for all $n>n_0$,
$$\P(\fav_{K,L}) \geq \frac{1}{2}.$$
Further, $K$ and $L_0$ may be made to depend on $\gamma$, $\theta$ and $A$ in a continuous manner.
\end{lemma}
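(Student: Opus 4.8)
\textbf{Proof proposal for Lemma~\ref{l.fav probability}.}

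The plan is to bound $\P(\fav_{K,L})$ from below by controlling each of the four events $\msf F_1,\msf F_2,\msf F_3',\msf F_4'$ separately and then applying a union bound on their complements. The events $\msf F_3'$ and $\msf F_4'$ — that $|\h(x_0)| \leq \beta L^{1/2}$ and $|x_0|\leq \beta L - A - 2$ — are the easiest: the location of the maximizer $x_0$ is controlled by Lemma~\ref{l.tightness of prelimiting maximizer}, which gives, for any $\delta>0$, a radius $M$ (depending continuously on $\gamma,\theta,\delta$) such that $\P(|x_0^n|>M)\leq\delta$ for all large $n$; taking $L_0$ large enough that $\beta L_0 - A - 2 > M$ makes $\P((\msf F_4')^c)$ as small as we wish. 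For $\msf F_3'$, we similarly need that $|\h(x_0)|$ is bounded with high probability: an upper bound on $\h(x_0)$ follows from the quadratic decay $\hlim_0(y)\leq -\gamma y^2$ together with the uniform tail bound Proposition~\ref{l.BrLPP uniform tail bound} applied to $\S_n(y,x)$ over the compact region of relevant $(x,y)$ (exactly as in the proof of Lemma~\ref{l.tightness of prelimiting maximizer}), while a lower bound on $\h(x_0)\geq \h(0) \geq \hlim_0(y_0)+\S_n(y_0,0)$ again follows from the one-point lower tail of $\S_n$. So for any prescribed small probability we can choose $L_0$ (and then $n_0$) making $\P((\msf F_3')^c)$ and $\P((\msf F_4')^c)$ small.

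The two events $\msf F_1=\{\Corner^{\uparrow}\leq 4K\}$ and $\msf F_2 = \{\mu\in[-K,K]\}$ require more care since $\Corner^{\uparrow}$ and $\mu$ are defined through the $\F$-conditional structure. For $\msf F_2$: $\mu = \tfrac12(\L_{n,1}(x_0+A)+\L_{n,1}(x_0+A+2))$, and since $\L_{n,1}(x) = \S_n(0,x)$ by \eqref{e.P S identity}, on the event $\msf F_4'$ we have $x_0+A+2$ lying in a fixed compact interval, so $|\mu|$ is bounded by $\sup_{|x|\leq \beta L}|\S_n(0,x)|$, which is again controlled by Proposition~\ref{l.BrLPP uniform tail bound}; hence $\P((\msf F_2)^c \cap \msf F_4')$ is small once $K$ is large. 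For $\msf F_1$: by the characterization of $\Corner^{\uparrow}$ established in Lemma~\ref{l.conditional distribution} (it is the largest value of $Z$ with $\sup_{x_0\leq x\leq n^{1/60}}\hz{z}(x)\leq \h(x_0)$), we have $\Corner^{\uparrow}\geq Z = \L_{n,1}(x_0+A+1)$ always, but we need an \emph{upper} bound. The key observation is that if $z$ is so large that the reconstruction $\hz{z}$ forces the top curve $\L_{n,1}^z$ (or some path weight through it) strictly above $\h(x_0)$, then $z\notin I$; concretely, $\L_{n,1}^z(x_0+A+1)=z$ (the reconstruction passes through $(x_0+A+1,z)$), and since $\h(x)\geq \L_{n,1}(x)$ always, if $z > \h(x_0)$ then the reconstructed profile at $x_0+A+1$ exceeds $\h(x_0)$, so $\Corner^{\uparrow}\leq \h(x_0)$. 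Thus on $\msf F_3'$ we already get $\Corner^{\uparrow}\leq \beta L^{1/2}$, and choosing $K$ with $4K \geq \beta L^{1/2}$ (or rather choosing $L_0$, $K$ compatibly — note $K$ is allowed to depend only on $\gamma,\theta,A$ while $L$ is a free parameter $>L_0$, so one must be slightly careful: actually we should bound $\Corner^{\uparrow}$ directly by $\h(x_0)$ and then bound $\h(x_0)$ by a constant depending on $\gamma,\theta,A$ via the one-point upper tail of $\S_n$ uniformly, giving $\msf F_1$ with $K$ independent of $L$) makes $\P((\msf F_1)^c)$ small. I would carry this out by first fixing $K$ large enough (depending on $\gamma,\theta,A$) that $\P(\Corner^{\uparrow}>4K)\leq \tfrac18$ and $\P(|\mu|>K, |x_0|\leq M)\leq\tfrac18$ using the one-point and uniform tail bounds, then fixing $L_0$ large (depending on $\gamma,\theta,A$, and forced to be at least a constant multiple of $K^2$ to accommodate $\msf F_3'$) so that $\P((\msf F_3')^c)\leq\tfrac18$ and $\P((\msf F_4')^c)\leq\tfrac18$, and finally $n_0$ so that all tail estimates are valid; a union bound then yields $\P(\fav_{K,L})\geq 1-\tfrac12 = \tfrac12$.

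\textbf{Main obstacle.} The genuinely delicate point is the upper bound on $\Corner^{\uparrow}$ in $\msf F_1$: unlike in the narrow-wedge case, $\Corner^{\uparrow}$ is defined via the reconstruction $\hz{z}$ of the full variational problem, not just the top curve, so one must argue carefully (using the formula \eqref{e.L^z formula} for $\L_{n,1}^z$, the identity $\L_{n,1}^z(x_0+A+1)=z$, the monotonicity Lemma~\ref{l.monotonicity of L^f}, and the elementary bound $\hz{z}(x)\geq \L_{n,1}^z(x)$ coming from the path that travels along line $n$ and jumps to line $1$ immediately before $x$ — which needs the point $(0,n)$ to be an admissible starting point, i.e.\ needs $\fh_0(0)>-\infty$ or a nearby point, hence the role of $\theta$) that $\Corner^{\uparrow}$ cannot exceed $\h(x_0)$ plus a controlled error. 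Once that deterministic comparison is in place, everything reduces to the tail estimates for $\S_n$ already available from Proposition~\ref{l.BrLPP uniform tail bound}, and the continuity of $K$ and $L_0$ in $\gamma,\theta,A$ is inherited from the continuity statements in Lemma~\ref{l.tightness of prelimiting maximizer} and the explicit (polynomial/exponential) form of the tail bounds.
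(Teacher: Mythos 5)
Your treatment of $\msf F_2$, $\msf F_3'$ and $\msf F_4'$ — the maximizer location via Lemma~\ref{l.tightness of prelimiting maximizer}, the bounds on $\h(x_0)$ and on $\mu$ via Proposition~\ref{l.BrLPP uniform tail bound}, and the final union bound with $K$ fixed before $L_0$ — matches the paper's argument. The gap is in the event $\msf F_1$, i.e.\ the upper bound on $\Corner^{\uparrow}$, which is precisely the step where the general-initial-data case differs from the narrow wedge. Two things go wrong in your sketch. First, the inequality $\h(x) \geq \L_{n,1}(x)$ is not true ``always'': $\L_{n,1}(x) = \S_n(0,x)$ corresponds to the starting point $y=0$ in the variational formula \eqref{e.prelimiting gen profile formula}, and $\fh_0(0)$ may be very negative or $-\infty$; at best one has $\h(x) \geq \fh_0(y_0) + \S_n(y_0,x)$ for the point $y_0\in[0,\theta]$ with $\fh_0(y_0)\geq -2\theta$, and converting $\S_n(y_0,\cdot)$ into $\L_{n,1}(\cdot)=\S_n(0,\cdot)$ costs a further random error that must itself be controlled by a tail estimate. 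Second, and more seriously, the path you propose to witness the bound — ``travels along line $n$ and jumps to line $1$ immediately before $x$'' — does not see the resampled value $z$ at all: path weights in \eqref{e.path weight definition} are sums of \emph{increments} of the lines, not function values, so a path touching line $1$ only instantaneously at $x_0+A+1$ picks up an increment $\approx 0$ there, not $z$. Consequently the asserted implication ``$z>\h(x_0)$ forces the reconstructed profile at $x_0+A+1$ above $\h(x_0)$, hence $\Corner^{\uparrow}\leq\h(x_0)$'' is unsupported (and as an exact inequality it is false in general: there is an additive offset coming from the value of $\L_{n,1}$ at the point where a path joins the top line, and from the initial data).

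The paper repairs exactly this point: to transfer a large value of $z$ into a large value of $\hz{z}(x_0+A+1)$ one must exhibit a path that joins the top line at a location $u$ strictly to the left of the resampling window, so that its weight contains the increment $z-\L_{n,1}(u)$; the paper takes $u=-M$ (with $M$ from the maximizer-location event) and therefore needs two further high-probability events, $\msf F_5=\{\S_n(y_0,-M)\geq -K/2\}$ and $\msf F_6=\{\L_{n,1}(-M)\leq K/2\}$, which have no counterpart in your proposal. On $\msf F_3\cap\msf F_4\cap\msf F_5\cap\msf F_6$ and for $z\geq 4K$ this yields $\hz{z}(x_0+A+1)\geq -K+\S_n(y_0,-M)+z-\L_{n,1}(-M)\geq 2K>\h(x_0)$, contradicting the characterization of $\Corner^{\uparrow}$ from Lemma~\ref{l.conditional distribution} (at $z=\Corner^{\uparrow}$ the supremum of $\hz{z}$ over $[x_0+A,x_0+A+2]$ equals $\h(x_0)$), whence $\P(\msf F_1^c\cap\msf F_3\cap\msf F_4\cap\msf F_5\cap\msf F_6)=0$. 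Your instinct that $\Corner^{\uparrow}$ should be bounded by a quantity of order $\h(x_0)$, with $K$ independent of $L$, is correct, but the deterministic comparison offered in its support does not work; filling it in requires the additive path-decomposition bound and the two extra events above (or equivalent control of the top line and of $\S_n$ at a fixed point to the left of the window).
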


\begin{proof}
We start by specifying some further good events. Let $y_0\in[-\theta,\theta]$ be some fixed real number such that  $\hlim_0(y_0) \geq -2\theta$. For $M>0$ to be specified later, define
\begin{align*}
\msf F_3 &= \bigl\{|\h(x_0)| \leq K\bigr\}, \qquad &\msf F_4 &= \big\{|x_0| \leq M\big\},\\ \qquad 
\msf F_5 &= \Big\{\S_n(y_0, -M) \geq -K/2\Big\}, \qquad &\msf F_6 &= \Big\{\L_{n,1}(-M) \leq K/2\Big\}.
\end{align*}
Note that $\fav_{K, L}\supseteq \msf F_1\cap\msf F_2\cap\msf F_3\cap \msf F_4$ when $\beta L\geq M+A+2$ and $\beta L^{1/2}\geq K$. We set $L_0$ high enough that both conditions on $L$ are met whenever $L\geq L_0$ (when $K$ and $M$ are set, which will be done in a way not depending on $L$).

We will show that $\P\left(\fav_{K, L}^c\right) \leq \tfrac{1}{2}$ for large enough $K$ and $L$ by showing the stronger statement that, for $\delta=1/10$ and appropriate choices of $K$, $L$, and $M$, we have
$$\P\left(\bigcup_{i=1}^6 \msf F_i^c\right) \leq 5\delta = \frac{1}{2}.$$

\noindent\emph{Bounding $\P(\msf F_4^c)$}: This is a simple application of Lemma~\ref{l.tightness of prelimiting maximizer}, which yields $M = M(\gamma,\theta)$ such that $\P(\msf F_4^c) \leq \delta = \frac{1}{10}$ for $n\geq n_0 = n_0(\gamma,\theta)$. We fix the value of $M$ obtained here for the rest of the proof.
\medskip

\noindent\emph{Bounding $\P(\msf F_2^c\cap \msf F_4)$}: On the event $\msf F_4$ we have $|x_0|\leq M$. Also, for large enough $K$ depending only on $\delta=1/10$, $A$, and $M$,
$$\P\left(\inf_{|x|\leq M+A+2} \L_{n,1}(x) < -K\right) \leq \delta/2 \quad\text{and}\quad \P\left(\sup_{|x| \leq M+A+2} \L_{n,1}(x) > K\right) \leq \delta/2;$$
this is implied by Proposition~\ref{l.BrLPP uniform tail bound} after recalling from \eqref{e.P S identity} that $\L_{n,1}(x) = \S_n(0, x)$.
Thus $\P(\msf F_2^c\cap \msf F_4)$ is at most $\delta$, since $\mu = \frac{1}{2}(\L_{n,1}(x_0+ A) + \L_{n,1}(x_0+A+2))$ is bounded above and below on $\msf F_4$ by $\sup_{x\in[-M-A-2, M+A+2]} \L_{n,1}(x)$ and $\inf_{x\in[-M-A-2, M+A+2]} \L_{n,1}(x)$.
\medskip

\noindent\emph{Bounding $\P(\msf F_5^c)$ and $\P(\msf F_6^c)$}: These correspond respectively to lower and upper tails on one-point last passage values, i.e., on $\S_n(y,x)$ for fixed $y$ and $x$, because $\L_{n,1}(-M) = \S_n(0, -M)$ in view of \eqref{e.P S identity}. Thus, we obtain $\P(\msf F_5^c) \leq \delta$ and $\P(\msf F_6^c) \leq \delta$ by applying Proposition~\ref{l.BrLPP uniform tail bound} in a closed interval of unit length around the starting and ending points and by setting $K$ high enough, depending on $\theta$.
\medskip

\noindent\emph{Bounding $\P(\msf F_3^c\cap \msf F_4 \cap \msf F_5)$}: We first bound the probability that $\h(x_0) \geq K$ on $\msf F_4$. Recall that, by assumption, $ \hlim_0(y) \leq - \gamma |y|^{1/2}$ for all $y\in \R$. 
Note that, on $\msf F_4$,
$\h(x_0) = \sup_{|x|\leq M} \h(x).$
Thus,
\begin{align*}
\sup_{|x|\leq M}\h(x)
= \sup_{\substack{|x|\leq M\\ 0\leq y \leq n^{1/60}}}\Big(\hlim_0(y) + \S_n(y, x)\Big)
&\leq \sup_{\substack{|x|\leq M\\ 0\leq y \leq n^{1/60}}} \left(\S_n(y,x) - \gamma |y|^{1/2}\right).
\end{align*}
By a union bound, we can bound $\P\left(\sup_{|x|\leq M} \h(x) > K\right)$ above by
\begin{align*}
\sum_{j=0}^{\lceil n^{1/60}\rceil } \P\Bigg(\sup_{\substack{|x|\leq M\\ y \in[j,j+1]}} \left(\S_n(y,x) - \gamma |y|^{1/2}\right) > K\Bigg) 
&\leq \sum_{j=0}^{\lceil n^{1/60}\rceil } \P\Bigg(\sup_{\substack{|x|\leq M\\ y \in[j,j+1]}} \S_n(y,x) > K + \gamma j^{1/2}\Bigg)\\
&\leq \sum_{j=0}^{\lceil n^{1/60}\rceil } C\max(M^2, j^2) \exp\left(-c(K^{3/2} + j^{3/4})\right),
\end{align*}
the final inequality by an application of Proposition~\ref{l.BrLPP uniform tail bound}. The final expression can be made less than $\delta=\frac{1}{10}$ by raising $K$ appropriately (depending on $M$ and $\gamma$) if needed. Doing so, we learn that $\P(\{\h(x_0)\geq K\}\cap \msf F_4) \leq \delta$.

Next we bound the probability that $\h(x_0) \leq -K$ on $\msf F_5$.  Recall that $y_0\in [-\theta,\theta]$ is such that $\fh_0(y_0) \geq -2\theta$, and increase $K$ if needed so that $\fh_0(y_0) \geq -K/3$. Since $x_0$ is the maximizer of $\h$, and  \eqref{e.prelimiting gen profile formula} holds, we see that
$$\h(x_0) \geq \fh_0(y_0) + \S_n(y_0, -M) \geq -5K/6 > -K,$$
the last inequality holding on $F_5$. Thus $\P(\{\h(x_0) \leq -K\}\cap F_5) = 0$. Overall we have shown that $\P(\msf F_3^c \cap \msf F_4\cap \msf F_5) \leq \delta$.

\medskip

\noindent\emph{Bounding $\P(\msf F_1^c\cap \msf F_3\cap \msf F_4\cap \msf F_5\cap \msf F_6)$}: Recall that we have set $y_0$ and $K$ such that $\hlim_0(y_0) \geq -K/3 > -K$. Observe that $\smash{\Corner^{\uparrow}} > 4K$ implies that there is a value of $z$ in $[4K, \infty)$ such that $\h(x_0) = \sup_{x\in [x_0+A,x_0+A+2]} \hz{z}(x)$ by Lemma~\ref{l.conditional distribution}. But we will now show that if $z\geq 4K$, then, on the event $\bigcap_{i=3}^6 \msf F_i$, we have that $\hz{z}(x_0+A+1) > \h(x_0)$; this is a contradiction and so the probability we are bounding must be zero.
We use the formula for $\hz{z}$ from \eqref{e.L^f representation}, and the formula \eqref{e.melon LPP and S_n} relating $\S_n$ and LPP values through $\L_n$. Indeed, if $z\geq 4K$ and the event $\bigcap_{i=3}^6 \msf F_i$ holds, then
\begin{align*}
\hz{z}(x_0+A+1) &= \sup_{0\leq y \leq n^{1/60}} \Big(\hlim_0(y) + \L_n^{z}[(-\tfrac12n^{1/3}+ y, n) \to (x_0+A+1, 1)]-n^{2/3}\Big)\\
&\geq -K + \left(\L_n^z[(-\tfrac12n^{1/3}+ y_0, n) \to (-M, 1)] - n^{2/3}\right)\\
&\quad + \L_n^z[(-M, 1) \to (x_0+A+1,1)]\\
&= -K + \S_n(y_0, -M) + z - \L_{n,1}(-M)\\
&\geq 3K + \S_n(y_0, -M)  - \L_{n,1}(-M) \geq 2K.
\end{align*}
The first inequality bounded the supremum by the choice of $y=y_0$ and used our assumption that $\hlim_0(y_0) > -K$; the penultimate inequality used the assumption that $z\geq4K$;  and the final inequality used the bounds that hold on $\msf F_5\cap\msf F_6$.
%
The conclusion $\hz{z}(x_0+A+1)  \geq 2K$ contradicts $\h(x_0) \leq K$, which holds on $\msf F_4$, since, on this event, $x_0+A+1 \in [-M,M]$. Thus the probability we are bounding is zero.

Overall we have shown that $\P\big(\bigcup_{i=1}^6 \msf F_i^c\big) \leq 5\delta = 1/2$. It may be easily checked that the setting of $K$ and $L_0$ can be made to depend on $\gamma$, $\theta$, and $A$ continuously, completing the proof of Lemma~\ref{l.fav probability}.
\end{proof}

\subsection{Performing the resampling: the proof of Proposition~\ref{p.resampling twin peaks}}

In this proof,
we will need a monotonicity property of conditional probabilities of the normal distribution. The proof is a straightforward calculation that we omit here, but details are available in \cite[Lemma~5.15]{calvert2019brownian}.

\begin{lemma}\label{l.normal monotonicity}
Fix $r>0$, $m\in\R$, and $\sigma^2>0$, and let $X$ be distributed as $N(m, \sigma^2)$. Then the quantity $\P(X \geq s-r \mid X\leq s)$ is a strictly decreasing function of $s\in\R$.
\end{lemma}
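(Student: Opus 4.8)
\textbf{Proof proposal for Lemma~\ref{l.normal monotonicity}.}

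The plan is to write the conditional probability explicitly as a ratio of Gaussian tail integrals and show the resulting function of $s$ is strictly decreasing by differentiation. First I would reduce to the standard normal: writing $X = m + \sigma Z$ with $Z \sim N(0,1)$, and setting $a := (s-r-m)/\sigma$ and $b := (s-m)/\sigma$, we have $a < b$ (since $r > 0$), and $\P(X \ge s - r \mid X \le s) = \P(Z \in [a,b])/\P(Z \le b)$. As $s$ increases by $\Delta$, both $a$ and $b$ increase by $\Delta/\sigma$, so it suffices to show that the function
\begin{equation}
\phi(b) := \frac{\Phi(b) - \Phi(b - \rho)}{\Phi(b)}
\end{equation}
is strictly decreasing in $b \in \R$ for each fixed $\rho := r/\sigma > 0$, where $\Phi$ is the standard normal CDF and $\varphi = \Phi'$ its density.

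Next I would compute the derivative. One finds
\begin{equation}
\phi'(b) = \frac{\bigl(\varphi(b) - \varphi(b-\rho)\bigr)\Phi(b) - \varphi(b)\bigl(\Phi(b) - \Phi(b-\rho)\bigr)}{\Phi(b)^2} = \frac{\Phi(b-\rho)\,\varphi(b) - \Phi(b)\,\varphi(b-\rho)}{\Phi(b)^2}.
\end{equation}
So the claim reduces to showing $\Phi(b-\rho)\,\varphi(b) < \Phi(b)\,\varphi(b-\rho)$ for all $b$, i.e.
\begin{equation}
\frac{\varphi(b)}{\Phi(b)} < \frac{\varphi(b-\rho)}{\Phi(b-\rho)}.
\end{equation}
This is exactly the statement that the function $t \mapsto \varphi(t)/\Phi(t)$ (the reciprocal of the ``Mills-type'' ratio for the lower tail) is strictly decreasing, since $b > b - \rho$. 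The routine way to verify this is to show $g(t) := \varphi(t)/\Phi(t)$ satisfies $g'(t) < 0$: using $\varphi'(t) = -t\varphi(t)$, one gets $g'(t) = -\varphi(t)\bigl(t\Phi(t) + \varphi(t)\bigr)/\Phi(t)^2$, and the bracket $t\Phi(t) + \varphi(t)$ is strictly positive for all $t \in \R$ (it is the derivative of $t\mapsto t\Phi(t)+\varphi(t)$... more directly, it equals $\E[(Z+t)\mathbf 1_{Z \le t}]$ up to sign, or: it is positive at $t=0$, tends to $0^+$ as $t\to-\infty$, tends to $+\infty$ as $t\to+\infty$, and has derivative $\Phi(t) > 0$, hence is increasing and stays positive). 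Therefore $g' < 0$, giving the strict inequality above, hence $\phi'(b) < 0$, hence the lemma.

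There is no real obstacle here; the only mild care needed is the elementary inequality $t\Phi(t) + \varphi(t) > 0$ for all real $t$, which follows since this expression is the antiderivative-type quantity whose derivative is $\Phi(t) > 0$, so it is strictly increasing, while its limit as $t \to -\infty$ is $0$ (from $t\Phi(t) \to 0$ and $\varphi(t) \to 0$, with $t\Phi(t) \to 0^-$ but dominated so that the sum stays positive — this can be seen from $\Phi(t) \le \varphi(t)/|t|$ for $t<0$, giving $t\Phi(t) + \varphi(t) \ge \varphi(t)(1 - 1) = 0$ with strictness for finite $t$). Since, as noted, the full computation is straightforward and is already recorded in \cite[Lemma~5.15]{calvert2019brownian}, I would simply cite that reference for the details rather than reproduce them.
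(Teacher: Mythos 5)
Your proposal is correct, and it is essentially the computation the paper has in mind: the paper omits the argument as a "straightforward calculation" and points to \cite[Lemma~5.15]{calvert2019brownian}, and your reduction to the strict monotonicity of $t\mapsto\varphi(t)/\Phi(t)$ via the positivity of $t\Phi(t)+\varphi(t)$ (which follows cleanly from $h(t):=t\Phi(t)+\varphi(t)$ having $h'(t)=\Phi(t)>0$ and $h(t)\to0$ as $t\to-\infty$, or equivalently from $h(t)=\E[(t-Z)\1{Z\leq t}]>0$) is exactly that standard calculation. The only blemish is the parenthetical identification of the bracket with "$\E[(Z+t)\1{Z\le t}]$ up to sign," which is not quite the right expression, but it is harmless since you immediately give two correct justifications.
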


\begin{proof}[Proof of Proposition~\ref{p.resampling twin peaks}]
We fix $K$ and $L_0$ as given by Lemma~\ref{l.fav probability}. For any given $L>L_0$, we have that $\P(\fav_{K,L}) \geq 1/2$.

Recall $x_0  = x_0^{n}= \argmax_{|x|\leq n^{1/60}} \h(x)$. We have that
\begin{align*}
\MoveEqLeft
\P\left(\sup_{x\in [x_0+A,x_0+A+2]} \h(x) > \h(x_0) - \varepsilon;\, |\h(x_0)| \leq \beta L^{1/2};\, |x_0|\leq \beta L-A-2\right)\\
&\hspace{-0.3cm}= \E\left[\P\left(\sup_{x\in [x_0+A,x_0+A+2]} \h(x) > \h(x_0) - \varepsilon \ \bigg|\ \F\right)\one_{|\h(x_0)| \leq \beta L^{1/2}\, |x_0|\leq \beta L-A-2}\right].
\end{align*}
We have to bound below the inner conditional probability. Set $\fav = \fav_{K,L}$ for notational convenience. Recall that the occurrence of $\fav$ implies that $|x_0|\leq \beta L-A-2$ and $|\h(x_0)| \leq \beta L^{1/2}$. We claim that, on $\fav$, the event of the inner conditional probability is implied by
\begin{equation}\label{e.I_epsilon}
Z := \L_{n,1}(x_0+A+1) \in \left[\Corner^{\downarrow}\vee (\Corner^{\uparrow}-\varepsilon/2),\  \Corner^{\uparrow}\right]=:I_{\varepsilon}.
\end{equation}
The validity of the claim follows from two facts. The first is that $\sup_{x\in [x_0+A,x_0+A+2]} \hz{z}(x) = \h(x_0)$ when $z=\Corner^{\uparrow}$ (from Lemma~\ref{l.conditional distribution}); and the second is that,  almost surely for all $z\in\R$, $\left|\sup_{x\in[x_0+A,x_0+A+2]} \hz{z_1}(x) - \sup_{x\in[x_0+A,x_0+A+2]} \hz{z_2}(x)\right|$ is at most $2|z_1-z_2|$ (from Lemma~\ref{l.lipschitz}). We apply this to $z_1 = \Corner^{\uparrow}$ and $z_2\in I_\varepsilon$.

With this preparation, we see that
\begin{align}
\MoveEqLeft[12]
\P\left(\sup_{x\in [x_0+A,x_0+A+2]} \h(x) > \h(x_0) - \varepsilon\ \bigg|\ \F\right)\one_{|\h(x_0)|\leq \beta L^{1/2},\, |x_0|\leq \beta L-A-2}\nonumber\\
&\geq \P\left(\sup_{x\in [x_0+A,x_0+A+2]} \h(x) > \h(x_0) - \varepsilon\ \bigg|\ \F\right)\cdot \one_{\fav}\nonumber\\
&\geq \P\big(Z \in I_{\varepsilon}\ \big|\ \F\big)\cdot \one_{\fav} . \label{e.Z in interval prob}
\end{align}
Recall now from Lemma~\ref{l.conditional distribution} that $Z$ is distributed as a normal random variable with mean $\mu = \frac{1}{2}(\L_{n,1}(x_0+A) + \L_{n,1}(x_0+A+2))$ and variance one, conditioned on lying inside $[\Corner^{\downarrow}, \Corner^{\uparrow}]$. Observe from \eqref{e.I_epsilon} that $I_\varepsilon$ is one of two intervals: $[\Corner^{\downarrow}, \Corner^{\uparrow}]$ or $[\Corner^{\uparrow}-\varepsilon/2, \Corner^{\uparrow}]$. In the first case, the conditional probability in \eqref{e.Z in interval prob} equals one. We show now that, in the second case, the conditional probability is bounded below by $\smallconst\varepsilon$ for some constant $\smallconst>0$.

We let $N$ be a standard normal random variable with mean zero and variance one. Then, on the event $\fav\cap\{\Corner^{\downarrow} < \Corner^{\uparrow}-\varepsilon/2\}$, we have that $|\mu| \leq K$ and $\Corner^{\uparrow} \leq 4K$, which implies that, on the same event,
\begin{align*}
\P\big(Z \in I_\varepsilon\ \big|\ \F\big)
&= \P\left(N +\mu \in [\Corner^{\uparrow} -\varepsilon/2, \Corner^{\uparrow}]\  \Big|\  N+\mu \in [\Corner^{\downarrow}, \Corner^{\uparrow}], \F\right)\\
&= \frac{\P\left(N +\mu \in [\Corner^{\uparrow} -\varepsilon/2, \Corner^{\uparrow}]\  \Big|\ \F\right)}{\P\left(N+\mu \in [\Corner^{\downarrow}, \Corner^{\uparrow}]\  \Big|\   \F\right)}\\
&\geq \P\left(N+\mu \geq \Corner^{\uparrow} -\varepsilon/2 \  \Big|\  N+\mu \leq \Corner^{\uparrow}, \F\right)\\
&\geq \P\left(N +\mu  \geq 4K -\varepsilon/2\  \Big|\ N+\mu \leq 4K, \F\right)\\
&\geq  \P\big(N +\mu  \in [4K -\varepsilon/2, 4K]\ \big|\ \F\big).
\end{align*}
For the first equality, we interpret $\P(Z\in \cdot \mid \F)$ as a regular conditional distribution, which exists as $Z$ takes values in $\R$ (see \cite[Theorem~6.3]{Kallenberg}); conditioning on an event $E$  is then understood by the usual equality of conditional probability with a ratio of probabilities, i.e., $\P(\cdot \mid E, \F) = \P(\cdot\cap E\mid \F)/\P(E\mid \F)$. Then the first equality follows from the characterization of $Z$'s law recalled above after \eqref{e.Z in interval prob}. The second equality can be seen by noting that we are working on the event that $\Corner^\downarrow < \Corner^\uparrow-\varepsilon/2$, and some simple manipulations of the probabilities in the ratio gives the third line, i.e., the first inequality.
The penultimate inequality used the monotonicity property of normal random variables recorded in Lemma~\ref{l.normal monotonicity} and that $\Corner^{\uparrow}\leq 4K$ on $\fav$. Now the form of the normal density gives that the final expression is bounded below by $\smallconst\varepsilon\cdot\one_\fav$ for some $\smallconst>0$ depending only on $K$, since $|\mu|\leq K$; further, this dependence is clearly continuous in $K$.

Substituting  into \eqref{e.Z in interval prob} this bound, as well as the earlier bound of one in the case that $\Corner^{\downarrow}\geq \Corner^{\uparrow}-\varepsilon/2$, gives that
\begin{align*}
\MoveEqLeft[24]
\P\left(\sup_{x\in [x_0+A,x_0+A+2]} \h(x) > \h(x_0) - \varepsilon;\, |\h(x_0)| \leq \beta L; \, |x_0|\leq \beta L-A-2\right)\\
&\geq \smallconst\varepsilon\cdot \P\big(\fav_{K,L}\big) \geq \frac{1}{2}\smallconst \varepsilon,
\end{align*}
in view of $K$ and $L$ being such that $\P(\fav_{K,L}) \geq 1/2$. Relabelling $\smallconst$ completes the proof of Proposition~\ref{p.resampling twin peaks}.
\end{proof}


\section{Proof of Theorem~\ref{thm:main}}
\label{sec:main_proof}



We start by making some definitions. Let the set $\TTP_{\!A, L}^{\eps}$ be defined in \eqref{eq:setTP} and let $\TTP_{\!A, L} := \TTP_{\!A, L}^{0}$. Then for $L \geq A > 0$, $T > T_0 > 0$ and $\eps \geq 0$, we define the set
\begin{equation}\label{eq:times_bounded}
\CT_{[T_0, T], A, L}^{\eps}(\fh) := \bigl\{t \in [T_0,T] : \fh_t \in \TTP_{\!A, L}^{\eps}\bigr\}.
\end{equation}
We denote $\CT_{[T_0, T], A, L}(\fh) := \CT^{0}_{[T_0, T], A, L}(\fh)$. 


%
For lighter notation, we suppress the initial data $\fh_0$ from the probability measure $\P_{\fh_0}$.

We will first prove \eqref{eq:dimTP} for the set $\CT_{[T_0, T], A, L}(\fh)$ defined in \eqref{eq:times_bounded}, where $A > 0$ and $L \geq A$. All constants which appear in this proof can depend on $T_0$, $T$, $L$ and $A$, and we will not indicate these dependences. If $\CT_{[T_0, T], A, L}(\fh) = \emptyset$, then $\dim(\CT_{[T_0, T], A, L}(\fh)) = 0$ and the required bound is trivial. Hence, we will consider the case $\CT_{[T_0, T], A, L}(\fh) \neq \emptyset$.
\medskip

\emph{Step 1: From twin peaks at exceptional time to $\eps$-twin-peaks at deterministic time.} We start by introducing a dyadic partition of the time interval $[T_0, T]$. More precisely, for any integer  $n \geq 1$, we define the set
$$\CA_n := \{i \in \nn : 1 \leq i \leq \lceil 2^n (T - T_0) \rceil\}$$
and the times $\ft_{n, i} := T_0 + 2^{-n} i$ indexed by $i \in \CA_n$. Furthermore, for these values of $n$ and $i$, we define the set
\begin{equation}
\CJ_{n, i} := \bigl\{t \in [T_0,T] :  |\ft_{n, i} - t| \leq (2+\tfrac{1}{n})^{-n - 1}\bigr\},
\end{equation}
which contains a small left-neighbourhood of $\ft_{n, i}$; the $1/n$ term provides a little padding that will be useful shortly. We will show that, if there is an exceptional time present in $\CJ_{n, i}$, i.e., $\CJ_{n, i}\cap \CT_{[T_0,T],A,L}(\fh) \neq \emptyset$, then $\fh^L_{\ft_{n,i}} \in \TP{A, L + 1}^{\eps}$ for a suitably chosen $\eps$, where $\fh^L$ is the cutoff defined in \eqref{eq:cut-off}.

If $\CJ_{n, i} \cap \CT_{[T_0, T], A, L}(\fh) \neq \emptyset$, then let $\tau$ be the smallest random time point in $\CJ_{n, i} \cap \CT_{[T_0, T], A, L}(\fh)$, and let $\chi_1, \chi_2 \in \bJ_L$ (this set is defined in \eqref{eq:setJ}) be two random spatial points such that $|\chi_1 - \chi_2 | \geq A$ and $\Max(\fh_\tau) = \fh_\tau(\chi_1) = \fh_\tau(\chi_2)$. Since the pair of points $\chi_1$, $\chi_2$ may not be  unique, we choose it so that $\chi_1 + \chi_2$ takes the minimum value.

Recall that, for any $\nu > 0$, the KPZ fixed point is almost surely locally $(\tfrac{1}{3}-\nu)$-H\"{o}lder continuous in the time variable uniformly in the space variable (see Lemma~\ref{lem:Holder-time}). Owing to this, and also to $\ft_{n,i}, \tau \in [T_0, T+1]$, and $\chi_\ell \in \bJ_L$, it follows that, for $\ell = 1$ and~$2$,
\begin{equation}\label{eq:Holderr}
|\fh^L_{\ft_{n,i}}(\chi_\ell) - \fh^L_{\tau}(\chi_\ell)| = |\fh_{\ft_{n,i}}(\chi_\ell) - \fh_{\tau}(\chi_\ell)| \leq \CC_1 2^{-(\frac{1}{3}-\nu) n}
\end{equation}
almost surely. Here,  $\CC_1 \geq 0$ is an almost surely finite random constant that is independent of $n$ and~$i$ but that depends on $L$, $T$ and $\nu$. Moreover, $\ft_{n,i}, \tau \in [T_0, T+1]$ and Lemma~\ref{lem:max_Holder} yield
\begin{equation}\label{eq:C_2}
	\bigl| \Max(\fh^L_{\ft_{n,i}}) - \Max(\fh^L_{\tau}) \bigr| \leq \CC_2 2^{-(\frac{1}{3}-\nu) n},
\end{equation}
for $\CC_2 \geq 0$, which  is an almost surely finite random constant that is independent of $n$ and $i$ but that depends on $L$, $T$ and $\nu$.

Combining \eqref{eq:Holderr} and \eqref{eq:C_2}, we find that, for $\ell = 1$ and $2$,
\begin{equation}
\Max(\fh^L_{\ft_{n,i}}) - \fh^L_{\ft_{n,i}}(\chi_\ell) \leq \bigl(\fh^L_{\tau}(\chi_\ell) - \fh^L_{\ft_{n,i}}(\chi_\ell)\bigr) + \bigl(\Max(\fh^L_{\ft_{n,i}}) - \Max(\fh^L_{\tau})\bigr) \leq \CC_3 2^{-(\frac{1}{3}-\nu) n},
\end{equation}
where $\CC_3 = \CC_1 + \CC_2$ is an almost surely finite random constant that is independent of $n$ and $i$ but that depends on $L$, $T$ and $\nu$. Let $K$ be a deterministic constant (which we will eventually send to infinity). Then, for all $n$ sufficiently large, we have that $\fh^L_{\ft_{n,i}} \in \TP{A, L + 1}^{\eps}$ almost surely on the event that $\left\{\CJ_{n, i} \cap \CT_{[T_0, T], A, L}(\fh) \neq \emptyset\right\}\cap\left\{\CC_3 \leq K\right\}$, where $\eps = K 2^{-( 1 / 3-\nu)n}$. The increase of $L$ by $1$ comes from \eqref{eq:C_2}: if $\Max(\fh_{\tau}) \in \bJ_{L^{1/2}}$, then $\Max(\fh^L_{\ft_{n,i}}) \in \bJ_{(L + 1)^{1/2}}$ as soon as $\eps$ is sufficiently small.
\medskip

\emph{Step 2: The open cover and its Hausdorff pre-measure.} We estimate the Hausdorff pre-measure \eqref{eq:pre-measure} of $\CT_{[T_0, T], A, L}(\fh)$ by choosing a suitable open covering. For this, we define the set
\begin{equation}
\CJ^*_{n, i} := \bigl\{t \in [T_0,T] : |t - \ft_{n, i}| < 2^{-n - 1}\bigr\},
\end{equation}
which is open and has diameter $2^{-n}$; note also the inclusion $\CJ_{n, i} \subset \CJ^*_{n, i}$ due to the presence of the padding term $1/n$ in the former's definition.

We further define the set $\CI_n := \{i \in \CA_n : \CJ_{n, i} \cap \CT_{[T_0, T], A, L}(\fh) \neq \emptyset\}$.
Then one can see that the following open covering holds: $\CT_{[T_0, T], A, L}(\fh)\subset \bigcup_{i \in \CI_n} \CJ^*_{n, i}$. Moreover, for any $\alpha > 0$, we have that
\begin{equation}\label{eq:m-bound}
m_{\alpha, n} \bigl(\CT_{[T_0, T], A, L}(\fh) \bigr) := \sum_{i \in \CI_n} \diam(\CJ^*_{n, i})^\alpha = 2^{-n\alpha} |\CI_n|,
\end{equation}
where $\diam$ is the diameter of a set. The definition of the Hausdorff pre-measure \eqref{eq:pre-measure} yields
\begin{equation}\label{eq:UpperBound2}
\CH^\alpha_{\delta_n}\bigl(\CT_{[T_0, T], A, L}(\fh) \bigr) \leq m_{\alpha, n} \bigl(\CT_{[T_0, T], A, L}(\fh) \bigr),
\end{equation}
where $\delta_n = 2^{-n - 1}$, which vanishes as $n \to \infty$.

Next we estimate the quantity $m_{\alpha, n}(\CT_{[T_0, T], A, L}(\fh))$. Let us define the set
$$\CI^*_n := \bigl\{i \in \CA_n : \fh^L_{\ft_{n,i}} \in \TP{A, L + 1}^{\eps}\bigr\},$$
where $\eps = K 2^{-(1 / 3-\nu)n}$ as above. Then, as we proved in Step 1, $\CI_n \subset \CI^*_n$ almost surely on the event $\{\CC_3 \leq K\}$. Hence \eqref{eq:m-bound} yields, on the same event,
\begin{equation}
m_{\alpha, n}\bigl(\CT_{[T_0, T], A, L}(\fh)\bigr) \leq 2^{-n\alpha}|\CI^*_n|
\end{equation}
almost surely. Taking expectations, we obtain
\begin{equation}\label{eq:m_bound}
\E\bigl[m_{\alpha, n}\bigl(\CT_{[T_0, T], A, L}(\fh)\bigr)\one_{\CC_3 \leq K}\bigr] \leq 2^{-n\alpha} \E\bigl[|\CI^*_n|\one_{\CC_3\leq K}\bigr],
\end{equation}
where
\begin{equation}
\E\bigl[|\CI^*_n|\one_{\CC_3\leq K}\bigr] = \sum_{i \in \CA_n} \P \bigl( \fh^L_{\ft_{n,i}} \in \TP{A, L + 1}^{\eps}, \CC_3\leq K\bigr) \leq \sum_{i \in \CA_n} \P \bigl( \fh^L_{\ft_{n,i}} \in \TP{A, L + 1}^{\eps}\bigr).
\end{equation}
 Using Theorem~\ref{thm:densities2-intro}, we see that each summand is at most $C \epsilon \ft_{n,i}^{-1/3}$ for a constant $C$. Now, by the definitions of $\ft_{n,i} =T_0+ 2^{-n} i$
 and $\eps= K 2^{-(1 / 3-\nu)n}$, we get, for a constant $C_3$,
 $$\P\bigl(\fh^L_{\ft_{n,i}} \in \TP{A, L + 1}^{\eps}\bigr) \leq C  K 2^{-(\frac{1}{3}-\nu)n} (T_0+2^{-n} i)^{-1/3} \leq C_3K 2^{\nu n} i^{-1/3}.$$
 In the second inequality, we used  $(T_0+2^{-n}i)^{-1/3} \leq 2^{n/3} i^{-1/3}$.  Hence,
\begin{equation}
\E\bigl[|\CI^*_n|\one_{\CC_3\leq K}\bigr]  \leq C_3 K2^{\nu n} \sum_{i \in \CA_n} i^{-1/3} \leq C_4 K2^{\nu n} (2^n T)^{\frac{2}{3}} = C_4 KT^{\frac{2}{3}} 2^{n(\frac{2}{3} + \nu)},
\end{equation}
for a new constant $C_4 \geq 0$. Combining this bound with \eqref{eq:m_bound}, we obtain
\begin{align}
\E\bigl[m_{\alpha, n}\bigl(\CT_{[T_0, T], A, L}(\fh)\bigr)\one_{\CC_3 \leq K}\bigr] &\leq 2^{-n\alpha}\cdot C_4 KT^{\frac{2}{3}}  2^{n(\frac{2}{3} + \nu)}
= C_4 KT^{\frac{2}{3}} 2^{n (\frac{2}{3}-\alpha + \nu)}.
\end{align}
For any $\alpha > \frac{2}{3}$, we may choose $0 < \nu < \alpha-\frac{2}{3}$, so that the last expression vanishes as $n \to \infty$. Combining this with \eqref{eq:UpperBound2} and the definition \eqref{eq:measure}, we conclude
$$\E\bigl[\CH^\alpha_{\delta_n}(\CT_{[T_0, T], A, L}(\fh))\one_{\CC_3\leq K}\bigr] \leq  C_4 KT^{\frac{2}{3}} 2^{n (\frac{2}{3}-\alpha + \nu)}$$
for any $\alpha > \frac{2}{3}$. Then the monotone convergence theorem and the almost sure finiteness of $\CC_3$ yield, for any $\alpha > \frac{2}{3}$, that
\begin{equation}\label{eq:upper-bound-expectation}
\E\bigl[\CH^\alpha \bigl(\CT_{[T_0, T], A, L}(\fh)\bigr)\bigr] = \lim_{K\to\infty}\lim_{n \to \infty} \E\bigl[\CH^\alpha_{\delta_n}(\CT_{[T_0, T], A, L}(\fh))\one_{\CC_3\leq K}\bigr] = 0.
\end{equation}
By \eqref{eq:Hausdorff_dim}, this shows, for every $T_0>0$, $A>0$, and $L>0$, that the Hausdorff dimension of $\CT_{[T_0, T], A, L}(\fh)$ is almost surely at most $2/3$.
\medskip

\emph{Step 3: Inferring the Hausdorff dimension upper bound for $\CT_{T,A}(\fh)$.} Recall the countable stability property of Hausdorff dimension \eqref{eq:monotone}, which says that the Hausdorff dimension of a countable union of spaces is the supremum of the Hausdorff dimensions of the individual spaces. This yields that the Hausdorff dimension of $\CT_{T, A}(\fh)$ is at most $2/3$ after noting that this set can be written as a countable union of sets of the form $\CT_{[T_0, T], A, L}(\fh)$:
\begin{equation}\label{eq:countable stability decomp}
\CT_{T,A}(\fh) = \bigcup_{L=1}^{\infty} \CT_{[L^{-1}, T], A, L}(\fh).
\end{equation}
This equality is implied by the straightforward set monotonicity properties of $\CT_{[T_0, T], A, L}(\fh)$ in each of the variables $T_0$, $A$, and $L$. Thus the Hausdorff dimension of $\CT_{T,A}(\fh)$ is almost surely at most $2/3$ when $A>0$. If $A=0$, we can replace the $A$ in the right-hand side \eqref{eq:countable stability decomp} with $L^{-1}$ and the same reasoning applies. This completes the proof of Theorem~\ref{thm:main}.

\appendix

\section{Hausdorff measure and dimension}
\label{sec:Hausdorff}

We recall the definitions of Hausdorff measure and Hausdorff dimension for subsets of~$\R$---a treatment of Hausdorff dimension in $\R^d$ may be found in \cite[Section~4]{Mattila}.

\begin{defn}\thmtitle{Hausdorff measure and dimension} 
 Let $S \subset \R$ and $0 \leq \alpha < \infty$.
\begin{enumerate}
\item For $0 < \delta \leq \infty$, we define the \emph{$\delta$-approximate packing pre-measure}
\begin{equation}\label{eq:pre-measure}
\CH^\alpha_\delta(S) := \inf \left\{ \sum_{i = 1}^\infty \diam (J_i)^\alpha : S \subset \bigcup_{i = 1}^\infty J_i,\; \diam (J_i) < \delta \right\},
\end{equation}
where the infimum is taken over all countable covers of $S$ by sets $J_i \subset \R$ with $\diam(J_i) < \delta$, and where $\diam(J_i)$ is the supremum of all distances between points in $J_i$.
\item We define the \emph{$\alpha$-dimensional Hausdorff measure}
\begin{equation}\label{eq:measure}
\CH^\alpha(S) := \lim_{\delta \to 0} \CH^\alpha_\delta(S) = \sup_{\delta > 0} \CH^\alpha_\delta(S).
\end{equation}
We note that $\CH^\alpha(S)$ is well-defined (although it can be infinite), because $\CH^\alpha_\delta(S)$ monotonically increases as $\delta \searrow 0$.
\item The \emph{Hausdorff dimension} of $S$ is defined by the following equivalent formulas:
\begin{align}\label{eq:Hausdorff_dim}
\dim(S) &:= \sup \{\alpha \geq 0 : \CH^\alpha(S) > 0\} = \sup \{\alpha \geq 0 : \CH^\alpha(S) = \infty\} \\
&= \inf \{\alpha \geq 0 : \CH^\alpha(S) < \infty\} = \inf \{\alpha \geq 0 : \CH^\alpha(S) = 0\}.
\end{align}
Note that $0 \leq \dim(S) \leq 1$ for any $S \subset \R$.
\end{enumerate}
\end{defn}

\begin{rem}\label{rem:dimension}
We will need the following facts about the Hausdorff dimension, which are implied by \eqref{eq:Hausdorff_dim}:
\begin{enumerate}
\item if $\CH^\alpha(S) < \infty$, then $\dim(S) \leq \alpha$;
\item\label{it:2} if $\CH^\alpha(S) > 0$, then $\dim(S) \geq \alpha$.
\end{enumerate}
\end{rem}

Clearly, the Hausdorff measure is monotone in the sense that $\CH^\alpha(S) \leq \CH^\alpha(S')$ for $S \subset S'$. Moreover, the Hausdorff dimension enjoys monotonicity and countable stability properties, which can be found below Definition $4.8$ in \cite{Mattila}:
\begin{equation}\label{eq:monotone}
\dim(S) \leq \dim(S'), \qquad\qquad \dim \left( \bigcup_{i \in \nn} S_i \right) = \sup_{i \in \nn} \dim (S_i),
\end{equation}
for any $S \subset S'$ and for any sets $S_i \subset \R$.



\section{Trace class operators and Fredholm determinants}
\label{sec:Fredholm}

We list several properties of Fredholm determinants of which we make use. For more information, see \cite{Simon}.
We first provide basic definitions. For the separable Hilbert space $\CH = L^2(\R)$, the trace norm of a bounded linear operator $A$ is $\| A \|_1 := \sum_{n = 1}^\infty \langle e_n, |A| e_n \rangle$, where $\{e_n\}_{n \geq 1}$ is any orthonormal basis of $\CH$, and $|A| = \sqrt{A^* A}$ is the unique positive square root of the operator $A^* A$. The space of trace class operators $\CB_1(\CH)$ contains such $A$ for which $\| A \|_1 < \infty$. For $A \in \CB_1(\CH)$, the trace is defined by $\tr(A) := \sum_{n = 1}^\infty \langle e_n, A e_n \rangle$ and the Hilbert-Schmidt norm is $\| A \|_2 := \sqrt{\tr(|A|)^2}$. The space of Hilbert-Schmidt operators $\CB_2(\CH)$ contains such $A$ for which $\| A \|_2 < \infty$.

Let $\| A \|_\op$ be the operator norm of $A$. Then we have the following relations between the norms:
\begin{equation}\label{eq:norms}
\| A \|_\op \leq \| A \|_2 \leq \| A \|_1.
\end{equation}
If $B$ is another bounded linear operator on $\CH$, then the following bounds hold, assuming that the involved norms are finite:
\begin{equation}\label{eq:norms_products}
\| A B \|_1 \leq \| A \|_2 \| B \|_2, \qquad \| A B \|_2 \leq \| A \|_2 \| B \|_\op, \qquad \| A B \|_2 \leq \| A \|_\op \| B \|_2.
\end{equation}

The \emph{Fredholm determinant} of an integral trace-class operator $A$ on $\CH$ is defined by
\begin{equation}\label{eq:det}
\det(I + A)_{\CH} := 1 + \sum_{n = 1}^\infty \frac{1}{n!} \int_{\R} \cdots \int_{\R} \det[A(x_i, x_j)]_{i,j = 1}^n \d x_1 \cdots \d x_n,
\end{equation}
where $A : \R^2 \to \R$ is the integral kernel of the operator. One important property of the Fredholm determinant is that it is invariant under conjugation $A \mapsto \Gamma^{-1} K \Gamma$. For further properties of Fredholm determinants, we refer to \cite{Simon}.

We often work with operators $A(v)$, parametrized by a variable $v$. For these, we will use the following properties.

\begin{lem}\label{lem:det}
Let a family of operators $A(v)$, parametrized by some vectors $v \in \R^n$, converges in trace norm to $A(v_0) \in \CB_1(\CH)$ as $v \to v_0$. Then
\begin{enumerate}[label=\normalfont{(\arabic*)},ref=\arabic*]
\item $\lim_{v \to v_0}\tr(A(v)) = \tr(A(v_0))$.
\item\label{it:det_continuous} $\lim_{v \to v_0}\det(I + A(v))_{\CH} = \det(I + A(v_0))_{\CH}$.
\item If $I + A(v_0)$ and $I + A(v)$ are invertible for all $v$ close to $v_0$, then
\begin{equation}
(I + A(v))^{-1} \xrightarrow{v \to v_0} (I + A(v_0))^{-1} \quad \text{in}~\CB_1(\CH).
\end{equation}
\item\label{det:deriv} Let $g (v) = \prod_{i = 1}^n |v_i|$ and let there be an operator $\partial A(v_0) \in \CB_1(\CH)$ such that
\begin{equation}
\tfrac{1}{g (v - v_0)} \bigl(A(v) - A(v_0)\bigr) \xrightarrow{v \to v_0} \partial A(v_0) \quad \text{in}~\CB_1(\CH),
\end{equation}
then
\begin{align}\label{eq:Fredholm_deriv}
\tfrac{1}{g (v - v_0)} &\bigl( \det (I + A(v))_{\CH} - \det (I + A(v_0))_{\CH}\bigr) \\
&\qquad \xrightarrow{v \to v_0} \tr \bigl[(I + A(v_0))^{-1} \partial A(v_0) \bigr] \det (I + A(v_0))_{\CH}.
\end{align}
\end{enumerate}
\end{lem}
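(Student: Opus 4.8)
\textbf{Proof proposal for Lemma~\ref{lem:det}.} The four items are standard facts about trace-class operators and Fredholm determinants; see \cite{Simon}. The plan is to deduce each from the Lipschitz-type continuity of $\tr$, $\det$, and inversion with respect to the trace norm, and to obtain item \eqref{det:deriv} by combining \eqref{it:det_continuous} with the first-order expansion of the Fredholm determinant. Throughout write $A = A(v_0)$, $A' = A(v)$, and $g = g(v-v_0)$.

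For item (1), linearity of the trace and the bound $|\tr(B)| \leq \|B\|_1$ give $|\tr(A') - \tr(A)| = |\tr(A'-A)| \leq \|A'-A\|_1 \to 0$. For item \eqref{it:det_continuous}, we invoke the classical Lipschitz estimate
\begin{equation}
\bigl| \det(I+A') - \det(I+A) \bigr| \leq \|A'-A\|_1 \, \exp\bigl(1 + \|A\|_1 + \|A'\|_1\bigr),
\end{equation}
valid for trace-class operators (this is the bound \eqref{eq:det_bound} used elsewhere in the paper, e.g.\ in Section~\ref{ss:bound_on_P}); since $\|A'\|_1$ is bounded for $v$ near $v_0$, the right-hand side tends to zero. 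For item (3), write, for $v$ close to $v_0$ so that both inverses exist and $\|(I+A)^{-1}\|_{\op}\|A'-A\|_1 < 1/2$, the resolvent identity
\begin{equation}
(I+A')^{-1} - (I+A)^{-1} = -(I+A')^{-1}(A'-A)(I+A)^{-1}.
\end{equation}
Using $\|(I+A')^{-1}\|_{\op} \leq 2\|(I+A)^{-1}\|_{\op}$ for such $v$ (a Neumann-series estimate), and the submultiplicativity bounds \eqref{eq:norms} and \eqref{eq:norms_products}, the right-hand side has trace norm at most $2\|(I+A)^{-1}\|_{\op}^2 \|A'-A\|_1 \to 0$.

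For item \eqref{det:deriv}, the starting point is the first-order expansion of the Fredholm determinant: for trace-class $B$ and $C$ with $I+B$ invertible,
\begin{equation}\label{eq:Fredholm_expansion_plan}
\det(I+B+C) = \det(I+B)\Bigl( 1 + \tr\bigl[(I+B)^{-1}C\bigr] \Bigr) + \mathrm{Rem}(B,C),
\end{equation}
where the remainder satisfies $|\mathrm{Rem}(B,C)| \leq \|C\|_1^2 \, \Psi(\|B\|_1, \|C\|_1)$ for a locally bounded function $\Psi$ (this follows from the multilinearity of the Fredholm expansion \eqref{eq:det} together with the Hadamard bound on the determinantal integrands; it is the standard quantitative version of Fr\'echet differentiability of $\det$). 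Apply \eqref{eq:Fredholm_expansion_plan} with $B = A$ and $C = A'-A$, divide by $g$, and write $A'-A = g\bigl(\partial A(v_0) + r(v)\bigr)$ where $\|r(v)\|_1 \to 0$ by hypothesis. The main term gives
\begin{equation}
\frac{1}{g}\Bigl(\det(I+A') - \det(I+A)\Bigr) = \det(I+A)\,\tr\bigl[(I+A)^{-1}(\partial A(v_0) + r(v))\bigr] + \frac{\mathrm{Rem}}{g};
\end{equation}
the trace term converges to $\det(I+A)\,\tr[(I+A)^{-1}\partial A(v_0)]$ because $|\tr[(I+A)^{-1}r(v)]| \leq \|(I+A)^{-1}\|_{\op}\|r(v)\|_1 \to 0$, while $|\mathrm{Rem}/g| \leq g\,\|\partial A(v_0)+r(v)\|_1^2\,\Psi(\|A\|_1, \ldots) \to 0$ since $g = g(v-v_0)\to 0$. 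This is precisely \eqref{eq:Fredholm_deriv}. The only mild subtlety\,---\,and the step I would be most careful with\,---\,is the uniform control of the remainder in \eqref{eq:Fredholm_expansion_plan}: one must check that the constant multiplying $\|C\|_1^2$ stays bounded as $C\to 0$, which is why I phrase it via a locally bounded $\Psi$ rather than a single constant; this is routine from the Plemelj--Smithies formula or directly from \eqref{eq:det}, but it is the heart of the argument.
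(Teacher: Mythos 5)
Your argument is correct, but it takes a more self-contained route than the paper, whose entire proof of this lemma is a citation to Simon's book: item (1) is referred to \cite[Theorem~3.1]{Simon}, and the continuity of the inverse and the differentiation formula to \cite[Corollary~5.2]{Simon} and \cite[Eq.~5.1]{Simon}. Mathematically you are reproving exactly those standard trace-ideal facts: your item (2) is the Lipschitz bound \eqref{eq:det_bound} (Simon's Theorem~3.4, already quoted in Appendix~\ref{sec:Fredholm}); your item (3) is the resolvent identity plus a Neumann-series bound, which is the content of Simon's Corollary~5.2; and your item (4) is the quantitative Fr\'echet differentiability of $A \mapsto \det(I+A)$, which is what Simon's Eq.~5.1 packages. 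The one point you rightly single out, the remainder estimate $|\mathrm{Rem}(B,C)| \leq \|C\|_1^2\,\Psi(\|B\|_1,\|C\|_1)$, does hold and can be obtained exactly as you suggest: the map $z\mapsto \det(I+B+zC)$ is entire with $|\det(I+B+zC)| \leq e^{\|B\|_1+|z|\|C\|_1}$, so Cauchy estimates with radius $R=\|C\|_1^{-1}$ bound the sum of the terms of order $\geq 2$ by a constant multiple of $\|C\|_1^2 e^{\|B\|_1+1}$ for $\|C\|_1 \leq \tfrac12$; equivalently one can invoke the Plemelj--Smithies expansion, or factor $\det(I+A') = \det(I+A)\det\bigl(I+(I+A)^{-1}(A'-A)\bigr)$ and expand the small determinant. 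Two minor remarks: in item (4) the invertibility of $I+A(v_0)$ is needed for the expansion (and indeed for the statement \eqref{eq:Fredholm_deriv} to make sense), so it should be stated as a hypothesis just as in your item (3); and your identity $A'-A = g\bigl(\partial A(v_0)+r(v)\bigr)$ with $\|r(v)\|_1\to 0$, together with $|\tr(XY)|\leq \|X\|_\op\|Y\|_1$, does exactly what is needed, since $\|A'-A\|_1^2/g = g\,\|\partial A(v_0)+r(v)\|_1^2 \to 0$. What the paper's approach buys is brevity; what yours buys is a proof that does not require the reader to check which exact statements in \cite{Simon} cover the multi-parameter limit used here, which is a reasonable trade.
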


\noindent The first property follows from \cite[Theorem~3.1]{Simon}. The latter two follow from \cite[Corollary~5.2]{Simon} and \cite[Eq.~5.1]{Simon} respectively.


We make use of a formula concerning a rank-one perturbation of an operator. 
For $A, B \in \CB_1(\CH)$, where $B$ is rank-one, then we have
\begin{equation}\label{eq:Fredholm_rank1}
\det (I + A + B)_{\CH} = \bigl(I + \tr \bigl[(I+A)^{-1} B\bigr]\bigr) \det(I + A)_{\CH}.
\end{equation}
This identity is obtained using approximations of the kernels by finite-dimensional matrices, for which this rank-one perturbation identity cane be found in \cite[p.~475]{Meyer}. From this identity we get
\begin{equation}
\bigl| \det (I + A + B)_{\CH} - \det (I + A)_{\CH} \bigr| \leq \| (I+A)^{-1} \|_1 \| B\|_1 \bigl|\det(I + A)_{\CH}\bigr|, \label{eq:det_bound_trace}
\end{equation}
where we used \cite[Eq.~3.1]{Simon}, the fact that $\CB_1(\CH)$ is an ideal, and \cite[Eq.~3.6]{Simon}.

For any trace class operators $A$ and $B$, the following bounds hold \cite[Theorem~3.4]{Simon}:
\begin{equation}\label{eq:det_bound}
\bigl| \det (I + A)_{\CH} - \det (I + B)_{\CH} \bigr| \leq \| A - B\|_1 e^{\| A\|_1 + \|B\|_1 + 1} \leq \| A - B\|_1 e^{\| A - B\|_1 + 2 \|B\|_1 + 1}.
\end{equation}

\section{Bounds on functions}
\label{sec:kernels_bounds}

We provide several estimates on Airy function and the functions $\fT_{t,x}$ defined in \eqref{eq:fTdef}. These are used in the proof of Proposition~\ref{prop:densities}.

\begin{lem}\label{lem:S_bound}
For any integer $n \geq 0$, there exists a constant $C > 0$, such that
\begin{equation}\label{eq:S_bound}
\bigl|\fT^{(n)}_{1,x}(z)\bigr| \leq C e^{F(x, z)},
\end{equation}
where $F(x, z) := -\tfrac{1}{3} x^3 + x y - \tfrac{2}{3} (y \vee 0)^{3 / 2}$ with $y = x^2 - z$, and where $\fT^{(n)}_{1,x}(z)$ denotes the $n^{\text{th}}$ derivative with respect to $z$.
\end{lem}

\begin{proof}
We first derive a bound on the Airy function and its derivatives. The Airy function can be written in terms of the Bessel functions as
\begin{equation}\label{eq:Airy_function}
\Ai(z) = \frac{1}{\pi} \sqrt{\frac{z}{3}} K_{1/3} (\zeta), \qquad \Ai(-z) = \frac{\sqrt{z}}{3} \bigl(J_{-1/3} (\zeta) + J_{1/3} (\zeta) \bigr),
\end{equation}
where $z \geq 0$; $\zeta = \frac{2}{3} z^{\frac{3}{2}}$; $J$ is a Bessel function of the first kind; and $K$ is a modified Bessel function of the second kind \cite[Section~2.2.4]{AiryBook}. Then, from \cite[Eq.~10.29.5]{NIST:DLMF}, we have
\begin{equation}\label{eq:K_recursion}
K^{(n)}_{\nu}(z) = 2^{-n} \sum_{k = 0}^n (-1)^k \binom{n}{k} K_{\nu - n + 2 k}(z).
\end{equation}
Moreover, the function $|K_\nu(z)|$ is bounded by a constant multiple of $e^{- z}$ for $z \geq 0$, where the constant depends on $\nu$ (see \cite[Eq.~10.25.3]{NIST:DLMF}). To be more precise, we have the bound $|K_\nu(z)| \leq C z^{-1/2} e^{- z}$ for large $z \geq 0$. However, the slowly decaying factor $z^{-1/2}$ does not play any role in our analysis. Then, from \eqref{eq:Airy_function} and \eqref{eq:K_recursion}, we obtain
\begin{equation}\label{eq:Airy_bound1}
\bigl|\Ai^{(n)}(z)\bigr| \leq C(n) e^{- \frac{2}{3} z^{3/2}},
\end{equation}
for $n \geq 0$ and $z \geq 0$.

Now, we will bound the Airy function $\Ai(-z)$ for $z > 0$. Equation 10.6.7 in \cite{NIST:DLMF} implies that the functions $J$ satisfy relation \eqref{eq:K_recursion}. Moreover, from \cite[Eq.~10.7.8]{NIST:DLMF}, we readily conclude that $|J_\nu(z)|$ is bounded by a constant depending on $\nu$. A slightly stronger bound holds for large values of $z$: $|J_\nu(z)| \leq C z^{-1/2}$. As for the function $K_\nu$ we will ignore this slow decay. Combining \eqref{eq:Airy_function} with these properties of the functions $J$, we obtain
\begin{equation}\label{eq:Airy_bound2}
\bigl|\Ai^{(n)}(-z)\bigr| \leq C(n),
\end{equation}
for $n \geq 0$ and $z \geq 0$.

From \eqref{eq:Airy_bound1} and \eqref{eq:Airy_bound2}, we conclude that $\bigl|\Ai^{(n)}(z)\bigr| \leq C(n) e^{- \frac{2}{3} (z \vee 0)^{3/2}}$. Applying this bound to \eqref{eq:fTdef}, we arrive at \eqref{eq:S_bound}.
\end{proof}

Next we derive bounds on the $L^2$ norms of the derivatives of $\fT^{(n)}_{t,x}(z)$. The bound \eqref{eq:S_bound} implies that these functions are not integrable on $\R$, and we need to multiply them by a fast decaying function to gain integrability. This is exactly the reason for conjugation of the kernel in Proposition~\ref{prop:KPZfp}.

\begin{lem}\label{lem:S_integral_estimate}
For  $t > 0$, $L > 0$, $\bar L > 0$, $u \in \R$, $x \in \R$, and  integer $n \geq 0$,
\begin{equation}\label{eq:S_integral_estimate}
\bigl\| \bigl(\fT^{(n)}_{t,x} \Gamma_t\bigr) (u, \bigcdot) \bigr\|_{L^2} \leq C,
\end{equation}
for $|x| \leq L$ and $|u| \leq \bar L$, where the constant $C \geq 0$ depends on $n$, $t$, $L$ and $\bar L$. Here, $\fT^{(n)}_{t,x}(z)$ is the $n^{\text{th}}$ derivative with respect to $z$; the multiplicative operator $\Gamma_t$ is defined in \eqref{eq:Gamma_L}.
\end{lem}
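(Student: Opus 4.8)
The statement to be proved is Lemma~\ref{lem:S_integral_estimate}, which asserts an $L^2$ bound, uniform over $|x|\le L$ and $|u|\le \bar L$, on the function $z\mapsto \bigl(\fT^{(n)}_{t,x}\Gamma_t\bigr)(u,z)$. First I would reduce to the case $t=1$: by the scaling structure of \eqref{eq:fTdef} (the kernel $\fT_{t,x}(z)$ is $t^{-1/3}$ times a rescaled $\fT_{1,\cdot}$), a change of variables in the integral defining the $L^2$ norm reduces the general-$t$ bound to the $t=1$ bound at the cost of $t$-dependent constants, which is permitted. So from now on I work with $\fT^{(n)}_{1,x}$ and $\Gamma=\Gamma_1$, recalling from \eqref{eq:Gamma_L} that $\Gamma$ acts as multiplication by $e^{G(z)}$ with $G(z)=\kappa\,\sgn(z)|z|^{3/2}$ for a fixed small $\kappa>0$.

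The core of the argument is to write out the $L^2$ norm explicitly and insert the pointwise bound \eqref{eq:S_bound}. We have
\begin{equation}
\bigl\| \bigl(\fT^{(n)}_{1,x} \Gamma\bigr) (u, \bigcdot) \bigr\|_{L^2}^2 = \int_{\R} \bigl|\fT^{(n)}_{1,x}(u-z)\bigr|^2 e^{2G(z)}\,\d z \le C(n)^2 \int_{\R} e^{2F(x,u-z)}\,e^{2G(z)}\,\d z,
\end{equation}
where $F(x,w)=-\tfrac13 x^3 + xy - \tfrac23(y\vee 0)^{3/2}$ with $y=x^2-w$. The plan is then to show that the integrand has Gaussian-type (or faster) decay in $z$, uniformly for $|x|\le L$, $|u|\le\bar L$. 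Writing $w=u-z$ and $y=x^2-w=x^2-u+z$, for $z$ large and positive we have $y\sim z$, so the term $-\tfrac23(y\vee 0)^{3/2}\sim -\tfrac23 z^{3/2}$ while $2G(z)=2\kappa z^{3/2}$; since $\kappa<\kappa_\star$ and in particular $\kappa$ can be taken smaller than $1/3$ (this is exactly the role of the smallness of $\kappa$, and is where I would invoke the defining property of $\kappa_\star$ from the proof of \cite[Theorem~4.1]{fixedpt}), the net exponent is $\le -c z^{3/2}$ for some $c>0$. For $z$ large and negative, $y\sim z\to -\infty$ so $(y\vee 0)^{3/2}=0$, the term $xy\sim xz$ is linear, and $2G(z)=-2\kappa|z|^{3/2}\to -\infty$ dominates, again giving decay like $e^{-c|z|^{3/2}}$. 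On a bounded $z$-window all terms are bounded uniformly in $|x|\le L$, $|u|\le\bar L$. Combining these three regimes, the integral is bounded by a constant depending only on $n$, $L$, $\bar L$ (and, after undoing the $t=1$ reduction, on $t$).

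I expect the main obstacle to be the bookkeeping in the regime where $z$ is positive but $y=x^2-u+z$ could be moderate in size, so that the asymptotic $y\sim z$ has not yet kicked in; there one must check that the positive contribution $+xy$ and the $-\tfrac23 x^3$ constant cannot overwhelm the decay, uniformly over $|x|\le L$. This is handled by the crude bounds $|xy|\le L(L^2+\bar L+|z|)$ and $|x^3|\le L^3$, which are swallowed by $2\kappa z^{3/2}$ minus $\tfrac23 z^{3/2}$ once $z$ exceeds a threshold depending only on $L,\bar L,\kappa$; below that threshold everything is uniformly bounded. The only genuinely external input is the smallness of $\kappa$ relative to $1/3$, which I would state as being guaranteed by shrinking $\kappa_\star$ in the definition of the Brownian scattering transform if necessary (this does not affect any earlier result, since smaller $\kappa$ is always admissible). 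Everything else is elementary calculus and the already-proved pointwise estimate \eqref{eq:S_bound}.
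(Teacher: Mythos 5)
Your proposal is correct and follows essentially the same route as the paper's proof: reduce to $t=1$ by the scaling relation for \eqref{eq:fTdef}, insert the pointwise bound \eqref{eq:S_bound} into the explicit $L^2$ integral, and split into regimes in $z$ where either the $-(y\vee 0)^{3/2}$ decay or the negative part of $G$ dominates the $2\kappa|z|^{3/2}$ growth, using that $\kappa$ may be fixed small (the paper does exactly this, organizing the large-positive-$z$ regime via a splitting at $v=u-x^2$ and a Jensen-type inequality). One cosmetic slip: since the norm involves $e^{2F+2G}$, the decay term is $-\tfrac{4}{3}(y\vee 0)^{3/2}$ rather than $-\tfrac{2}{3}(y\vee 0)^{3/2}$, which only makes your comparison with $2\kappa z^{3/2}$ more favorable and does not affect the argument.
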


\begin{proof}
The definition \eqref{eq:fTdef} yields $\fT_{t,x}(z) = t^{-1/3} \fT_{1, t^{-2/3} x}(t^{-1/3} z)$ for $t \neq 0$. This means that the bound \eqref{eq:S_integral_estimate} for any $t > 0$ follows from the same bound for $t = 1$. Recalling the definition of $\Gamma = \Gamma_1$ in \eqref{eq:Gamma_L}, we may write
\begin{equation}
\bigl\| \bigl(\fT^{(n)}_{1,x} \Gamma\bigr) (u, \bigcdot) \bigr\|^2_{L^2} = \int_{-\infty}^\infty \fT^{(n)}_{1,x} (u - v)^2 e^{2 G(v)} \d v.
\end{equation}
Applying then the bound \eqref{eq:S_bound}, we get
\begin{align}
\MoveEqLeft
\bigl\| \bigl(\fT^{(n)}_{1,x} \Gamma\bigr) (u, \bigcdot) \bigr\|^2_{L^2} \\
&\leq C_1 \int_{-\infty}^\infty \exp\left\{2 F(x,u - v) + 2 G(v)\right\} \d v \\
&= C_1 e^{\frac{4}{3} x^3 - 2 x u} \int_{-\infty}^\infty \exp\left\{- \frac{4}{3} ((x^2 - u + v) \vee 0)^{3/2} + 2 x v + 2 \kappa \sgn(v) |v|^{3/2}\right\} \d v,\label{eq:integral}
\end{align}
where the constant $C_1$ depends on $n$. In order to estimate this integral, we will split the interval of integration into two subintervals: $v \leq u - x^2$ and $v > u - x^2$.

If $v \leq u - x^2$ in the integral \eqref{eq:integral}, then, on this interval of integration, the expression \eqref{eq:integral} equals
\begin{equation}
C_1 e^{\frac{4}{3} x^3 - 2 x u} \int_{-\infty}^{(u - x^2) \wedge 0} \exp\left\{2 x v - 2 \kappa |v|^{3/2}\right\} \d v + C_1 e^{\frac{4}{3} x^3 - 2 x u} \int_{(u - x^2) \wedge 0}^{u - x^2} \exp\left\{2 x v + 2 \kappa |v|^{3/2}\right\} \d v.
\end{equation}
Naturally, for $|x| \leq L$ and $|u| \leq \bar L$, both these integrals are bounded by a constant depending on $L$ and $\bar L$.

In the case that $v > u - x^2$, the expression in \eqref{eq:integral} may be written as
\begin{equation}\label{eq:integral2}
\begin{split}
\MoveEqLeft
C_1 e^{\frac{4}{3} x^3 - 2 x u } \int_{u - x^2}^{(u - x^2) \vee 0} \exp\left\{- \frac{4}{3} (x^2 - u + v)^{3/2} + 2 x v - 2 \kappa |v|^{3/2}\right\} \d v \\
&+ C_1 e^{\frac{4}{3} x^3 - 2 x u} \int_{{(u - x^2) \vee 0}}^\infty \exp\left\{- \frac{4}{3} (x^2 - u + v)^{3/2} + 2 x v + 2 \kappa v^{3/2}\right\} \d v.
\end{split}
\end{equation}
For $|x| \leq L$ and $|u| \leq \bar L$, the first term in \eqref{eq:integral2} is bounded by a constant depending on $L$ and $\bar L$. Now, we will bound the second integral in \eqref{eq:integral2}. For this, we use the inequality
\begin{equation}
\tfrac{4}{3} (x^2 - u + v)^{\frac{3}{2}} \geq \alpha [(x^2 - u) \vee 0]^{\frac{3}{2}} + \bigl(\tfrac{4}{3} - \alpha\bigr) [v - (u - x^2) \vee 0]^{\frac{3}{2}},
\end{equation}
which holds for any $\alpha < \frac{4}{3}$. Then the second integral in \eqref{eq:integral2} is estimated by
\begin{align}
\MoveEqLeft
C_1 e^{\frac{4}{3} x^3 - 2 x u - \alpha [(x^2 - u) \vee 0]^{3/2}}\\
&\qquad\times \int_{{(u - x^2) \vee 0}}^\infty\exp\left\{- (\tfrac{4}{3} - \alpha) [v - (u - x^2) \vee 0]^{3/2} + 2 x v + 2 \kappa v^{3/2}\right\} \d v \\
&= C_1 e^{\frac{4}{3} x^3 - 2 x u - \alpha [(x^2 - u) \vee 0]^{3/2}}\\
&\qquad\times \int_{0}^\infty \exp\left\{- (\tfrac{4}{3} - \alpha) v^{3/2} + 2 x v + 2 \kappa (v + (u - x^2) \vee 0)^{3/2}\right\} \d v\\
&\leq C_1 e^{\frac{4}{3} x^3 - 2 x u - (\alpha - 2^{3/2} \kappa) [(x^2 - u) \vee 0]^{3/2}} \int_{0}^\infty \exp\left\{- (\tfrac{4}{3} - \alpha - 2^{3/2} \kappa) v^{3/2} + 2 x v\right\} \d v,
\end{align}
where in the last line we have used Jensen's inequality for the function $|\bigcdot|^{3/2}$. As soon as $\kappa > 0$ is small enough, the last integral is bounded by a constant depending on $x$, which implies that the last term in \eqref{eq:integral2} is bounded by a constant depending on $L$ and $\bar L$, if $|x| \leq L$ and $|u| \leq \bar L$.

Combining \eqref{eq:integral} with the derived bounds, we obtain the bound \eqref{eq:S_integral_estimate} for $t = 1$, as required.
\end{proof}

\printbibliography[heading=apa]

\end{document}